\def\version{{\tiny Version September 14, 2018 (typeset: \today)}}
\def\version{}
\DeclareFontFamily{OT1}{eusb}{} \DeclareFontShape{OT1}{eusb}{m}{n} {<5> <6> <7> <8> <9> <10> <11> <12> <14.4> eusb10}{}
\DeclareMathAlphabet{\eusb}{OT1}{eusb}{m}{n}
\DeclareFontFamily{OT1}{eusm}{} \DeclareFontShape{OT1}{eusm}{m}{n} {<5> <6> <7> <8> <9> <10> <11> <12> <14.4> eusm10}{}
\DeclareMathAlphabet{\eusm}{OT1}{eusm}{m}{n}
\DeclareFontFamily{OT1}{eufm}{} \DeclareFontShape{OT1}{eufm}{m}{n} {<5> <6> <7> <8> <9> <10> <11> <12> <14.4> eufm10}{}
\DeclareMathAlphabet{\mathfrak}{OT1}{eufm}{m}{n}
\DeclareFontFamily{OT1}{fraktura}{}
\DeclareFontShape{OT1}{fraktura}{m}{n} {<5> <6> <7> <8> <9> <10> <11> <12> <13> <14.4> [1.1] eufm10}{}
\DeclareMathAlphabet{\fraktura}{OT1}{fraktura}{m}{n}
\DeclareFontFamily{OT1}{cmfi}{} \DeclareFontShape{OT1}{cmfi}{m}{n} {<5> <6> <7> <8> <9> <10> <11> <12> <13> <14.4> [0.9] cmfi10}{}
\DeclareMathAlphabet{\cmfi}{OT1}{cmfi}{b}{n}
\DeclareFontFamily{OT1}{cmss}{} \DeclareFontShape{OT1}{cmss}{m}{n} {<5> <6> <7> <8> <9> <10> <11> <12> <13> <14.4> cmss10}{}
\DeclareMathAlphabet{\cmss}{OT1}{cmss}{m}{n}
\newtheoremstyle{thm}{1.5ex}{1.5ex}{\itshape\rmfamily}{} {\bfseries\rmfamily}{}{2ex}{}
\newtheoremstyle{def}{1.5ex}{1.5ex}{\rmfamily\sl}{} {\bfseries\rmfamily}{}{2ex}{}
\newtheoremstyle{rem}{1.3ex}{1.3ex}{\rmfamily}{} {\bfseries\rmfamily}{}{2ex}{}
\newtheoremstyle{ass}{1.5ex}{1.5ex}{\rmfamily\sl}{} {\bfseries\rmfamily}{}{2ex}{}
\newenvironment{proofsect}[1] {\vskip0.1cm\noindent{\rmfamily\itshape#1.}}{\qed\vspace{0.15cm}}
\theoremstyle{thm}
\newtheorem{theorem}{Theorem}[section]
\newtheorem{lemma}[theorem]{Lemma}
\newtheorem{proposition}[theorem]{Proposition}
\newtheorem*{Main Theorem}{Main Theorem.}
\newtheorem{corollary}[theorem]{Corollary}
\newtheorem{conjecture}[theorem]{Conjecture}
\newtheoremstyle{named}{}{}{\itshape}{}{\bfseries}{}{.5em}{\thmnote{#3}}
\theoremstyle{named}
\theoremstyle{def}
\theoremstyle{rem}
\newtheorem{remark}[theorem]{{Remark}}
\numberwithin{equation}{section}
\renewcommand{\theequation}{\arabic{section}.\arabic{equation}}
\renewcommand{\section}{\secdef\sct\sect}
\newcommand{\sct}[2][default]{\refstepcounter{section}
\addcontentsline{toc}{section}
{{\tocsection {}{\thesection}{\!\!\!\!#1\dotfill}}{}}
\vspace{0.7cm}
\centerline{ 
\scshape\arabic{section}.\ #1} \nopagebreak \vspace{0.2cm}}
\newcommand{\sect}[1]{
\vspace{0.4cm} \centerline{\large\scshape\rmfamily #1}
\vspace{0.2cm}}
\renewcommand{\subsection}{\secdef\subsct\sbsect}
\newcommand{\subsct}[2][default]{\refstepcounter{subsection}
\addcontentsline{toc}{subsection}
{{\tocsection{\!\!}{\hspace{1.2em}\thesubsection}{\!\!\!\!#1\dotfill}}{}}
\nopagebreak\vspace{0.45\baselineskip} {\flushleft\bf
\arabic{section}.\arabic{subsection}~\bf #1.~}
\\*[3mm]\noindent
\nopagebreak}
\newcommand{\sbsect}[1]{
\vspace{0.1cm}\noindent
\textbf{#1.~}\vspace{0.1cm}}
\renewcommand{\subsubsection}{%
\secdef \subsubsect\sbsbsect}
\newcommand{\subsubsect}[2][default]{%
\refstepcounter{subsubsection} 
\addcontentsline{toc}{subsubsection}{{\tocsection{\!\!}
{\hspace{3.05em}\thesubsubsection}{\!\!\!\!#1\dotfill}}{}}
\nopagebreak
\vspace{0.15\baselineskip} \nopagebreak {\flushleft\rmfamily
\itshape\arabic{section}.\arabic{subsection}.\arabic{subsubsection}
\ \rmfamily #1\/.}\ }
\newcommand{\sbsbsect}[1]{\vspace{0.1cm}\noindent
\rmfamily \itshape
\arabic{section}.\arabic{subsection}.\arabic{subsubsection} \
\sffamily #1\/.\ }
\newcounter{obrazek}
\renewcommand{\caption}[1]{%
\vglue0.5cm
\refstepcounter{obrazek}
\begin{minipage}{0.9\textwidth}\small {\sc Figure~\theobrazek. }#1\end{minipage}}
\newcommand{\dist}{\operatorname{dist}}
\newcommand{\supp}{\operatorname{supp}}
\newcommand{\textd}{\text{\rm d}\mkern0.5mu}
\newcommand{\texte}{\text{\rm  e}\mkern0.7mu}
\newcommand{\Var}{\text{\rm Var}}
\newcommand{\Cov}{\text{\rm \text{\rm Cov}}}
\newcommand{\1}{{1\mkern-4.5mu\textrm{l}}}
\renewcommand{\1}{\text{\sf 1}}
\newcommand{\FF}{\mathcal F}
\newcommand{\HH}{\mathcal H}
\newcommand{\NN}{\mathcal N}
\newcommand{\C}{\mathbb C}
\newcommand{\D}{\mathbb D}
\newcommand{\N}{\mathbb N}
\newcommand{\BbbP}{\mathbb P}
\newcommand{\Q}{\mathbb Q}
\newcommand{\R}{\mathbb R}
\newcommand{\T}{\mathbb T}
\newcommand{\Z}{\mathbb Z}
\newcommand{\twoeqref}[2]{(\ref{#1}--\ref{#2})}
\newcommand{\cc}{{\text{\rm c}}}
\def\myffrac#1#2 in #3{\raise 2.6pt\hbox{$#3 #1$}\mkern-1.5mu\raise 0.8pt\hbox{$#3/$}\mkern-1.1mu\lower 1.5pt\hbox{$#3 #2$}}
\newcommand{\ffrac}[2]{\mathchoice%
	{\myffrac{#1}{#2} in \scriptstyle}
	{\myffrac{#1}{#2} in \scriptstyle}
	{\myffrac{#1}{#2} in \scriptscriptstyle}
	{\myffrac{#1}{#2} in \scriptscriptstyle}
}
\newcommand{\wh}{\widehat}
\newcommand{\wt}{\widetilde}
\newcommand{\laweq}{\,\overset{\text{\rm law}}=\,}
\newcommand{\lf}{\lfloor}
\newcommand{\rf}{\rfloor}
\newcommand{\leb}{{\rm Leb}}
\newcommand{\dd}{\textd}
\newcommand{\Lawarrow}{{\,\overset{\text{\rm law}}\longrightarrow\,}}
\newcommand{\Lawlongarrow}{{\,\,\overset{\text{\rm law}}\longrightarrow\,\,}}
\newcommand{\rad}{{\text{\rm rad}}}
\newcommand{\fraka}{\fraktura a}
\begin{document}

\title[]
{\large Conformal symmetries in the extremal process\\of two-dimensional discrete Gaussian Free Field}

\author[\hfill  \version \hfill Biskup and Louidor]
{Marek~Biskup$^{1,2}$ and Oren~Louidor$^3$}
\thanks{\hglue-4.5mm\fontsize{9.6}{9.6}\selectfont\copyright\,\textrm{2019}\ \ \textrm{M.~Biskup, O.~Louidor.
Reproduction, by any means, of the entire
article for non-commercial purposes is permitted without charge.\vspace{2mm}}}
\maketitle

\vspace{-5mm}
\centerline{\textit{$^1$
Department of Mathematics, UCLA, Los Angeles, California, USA}}
\centerline{\textit{$^2$
Center for Theoretical Study, Charles University, Prague, Czech Republic}}
\centerline{\textit{$^3$
Faculty of Industrial Engineering and Management, Technion, Haifa, Israel}}

\vskip0.3cm
\begin{quote}
\footnotesize \textbf{Abstract:}
We study the extremal process associated with the Discrete Gaussian Free Field on the square lattice and elucidate how  the conformal symmetries manifest themselves in the scaling limit. Specifically, we prove that the joint process of spatial positions~($x$) and centered values~($h$) of the extreme local maxima in lattice versions of a bounded domain~$D\subset\C$ converges, as the lattice spacing tends to zero, to a Poisson point process with intensity measure $Z^D(\textd x)\otimes\texte^{-\alpha h}\textd h$, where~$\alpha$ is a constant  and~$Z^D$ is a random a.s.-finite measure on~$D$. The random measures $\{Z^D\}$ are naturally interrelated; restrictions to subdomains are governed by a Gibbs-Markov property and images under analytic bijections~$f$ by the transformation rule $(Z^{f(D)}\circ f)(\textd x)\laweq |f'(x)|^4\, Z^D(\textd x)$. Conditions are given that determine the laws of these measures uniquely. These identify~$Z^D$ with the critical Liouville Quantum Gravity associated with the Continuum Gaussian Free Field. 
\end{quote}

\tableofcontents
\vglue-1.0cm

\section{Introduction}
\label{sec:1}\noindent
The Discrete Gaussian Free Field (DGFF) is a Gaussian process $\{h_x^V\colon x\in V\}$, indexed by the vertices in a finite subset~$V$ of an infinite graph with covariance given by the Green function, i.e., the expected number $G_V(x,y)$ of visits to~$y$ of the simple symmetric random walk started from~$x$ and killed upon exiting~$V$. We usually take the field to be zero mean, $E h_x^V=0$, and regard it to be identically zero outside~$V$. In this paper, we describe the limit statistics of the \emph{extremal values} of the DGFF in proper subsets of~$\Z^2$ that approximate, in a sense to be defined precisely below, a given bounded continuum domain $D\subset\C$. Our particular interest is in the precise way  these  statistics depends on~$D$. 

It is well known that, in the said limit, the DGFF tends to a continuum object called the Continuum Gaussian Free Field (CGFF). A distinguished feature of the two-dimensional CGFF is that it is invariant under all conformal maps (i.e., analytic bijections) of the underlying domain. Unfortunately, the CGFF is too rough to exist as a proper function and it has to be interpreted as a Gaussian random linear functional over the Hilbert space of functions $\cmss H^1_0(D)$ see, e.g., the review by Sheffield~\cite{Sheffield-review}. In particular, there is no meaning to its maximum and/or extremal values. One of our goals here is thus to elucidate how the conformal symmetries of the CGFF manifest themselves at the level of limit laws for extremal values of the DGFF.

A first step towards the above goal has been taken by the authors in~\cite{Biskup-Louidor} by proving that the process of extreme local maxima,
\begin{equation}
\eta_{N,r}:=\sum_{x\in V_N}1_{\{h^{V_N}_x=\max_{y\colon|y-x|<r}h^{V_N}_y\}}\delta_{x/N}\otimes\delta_{h^{V_N}_x-m_N}\,,
\end{equation}
on lattice versions~$V_N:=(0,N)^2\cap\Z^2$ of the unit square~$D:=(0,1)^2$, admits, for any sequence~$r_N$ (defining the meaning of ``local'') such that $r_N\to\infty$ and $N/r_N\to\infty$, the limit
\begin{equation}
\label{E:1.4a}
\eta_{N,r_N}\,\,\underset{N\to\infty}\Lawarrow\,\,\text{PPP}\bigl(Z(\textd x)\otimes\texte^{-\alpha h}\textd h\bigr).
\end{equation}
Here ``PPP'' designates a Poisson point process, $\alpha:=2/\sqrt{g}$  for~$g$ a constant such that $G^{V_N}(x,x)=g\log N+O(1)$ at points~$x$ ``deep'' inside~$V_N$ (which, in our normalization, gives~$g:=2/\pi$)  and $Z(\textd x)$ is a \emph{random} intensity measure on~$[0,1]^2$. The centering sequence
\begin{equation}
\label{E:1.3}
	m_N := 2\sqrt{g} \log N - \frac34 \sqrt{g} \log \log N,
\end{equation}
captures the growth rate of the absolute maximum (Bramson and Zeitouni~\cite{BZ}). The law of the centered maximum $\max_{x\in V_N}h^{V_N}_x-m_N$ is known to converge (Bramson, Ding and Zeitouni~\cite{BDingZ}). For the limit law,~\eqref{E:1.4a} yields the representation
\begin{equation}
\label{E:1.5a}
P\bigl(\,\max_{x\in V_N}h_x^{V_N}-m_N\le t\bigr)\,\underset{N\to\infty}\longrightarrow\,E\bigl(\texte^{-\alpha^{-1}\texte^{-\alpha t} Z([0,1]^2)}\bigr).
\end{equation}
Notice that the right-hand side is the Laplace transform of the total mass of the~$Z$-measure.


There are two directions in which the work~\cite{Biskup-Louidor} called for further extension. The first one concerns the behavior of the full extremal process; indeed, the results of~\cite{Biskup-Louidor} addressed only local maxima and ignored, for good reasons, the points (still extremal) lying nearby thereof. This aspect has now been fully resolved in Biskup and Louidor~\cite{BL-new}. The second direction concerns the global correlations encoded into the random measure~$Z(\textd x)$. Two natural questions~arise:
\begin{enumerate}
\item[(1)] Does \eqref{E:1.4a} generalize to domains other than squares? And, if so, how are the corresponding $Z$-measures related?
\item[(2)] Can the law of the $Z$-measure be independently characterized? In particular, what properties determine their laws uniquely?
\end{enumerate}
The aim of the present work is to completely resolve both of these questions.

Our strategy is as follows:  We first  demonstrate that \eqref{E:1.4a} extends to a large class~$\mathfrak D$ of bounded open sets~$D\subset\C$.  Then we  show that the resulting random measures~$\{Z^D\colon D\in\mathfrak D\}$ are quite interdependent. In particular, they transform canonically under the restriction to a subdomain (the Gibbs-Markov property in Theorem~\ref{thm:2}) and conformal maps between domains (Theorem~\ref{thm:3}). An essential ingredient for the latter is invariance of the law of~$Z^D$ under \emph{rotations} of~$D$; indeed, the Gibbs-Markov property localizes the conformal map to a rotation and a dilation and~$Z^D$ transforms canonically under dilations thanks to the very existence of the limit~\eqref{E:1.4a}. 

A by-product of the proof of conformal invariance is a list of properties that identifies the laws of the measures~$\{Z^D\colon D\in\mathfrak D\}$ uniquely. By way of somewhat tedious estimates we then verify that all properties on this list are fulfilled by the ``white-noise'' version of the \emph{Liouville Quantum Gravity}  (LQG)  measure introduced in Duplantier, Rhodes, Sheffield and Vargas~\cite{DRSV1,DRSV2}, and thus identify our~$Z^D$'s with this object up to an overall multiplicative constant.

Following up on a referee's suggestion, we note that the term LQG measure used in probability is technically rather different from that used in physics. Indeed, there the LQG is a theory of a random metric derived from a CGFF and where the underlying probability law is, in the absence of other interacting fields, that of the CGFF tilted by the total mass of the associated probabilistic LQG measure; see Rhodes and Vargas~\cite[Section~5.2]{Rhodes-Vargas} for some details. Moreover, the CGFF has to be taken with Neumann boundary conditions instead of Dirichlet considered here.



\section{Main results}
\nopagebreak
\vglue-0.4cm
\subsection{Limit in general domains}
We now move to the statement of our main results. As noted above, our first goal is to generalize \eqref{E:1.4a} to a representative family of domains in the complex plane. Let~$\mathfrak D$ denote the class of all non-empty, bounded, open sets~$D\subset\C$ with a finite number of connected components and boundary $\partial D$ that has a finite number of connected components each of which is of positive Euclidean diameter. Note that~$\mathfrak D$ is closed under shifts, rotations, dilations and finite unions and that it includes all bounded simply connected domains in~$\C$. 

Let $\textd_\infty(x,y)$ denote $\ell^\infty$-distance on~$\R^2$. Given~$D\in\mathfrak D$ we then consider a family $\{D_N\colon N\ge1\}$ of its discrete approximations for which we assume
\begin{equation}
\label{E:1.1}
	 D_N \subseteq \bigl\{x \in \Z^2 \colon \textd_\infty(x/N,D^\cc)>\ffrac1N\bigr\} ,
\end{equation}
and, for each $\delta>0$ and $N$ sufficiently large, also
\begin{equation}
\label{E:1.1a}
D_N\supseteq\bigl\{x \in \Z^2 \colon \textd_\infty(x/N,D^\cc)>\delta\bigr\}
\end{equation}
We will often write $h_N^D(x)$ for~$h^{D_N}_x$. The reason for requiring \eqref{E:1.1} is that, if $D$ has connected components $D^1,\dots,D^m$, then $D_N^i$ and $D_N^j$ for any $i\ne j$ are at least two lattice steps apart and so the fields $(h^{D^1}_N,\dots, h^{D^m}_N)$ are independent. See Fig.~1 for an illustration.

Associated with each sample of $h^ D_N$ is a point process of centered extreme local maxima. This process is realized as the random Borel measure on $\overline D\times\R$ defined by
\begin{equation}
	\eta^ D_{N,r} := \sum_{x \in  D_N}  
		\1_{\{h_{N}^D(x) = \max_{ z \in \Lambda_r(x)} h_{N}^D(z)\}}\delta_{x/N}\otimes\delta_{ \,h_{N}^D(x) - m_N },
\end{equation}
where
\begin{equation}
\label{E:2.4ui}
\Lambda_r(x):=\{z\in\Z^2\colon |z-x|\le r\}
\end{equation}
defines the meaning of ``local,'' the centering sequence $m_N$ is as in \eqref{E:1.3}, $\1_A$ is the indicator of event~$A$ and $\delta_z$ denotes the unit (Dirac) point-mass at~$z$. 

\begin{figure}[t]
\vglue0.2cm
\centerline{\includegraphics[width=3.4truein]{./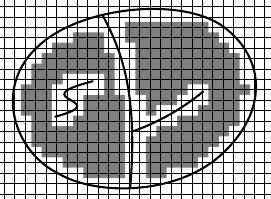}}
\begin{quote}
\small
\vglue0.2cm
{\sc Fig~1.\ }
\label{fig1}
An illustration of a domain $D\in\mathfrak D$ (the region in the plane bounded by the solid curves) along with its discrete approximation~$D_N$ (the lattice vertices contained in the gray areas) corresponding to the set on the right of \eqref{E:1.1}.  The lattice spacing is $1/N$.
\normalsize
\end{quote}
\end{figure}
 

The family $\{\eta_{N,r}\}$ belongs to the space of locally-finite Borel measures on a locally compact metric space which, if endowed with vague topology, permits us to consider its distributional limit points. Let $\leb(\cdot)$ denote the Lebesgue measure on~$\R^2$ and, given a sigma-finite Borel measure~$\lambda$, let $\text{\rm PPP}(\lambda)$ denote the associated Poisson point process. If~$\lambda$ is itself random then the law of $\text{\rm PPP}(\lambda)$ is also averaged over the law of~$\lambda$. The starting point of our derivations is:

\begin{theorem}[Limit process]
\label{thm:1}
For each $ D \in\mathfrak D$ there is a random Borel measure $Z^D(\textd x)$ on~$\overline D$ with $Z^D(\overline D)<\infty$ a.s.\ such that for any $r_N$ satisfying $r_N\to\infty$ and $r_N/N\to0$,  
\begin{equation}
\label{e:1.9}
\eta^ D_{N,r_N}\,\underset{N\to\infty}\Lawlongarrow\, \text{\rm PPP}\bigl(Z^ D(\textd x)\otimes\texte^{-\alpha h}\textd h\bigr) \,,
\end{equation}
where $\alpha:=2/\sqrt g$. The measure~$Z^D$ has the following properties almost surely:
\begin{enumerate}
\item[(1)] $Z^D$ is non-atomic and $Z^D(\partial D)=0$,
\item[(2)] for any non-random Borel~$A\subset\overline D$ with $\leb(A)=0$ we have $Z^D(A)=0$ ,
\item[(3)] $Z^D(A)>0$ for any $A\subseteq D$ non-empty open and so $\supp Z^D=\overline D$.
\end{enumerate}
Moreover, if~$D$ is the union of pairwise-disjoint sets $D^1,\dots,D^m\in\mathfrak D$, then 
\begin{equation}
\label{e:1.5}
Z^{ D}\laweq Z^{ D^1} + \dots + Z^{ D^m} ,
\end{equation}
where $Z^{ D^1},\dots, Z^{ D^m}$ are independent and regarded as measures on~$D$.
\end{theorem}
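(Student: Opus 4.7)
The plan is to bootstrap from the square-domain convergence established in Biskup and Louidor~\cite{Biskup-Louidor} to general $D\in\mathfrak D$ via a Gibbs-Markov decomposition. For $\ep>0$, exhaust~$D$ by a disjoint union $V'_\ep\subset D$ of open squares of side~$\ep$ and write
\begin{equation*}
h^{D}_N \,=\, h^{V'_\ep}_N \,+\, \varphi^{D,V'_\ep}_N ,
\end{equation*}
where $h^{V'_\ep}_N$ is distributed as a DGFF on $V'_{\ep,N}$ (a product of independent DGFFs on the constituent $\ep N$-squares, thanks to \eqref{E:1.1}) and $\varphi^{D,V'_\ep}_N$ is the discrete harmonic extension of the boundary data, independent of $h^{V'_\ep}_N$. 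Since shifting heights by an additive function of the spatial coordinate turns a $\mathrm{PPP}(\mu(\textd x)\otimes\texte^{-\alpha h}\textd h)$ into a $\mathrm{PPP}(\texte^{\alpha\varphi(x)}\mu(\textd x)\otimes\texte^{-\alpha h}\textd h)$, the Biskup-Louidor result on each $\ep$-square combined with convergence of the binding field (to a continuum harmonic object built from the trace of the CGFF) identifies every subsequential limit of $\eta^D_{N,r_N}$ at fixed~$\ep$ as a Cox process whose intensity is the product of $\texte^{\alpha\Phi^{D,V'_\ep}(x)}$ with the independent square measures on the constituents. Letting $\ep\downarrow 0$ along this Gibbs-Markov tower --- combined with tightness of $\max h^D_N-m_N$ coming from Gaussian comparison of $h^D_N$ with DGFFs on large circumscribing and small inscribed squares --- promotes the subsequential limits to a bona fide convergence and defines~$Z^D$.

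Next, I would read the qualitative properties off the approximations. Finiteness $Z^D(\ol D)<\infty$ and non-zeroness are immediate from the tightness and non-degeneracy of the maximum. Non-atomicity follows because the local variance of $h^D_N$ at any fixed vertex, conditional on the field on a surrounding annulus, stays uniformly positive; this forces any putative atom to spread over scales $\ge r_N$ and hence carry zero mass in the limit. The identity $Z^D(\partial D)=0$ uses $h^D_N\equiv 0$ on $\partial D_N$ and a standard entropic-repulsion estimate showing that no local maximum of height $m_N-O(1)$ occurs within distance $\de N$ of $\partial D_N$ with probability $\to 1$ as $\de\downarrow 0$. Positivity on open $A\subset D$ is inherited from the square case: any such~$A$ contains an $\ep$-square of the approximation, and the PPP on it already has positive intensity. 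Stochastic absolute continuity follows from a bound $EZ^D(B)\le C\leb(B)$ for Borel $B\subset D$, which reduces via the tower to the analogous first-moment bound on a square together with a Cameron-Martin-type shift absorbing the binding field.

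The disjoint-union identity \eqref{e:1.5} is the cleanest part: when $D=D_1\cup\dots\cup D_m$ with the $D_i$ pairwise disjoint, condition~\eqref{E:1.1} implies that $D_N^i$ and $D_N^j$ are at graph-distance at least two for $i\ne j$, so the fields $h^{D_i}_N$ are genuinely independent and hence so are the point processes $\eta^{D_i}_{N,r_N}$; their joint limit is a superposition of independent PPPs with intensities $Z^{D_i}(\textd x)\otimes\texte^{-\alpha h}\textd h$, which is itself a PPP with intensity $\sum_i Z^{D_i}(\textd x)\otimes\texte^{-\alpha h}\textd h$. Matching with the limit for~$D$ and using that the intensity measure of a Poisson point process determines its law yields \eqref{e:1.5}. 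The main obstacle in the whole program is not any one of these properties but rather the $\ep\downarrow 0$ step in the Gibbs-Markov tower: the binding-field exponential $\texte^{\alpha\varphi^{D,V'_\ep}_N}$ is a critical multiplicative chaos object in disguise, so quantitative control must be exerted uniformly in $N$ in order to ensure that the family $\{Z^D_\ep\}$ is tight and consistent with a non-degenerate limit. This is where the bulk of the technical work lies.
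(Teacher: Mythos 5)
Your overall strategy --- Gibbs-Markov decomposition into disjoint small squares, bootstrap from the Biskup-Louidor square result, identify the limit by tracking the binding field through the tower, and then take the mesh to zero --- is exactly the approach taken in the paper (Propositions~\ref{prop-3.1} and~\ref{prop-3.2}). The tightness inputs you cite (Gaussian comparison between the DGFF on~$D_N$ and on inscribed/circumscribed squares) match Lemma~\ref{lemma-2.4} and Propositions~\ref{lem:4}--\ref{lem:6}. The disjoint-union argument and the observation that shifting heights by $\varphi^{D,V'_\ep}$ tilts the spatial intensity by $\texte^{\alpha\varphi}$ are also the paper's mechanisms. So the architecture of the proposal is sound.

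However, there is a genuine error in your route to stochastic absolute continuity. You propose to derive it from a first-moment bound $E Z^D(B)\le C\,\leb(B)$. This cannot hold: the measure $Z^D$ is a critical multiplicative-chaos-type object, and $E[Z^D(B)]=\infty$ for every Borel~$B$ with $\leb(B)>0$. Indeed, the Laplace-transform asymptotic $E\bigl(Z^D(A)\texte^{-\lambda Z^D(D)}\bigr)\sim\text{const}\cdot\log(\ffrac1\lambda)$ as $\lambda\downarrow 0$ from Corollary~\ref{cor-Z-measure} blows up as $\lambda\downarrow0$, which is incompatible with a finite first moment; equivalently, the tail $P(Z^D(D)>t)\asymp t^{-1}$ (Tauberian from \eqref{E:1.22c}) gives $EZ^D(D)=\infty$. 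The paper's argument avoids first moments entirely: it uses the \emph{probability} bound of Proposition~\ref{lem:6}, namely $P\bigl(\exists x\in A\colon h_N^D(x)\ge m_N+t\bigr)\le c\,(|A|/N^2)^{1/2}\,t\,\texte^{-\alpha t}$, to show that the limiting extremal point process places no points over a Lebesgue-null $A$ with probability one, whence $Z^D(A)=0$ a.s. You would need to replace your first-moment argument by something of this nature. A related minor point: once stochastic absolute continuity is in hand, $Z^D(\partial D)=0$ follows for free since $\leb(\partial D)=0$ for $D\in\mathfrak D$; your separate entropic-repulsion argument, while plausible, is unnecessary extra work and would itself require careful justification near an irregular boundary. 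Finally, your sketch of non-atomicity via ``conditional local variance forces atoms to spread'' is a heuristic, not a proof; the paper instead invokes the established non-atomicity of~$Z^{S_1}$ from Theorem~1.3 of~\cite{Biskup-Louidor} and transports it to general~$D$ via the Gibbs-Markov identity and shift/dilation invariance.
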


We will sometimes refer to property~(2) as stochastic absolute continuity.
Since~$Z^D$ puts zero mass on~$\partial D$ almost surely, we will regard it from now on as a measure on~$D$ only. As a consequence of the existence of the limit we get:

\begin{corollary}[Shift and dilation invariance]
\label{cor-shift-scale}
For~$a\in\C$ and $\lambda>0$, let us denote $a+\lambda D:=\{a+\lambda z\colon z\in D\}$. For all~$D\in\mathfrak D$, all~$a\in\C$ and all~$\lambda>0$,
\begin{equation}
\label{E:2.6t}
Z^{\,a+\lambda D}(a+\lambda\textd x)\laweq \lambda^4 \,Z^D(\textd x).
\end{equation}
\end{corollary}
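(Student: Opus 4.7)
The strategy is to derive the identity directly from Theorem~\ref{thm:1} by comparing two distinct sequences of discrete approximations to essentially the same limiting point process. The key observation is that the DGFF is translation invariant on~$\Z^2$ and that discretizations of~$a+\la D$ at lattice scale $1/N$ correspond --- up to a translation by~$\lfloor aN\rfloor$ and an $O(1)$-thin boundary discrepancy --- to discretizations of~$D$ at the finer lattice scale $1/(\la N)$.

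First I would set $M:=\lfloor \la N\rfloor$ and choose an admissible approximation~$D_M$ of~$D$ such that $(a+\la D)_N$ and $\lfloor aN\rfloor + D_M$ agree except in an $O(1)$-neighborhood of the boundary. Translation invariance of the DGFF on~$\Z^2$ then yields $h^{a+\la D}_N(\lfloor aN\rfloor + y) \laweq h^{D}_M(y)$ for $y\in D_M$. Writing the atoms of $\eta^{a+\la D}_{N,r_N}$ in terms of $y := x-\lfloor aN\rfloor$ and $z:=y/M$, one has
\begin{equation}
x/N = a + \la z + O(1/N),\qquad h^{a+\la D}_N(x) - m_N = \bigl(h^{D}_M(y) - m_M\bigr) + 2\sqrt g\,\log\la + o(1),
\end{equation}
since $m_M - m_N = 2\sqrt g\,\log\la + o(1)$, while the local-max condition of radius~$r_N$ around~$x$ transports to a local-max condition of the same radius around~$y$, still satisfying $r_N\to\infty$ and $r_N/M\to 0$. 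Hence, up to the aforementioned boundary corrections, $\eta^{a+\la D}_{N,r_N}$ is equidistributed with the image of $\eta^{D}_{M,r_N}$ under the affine map $T_\la(z,h):=(a+\la z,\, h + 2\sqrt g\log\la)$.

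Applying Theorem~\ref{thm:1} on both sides and using the continuous-mapping theorem for vague convergence of point processes under $T_\la$ gives
\begin{equation}
\text{PPP}\bigl(Z^{a+\la D}(\dd x)\otimes \texte^{-\al h}\dd h\bigr)\,\laweq\,(T_\la)_*\,\text{PPP}\bigl(Z^{D}(\dd z)\otimes \texte^{-\al h}\dd h\bigr).
\end{equation}
A direct change-of-variable computation, using the key identity $\al\cdot 2\sqrt g = 4$, produces
\begin{equation}
(T_\la)_*\bigl(Z^{D}(\dd z)\otimes \texte^{-\al h}\dd h\bigr) \,=\, \la^4\,\tilde\mu(\dd x)\otimes \texte^{-\al h}\dd h,
\end{equation}
where~$\tilde\mu$ denotes the pushforward of $Z^D$ under $z\mapsto a+\la z$. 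Since a Cox process with product intensity $\nu(\dd x)\otimes \texte^{-\al h}\dd h$ determines its spatial intensity~$\nu$ in law, we conclude $Z^{a+\la D}(\dd x)\laweq \la^4\tilde\mu(\dd x)$, which is exactly the desired identity after unwinding the pushforward notation.

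The main technical obstacle is controlling the $O(1)$ boundary discrepancy between $(a+\la D)_N$ and $\lfloor aN\rfloor + D_M$, which arises because $\la N$ and $aN$ need not be integers. One route is to verify that both sets qualify as admissible discretizations of their respective domains in the sense of~\eqref{E:1.1}--\eqref{E:1.1a} and then invoke the fact that Theorem~\ref{thm:1} delivers the same limit measure for all such choices. Alternatively, one may couple the two fields using the Gibbs--Markov property and appeal to Property~(1) of Theorem~\ref{thm:1} --- namely that $Z^D$ is non-atomic with $Z^D(\partial D)=0$ --- to conclude that modifications in a bounded neighborhood of $\partial D$ cannot affect the limiting intensity. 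Either route justifies equating the two Poisson intensities and finishes the argument.
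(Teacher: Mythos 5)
Your proposal takes a genuinely different route from the paper's. The paper first disposes of integer $a,\lambda$ by an exact lattice identity, extends to rational $a,\lambda$ by passing to the subsequence of $N$ that makes $\lambda N$ an integer, and then handles general real $a,\lambda$ by approximating $D$ from within by $D^n:=\{x:\dist(x,D^\cc)>2^{-n}\}$, choosing rational $a_n\to a$, $\lambda_n\to\lambda$, and invoking Proposition~\ref{prop-inner-apprx} (vanishing of the binding field $\Phi^{D,D^n}$) together with Proposition~\ref{prop-3.2} to pass to the limit. Your argument instead treats all $a,\lambda$ at once by setting $M=\lfloor\lambda N\rfloor$ and observing that $\lfloor aN\rfloor+D_M$ can be arranged to be an admissible discretization of $a+\lambda D$ at scale $N$ while $D_M$ remains admissible for $D$ at scale $M$, so that translation invariance of the DGFF and the affine change of variables $T_\lambda$ (using $\alpha\cdot2\sqrt g=4$ and $m_M-m_N=2\sqrt g\log\lambda+o(1)$) do all the work. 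This is cleaner in that it avoids the rational/irrational case split and the inner-domain machinery entirely.

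Two points deserve emphasis, as your proposal leaves them at the level of a sketch. First, to choose $V:=D_M$ so that \emph{simultaneously} $V$ obeys \twoeqref{E:1.1}{E:1.1a} for $D$ at scale $M$ and $\lfloor aN\rfloor+V$ obeys them for $a+\lambda D$ at scale $N$, you must shrink $V$ by an extra $O(1/M)$ margin to absorb the roughly $(\sqrt 2+\mathrm{diam}\,D)/N$ error coming from the rounding of $aN$ and from $|M-\lambda N|<1$; this is not automatic from the outer constraint \eqref{E:1.1}, which gives only a $1/N$ margin, but can be arranged by definition. Second, and more substantively, your argument silently uses that the measure $Z^D$ furnished by Theorem~\ref{thm:1} does not depend on which admissible family $\{D_N\}$ one chooses. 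This is true and implicit in the paper's construction (Proposition~\ref{prop-3.2} anchors $Z^D$ to the canonical square measures), but the theorem statement does not say it explicitly; you should cite the square-approximation construction to justify identifying the two limiting intensities. With these two points made precise, your argument is a correct, alternative proof of the corollary.
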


These properties will play an important role in the sequel.
 
To give some immediate explanation of the fourth-power of~$\lambda$ appearing in \eqref{E:2.6t}, note that \eqref{e:1.9} shows, for $S_K:=(0,K)^2$,
\begin{equation}
P\bigl(\,\max_{x\in V_{KN}}h^{V_{KN}}_x\le m_N+t\bigr)\,\,\underset{N\to\infty}\longrightarrow\,\,E\bigl(\texte^{-\alpha^{-1}\texte^{-\alpha t}\,Z^{S_K}(S_K)}\bigr)
\end{equation}
while, interpreting~$V_{KN}$ as the lattice version of~$S_1$ at scale~$KN$,
\begin{equation}
P\bigl(\,\max_{x\in V_{KN}}h^{V_{KN}}_x\le m_{KN}+t\bigr)\,\,\underset{N\to\infty}\longrightarrow\,\,E\bigl(\texte^{-\alpha^{-1}\texte^{-\alpha t}\,Z^{S_1}(S_1)}\bigr).
\end{equation}
Since $m_{KN}-m_N=2\sqrt g\log K+o(1)$, this yields $Z^{S_K}(S_K)\laweq K^4Z^{S_1}(S_1)$. The same argument works with scaled up versions of arbitrary domains.

\subsection{Relations between $Z^D$-measures}
As mentioned above, the laws of random measures $\{Z^ D \colon   D \in\mathfrak D\}$ are  interrelated. To state these relations, we need some more definitions.

For $D\in\mathfrak D$, let $\Pi^D(x,\textd y)$ denote the Poisson kernel, i.e., the harmonic measure, in~$D$ relative to~$x$.  We may take $\Pi^D$ to be defined as  the exit distribution from~$D$ of the Brownian motion started at~$x$.  (Necessarily, $\Pi^D(x,\cdot)$ is then concentrated on~$\partial D$.)  Given $D,\widetilde D\in\mathfrak D$ subject to $\widetilde D\subseteq D$, let $C^{D,\widetilde D}\colon\widetilde D\times\widetilde D\to\R$ be defined by
\begin{equation}
\label{E:1.6}
C^{D,\widetilde D}(x,y)
:=g\int_{\partial  D}\Pi^{ D}(x,\textd z)\log|z-y|-g\int_{\partial \wt{ D}}\Pi^{\wt{ D}}(x,\textd z)\log|z-y|,
\end{equation}
where~$g$ is as in, e.g., \eqref{E:1.3} above. We note:

\begin{lemma}
\label{lemma-2.3a}
$x,y\mapsto C^{D,\widetilde D}(x,y)$ is symmetric and positive definite on~$\wt D\times\wt D$. It is also harmonic on~$\wt D$ in each coordinate. Consequently, there is a centered Gaussian field $\Phi^{D,\wt D}$ on~$\wt D$ with covariance~$C^{D,\widetilde D}$. The sample paths of this field are harmonic, and thus smooth,~on~$\wt D$,~a.s.
\end{lemma}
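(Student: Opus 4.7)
The plan is to recognise the kernel in \eqref{E:1.6} as a difference of continuum Brownian Green functions, namely $C^{D,\widetilde D}(x,y)=G^D(x,y)-G^{\widetilde D}(x,y)$ on $\widetilde D\times\widetilde D$, from which all four claims will flow from classical properties of Green functions. To establish this identity I will use the standard representation $G^D(x,y)=-g\log|x-y|+g\int_{\partial D}\Pi^D(x,\dd z)\log|z-y|$, justified by checking that the right side is harmonic in $x\in D\setminus\{y\}$, has the correct logarithmic singularity at $x=y$, and vanishes on $\partial D$ (where $\Pi^D(x,\cdot)=\delta_x$). Substituting into \eqref{E:1.6} the $-g\log|x-y|$ terms cancel and the identity falls out on $\widetilde D\times\widetilde D$. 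Symmetry of $C^{D,\widetilde D}$ then follows from symmetry of $G^D$ and $G^{\widetilde D}$, while harmonicity in each coordinate comes from noting that both $G^D(\cdot,y)$ and $G^{\widetilde D}(\cdot,y)$ are harmonic on $\widetilde D\setminus\{y\}$ with matching $\log$ singularities at $x=y$, so the difference extends harmonically across~$y$ to all of~$\widetilde D$.

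The main work is positive-definiteness. I would apply the strong Markov property at the first exit from~$\widetilde D$ of the killed Brownian motion in~$D$ (which, since $\widetilde D\subseteq D$, occurs before the exit from $D$) to get
\[
G^D(x,y)=G^{\widetilde D}(x,y)+\int_{\partial\widetilde D}\Pi^{\widetilde D}(x,\dd z)\,G^D(z,y),\qquad x,y\in\widetilde D.
\]
Iterating in the second coordinate --- using that $y\mapsto G^D(z,y)$ is harmonic on~$\widetilde D$ whenever $z\notin\widetilde D$ --- yields the symmetric representation
\[
C^{D,\widetilde D}(x,y)=\iint_{\partial\widetilde D\times\partial\widetilde D}G^D(z,w)\,\Pi^{\widetilde D}(x,\dd z)\,\Pi^{\widetilde D}(y,\dd w).
\]
For $c_1,\dots,c_n\in\R$ and $x_1,\dots,x_n\in\widetilde D$, setting $\nu:=\sum_i c_i\Pi^{\widetilde D}(x_i,\cdot)$ (a signed Borel measure on $\partial\widetilde D$), the quadratic form becomes $\iint G^D\,\dd\nu\otimes\dd\nu$; non-negativity then follows from the semigroup decomposition $G^D=\int_0^\infty p^D_t\,\dd t$ combined with the Chapman--Kolmogorov identity $\iint p^D_t(z,w)\,\nu(\dd z)\,\nu(\dd w)=\|P^D_{t/2}\nu\|_{L^2}^2\ge 0$. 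The main technical nuisance I anticipate here is the case where $\partial\widetilde D$ touches $\partial D$, where Green-energy integrability requires a little care; I would handle this by monotonically approximating $\widetilde D$ from inside by subdomains $\widetilde D_n$ with $\overline{\widetilde D_n}\subset D$ and passing to the limit.

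With positive-definiteness in hand, the centered Gaussian process $\Phi^{D,\widetilde D}$ with covariance $C^{D,\widetilde D}$ exists by Kolmogorov extension. The kernel is $C^\infty$ on $\widetilde D\times\widetilde D$ --- both $G^D$ and $G^{\widetilde D}$ are smooth off the diagonal by local elliptic regularity and their $\log$ singularities cancel --- so a smooth modification of $\Phi^{D,\widetilde D}$ exists by the standard mean-square-derivative criterion for Gaussian processes. Finally, joint harmonicity of $C^{D,\widetilde D}$ yields
\[
E\bigl[\bigl(\Delta_x\Phi^{D,\widetilde D}(x)\bigr)^{\!2}\bigr]=\bigl(\Delta_x\Delta_y\,C^{D,\widetilde D}(x,y)\bigr)\big|_{y=x}=0,
\]
so $\Delta\Phi^{D,\widetilde D}(x)=0$ almost surely for each fixed $x\in\widetilde D$; continuity of $\Delta\Phi^{D,\widetilde D}$ together with a countable-dense-set argument then lifts this pointwise vanishing to $\Delta\Phi^{D,\widetilde D}\equiv 0$ on~$\widetilde D$ almost surely, which gives harmonicity (and hence $C^\infty$-smoothness) of the sample paths.
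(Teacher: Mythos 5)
Your proposal is correct and, for most of the statement, tracks the paper's own argument: the Green-function representation with cancelling logarithmic singularities, symmetry and coordinatewise harmonicity via the Poisson kernel, smoothness of paths from smoothness of the covariance, and pathwise harmonicity from the vanishing of $\Delta_x\Delta_y C^{D,\widetilde D}(x,y)$ on the diagonal (so that the centered Gaussian $\Delta\Phi^{D,\widetilde D}(x)$ has zero variance pointwise, followed by a continuity argument). Where you genuinely depart from the paper is the positive-semidefiniteness step. The paper settles this in one sentence by invoking quadratic-form domination: $H^1_0(\widetilde D)\subset H^1_0(D)$ forces $G^{\widetilde D}\le G^D$ as operators, whence $C^{D,\widetilde D}=G^D-G^{\widetilde D}\ge 0$. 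You instead derive the balayage identity
\begin{equation*}
C^{D,\widetilde D}(x,y)=\iint_{\partial\widetilde D\times\partial\widetilde D}G^D(z,w)\,\Pi^{\widetilde D}(x,\textd z)\,\Pi^{\widetilde D}(y,\textd w),
\end{equation*}
and exhibit $\iint G^D\,\textd\nu\otimes\textd\nu=\int_0^\infty\|P^D_{t/2}\nu\|_{L^2}^2\,\textd t\ge0$ via the heat-kernel decomposition and Chapman--Kolmogorov. Both routes are valid. The paper's is shorter but silently relies on the standard fact that form domination reverses under taking inverses; yours is more explicit and self-contained, and the Green-energy representation has some independent interest. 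One step worth tightening in your write-up: when you ``iterate in the second coordinate using that $y\mapsto G^D(z,y)$ is harmonic on $\widetilde D$,'' the harmonic-extension formula you appeal to is delicate because $G^D(z,\cdot)$ is unbounded near $z\in\partial\widetilde D\cap D$ (the Poisson-integral formula can fail for unbounded positive harmonic functions, cf.\ the Poisson kernel itself). It is cleaner and avoids this entirely to apply symmetry of $G^D$ together with the same strong-Markov decomposition at $y$: $G^D(z,y)=G^D(y,z)=G^{\widetilde D}(y,z)+\int_{\partial\widetilde D}\Pi^{\widetilde D}(y,\textd w)\,G^D(w,z)$, and then use $G^{\widetilde D}(y,z)=0$ for $z\notin\widetilde D$. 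Your plan to resolve the boundary-touching integrability issue by exhausting $\widetilde D$ from within is the right fix.
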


\begin{figure}[t]
\vglue0.2cm
\centerline{\includegraphics[width=4truein]{./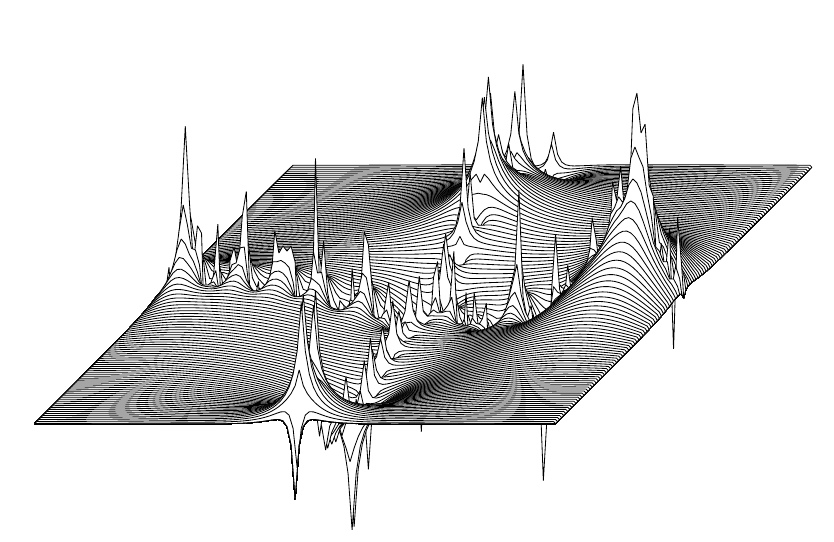}}
\begin{quote}
\small
\vglue0.2cm
{\sc Fig~2.\ }
\label{fig2a}
A sample of the field~$\Phi^{D,\wt D}$ for~$D:=(-1,1)^2$ and $\wt D$ equal to~$D$ with the coordinate axes removed. The sample paths of~$\Phi^{D,\wt D}$ are smooth on~$\wt D$ but they become very rough (in fact, undefined as functions) on~$D\smallsetminus\wt D$.
\normalsize
\end{quote}
\end{figure}

A very useful facts about the DGFF is that it obeys the \emph{Gibbs-Markov property}: Given $U\subseteq V$, conditional on $\{h_x^V\colon x\in V \setminus U\}$, the field in~$U$ is the sum of~$h^U$ and an independent random Gaussian function, which is the harmonic extension to~$U$ of the values of~$h^V$ in~$V\smallsetminus U$; see \eqref{e:2.3} for a precise formulation. This is reflected for measures $Z^D$ as follows:

\begin{theorem}[Gibbs-Markov property]
\label{thm:2}
Let $D,\widetilde D\in\mathfrak D$ with $\wt{ D} \subseteq  D$ but $\leb(D\smallsetminus\widetilde D)=0$. Then 
\begin{equation}
\label{e:1.9a}
	Z^D(\textd x) \laweq  \texte^{\alpha  \Phi^{ D, \wt{ D}}(x)}\,Z^{\wt{ D}}(\textd x),
\end{equation}
where $Z^{\wt{ D}}$ and $ \Phi^{ D, \wt{ D}}$ on the right are independent and (as before) $\alpha:=2/\sqrt g$. (The right-hand side~\eqref{e:1.9a} is meaningful since $Z^{\widetilde D}$ (as well as~$Z^D$) puts zero mass on $D\smallsetminus\widetilde D$ a.s.)
\end{theorem}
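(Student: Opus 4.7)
The plan is to lift the discrete Gibbs--Markov decomposition of the DGFF to the level of the extremal process and then identify the two sides of~\eqref{e:1.9a} via Theorem~\ref{thm:1}. Recall that for $\widetilde D_N\subseteq D_N$ the DGFF in $D_N$ admits the orthogonal decomposition $h^{D_N}=h^{\widetilde D_N}+\varphi_N$ on $\widetilde D_N$, where $\varphi_N$ is the conditional mean of $h^{D_N}$ given its values outside $\widetilde D_N$ (equivalently, the discrete-harmonic extension from $D_N\setminus\widetilde D_N$ to $\widetilde D_N$), $h^{\widetilde D_N}$ is an independent DGFF in $\widetilde D_N$, and consequently $h^{D_N}(x)-m_N=[h^{\widetilde D_N}(x)-m_N]+\varphi_N(x)$ for $x\in\widetilde D_N$.

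The first step is the joint convergence of $\varphi_N$ to $\Phi^{D,\widetilde D}$. Standard potential theory identifies $\mathrm{Cov}(\varphi_N(x),\varphi_N(y))=G_{D_N}(x,y)-G_{\widetilde D_N}(x,y)$ for $x,y\in\widetilde D_N$, and asymptotics for the discrete Green function on domains in $\mathfrak D$ show that this quantity converges, uniformly for $x/N,y/N$ in compact subsets of $\widetilde D\times\widetilde D$, to $C^{D,\widetilde D}(x/N,y/N)$ from~\eqref{E:1.6}. Combined with discrete harmonicity of $\varphi_N$ (which supplies equicontinuity and tightness of the continuum interpolant), this yields $\varphi_N(\lfloor N\cdot\rfloor)\Rightarrow\Phi^{D,\widetilde D}$ uniformly on compacts of $\widetilde D$, where $\Phi^{D,\widetilde D}$ is the field provided by Lemma~\ref{lemma-2.3a}. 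Independence of $\varphi_N$ and $h^{\widetilde D_N}$ at the discrete level is preserved in the limit and gives the required independence of $\Phi^{D,\widetilde D}$ and $Z^{\widetilde D}$.

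Next, fix a compact $K\subset\widetilde D$. For $x\in D_N$ with $x/N\in K$, once $N$ is large, $\Lambda_{r_N}(x)\subset\widetilde D_N$, and harmonicity of $\varphi_N$ together with $r_N/N\to0$ implies $\max_{z\in\Lambda_{r_N}(x)}|\varphi_N(z)-\varphi_N(x)|\to 0$ uniformly in such $x$. Consequently, the local-maximum condition for $h^{D_N}$ on $\Lambda_{r_N}(x)$ coincides, up to vanishing error, with that for $h^{\widetilde D_N}$, while the centered value transforms by the additive shift $\varphi_N(x)$. Applying Theorem~\ref{thm:1} to $\widetilde D$ jointly with the convergence $\varphi_N\Rightarrow\Phi^{D,\widetilde D}$ and the mapping theorem for Poisson processes then yields
\begin{equation}
\eta^D_{N,r_N}\big|_{K\times\R}\,\Lawarrow\,\mathrm{PPP}\bigl(\texte^{\alpha\Phi^{D,\widetilde D}(x)}Z^{\widetilde D}(\dd x)\otimes \texte^{-\alpha h}\dd h\bigr)\big|_{K\times\R}.
\end{equation}
Comparing with the direct limit $\mathrm{PPP}(Z^D(\dd x)\otimes \texte^{-\alpha h}\dd h)\big|_{K\times\R}$ provided by Theorem~\ref{thm:1} applied to $D$, and noting that the random Cox intensity is determined by the law of the process (e.g., via Laplace functionals of test functions of the form $f(x)\texte^{-\alpha h}\1_{[a,b]}(h)$), one obtains the joint distributional identity $Z^D|_K\laweq \texte^{\alpha\Phi^{D,\widetilde D}}Z^{\widetilde D}|_K$. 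Letting $K\uparrow\widetilde D$, and using $Z^D(\partial\widetilde D)=Z^{\widetilde D}(\partial\widetilde D)=0$ a.s.\ (a consequence of the stochastic absolute continuity in Theorem~\ref{thm:1} together with $\leb(\partial\widetilde D)=0$), yields~\eqref{e:1.9a}.

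The main technical obstacle will be controlling the lattice sites that the decomposition leaves out: those in $D_N\setminus\widetilde D_N$, as well as those in a vanishing neighborhood of $\partial\widetilde D$, where $\varphi_N$ ceases to be well-approximated by the smooth limit $\Phi^{D,\widetilde D}$. Since $\leb(D\setminus\widetilde D)=0$, the number of such lattice sites is $o(N^2)$; a first-moment bound on the number of extreme local maxima supported in such ``thin'' subsets, of the type already underlying the proof of Theorem~\ref{thm:1}, should suffice to show that their contribution to the limiting extremal process vanishes. Combined with the fact that $Z^D$ and $Z^{\widetilde D}$ charge neither $\partial\widetilde D$ nor $D\setminus\widetilde D$ almost surely, this closes the gap between the $K$-restrictions and the full statement.
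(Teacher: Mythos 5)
Your proposal is essentially the approach the paper takes: the paper establishes a general comparison between $\eta^D_{N,r}$ and $\eta^{D^n}_{N,r}$ via the discrete Gibbs--Markov decomposition, the convergence $\varphi_N^{D,D^n}\Rightarrow\Phi^{D,D^n}$, a careful injective matching of the relevant local maxima (its Lemma~\ref{lemma-3.6}), and first-moment control of the boundary layer (via Proposition~\ref{lem:6} and a mollifier), and then obtains Theorem~\ref{thm:2} by specializing Proposition~\ref{prop-3.2} to the constant sequence $D^n:=\widetilde D$. The one place where your sketch is light is the claim that the local-maximum conditions for $h_N^D$ and $h_N^{\widetilde D}$ ``coincide up to vanishing error'': the oscillation of $\varphi_N$ over $\Lambda_{r_N}(x)$ going to zero does not by itself prevent the $r_N$-local maximizer from moving; one needs the separation estimate (Proposition~\ref{lem:5}) to know that no competing extreme points lie at lattice distances between $O(1)$ and $O(N)$, which is precisely what makes the paper's injective map $q$ in Lemma~\ref{lemma-3.6} well defined and controls how far and by how much it moves points.
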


The next property explicates on how conformal invariance of the CGFF reflects itself in the behavior of (the law of)~$Z^D$ under conformal transformations of~$D$:

\begin{theorem}[Conformal invariance]
\label{thm:3}
Let~$D\in\mathfrak D$ and let $f$ be  an analytic bijection of~$D$ onto $f(D)\in\mathfrak D$. The laws of~$Z^D$ and~$Z^{f(D)}$ are then related by the transformation rule
\begin{equation}
\label{E:1.10a}
(Z^{f(D)}\circ f)\,(\textd  x)\laweq |f'(x)|^4\,Z^D(\textd x).
\end{equation}
In particular, for $D$ simply connected and with $\rad_D(x)$ denoting the conformal radius of~$D$ at~$x$, the law of the random measure
\begin{equation}
\label{E:1.17c}
\rad_D(x)^{-4}\,Z^D(\textd x),
\end{equation}
is invariant under conformal maps of~$D$. 
\end{theorem}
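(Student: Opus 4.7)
The second statement follows at once from the first: for a conformal bijection~$f$ between simply connected domains one has $\rad_{f(D)}(f(x))=|f'(x)|\,\rad_D(x)$, and multiplying both sides of \eqref{E:1.10a} by $\rad_{f(D)}(f(x))^{-4}=|f'(x)|^{-4}\rad_D(x)^{-4}$ yields the asserted conformal invariance of $\rad_D(x)^{-4}Z^D(\textd x)$. We focus on \eqref{E:1.10a}.

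The strategy is to use the Gibbs-Markov property (Theorem~\ref{thm:2}) to reduce~$f$ to its local affine approximations, on which conformality is an elementary combination of translations, dilations and rotations. Fix $\ep>0$ and partition~$D$, up to a Lebesgue-null set, into a disjoint union $\wt D_\ep=\bigsqcup_i B_i$ of small open disks $B_i=B(x_i,r_i)\subset D$ of diameter at most~$\ep$, each at positive distance from~$\partial D$. Applying Theorem~\ref{thm:2} to $(D,\wt D_\ep)$ and to $(f(D),f(\wt D_\ep))$, together with the independence \eqref{e:1.5} across components,
$$ Z^D(\textd x) \,\laweq\, \sum_i Z^{B_i}(\textd x)\,\texte^{\alpha\Phi^{D,\wt D_\ep}(x)}, \qquad Z^{f(D)}(\textd y) \,\laweq\, \sum_i Z^{f(B_i)}(\textd y)\,\texte^{\alpha\Phi^{f(D),f(\wt D_\ep)}(y)}. $$
The Gaussian field in \eqref{E:1.6} transforms correctly under~$f$: rewriting $C^{D,\wt D}(x,y)=G_D(x,y)-G_{\wt D}(x,y)$ in terms of the continuum Green's function and invoking its classical conformal invariance $G_{f(D)}(f(x),f(y))=G_D(x,y)$ (equivalently, the conformal transformation law of harmonic measure) gives $\Phi^{f(D),f(\wt D_\ep)}\circ f \laweq \Phi^{D,\wt D_\ep}$. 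Pulling the second display back under~$f$ and coupling these Gaussian fields on both sides reduces \eqref{E:1.10a} to the vague-convergence-in-law statement
$$ \sum_i \bigl(Z^{f(B_i)}\circ f\bigr)(\textd x) \,\approx\, \sum_i |f'(x_i)|^4\,Z^{B_i}(\textd x) \qquad \text{as}\ \ep\to 0. $$

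On each $B_i$, Taylor's theorem gives $f(z)=f(x_i)+f'(x_i)(z-x_i)+O(\ep^2)$; writing $f'(x_i)=\lambda_i\texte^{\texti\theta_i}$, the leading similitude is translation followed by positive dilation by~$\lambda_i$ and rotation by~$\theta_i$. Since $B_i$ is rotationally symmetric about~$x_i$, $f(B_i)$ is to leading order the disk $B(f(x_i),\lambda_i r_i)$, so shift and real-dilation invariance (Corollary~\ref{cor-shift-scale}) reduce the claim to the rotational invariance in law of $Z^{B(0,1)}$ about the origin.

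This rotational invariance is the main obstacle: the lattice~$\Z^2$ is only $\pi/2$-symmetric, so invariance under arbitrary angles is not immediate from the setup. We plan to establish it as a separate lemma by running Theorem~\ref{thm:1} along discretizations of $B(0,1)$ built from rotated copies $\texte^{\texti\theta}\Z^2/N$ of the lattice and arguing---using that the rotated-lattice DGFF is isometrically equivalent to the standard one in the rotated domain (here $\texte^{-\texti\theta}B(0,1)=B(0,1)$), together with the Gibbs-Markov decomposition, the non-atomicity from Theorem~\ref{thm:1}, and an auxiliary universality-in-lattice-orientation input---that the limit does not depend on~$\theta$. The residual $O(\ep^2)$ Taylor error and the small discrepancy between $f(B_i)$ and its leading affine image are controlled by uniform continuity of~$f'$ on compact subsets of~$D$ and by the stochastic absolute continuity of~$Z^D$ with respect to Lebesgue from Theorem~\ref{thm:1}; letting $\ep\to 0$ then yields~\eqref{E:1.10a}.
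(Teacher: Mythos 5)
Your overall architecture is the right one, and it matches the paper's: apply the Gibbs-Markov decomposition over a fine subdivision of~$D$, use the conformal invariance of the continuum Green function to match the binding fields, linearize~$f$ locally so that the problem becomes invariance under shifts, dilations and rotations, and then take a fine-mesh limit. You have also correctly isolated the real obstruction, namely rotational invariance of the law of~$Z$. Unfortunately, it is exactly there that the proposal has a genuine gap.

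Your plan for rotation invariance --- redo Theorem~\ref{thm:1} on lattices $\texte^{\texti\theta}\Z^2/N$ and invoke ``an auxiliary universality-in-lattice-orientation input'' --- is not an argument; it restates the problem. The isometry $\texte^{\texti\theta}\Z^2 \cap N B(0,1) \cong \Z^2\cap N B(0,1)$ only tells you that the limit process on the rotated lattice is the rotation of the limit on the standard lattice. To conclude that the two limits coincide you would have to prove that the law of the limiting $Z$-measure does not depend on the lattice orientation, which is precisely the rotation invariance you are trying to establish; moreover, the input results from Bramson--Ding--Zeitouni and from~\cite{Biskup-Louidor} that underlie Theorem~\ref{thm:1} have only been proved on $\Z^2$. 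The paper does not go this route. Instead it first establishes the explicit form $\psi^D(x)=C_\star\exp\{2\int_{\partial D}\Pi^D(x,\textd z)\log|z-x|\}$ of the tail density (Theorem~\ref{thm:1.4} and Corollary~\ref{cor-Z-measure}), which is manifestly invariant under simultaneous rotation of~$D$ and~$x$, and then proves the derivative-martingale representation (Theorem~\ref{thm:5.1}) of~$Z^D$ in terms of $\psi^D$ and the continuum binding field $\Phi^{D,\wt D}$ over a triangular partition. Since both ingredients of that representation are rotation covariant (rotating the triangular grid along with~$D$), rotation invariance of $Z^D$ falls out (Lemma~\ref{lemma-6.1}). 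Without the tail asymptotics and the representation theorem, there is no route to the rotation step in your proposal.

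A secondary concern: the linearization step is more delicate than ``uniform continuity of~$f'$ plus stochastic absolute continuity.'' You must control the error caused by $f(B_i)$ not being exactly a rotated-dilated copy of~$B_i$, and the error in the conditional binding field $\Phi^{f_{i}(B_i),f(B_i)}$, both of which act \emph{multiplicatively} (through exponential weights) on a measure of random, possibly large, total mass. Stochastic absolute continuity disposes of Lebesgue-null discrepancies but says nothing about these multiplicative errors. The paper handles this by proving a one-sided Laplace-transform inequality (Proposition~\ref{prop-6.2}) and iterating it twice, using Kahane's convexity inequality and Jensen's inequality to absorb the error fields; this machinery in turn relies on the triangular partition (so that the orthogonal piece $\Phi^\perp$ of the binding field has uniformly bounded variance via the DGFF-on-triangular-lattice identification). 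Disks would not give you this control over $\Phi^\perp$.

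In short: the skeleton of your proof is the one in the paper, and the reduction of the second statement from the first via $\rad_{f(D)}(f(x))=|f'(x)|\rad_D(x)$ is correct. But the key rotation-invariance lemma is left as a circular hand-wave, and the error control in the linearization step is underestimated. Both are filled in the paper via the explicit tail density~$\psi^D$ and the triangular derivative-martingale representation, neither of which appears in the proposal.
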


Obviously, \eqref{E:1.10a} vastly generalizes the statement in Corollary~\ref{cor-shift-scale}.
Moreover, in combination with Theorem~\ref{thm:1}, one can regard \twoeqref{E:1.10a}{E:1.17c} as a form of universality of the law of extreme local maxima with respect to changes in the underlying domain.

For the benefit of the reader we recall that, for simply connected~$D$, the conformal radius $\rad_D(x)$ is the (unique) value of $|f'(x)|^{-1}$ for any conformal map $f\colon D\to\D$, where $\D$ denotes the unit disc $\D:=\{z\in\C\colon |z|<1\}$, such that $f(x)=0$. Interestingly, this notion is closely related to objects introduced earlier in this section (specifically, the kernel $C^{D,\widetilde D}$) via 
\begin{equation}
\label{E:rad}
\rad_D(x)=\exp\Bigl\{\int_{\partial D}\Pi^D(x,\textd z)\log|z-x|\Bigr\}.
\end{equation}
This formula thus naturally generalizes the conformal radius to non-simply connected domains.
 
As is immediate from Theorem~\ref{thm:3} and the Riemann Mapping Theorem, the conformal transformation rule \eqref{E:1.10a} determines the law of~$Z^D$ from that of~$Z^\D$ for any simply connected domain $D\in\mathfrak D$. The Gibbs-Markov property \eqref{e:1.9a},  the factorization over connected components \eqref{e:1.5}  and the stochastic absolute continuity with respect to the Lebesgue measure permit us to represent non-simply connected domains as restrictions, up to a set of vanishing Lebesgue measure, of simply connected ones.  (This is because we assumed that~$\partial D$ has only a finite number of internal components which can then be joined by smooth curves to the outer boundary of~$D$.)  So, in fact, the law of~$Z^{\D}$ determines the law of~$Z^D$ for all~$D\in\mathfrak D$.

\subsection{Tail behavior}
Key for our later developments will be the control of the joint law of the scaled position and centered value of the absolute maximum, conditioned on the maximum being very large:

\begin{theorem}
\label{thm:1.4}
Fix $D\in\mathfrak D$, recall that $h^D_N$ denotes the DGFF in~$D_N$ defined in \eqref{E:1.1} and~$m_N$ denotes the centering sequence from \eqref{E:1.3}. Set, as before, $\alpha:=2/\sqrt g$. Then for all open $A \subseteq  D$,
\begin{equation}
\label{E:1.13a}
	\lim_{t \to \infty} \,\lim_{N \to \infty} \,
		\frac1t \texte^{\alpha t}\,
			P \Bigl( N^{-1} \operatornamewithlimits{argmax}_{D_N}\, h^{ D}_N \in A \, ,\,\,
				\max_{x\in D_N} h_N^{ D} > m_N + t \Bigr) 
			= \int_A \psi^{\,D}(x) \,\textd x
\end{equation}
with
\begin{equation}
\label{E:1.14a}
\psi^{\,D}(x) :=  c_\star\,1_D(x)\,\exp \Bigl\{ 2 \int_{\partial D} \Pi^ D(x, \textd z)  \log |z-x|\,\Bigr\},
\end{equation}
where $c_\star\in(0,\infty)$ is a constant independent of~$D$. 
\end{theorem}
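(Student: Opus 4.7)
The plan is to reduce \eqref{E:1.13a} to a limiting computation for the Cox process of Theorem~\ref{thm:1} and to identify the density $\psi^{\,D}$ through the symmetries supplied by Theorems~\ref{thm:2} and~\ref{thm:3}.

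The event $\{N^{-1}\argmax_{D_N}h^D_N\in A,\,\max_{D_N}h_N^D>m_N+t\}$ is precisely the event that the highest atom of $\eta^D_{N,r_N}$ lies in $A\times(t,\infty)$. For open $A\subseteq D$ with $\leb(\partial A)=0$, this is a continuity point of the limiting point measure (by non-atomicity and $Z^D(\partial D)=0$ from Theorem~\ref{thm:1}), so Theorem~\ref{thm:1} identifies the $N\to\infty$ limit of the probability in~\eqref{E:1.13a} with the analogous event for $\mathrm{PPP}(Z^D(\textd x)\otimes\texte^{-\alpha h}\textd h)$. Conditional on~$Z^D$ the overall maximum height has survival function $1-\exp(-Z^D(D)\texte^{-\alpha h}/\alpha)$ and, given this height, the argmax location is drawn from $Z^D(\cdot)/Z^D(D)$; explicit integration yields
\begin{equation}
\label{E:P:P-infty}
P_\infty(t,A)\,:=\,E\Bigl[\frac{Z^D(A)}{Z^D(D)}\bigl(1-\texte^{-Z^D(D)\texte^{-\alpha t}/\alpha}\bigr)\Bigr].
\end{equation}
Writing $1-\texte^{-Z^D(D)s}=\int_0^\infty s\texte^{-sv}\Bbbone_{Z^D(D)\ge v}\,\textd v$ with $s:=\texte^{-\alpha t}/\alpha$, the right-hand side of~\eqref{E:P:P-infty} equals the Laplace integral $\int_0^\infty s\texte^{-sv}g_A(v)\,\textd v$ of $g_A(v):=E[Z^D(A)\Bbbone_{Z^D(D)\ge v}/Z^D(D)]$, and a Karamata--Tauberian computation shows that $t^{-1}\texte^{\alpha t}P_\infty(t,A)\to\mu^D(A)$ provided $g_A(v)\sim\mu^D(A)/v$ as $v\to\infty$. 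The unconditional tail $P(Z^D(D)>v)\sim c(D)/v$ follows from~\eqref{E:1.5a} together with the known $ct\,\texte^{-\alpha t}$ right-tail of the centered DGFF maximum; a localisation argument for the heavy mass of $Z^D$ (using the structural properties supplied by Theorems~\ref{thm:1}--\ref{thm:3}) refines it to the required asymptotic for $g_A$, and stochastic absolute continuity then forces $\mu^D$ to be absolutely continuous with respect to Lebesgue with a density that we denote~$\psi^{\,D}$.

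To identify $\psi^{\,D}$ with~\eqref{E:1.14a}, apply Theorem~\ref{thm:3} to~\eqref{E:P:P-infty}: this yields the transformation rule $\psi^{\,f(D)}(f(x))=|f'(x)|^2\psi^{\,D}(x)$ under conformal bijections~$f$. On $D=\D$, rotational invariance (a special case of Corollary~\ref{cor-shift-scale}) combined with the transformation rule forces $\psi^{\,\D}$ to be constant, say~$C_\star$; pulling back along any Riemann map gives $\psi^{\,D}(x)=C_\star\rad_D(x)^2$ on any simply connected $D$, which equals~\eqref{E:1.14a} via the identity for $\rad_D$ recalled after~\eqref{E:1.10a}. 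The additivity $\psi^{\,D}=\psi^{\,D^i}$ across disjoint components follows from the independence clause of Theorem~\ref{thm:1}. For a connected but non-simply-connected~$D$, apply Theorem~\ref{thm:2} with a simply connected slit-domain $\tilde D\subseteq D$ of full Lebesgue measure: the transfer from $\psi^{\,\tilde D}$ to $\psi^{\,D}$ uses the Gaussian identity $E[\texte^{\alpha\Phi^{D,\tilde D}(x)}]=\texte^{\alpha^2 C^{D,\tilde D}(x,x)/2}$ together with~\eqref{E:1.6}, which exactly turns the $\partial\tilde D$-integral in $\psi^{\,\tilde D}$ into the $\partial D$-integral in~\eqref{E:1.14a}.

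The main obstacle is the Karamata step: after the $t^{-1}\texte^{\alpha t}$ rescaling, the integrand in~\eqref{E:P:P-infty} has no useful pointwise limit (the naive one involves $E[Z^D(A)]/t$, which vanishes while $E[Z^D(A)]=\infty$), so one cannot interchange limit and expectation directly. The entire contribution comes from the regime $Z^D(D)\asymp\texte^{\alpha t}$, and justifying this passage requires the precise tail estimates for $Z^D$ described above, which lie outside the content of Theorems~\ref{thm:1}--\ref{thm:3} and would need to be imported from the DGFF-maximum literature or derived in parallel.
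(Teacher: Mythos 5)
The proposal has a fundamental circularity that makes it unworkable as a proof of Theorem~\ref{thm:1.4} in the paper's logical structure, and in fact the gap you flag at the end (``the Karamata step'') is essentially the \emph{content} of Theorem~\ref{thm:1.4}, not a side technicality.

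\textbf{Circularity with Theorem~\ref{thm:3}.} You invoke Theorem~\ref{thm:3} (full conformal invariance) to derive the transformation rule $\psi^{f(D)}(f(x))=|f'(x)|^2\psi^D(x)$ and from it the formula~\eqref{E:1.14a}. But in the paper's ordering, Theorem~\ref{thm:3} is proved \emph{last} (Section~\ref{sec6}) and its proof uses Theorem~\ref{thm:1.4} in an essential way: the explicit form~\eqref{E:1.14a} of $\psi^D$ is what establishes rotation invariance of the $Z^D$-laws (Lemma~\ref{lemma-6.1}), which feeds into Theorem~\ref{thm:6.1}. You compound this by also claiming rotation invariance as ``a special case of Corollary~\ref{cor-shift-scale}''; it is not --- Corollary~\ref{cor-shift-scale} restricts $\lambda>0$ (real), so only shifts and real dilations are covered, and rotation invariance in the paper is a nontrivial new fact obtained through the $\psi^D$-formula. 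Relatedly, even granting conformal invariance of $Z^D$, the deduction of the $|f'(x)|^2$-transformation of $\psi^D$ from the $|f'(x)|^4$-transformation of $Z^D$ via~\eqref{E:P:P-infty} is not automatic: the ratio $Z^D(A)/Z^D(D)$ does not transform multiplicatively under pushforward when $|f'|$ is nonconstant, and making this precise requires a localisation argument that amounts to the very density identification one is after.

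\textbf{The tail estimate is the whole theorem, not a lemma to import.} You correctly observe that the $N\to\infty$ limit of the probability in~\eqref{E:1.13a} equals~\eqref{E:P:P-infty}, which matches the paper's proof of Corollary~\ref{cor-Z-measure}. But the paper derives the $Z^D$-tail asymptotics \emph{from} Theorem~\ref{thm:1.4} (via the BDZ square case plus Proposition~\ref{prop-4.2}), not the other way around. Outside of square boxes, the tail $P(Z^D(D)>v)\sim c(D)/v$ and, more delicately, $g_A(v)\sim\mu^D(A)/v$ are precisely what one is asked to prove; they do not ``follow from~\eqref{E:1.5a} and the known right-tail of the centered maximum'', since that known tail is only available for squares (Proposition~\ref{thm-asymp}). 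The Tauberian machinery is therefore reshuffling the problem rather than solving it.

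\textbf{What the paper actually does.} The paper takes a lattice route that avoids both issues: Proposition~\ref{prop-4.2} reduces the large-deviation argmax event for general $D$ to that for a square $S\subset D$ via the Gibbs-Markov decomposition $h^D_N=h^S_N+\varphi^{D,S}_N$, using Lemmas~\ref{lemma-4.3} and~\ref{lem:5.1} to control the cross-terms. The explicit formula~\eqref{E:1.14a} then drops out of the identity $\psi^D(x)=\psi^S(x)\,\texte^{\frac12\alpha^2\mathrm{Var}(\Phi^{D,S}(x))}$ combined with~\eqref{E:1.6} and the elegant observation that $\psi^D(x)\texte^{-\frac12\alpha^2 g F^D(x)}$ is independent of $x$ by joint translation-invariance of $x$ and $S$ in $\psi^S$. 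No conformal invariance, no rotation invariance, and no Tauberian argument are needed. If you want to keep the spirit of your continuum approach, the missing ingredient is a direct derivation of the $Z^D$-tails for general domains \emph{without} invoking Theorem~\ref{thm:3}; the only route the paper offers for that is precisely the reduction to the square-box BDZ estimate.
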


Theorem~\ref{thm:1.4} generalizes Proposition~2.2 in Bramson, Ding and Zeitouni~\cite{BDingZ} where the limit \eqref{E:1.13a} with \emph{some} (continuous) function~$\psi^D$ was shown to hold for $D:=(0,1)^2$. Our main contribution here is that we identify the limiting density~$\psi^{\,D}$ explicitly with the square of the conformal radius.  We will say more about the constant~$c_\star$ in Section~\ref{sec2.5}. 
The asymptotic \eqref{E:1.13a} translates for the $Z^D$-measure as follows:

\begin{corollary}
\label{cor-Z-measure}
Given $D\in\mathfrak D$, define the random probability measure $\widehat Z^D(A):={Z^D(A)}/{Z^D(D)}$. Then for any open set $A\subseteq D$,
\begin{equation}
\label{E:1.22c}
\lim_{\lambda\downarrow0}\frac{E\bigl(\,\widehat Z^D(A)\,(1-\texte^{-\lambda Z^D(D)})\bigr)}{\lambda\log(\ffrac1\lambda)}= \int_A \psi^{\,D}(x)\,\textd x
\end{equation}
and, consequently,
\begin{equation}
\label{E:1.23c}
\lim_{\lambda\downarrow0}\frac{E\bigl(Z^D(A)\,\texte^{-\lambda Z^D(D)}\bigr)}{\log(\ffrac1\lambda)}=\int_A \psi^{\,D}(x)\,\textd x.
\end{equation}
\end{corollary}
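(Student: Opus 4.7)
The plan is to derive both assertions from Theorem \ref{thm:1.4} by re-expressing the probability there through the Cox-process limit \eqref{e:1.9}. Conditionally on $Z^D$, the Poisson point process $\text{\rm PPP}(Z^D(\textd x)\otimes\texte^{-\alpha h}\textd h)$ has a well-defined highest point $(X^\star,M)$, and standard properties of marked Poisson processes give, conditionally on $Z^D$, that $M$ satisfies $P(M\le t\mid Z^D)=\exp(-\alpha^{-1}\texte^{-\alpha t}Z^D(D))$ while $X^\star$ is distributed according to $\widehat Z^D$ independently of~$M$. Hence for any open $A\subseteq D$ with $\leb(\partial A)=0$,
\begin{equation}
P(X^\star\in A,\ M>t)=E\bigl[\widehat Z^D(A)\bigl(1-\texte^{-\alpha^{-1}\texte^{-\alpha t}Z^D(D)}\bigr)\bigr].
\end{equation}
The non-atomicity and stochastic absolute continuity of~$Z^D$ supplied by Theorem~\ref{thm:1} ensure that this argmax/max functional is almost surely continuous at the PPP limit, so \eqref{e:1.9} implies that the finite-$N$ probability in \eqref{E:1.13a} converges to the right-hand side above as $N\to\infty$ at each fixed~$t$.

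Combining this with Theorem~\ref{thm:1.4} and substituting $\lambda:=\alpha^{-1}\texte^{-\alpha t}$ yields \eqref{E:1.22c}: indeed $\texte^{\alpha t}/t=1/(\lambda\log(1/(\alpha\lambda)))$, which is asymptotic to $1/(\lambda\log(1/\lambda))$ as $\lambda\downarrow0$, while $\alpha^{-1}\texte^{-\alpha t}Z^D(D)=\lambda Z^D(D)$.

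For \eqref{E:1.23c}, set $F(\lambda):=E[Z^D(A)\texte^{-\lambda Z^D(D)}]$ and $G(\lambda):=E[\widehat Z^D(A)(1-\texte^{-\lambda Z^D(D)})]$. Dominated convergence (via the pointwise bound $x\texte^{-\mu x}\le 1/(\texte\mu)$) shows that $G$ is $C^1$ on $(0,\infty)$ with $G'(\lambda)=F(\lambda)$ and $G(0)=0$, whence $G(\lambda)=\int_0^\lambda F(\mu)\,\textd\mu$. From \eqref{E:1.22c} we have $G(\lambda)\sim c\lambda\log(1/\lambda)$ with $c:=\int_A\psi^{\,D}(x)\,\textd x$. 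Since $\lambda\mapsto F(\lambda)$ is non-increasing, a standard Karamata-type Tauberian argument applies: monotonicity gives the upper bound $F(\lambda)\le G(\lambda)/\lambda$, yielding $\limsup_{\lambda\downarrow0}F(\lambda)/\log(1/\lambda)\le c$; the matching lower bound comes from $F(\epsilon\lambda)\ge(G(\lambda)-G(\epsilon\lambda))/((1-\epsilon)\lambda)$, taking $\lambda\downarrow0$ first and then $\epsilon\downarrow0$.

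The chief technical obstacle is the first step: transferring the convergence \eqref{e:1.9}, which a priori concerns test functions of compact support in the $h$-direction, to the unbounded argmax/max functional. This is routine once one checks that the contribution of points with height exceeding a threshold~$T$ is uniformly negligible in~$N$ as $T\to\infty$ (e.g.\ via the first-moment bound on extreme local maxima implicit in the proof of Theorem~\ref{thm:1.4}), allowing a truncation argument that reduces to a bounded functional to which \eqref{e:1.9} applies directly.
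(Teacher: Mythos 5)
Your proof is correct and takes essentially the same route as the paper. For \eqref{E:1.22c} you use the identity $P(X^\star\in A, M>t\mid Z^D)=\widehat Z^D(A)(1-\texte^{-\lambda Z^D(D)})$ with $\lambda=\alpha^{-1}\texte^{-\alpha t}$, combine it with Theorem~\ref{thm:1.4} and the Cox-process convergence \eqref{e:1.9}, and absorb the approximation issues (truncation in $h$, continuity of the argmax/max functional at the PPP limit) into a remark, just as the paper does (the paper even appears to have a sign typo, writing $h^\star\le t$ where the displayed formula forces $h^\star>t$, which you have right). For \eqref{E:1.23c}, your use of $G'(\lambda)=F(\lambda)$ together with monotonicity of $F$ is the same Tauberian argument the paper carries out via the integral identity $E\bigl(\widehat Z^D(A)(\texte^{-\theta\lambda Z}-\texte^{-\lambda Z})\bigr)=\lambda\int_\theta^1 F(s\lambda)\,\textd s$ and endpoint bounds; the two are equivalent after the change of variables $\mu=s\lambda$. (The paper's "l'Hospital" remark in the text suggests deriving \eqref{E:1.22c} from \eqref{E:1.23c}; the actual proof, like yours, goes the other way.)
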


These asymptotics serve as an important ingredient for the proof of Theorem~\ref{thm:3}. Indeed, thanks to the explicit form \eqref{E:1.14a} we readily check that $x\mapsto\psi^D(x)$ is invariant under simultaneous rotations of~$D$ and~$x$ --- while, of course, keeping the underlying lattice in the same position. With the help of the Gibbs-Markov property, this implies \emph{rotation invariance} of the law of the~$Z^D$-measure (see Lemma~\ref{lemma-6.1} for a precise statement). In conjunction with the scaling law in Corollary~\ref{cor-shift-scale}, this is then upgraded to the conformal-transformation rule \eqref{E:1.10a}. See Theorem~\ref{thm:6.1} for an axiomatic formulation.

We note that \eqref{E:1.23c} implies \eqref{E:1.22c} via l'Hospital's Rule  and so the two statements are actually equivalent.  With some extra work, these expressions also yield an asymptotic form for the Laplace transform of the integral of any continuous function~$f\colon\overline D\to\R$ against~$Z^D(\textd x)$; see Proposition~\ref{prop-3}.

\subsection{Connection to Liouville Quantum Gravity}
As a by-product of the considerations underlying Theorem~\ref{thm:3}, we obtain a list of conditions that identify the law of $Z^D$-measures uniquely up to an overall multiplicative constant. We give these in an axiomatic form:

\begin{theorem}
\label{thm:1.6}
Suppose~$\{M^D\colon D\in\mathfrak D\}$ is a family of random Borel measures that obey:
\settowidth{\leftmargini}{(11)}
\begin{enumerate}
\item[(0)] (support and total mass restriction) $M^D$ is concentrated on~$D$ and $M^D(D)<\infty$ a.s.
\item[(1)] (stochastic absolute continuity) $P(M^D(A)>0)=0$ for any Borel $A\subseteq D$ with $\leb(A)=0$.
\item[(2)] (shift and dilation invariance) For any $a\in\C$ and any $\lambda>0$, 
\begin{equation}
\label{E:2.17ie}
M^{a+\lambda D}(a+\lambda\textd x)\laweq \lambda^4 M^D(\textd x).
\end{equation}
\item[(3)] (Gibbs-Markov property) If $D,\widetilde D\in\mathfrak D$ are disjoint then
\begin{equation}
\label{E:2.18ua}
M^{D\cup\wt D}(\textd x)\laweq M^D(\textd x)+M^{\wt D}(\textd x) \,,
\end{equation}
with $M^D$ and $M^{\wt D}$ on the right regarded as independent.
If $D,\wt D\in\mathfrak D$ instead
obey $\widetilde D\subseteq D$ and $\leb(D\smallsetminus\widetilde D)=0$, then (for $\alpha:=2/\sqrt g$)
\begin{equation}
\label{E:2.19ua}
M^D(\textd x) \laweq \texte^{\alpha  \Phi^{ D, \wt{ D}}(x)} \,M^{\wt{ D}}(\textd x)\,,
\end{equation}
where $\Phi^{ D, \wt{ D}}$ is a centered Gaussian field with covariance $C^{D,\wt D}$, independent of $M^{\wt{ D}}$.
\item[(4)] (Laplace transform tail) There is~$c\in(0,\infty)$ such that for any open set~$A$ with $\overline A\subset D$,
\begin{equation}
\label{E:1.25}
\lim_{\lambda\downarrow0}\frac{E(M^D(A)\texte^{-\lambda M^D(D)})}{\log(\ffrac1\lambda)}= c\int_A \rad_D(x)^2\,\textd x,
\end{equation}
 where $\rad_D(x)$ is as in \eqref{E:rad}.
\end{enumerate}
Then $M^D\laweq c c_\star^{-1} Z^D$ for all $D\in\mathfrak D$, where~$c_\star$ is the constant from Theorem~\ref{thm:1.4}.
\end{theorem}

We note that the behavior under dilation \eqref{E:2.17ie} need not be assumed provided we can show that the limit \eqref{E:1.25}, with $M^D(\cdot)$ replaced by $K^4 M^{D_K}(K^{-1}\cdot)$ for~$D_K:=K^{-1}D$, takes place uniformly in~$K\ge1$. See Remark~\ref{remark-for-Stefan} for precise formulation.  We note that an analogous characterization has recently been shown for the CGFF itself as well; see Berestycki, Powell and Ray~\cite{BPR}. 

\smallskip
Theorem~\ref{thm:1.6} permits us to connect our $Z^D$-measures to the critical Liouville Quantum Gravity associated with the CGFF. This object has a number of constructive definitions whose equivalence has been shown only recently. We will use the definition based on the white-noise approximation of the~CGFF and Seneta-Heyde norming. 

For~$\{B_t\colon t\ge0\}$ the standard Brownian motion started at~$x$ and killed upon exit from~$D$, let $y\mapsto p_t^D(x,y)$ denote the (sub)probability density of~$B_t$.
Writing~$W$ for the Gaussian white noise with respect to the Lebesgue measure on~$(0,\infty)\times D$, let
\begin{equation}
\varphi_t(x):=\int_{(\texte^{-2t},\infty)\times D}p_{s/2}^D(x,z)W(\textd s\,\textd z).
\end{equation}
A covariance computation (based on the Chapman-Kolmogorov conditions for the substochastic kernel $p_t$) shows that $\{\varphi_t(x)\colon x\in D\}$ is a (smooth) Gaussian field tending to the CGFF in the limit as~$t\to\infty$. Defining, for each~$t>0$, the random measure
\begin{equation}
\label{E:2.22}
M_t^D(\textd x):= 1_D(x)\sqrt{t}\,\texte^{\alpha\varphi_t(x)-\frac12\alpha^2\Var(\varphi_t(x))}\,\rad_D(x)^2\,\textd x\,,
\end{equation}
 where $\rad_D(x)$ is as in \eqref{E:rad}. 
Theorem~10 of Duplantier, Rhodes, Sheffield and Vargas~\cite{DRSV2} states that there is a non-trivial, a.s.-finite random Borel measure~$M_\infty^D$ on~$D$ such that, for each Borel~$A\subseteq D$,
\begin{equation}
\label{E:2.25ua}
M_t^D(A)\,\,\underset{t\to\infty}\longrightarrow\,\, M_\infty^D(A),\quad\text{in probability}.
\end{equation}
Disregarding an unimportant normalization constant, \cite[Definition~11]{DRSV2} calls $M_\infty^D$ the \emph{critical Liouville quantum gravity} in~$D$. We now claim:

\begin{theorem}[Identification of~$Z^D$ with critical LQG measure]
\label{thm:LQG}
The measure $M_\infty^D$ obeys conditions (0-4) in Theorem~\ref{thm:1.6}  with $c:=1/\sqrt{2\pi}$ in \eqref{E:1.25}.  In particular, 
\begin{equation}
Z^D(\textd x)\,\,\laweq\,\,  c_\star\sqrt{2\pi}\,\,M^D_\infty(\textd x)
\end{equation}
for each~$D\in\mathfrak D$,  where~$c_\star$ is the constant from Theorem~\ref{thm:1.4}. 
\end{theorem}

The most difficult condition to check in Theorem~\ref{thm:1.6} is \eqref{E:1.25}. This condition can be thought of as a restriction on the tail of the law of~$Z^D(A)$. Indeed, through the use of  Karamata's Tauberian theorem~\cite[Theorem~1.7.1]{BGT}, \eqref{E:1.23c}  is equivalent to\
\begin{equation}
\label{E:2.27ua}
\frac{1}{\log t}\, E\bigl(Z^D(D); \,Z^D(D)\le t\bigr) \,\underset{t \to \infty} \longrightarrow\, c_\star \int_D \rad_D(x)^2 \dd x \,.
\end{equation}
This is consistent with, but insufficient to conclude that~$Z^D(D)$ has a Cauchy tail. We believe (and, in an earlier version of this text, have erroneously claimed a proof of) the following conjecture:

\begin{conjecture}[Cauchy tails]
\label{cor-2.10}
For each~$D\in\mathfrak D$ and each open~$A\subseteq D$, the random variable $Z^D(A)$ has a Cauchy tail. More precisely, for all~$A\subseteq D$ open,
\begin{equation}
\label{E:2.28ua}
t\, P\bigl(Z^D(A)>t\bigr)\,\underset{t\to\infty}\longrightarrow\,\,c_\star\int_A\rad_D(x)^2\textd x,
\end{equation}
where~$c_\star$ is the constant from Theorem~\ref{thm:1.4}. 
\end{conjecture}

We note that a Cauchy tail has been observed previously for the derivative martingale obtained from scale-invariant kernels in the unit box (Barral, Kupiainen, Nikula, Saksman, Webb~\cite[Section~5]{BKNSW}). Incidentally, for this case Madaule~\cite{Madaule} already proved the connection between the asymptotic law of the maximum and the derivative martingale directly.  Wong~\cite{Wong} has recently established a power law tail for the subcritical LQG measures. (\textit{Update in final version}: Wong recently posted a preprint~\cite{Wong2} that proves Conjecture~\ref{cor-2.10}.)
We remark that, as our proof of condition \eqref{E:1.25} is based on the asymptotic form of the covariance kernel, it extends readily to critical multiplicative chaos for general log-correlated Gaussian fields.


\begin{figure}[t]
\vglue-0.5cm
\centerline{\includegraphics[width=4truein]{./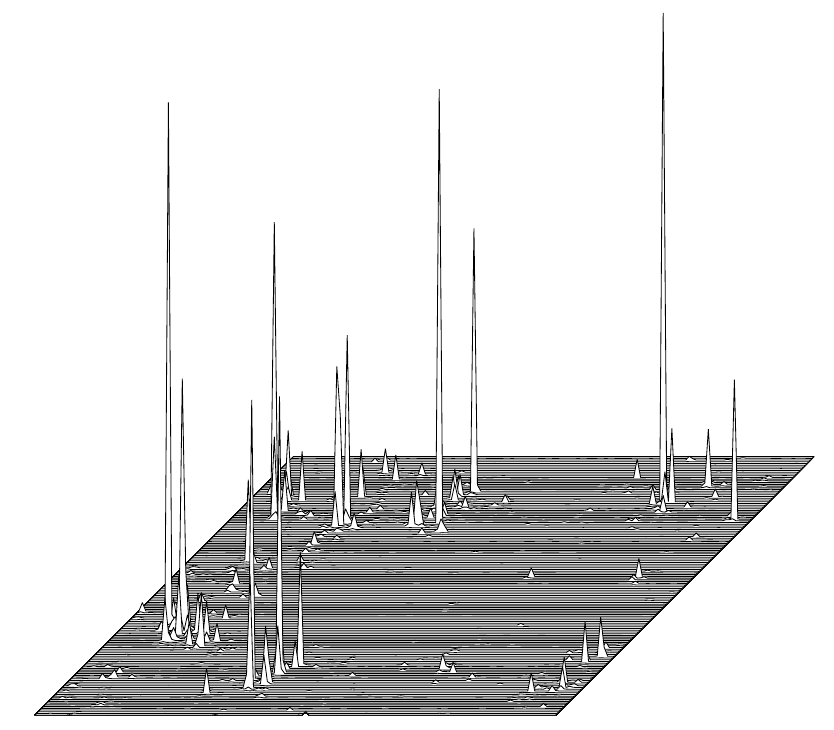}}
\begin{quote}
\small
{\sc Fig~3.\ }
\label{fig2}
An illustration of a  sample of the critical Liouville Quantum Gravity measure on a unit square obtained by simulating the exponential (with exponent $\alpha$) of the DGFF. 
\normalsize
\end{quote}
\end{figure}

\subsection{Remarks}
\label{sec2.5}\noindent
We proceed with some  additional  remarks concerning the above exposition.

\smallskip\noindent
(1) \textit{Domain choices}: Our restriction to domains with a finite number of non-degenerate boundary components comes from the need to ensure convergence of the harmonic measures in discrete approximations to a continuum domain (see Lemma~\ref{lemma-HM}). Although this explicitly excludes punctured domains, we remark that the proof still applies to at least some of them. In fact, the law of~$Z^D$, and thus the whole limit point process, remains unchanged when a polar set is removed from~$D$. (One can see this from the Gibbs-Markov property and the fact that $C^{D,\wt D}=0$ if~$\wt D\subseteq D$ with $D\smallsetminus\wt D$ polar.) \

Thus, for instance,~$Z^\D$ will have the same law as~$Z^{\D\smallsetminus\{0\}}$. An intuitive way to see this is that the DGFF in the lattice version of~$\D\smallsetminus\{0\}$ is that in~$\D$ but pinned at the origin. The pinning has a non-vanishing effect only up to lattice distances~$o(N)$ which, however, is also a region where no relevant local maxima would occur to begin with.

\smallskip\noindent
(2) \textit{Formulation of the Gibbs-Markov property  in Theorem~\ref{thm:1.6}}:
A consequence of the Gibbs-Markov property of the DGFF is that $Z^{D\cup\wt D}$, for $D,\wt D\in\mathfrak D$ disjoint, factors into the sum $Z^D+Z^{\wt D}$ of independent copies of the measures in~$D$ and~$\wt D$, respectively. This is guaranteed automatically for the measures arising in Theorem~\ref{thm:1} but has to be assumed explicitly in Theorem~\ref{thm:1.6}.
 
\smallskip\noindent
(3) \textit{Connection to critical LQG}: 
Theorem~\ref{thm:LQG} touches on the question of uniqueness of the critical LQG measure which has started to be  settled only quite recently. Indeed, as shown in Duplantier, Rhodes, Sheffield and Vargas~\cite{DRSV2}, the definition based on \eqref{E:2.22} agrees (up to overall normalization) with that based on the derivative martingale introduced in Duplantier, Rhodes, Sheffield and Vargas~\cite{DRSV1}.  Recently, Powell~\cite{Powell}, drawing on earlier work of Junilla and Saksman~\cite{Junilla-Saksman} and Huang, Rhodes and Vargas~\cite{HRV}, proved its equivalence with the various other versions  of  the critical LQG measure, including the one obtained by circle average approximation.
 
We note that for the subcritical cases --- corresponding to~$\alpha$ in \eqref{E:2.22} replaced by $\beta\in(0,\alpha)$  and $\sqrt{t}$ term dropped  --- the uniqueness is known in full generality from Shamov~\cite{Shamov}  (and in law already from Robert and Vargas~\cite{Robert-Vargas}).  There even the overall normalization is fixed by the fact that the expectation of the limit measure exists and agrees with that of the base measure (i.e., the Lebesgue measure on~$D$ in our case). This has proved to be quite useful in studying the ``intermediate'' level sets of the DGFF (Biskup and Louidor~\cite{BL-intermediate}). 

Characterizing the critical LQG measure by the conditions in Theorem~\ref{thm:1.6} goes part of the way of Shamov's strategy; a key question is what should replace the Gibbs-Markov property which is special to multiplicative chaos associated with the Gaussian Free Field. (Shamov's condition for behavior under Cameron-Martin shifts is a natural candidate although that requires the measures to be functions of the underlying field.) Note that \eqref{E:1.25} provides a convenient way to fix the normalization of the critical measure. The characterization of $Z^D$ measures proves to be quite useful in the description of (what we call) a near-critical process studied in a forthcoming preprint jointly with S.~Gufler~\cite{BGL}. In particular, it is proved there that~$Z^D$, albeit known to be non-atomic, is supported on a set of vanishing Hausdorff dimension. See Fig.~3.

\smallskip\noindent
(4) \textit{Conditions for uniqueness:}
The conditions in Theorem~\ref{thm:1.6} have been formulated with convenience in mind; no serious attempt was made to design a set of conditions that are minimal possible. We in fact believe that they are not:

\begin{conjecture}
Any family of random Borel measures $\{M^D
\colon D\in\mathfrak D\}$ satisfying conditions (0-3) of~Theorem~\ref{thm:1.6} obeys also condition~(4) thereof.  
\end{conjecture}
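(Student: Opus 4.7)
The natural strategy is to show that (0--3) already force a derivative-martingale-like representation of the family $\{M^D\}$ analogous to Theorem~\ref{thm:5.1}, from which the Laplace tail (4) can be extracted as a property of that representation. For a more self-contained attack, I would introduce the auxiliary quantities
\begin{equation*}
\overline\phi^{\,D}(A) := \limsup_{\lambda\downarrow 0}\frac{E\bigl(M^D(A)\texte^{-\lambda M^D(D)}\bigr)}{\log(\ffrac1\lambda)}, \qquad \underline\phi^{\,D}(A) := \liminf_{\lambda\downarrow 0}(\cdots),
\end{equation*}
and try to prove $\overline\phi^{\,D}=\underline\phi^{\,D}=\int_A\psi^D$. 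The scaling in~(2) immediately yields $\overline\phi^{\,a+\lambda D}(a+\lambda A)=\lambda^4\overline\phi^{\,D}(A)$, and the disjoint-union half of~(3), together with $E(\texte^{-\lambda M^{\widetilde D}(\widetilde D)})\to 1$ that follows from $M^{\widetilde D}(\widetilde D)<\infty$ a.s., makes $\overline\phi^{\,D}$ and $\underline\phi^{\,D}$ additive over disjoint unions (and insensitive to enlargement by a disjoint companion domain).

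The core of the argument would be to use the restriction half of~(3) via Cameron--Martin. For $x\in\widetilde D\subseteq D$ with $\leb(D\smallsetminus\widetilde D)=0$, writing $\Phi=\Phi^{D,\widetilde D}$ and applying the Gaussian shift $E\bigl[\texte^{\alpha\Phi(x)}G(\Phi)\bigr]=\texte^{\alpha^2 C^{D,\widetilde D}(x,x)/2}E\bigl[G(\Phi+\alpha C^{D,\widetilde D}(\cdot,x))\bigr]$, one computes
\begin{equation*}
E\bigl(M^D(\textd x)\,\texte^{-\lambda M^D(D)}\bigr)=\texte^{\alpha^2 C^{D,\widetilde D}(x,x)/2}\,E\Bigl(M^{\widetilde D}(\textd x)\,\texte^{-\lambda\int \texte^{\alpha\Phi(y)+\alpha^2 C^{D,\widetilde D}(y,x)}M^{\widetilde D}(\textd y)}\Bigr).
\end{equation*}
Since the twist $\texte^{\alpha^2 C^{D,\widetilde D}(\cdot,x)}$ is bounded on compact subsets of $\widetilde D$ and $\alpha^2/2\cdot C^{D,\widetilde D}(x,x)=\log(\psi^D(x)/\psi^{\widetilde D}(x))$ by the explicit form \eqref{E:1.14a}, one expects that the effect of the twist on the leading $\log(\ffrac1\lambda)$-term is exactly the factor $\psi^D/\psi^{\widetilde D}$. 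Carrying this through should yield the consistency relation $\overline\phi^{\,D}(\textd x)=(\psi^D(x)/\psi^{\widetilde D}(x))\,\overline\phi^{\,\widetilde D}(\textd x)$ on $\widetilde D$, and likewise for $\underline\phi$. Together with Lebesgue-absolute continuity (derivable from (1)) this forces $\overline\phi^{\,D}(\textd x)$ and $\underline\phi^{\,D}(\textd x)$ to be of the form $\overline{C}_\star(x)\psi^D(x)\textd x$ and $\underline{C}_\star(x)\psi^D(x)\textd x$ with $\overline C_\star,\underline C_\star$ Gibbs--Markov invariant. Shift-dilation invariance from~(2), applied to small discs around $x$, then forces these ``constants'' to be genuine constants in $[0,\infty]$; finiteness comes from~(0).

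The last step is to identify $\overline C_\star=\underline C_\star$, which would give existence of the limit in~(4) with $C_\star\in[0,\infty)$ depending only on the family. A tauberian input should suffice: the additivity and scaling of the limsup/liminf, combined with the Gibbs--Markov consistency, reduce the question to a single one-point renewal-type estimate on the tail of $M^\D(\D)$, say, and a standard Karamata-style argument yields equality whenever the tail is regularly varying, which the Gibbs--Markov composition naturally produces.

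\textbf{Main obstacle.} The delicate step is the Cameron--Martin manipulation in the middle paragraph: to extract the leading $\log(\ffrac1\lambda)$ contribution one must exchange the $\lambda\downarrow 0$ limit with the expectation over the Gaussian field~$\Phi$, despite the \emph{a priori} unknown tail of $M^{\widetilde D}$. Without assuming~(4), heavy-tail bounds for $M^{\widetilde D}(\widetilde D)$ must be bootstrapped from (0--3) alone, and there is a real risk of circularity (rotation invariance, which helps to pin down $\psi^D$, is known only via the explicit tail). Overcoming this likely requires a truncation/localization of $M^{\widetilde D}$ to a compact subset of $\widetilde D$ where the twist is uniformly controlled, followed by an approximation argument exhausting $\widetilde D$ by such compacts, with the Gibbs--Markov property providing the glue.
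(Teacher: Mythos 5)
This statement is posed in the paper as an open \emph{conjecture}; the authors explicitly say only that they ``suspect'' conditions (0--3) imply (4) and offer no proof, so there is no proof of record to compare your attempt against. Your plan nonetheless reproduces the right heuristic: the Cameron--Martin computation you display is correct, the prefactor $\texte^{\frac12\alpha^2 C^{D,\widetilde D}(x,x)}$ is indeed $\psi^D(x)/\psi^{\widetilde D}(x)$ by~\eqref{E:1.14a}, and additivity over disjoint unions together with the scaling from~(2) behave as you claim.

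The obstacle you flag at the end is, however, a genuine gap and not a routine technicality. To read off the leading $\log(\ffrac1\lambda)$-asymptotics from your Cameron--Martin identity you must exchange the $\lambda\downarrow0$ limit with the expectation over the Gaussian binding field and argue that the bounded twist $\texte^{\alpha^2 C^{D,\widetilde D}(\cdot,x)}$ only perturbs lower-order terms --- but that step is exactly what condition~(4) would grant you. Conditions (0--3) supply only a.s.\ finiteness of $M^{\widetilde D}(\widetilde D)$ and consistency relations that compare one unknown tail to another; they carry no regular-variation input. Your proposed ``Karamata-style'' identification of $\overline C_\star$ with $\underline C_\star$ presupposes regular variation of the tail, which is part of what must be established. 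Closing the argument would require an independent pinning input --- something like a renewal or fixed-point theorem showing that the Gibbs--Markov recursion, combined with the scaling in~(2), admits a unique self-consistent slowly varying correction --- and no such ingredient is supplied. As written, the plan is a faithful elaboration of the paper's heuristic for why the conjecture ought to be true, but it does not resolve it.
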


\noindent
Key to proving this conjecture seems to be the following statement: Conditional on~$Z^D(D)$ to be large, the measure $Z^D$ is close to a weighted point mass $Z^D(D)\delta_X$ at some random~$X\in D$ which is moreover independent of~$Z^D(D)$. Once this is proved, the Gibbs-Markov property reduces the conjecture to a ``smoothing transformation'' where the conclusions of Durrett and Liggett~\cite{Durrett-Liggett} apply. Still,  as far as the proof of Theorem~\ref{thm:LQG} is concerned, this strategy seems more involved than verifying all conditions for the critical LQG measure directly.

\smallskip\noindent
(5) \textit{Conformal transformation rule:}
In Duplantier, Rhodes, Sheffield and Vargas~\cite[Theorem~13]{DRSV2}, the critical LQG measure is stated to possess a transformation rule under conformal maps that implies, after some rewrites, the one in \eqref{E:1.10a} for maps between simply connected domains. The fourth-power of~$|f'(x)|$ comes by combining the factor $|f'(x)|^2$ coming from transforming the Lebesgue measure with another such factor coming from
\begin{equation}
(\psi^{f(D)}\circ f)(x)=\psi^D(x)\bigl|f'(x)\bigr|^2
\end{equation}
which is directly checked from \eqref{E:1.14a}.
In principle, we could thus refer Theorem~\ref{thm:3} to the combination of Theorem~\ref{thm:LQG} and \cite[Theorem~13]{DRSV2}. However, we still prefer to keep an independent proof of Theorem~\ref{thm:3} because we find it conceptually simpler (deducing it from the behavior of~$Z^D$-measures under dilations and rotations) and, moreover, because we anyway prove it along the way towards proving Theorems~\ref{thm:1.6} and~\ref{thm:LQG}.

\smallskip\noindent
(6) \textit{Constant~$c_\star$ from Theorem~\ref{thm:1.4}:}
Finally, we wish to make a note concerning the actual value of the constant~$c_\star$ appearing in \eqref{E:1.14a} which in light of \eqref{E:1.23c} and Theorem~\ref{thm:1.6} determines the normalization of the~$Z^D$-measure. For this let~$\phi$ denote the DGFF in~$\Z^2\smallsetminus\{0\}$, which is a centered Gaussian process on~$\Z^2$ with $\Cov(\phi_x,\phi_y)=\fraka(x)+\fraka(y)-\fraka(x-y)$, where $\fraka\colon\Z^2\to\R$ is the potential kernel for the simple symmetric random walk on~$\Z^2$. The asymptotic of~$\fraka$ is well known,
\begin{equation}
\label{E:3.23tt}
\fraka(x)=g\log|x|+ c_0+O\bigl(|x|^{-2}\bigr),\qquad |x|\to\infty,
\end{equation}
for~$c_0:=\pi^{-1}(2\gamma+\log8)$, where~$\gamma$ is the Euler constant; see, e.g., Lawler~\cite{Lawler} or Kozma and Schreiber~\cite{KozmaSchreiber04}. In terms of~$g=2/\pi$ and~$c_0$, we then get
\begin{equation}
\label{E:2.31}
c_\star = \texte^{2c_0/g}\,\sqrt{\frac\pi8}\,\,\lim_{r\to\infty} \sqrt{\log r}\,P\biggl(\,\,\bigcap_{x\in\Lambda_r(0)}\Bigl\{\phi_x+\frac2{\sqrt g}\fraka(x)\ge0\Bigr\}\biggr),
\end{equation}
where the limit is known to exist thanks to \cite[Theorem~2.4]{BL-new}. A crucial step towards this conclusion has been made already in \cite[Remark 6.7]{BL-new} albeit with a minor typo in the statement; the conclusion has then been completed in \cite[Theorem 10.7]{Biskup-PIMS}. The value of the limit in \eqref{E:2.31} is not known and it would be interesting to find out if it can be computed (more) explicitly.

\subsection{Outline}
The remainder of the paper is devoted to the proofs of the above results. In Section~\ref{sec2} we begin with some preliminaries concerning the Gibbs-Markov property of the DGFF, its convergence to to CGFF, etc. In particular, we prove Lemma~\ref{lemma-2.3a}. We also recall a couple of standard Gaussian inequalities that will be useful in our subsequent derivations. 

In Section~\ref{sec3}, we proceed to prove Theorem~\ref{thm:1}, dealing with existence and form of the limit of processes $\{\eta^D_{N,r_N}\}$. The crux of the proof is its reduction to the corresponding result for square boxes established in Biskup and Louidor~\cite{Biskup-Louidor}. This is achieved in Proposition~\ref{prop-3.2} whose formulation then permits a quick extraction of Theorem~\ref{thm:2} as well. Corollary~\ref{cor-shift-scale} then readily follows.

Our next task, in Section~\ref{sec4}, is to prove the asymptotic tail behavior in Theorem~\ref{thm:1.4}. Again, we will reduce this to a corresponding result for square boxes proved in  Bramson, Ding and Zeitouni~\cite{BDingZ}. The new non-trivial ingredient is a link between the \emph{location} of the maximizer in a set~$D_N$ and that in a square contained therein but of comparable diameter, assuming a coupling of the two fields via the Gibbs-Markov property. This is the content of (rather technical) Proposition~\ref{prop-4.2}. Worthy of attention is also our proof of the representation \eqref{E:1.14a}. This appears towards the end of proof of Theorem~\ref{thm:1.4} in Section~\ref{sec5.1ab}.

The proofs then proceed in a somewhat different order than the statements. Indeed, we first prove that any family of measures $\{M^D\colon D\in\mathfrak D\}$ satisfying the conditions of Theorem~\ref{thm:1.6} is a distributional limit of derivative-martingale like measures. Although the corresponding statement, Theorem~\ref{thm:5.1}, is close to Theorem~1.4 of Biskup and Louidor~\cite{Biskup-Louidor}, the method of proof is quite different. In particular, we rely on partitioning into triangles rather than squares as regularity of the ``binding'' field, $\Phi^{D,\wt D}$, with respect to such triangular partitions is easier to control (and, moreover, can be related to the DGFF on the triangular lattice). With the derivative-martingale like representation in hand, we then readily prove Theorem~\ref{thm:1.6} as well. 

In Section~\ref{sec6}, we in turn proceed to use the same representation to show that the laws of measures $\{Z^D\colon D\in\mathfrak D\}$ are rotation invariant. Then, in Theorem~\ref{thm:6.1} we give conditions for families of measures that guarantee the validity of the conformal transformation rule~\eqref{E:1.10a}. Not surprisingly, these boil down to invariance with respect to shifts, rotations and dilations and the validity of the Gibbs-Markov property. In Section~\ref{sec8} we then verify that the critical LQG measures~$M^D_\infty$ fulfill the conditions of Theorem~\ref{thm:1.6}. Kahane's convexity inequality is of much help here (this is the reason for our focus on Seneta-Heyde normalization) as well as a kind of concentric decomposition akin to that used in Biskup and Louidor~\cite{BL-new}.

The Appendix discusses the behavior of the harmonic measure under discrete and continuous approximations of our class of underlying domains. These underpin our requirements on the underlying domain as well as their discrete approximations.

\section{Preliminary considerations}
\label{sec2}\noindent
We begin by reviewing and further developing some tools from the theory of Gaussian processes and, particularly, the DGFF. An informed reader may consider just skimming through the statements below and then moving to the next section. 

\subsection{The Gibbs-Markov property}
As already noted above, an important technical input for many of our arguments is the Gibbs-Markov decomposition of the DGFF. Recall that~$h^V$ denotes the DGFF on $V$ with zero values in~$V^\cc$. Given non-empty finite sets $U \subseteq V\subset\Z^2$, denote
\begin{equation}
\label{E:2.1}
	\varphi^{V,U}(x) := E \bigl( h^V(x) \,\big|\, \sigma(h^V(y) ,\, y \in V \smallsetminus U)\bigr).
\end{equation}
Then $h^V-\varphi^{V,U}$ and $\varphi^{V,U}$ are independent with $h^V-\varphi^{V,U}\laweq h^U$. We will often write this as
\begin{equation}
\label{e:2.3}
	h^V\laweq h^U+\varphi^{V,U},
\end{equation}
where on the right-hand side $h^U$ is regarded as independent of $\varphi^{V,U}$.

The Gibbs-Markov decomposition \eqref{e:2.3} permits us to extend certain tightness results known for square boxes to general domains. Let $D\in\mathfrak D$ and write
\begin{equation}
	\Gamma_N^ D(t) := \bigl\{ x \in  D_N \colon  h_N^ D(x) \geq m_N - t \bigr\},
\end{equation}
where we recall that $m_N+O(1)$ is the scale of the absolute maximum in~$V_N$.
Then we have:

\begin{proposition}
\label{lem:4}
For all $D\in\mathfrak D$ and all $t \in \R$, the family $\{|\Gamma_N^ D(t)|\colon\, N \geq 1\}$ of random variables is tight. Moreover, for any $A\subseteq D$ non-empty and open,
\begin{equation}
\label{E:3.4qw}
\lim_{t\to\infty}\,\liminf_{N\to\infty}\,P\bigl(\,\exists x \in \Gamma_N^ D(t)\colon x/N\in A\bigr)=1.
\end{equation}
\end{proposition}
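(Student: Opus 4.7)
My plan is to reduce both claims to the corresponding tightness/lower-tail results for the DGFF on \emph{square} boxes --- namely, tightness of the centered maximum (Bramson-Ding-Zeitouni, et al.) and tightness of the level-set count $|\Gamma_{V_N}(t)|$ (established for square boxes in \cite{Biskup-Louidor}) --- using the Gibbs-Markov decomposition \eqref{e:2.3} as the bridge between $h^D_N$ and the DGFF on a conveniently chosen square.

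For the tightness claim, I would embed $D_N$ in an ambient square box $V_{N'}$ with $N'\asymp N$ and apply \eqref{e:2.3} to write $h^{V_{N'}} \laweq h^{D_N} + \varphi^{V_{N'},D_N}$ on $D_N$ with independent summands. On any bulk region $R_\delta := \{x \in D_N : \dist(x/N, \partial D) > \delta\}$ the covariance of $\varphi^{V_{N'},D_N}$ is uniformly bounded in $N$ (a difference of Green functions with a smooth scaling limit, cf.\ Lemma~\ref{lemma-2.3a} and the Appendix), so by Borell-TIS the maximum $\max_{R_\delta} |\varphi^{V_{N'},D_N}|$ is tight. Consequently $|\Gamma_N^D(t) \cap R_\delta|$ is dominated by the $h^{V_{N'}}$-level set at level $m_{N'} - t - (\text{tight shift})$, which is tight by the square-box result. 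On the complementary boundary layer $D_N \smallsetminus R_\delta$ a naive first-moment bound is too weak (it produces an unwanted $(\log N)^{3/2}$ factor inherited from the bulk tail), so instead I would run the same Gibbs-Markov comparison on a geometrically refining family of annuli of widths $2^{-k}\delta$ around $\partial D_N$, using at each scale the tightness of $|\Gamma_{V_{N'}}(\cdot)|$ restricted to the corresponding annulus; since $\Var(\varphi^{V_{N'},D_N})$ grows only linearly in $k$ while only $O(\log N)$ scales are needed, a telescoping argument delivers a tight bound.

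For the lower bound I would apply \eqref{e:2.3} in the reverse direction: since $A$ is open and non-empty, fix a closed square $\wt V \subset A$ with $\dist(\wt V, \partial D) > 0$, set $\wt V_N := (N\wt V) \cap \Z^2 \subset D_N$ for $N$ large, and write $h^{D_N} \laweq h^{\wt V_N} + \varphi^{D_N, \wt V_N}$ on $\wt V_N$ with independent summands. Since $|\wt V_N|$ has side $\asymp N$ we have $m_{|\wt V_N|} - m_N = O(1)$, and tightness of the square-box max gives $P(\max_{\wt V_N} h^{\wt V_N} \ge m_N - s) \to 1$ as $s \to \infty$ uniformly in $N$. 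Using tightness of the square-box level set, I would locate a near-maximum $\hat x$ of $h^{\wt V_N}$ inside a bulk subsquare $\wt V_N^\circ$ of $\wt V_N$ (where $\Var(\varphi^{D_N,\wt V_N}(\cdot))$ is uniformly bounded) with probability bounded below. Since $\varphi^{D_N,\wt V_N}$ is independent of $h^{\wt V_N}$, conditional on $\hat x$ the random variable $\varphi^{D_N,\wt V_N}(\hat x)$ is centered Gaussian with bounded variance and hence $\ge -(t-s)$ with probability close to $1$ for $t-s$ large. Combining yields $h^{D_N}(\hat x) \ge m_N - t$ at a point with $\hat x/N \in A$ with probability tending to $1$, which is precisely \eqref{E:3.4qw}.

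The main obstacle is the uniform-in-$N$ control of the harmonic correction $\varphi^{V,U}$: pointwise variance bounds are Green-function asymptotics on \emph{irregular} discrete approximations --- exactly the content of the harmonic-measure convergence stated in the Appendix (together with Lemma~\ref{lemma-2.3a}) --- while control of $\max|\varphi^{V,U}|$ on regions abutting $\partial U$ requires a Borell-TIS estimate with uniform continuity of the covariance kernel. In the tightness argument this is handled by the geometric annulus decomposition sketched above; in the lower bound it is handled by retreating to an inner subsquare of $\wt V_N$ where the correction is uniformly $O_P(1)$. A secondary subtlety in the lower bound is securing that the argmax (or a near-argmax) of $h^{\wt V_N}$ falls in the bulk $\wt V_N^\circ$ with positive probability, which can be obtained either from known tightness of the argmax law on square boxes or, self-contained, from one further iteration of Gibbs-Markov onto $\wt V_N^\circ$.
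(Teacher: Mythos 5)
You correctly identify the general strategy — reduce via the Gibbs--Markov decomposition to the square-box results (Ding--Zeitouni for the level-set count, Bramson--Zeitouni for the max) — and the lower-bound half of your proposal is essentially what the paper does. However, your tightness argument takes a detour that creates a genuine difficulty, and you miss the simple device the paper uses to avoid it entirely. The paper's key ingredient is Lemma~\ref{lemma-2.4}, a set of \emph{pointwise} comparison bounds that never require any uniform control of the binding field~$\varphi^{W,V}$. In particular, \eqref{E:1.16} bounds $P\bigl(|\Gamma_N^D(t)|\ge 2R\bigr)$ by $2\,P\bigl(|\Gamma_N^{S_K}(t')|\ge R\bigr)$ for an enclosing square $S_K\supset D$, using only that $\varphi^{W,V}$ is independent of $h^V$ and centered: conditionally on the ``first'' $2R$ high points of $h^V$, at least $R$ of them see $\varphi^{W,V}>0$ with probability $\ge\tfrac12$, by sign-symmetry of a centered Gaussian vector. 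This gives tightness of $|\Gamma_N^D(t)|$ in one step with no bulk/boundary split and no annulus hierarchy.

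Your annulus sketch is precisely where the gap lies. The variance of $\varphi^{V_{N'},D_N}$ at lattice distance $2^{-k}\delta N$ from $\partial D_N$ is indeed $\Theta(k)$, but the quantity you need, $\max|\varphi^{V_{N'},D_N}|$ over the scale-$k$ annulus (which has $\Theta(N^2)$ sites), is of order $\sqrt{k\log N}$ rather than $\sqrt{k}$, so the claimed telescoping does not obviously close; at minimum, this needs substantially more care than the one-line sign-symmetry argument. For the lower bound \eqref{E:3.4qw}, the paper likewise uses the pointwise bound \eqref{E:1.15}: on $\{\exists x\in S_N:\,h^{S_N}(x)\ge m_N-t+R\}$, the first such $x$ has $\varphi^{D_N,S_N}(x)>-R$ with probability $\ge 1-\tfrac12\,\texte^{-R^2/(2\sigma^2)}$ uniformly in $N$, with $\sigma^2$ the max of the binding-field variance over the relevant sites (bounded by Green-function monotonicity against nested squares). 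This sidesteps your secondary subtlety about locating the argmax of $h^{\wt V_N}$ in a bulk subsquare, so the extra Gibbs--Markov iteration you suggest is unnecessary.
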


\begin{proposition}
\label{lem:5}
For all $D\in\mathfrak D$ and all $t \in \R$, 
\begin{equation}
	\lim_{r \to \infty}\, \limsup_{N \to \infty}\,
		P \bigl( \exists x,y \in \Gamma_N^D(t)\colon r<|x-y| < N/r\bigr) = 0 \,.
\end{equation}
\end{proposition}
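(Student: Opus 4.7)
The plan is to reduce the statement to the square-box case established in Biskup and Louidor~\cite{Biskup-Louidor} via the Gibbs-Markov decomposition. Fix $D\in\mathfrak D$ and $t\in\R$, choose $C>\diam(D)$, and set $M:=\lceil CN\rceil$ so that (after an inessential lattice translation) $D_N\subseteq V_M$ for all large~$N$. Since $M/N\to C$, we have $|m_M-m_N|\le c_0$ for some constant $c_0$. On~$D_N$, the Gibbs-Markov identity \eqref{e:2.3} reads $h^{V_M}\laweq h^{D_N}+\varphi^{V_M,D_N}$, with $h^{D_N}$ independent of the mean-zero Gaussian binding field $\varphi:=\varphi^{V_M,D_N}$ whose sample paths are discrete-harmonic on~$D_N$. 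Writing $D_N^{(\delta)}:=\{x\in D_N\colon\dist(x,\partial D_N)\ge\delta N\}$, standard Green-function estimates give $\Var\varphi(x)=G_{V_M}(x,x)-G_{D_N}(x,x)\le\sigma_\delta^2$ uniformly in $N$ and $x\in D_N^{(\delta)}$, for each fixed $\delta>0$.

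The first step confines the extremal set to the bulk: for every $\epsilon>0$ there exists $\delta>0$ with $P\bigl(\Gamma_N^D(t)\not\subset D_N^{(\delta)}\bigr)\le\epsilon$ for all large~$N$. This is obtained by covering $D_N\smallsetminus D_N^{(\delta)}$ by $O(1/\delta)$ squares of side $\delta N$, applying the Gibbs-Markov decomposition from each such square into $D_N$, and using tightness of the centered square-box maximum; since $m_N-m_{\delta N}=2\sqrt g\log(1/\delta)+o_N(1)$ grows without bound as $\delta\downarrow0$, no near-boundary point can lie in $\Gamma_N^D(t)$ once $\delta$ is small enough. By Proposition~\ref{lem:4} we may also fix $K<\infty$ with $P(|\Gamma_N^D(t)|>K)\le\epsilon$.

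Next we control~$\varphi$ at the extremal points. Since $\Gamma_N^D(t)$ is a functional of $h^{D_N}$, it is independent of~$\varphi$; conditioning on $h^{D_N}$ on the event $\{|\Gamma_N^D(t)|\le K,\ \Gamma_N^D(t)\subset D_N^{(\delta)}\}$ and taking a union bound with the standard Gaussian tail yields
\begin{equation}
P\Bigl(\,\exists\,x\in\Gamma_N^D(t)\cap D_N^{(\delta)}\colon\varphi(x)<-s,\ |\Gamma_N^D(t)|\le K\Bigr)\le K\,\texte^{-s^2/(2\sigma_\delta^2)},
\end{equation}
which is at most~$\epsilon$ once $s$ is large. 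Outside the resulting exceptional event, of total probability at most $3\epsilon$, every $x\in\Gamma_N^D(t)$ satisfies $h^{V_M}(x)=h^{D_N}(x)+\varphi(x)\ge m_N-t-s\ge m_M-t'$ with $t':=t+s+c_0$, so $x\in\Gamma_M^{V_M}(t')$.

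To conclude, observe that $r<|x-y|<N/r$ together with $M\ge N$ forces $r<|x-y|<M/r$. Applying the square-box version of the proposition from~\cite{Biskup-Louidor} to $V_M$ at level $t'$ therefore gives
\begin{equation}
\limsup_{N\to\infty}P\bigl(\,\exists\,x,y\in\Gamma_N^D(t)\colon r<|x-y|<N/r\bigr)\le 3\epsilon+\limsup_{M\to\infty}P\bigl(\,\exists\,x,y\in\Gamma_M^{V_M}(t')\colon r<|x-y|<M/r\bigr),
\end{equation}
and the last quantity tends to~$0$ as $r\to\infty$. Letting $\epsilon\downarrow0$ completes the argument. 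The main obstacle is the bulk-restriction step: it needs a careful patching of Gibbs-Markov comparisons across a collection of near-boundary squares together with precise bookkeeping of the $-\tfrac34\sqrt g\log\log N$ correction in~$m_N$, and is where the tameness of $\partial D$ built into the class~$\mathfrak D$ is essential.
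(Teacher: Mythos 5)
Your reduction to a big square via Gibbs--Markov is the right idea, but the route you take is substantially more elaborate than necessary and contains a genuine gap exactly where you flag difficulty: the bulk-restriction step. You want $\Gamma_N^D(t)\subset D_N^{(\delta)}$ with high probability, which you propose to prove by covering the boundary collar with $O(1/\delta)$ squares of side $\delta N$ and comparing $h^{D_N}$ on each to a square-box field. But the binding field between a near-boundary sub-square and $D_N$ has variance of order $\log N$ near $\partial D_N$, so the comparison in Lemma~\ref{lemma-2.4}\,\eqref{E:1.15} does not come with a uniformly bounded $\sigma^2$; and a direct union bound over the $\approx\delta N^2$ boundary-collar vertices, each of which satisfies $P(h_N^D(x)\ge m_N-t)\lesssim N^{-2}\delta^2(\log N)^{3/2}$, yields $\delta^3(\log N)^{3/2}$, which diverges in $N$. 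Making the bulk restriction rigorous requires a separate truncated second-moment or dyadic-chaining argument; it is not a consequence of the estimates assembled in Section~\ref{sec2}.

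The paper circumvents all of this. It applies the comparison inequality \eqref{E:2.7} of Lemma~\ref{lemma-2.4} with $U=V=D_N$ and $W=V_M$ a large square containing $D_N$: conditionally on the first pair $(x,y)$ realizing the bad event for $h^{D_N}$, the independent binding field satisfies $P\bigl(\varphi^{V_M,D_N}(x)>0,\,\varphi^{V_M,D_N}(y)>0\bigr)\ge\tfrac14$, since the two mean-zero Gaussians have non-negative covariance. This gives
\begin{equation}
P\bigl(\exists\,x,y\in\Gamma_N^D(t)\colon r<|x-y|<N/r\bigr)
\le 4\,P\bigl(\exists\,x,y\in V_M\colon h^{V_M}(x),h^{V_M}(y)\ge m_N-t,\ r<|x-y|<N/r\bigr),
\end{equation}
and since $m_M-m_N=O(1)$ and $N/r\le M/r$, the right side is handled directly by Theorem~1.1 of Ding--Zeitouni~\cite{DZ}. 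Crucially, the bound $\ge\tfrac14$ holds regardless of the magnitude of $\Var(\varphi^{V_M,D_N}(x))$, so no bulk restriction, no control of $|\Gamma_N^D(t)|$, and no a priori localization of the extremal points is needed. I would recommend replacing the bulk-restriction and union-bound steps by this two-point positive-correlation comparison; it removes the hardest part of your argument entirely.
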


\begin{proposition}
\label{lem:6}
For each $D\in\mathfrak D$ there is a constant $c=c(D)$ such that for all $t\ge1$, all $N \geq 1$ and all sets $A \subseteq  D_N$, 
\begin{equation}
		P \bigl( \exists x \in A \colon  h^D_N(x) \geq m_N + t \bigr)
			\leq c\,\Bigl(\frac{|A|}{N^2}\Bigr)^{1/2}\, 
			t\,\texte^{-\alpha t} \,.
\end{equation}
\end{proposition}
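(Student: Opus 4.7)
The strategy is to reduce the problem to the analogous tail estimate for the DGFF on a square box containing $D_N$, and then to combine the Bramson--Ding--Zeitouni (BDZ) maximum-tail bound with a first-moment bound coming from the single-point Gaussian tail.

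First I would embed $D_N$ in a square. Since $D$ is bounded, pick $K=K(D)>0$ with $D\subseteq(-K,K)^2$ and set $V:=(-KN,KN)^2\cap\Z^2$, so that $D_N\subseteq V$. By the Gibbs--Markov identity \eqref{e:2.3},
\[
h^V\laweq h^{D_N}+\varphi,\qquad \varphi:=\varphi^{V,D_N},
\]
with $h^{D_N}$ and $\varphi$ independent. Splitting on whether $\varphi(x)$ drops below $-s$ gives, for any $s>0$,
\[
P\bigl(\exists x\in A\colon h^{D_N}(x)\ge m_N+t\bigr)\le P\bigl(\exists x\in A\colon h^V(x)\ge m_N+t-s\bigr)+\sum_{x\in A}P\bigl(\varphi(x)\le -s\bigr).
\]
The second term is controlled via the Gaussian tail of $\varphi(x)$, whose variance equals $G_V(x,x)-G_{D_N}(x,x)$: this is $O(1)$ for $x$ at macroscopic distance from $\partial D$, and at most $g\log N+O(1)$ universally. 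I would split $A$ into its bulk and boundary-layer portions; on the bulk, $s=O(1)$ suffices, while on the boundary layer $G_{D_N}(x,x)$ is itself strictly smaller than $g\log N$, so that the single-point Gaussian tail of $h^{D_N}(x)$ is sharp enough to be handled directly by first-moment summation, without the square-box reduction.

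For the reduced square-box problem I would combine: (i) the union bound via single-point Gaussian tail, $P\bigl(h^V(x)\ge m_N+t\bigr)\le CN^{-2}\log N\cdot\texte^{-\alpha t}$ for $t\ge 1$, obtained from $h^V(x)\sim\mathcal{N}(0,g\log N+O(1))$; and (ii) the BDZ maximum-tail estimate $P\bigl(\max_V h^V\ge m_N+t\bigr)\le C(1+t)\texte^{-\alpha t}$. The bound (i) is sharpest for $|A|$ small, (ii) for $|A|$ macroscopic, and each by itself bounds the quantity in question.

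The main obstacle is to obtain the precise $\sqrt{|A|/N^2}$ factor cleanly. A direct geometric-mean interpolation $\min(a,b)\le\sqrt{ab}$ between (i) and (ii) leaves a spurious $\sqrt{\log N\cdot t}$ factor. To eliminate it I would run a multiscale refinement: partition $V$ into sub-boxes of side $K=K(t)$ chosen so that $m_N-m_K$ is comparable to $t$, apply Gibbs--Markov within each sub-box $B$ to decompose $h^V|_{B}=h^B+\varphi^{V,B}|_B$, and bound $\max_{A\cap B}h^V$ via the BDZ tail for $h^B$ together with Gaussian control of the harmonic binding field $\varphi^{V,B}$ on $B$. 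Summing over the $\le |A|/K^2+O(1)$ sub-boxes that meet $A$ would yield the sharp $\sqrt{|A|/N^2}\,t\,\texte^{-\alpha t}$ bound. A cleaner alternative is to invoke Proposition~\ref{lem:5}: high points arise within clusters of radius $r=r(t)$, and a union bound over ``cluster centers'' in a small enlargement of $A$ gives an even stronger $|A|/N^2$-type bound, from which the proposition follows.
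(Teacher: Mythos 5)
There is a genuine gap, and it has two parts.

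First, your reduction from $D_N$ to the containing square $V$ by a union bound over $A$ for the binding field is too lossy. Writing
$P(\exists x\in A:h^{D_N}(x)\ge m_N+t)\le P(\exists x\in A:h^V(x)\ge m_N+t-s)+\sum_{x\in A}P(\varphi(x)\le -s)$,
the second term is of order $|A|\,\texte^{-cs^2}$ (even on the bulk where $\operatorname{Var}\varphi=O(1)$), and to make it $\lesssim\sqrt{|A|/N^2}\,t\,\texte^{-\alpha t}$ you are forced to take $s\gtrsim\sqrt{\log N+t}$ for intermediate $|A|$. The $\texte^{\alpha s}$ factor this inserts into the first term then grows unboundedly with $N$, so the resulting estimate is not uniform in $N$ as required. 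What fixes this is exactly the comparison Lemma~\ref{lemma-2.4}, equation~\eqref{E:1.15} with $R=0$: instead of a union bound over $A$, one conditions on the \emph{first} point $x\in A$ (in an arbitrary fixed order) where $h^{D_N}(x)\ge c$ and uses the fact that, independently, $\varphi^{W,V}(x)>0$ with probability at least $1/2$. That yields $P(\exists x\in A:h^{D_N}(x)\ge c)\le 2\,P(\exists x\in A:h^{S_{KN}}(x)\ge c)$ with no $|A|$- or $N$-dependent loss at all.

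Second, and more fundamentally, you are missing the key external input: Lemma~3.8 of Bramson, Ding and Zeitouni already states the sharp inequality $P(\exists x\in A:h_N^{S_1}(x)\ge m_N+t)\le c\,(|A|/N^2)^{1/2}\,t\,\texte^{-\alpha t}$ for unit-square domains. You instead cite only the cruder maximum-tail bound $C(1+t)\texte^{-\alpha t}$, correctly observe that the geometric mean with the single-site union bound leaves a spurious $\sqrt{t\log N}$, and then sketch a multiscale partition to remove it. That sketch is not worked out, and the clean alternative you propose --- invoking Proposition~\ref{lem:5} and a union over cluster centers --- does not work directly, since Proposition~\ref{lem:5} is a $\lim_{r\to\infty}\limsup_{N\to\infty}$ statement and gives no quantitative control uniform in $N$, $r$, $t$. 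In fact what you are trying to reprove on the square is precisely BDZ's Lemma~3.8. Once one knows that result, the whole proof collapses to: cite it for $S_1$, rescale to $S_K\supset D$ using $m_{KN}-m_N=O(1)$, and then apply \eqref{E:1.15} with $R=0$ to compare $h^{D_N}$ to $h^{S_{KN}}$.
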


Versions of these results (with explicit estimates on $|\Gamma_N^D(t)|$ in the first proposition) for a square domain $D:=(0,1)^2$ have been proved in Theorem~1.2 and Theorem~1.1 in Ding and Zeitouni~\cite{DZ} (for Propositions~\ref{lem:4} and~\ref{lem:5}, respectively) and Lemma~3.8 of Bramson, Ding and Zeitouni~\cite{BDingZ} (for Proposition~\ref{lem:6}). The additional input required to extend these to general domains is supplied by the following comparison bounds:

\begin{lemma}
\label{lemma-2.4}
Let $U\subseteq V\subseteq W$ be finite subsets of~$\Z^2$. Then for all $c\in\R$ and all $R\ge0$,
\begin{equation}
\label{E:1.15}
P\bigl(\exists x\in U\colon h^V(x)\ge c+R\bigr)\le (1-\tfrac12\texte^{-\frac12 R^2/\sigma^2})^{-1} P\bigl(\exists x\in U\colon h^W(x)\ge c\bigr)
\end{equation}
where $\sigma^2:=\max_{x\in U}\Var(\varphi^{W,V}(x))$. Similarly, for all $r_1<r_2$,
\begin{multline}
\label{E:2.7}
\qquad
P\bigl(\exists x,y\in U\colon h^V(x),h^V(y)\ge c,\, r_1<|x-y|< r_2 \bigr)
\\ \le
4 P\bigl(\exists x,y\in U\colon h^W(x),h^W(y)\ge c,\, r_1<|x-y|< r_2 \bigr).
\qquad
\end{multline}
Finally, for all $c\in\R$ and all integers $R>0$,
\begin{equation}
\label{E:1.16}
P\bigl(|\{x\in U\colon h^V(x)\ge c\}|\ge 2R\bigr)
\le 2 P\bigl(|\{x\in U\colon h^W(x)\ge c\}|\ge R\bigr).
\end{equation}
\end{lemma}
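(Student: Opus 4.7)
All three bounds rest on the Gibbs-Markov decomposition \eqref{e:2.3}, which exhibits $h^W\laweq h^V+\varphi^{W,V}$ with the centered Gaussian binding field $\varphi^{W,V}$ independent of $h^V$. Three elementary properties of $\varphi^{W,V}$ will drive the argument: (i) its marginal variance at each point of~$V$ is at most $\sigma^2$ (by the very definition of $\sigma^2$); (ii) $\varphi^{W,V}$ and $-\varphi^{W,V}$ are equidistributed, being centered Gaussian; (iii) its pointwise covariance on $V\times V$ satisfies $E[\varphi^{W,V}(x)\varphi^{W,V}(y)]=G_W(x,y)-G_V(x,y)\ge 0$, as may be read off from \eqref{e:2.3} combined with the fact that every random-walk excursion from $x$ reaching $y$ before exiting~$V$ also does so before exiting~$W\supseteq V$.

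\emph{Proof of \eqref{E:1.15}.} Set $A:=\{\exists x\in U\colon h^V(x)\ge c+R\}$ and on $A$ let $x_\star=x_\star(h^V)$ be any witness. By independence of $\varphi^{W,V}$ and $h^V$ together with the one-sided Gaussian tail bound $P(N(0,\sigma_x^2)>R)\le\tfrac12\texte^{-R^2/(2\sigma^2)}$ (valid pointwise by~(i)), we have $P(\varphi^{W,V}(x_\star)\ge -R\,|\,h^V)\ge 1-\tfrac12\texte^{-R^2/(2\sigma^2)}$. On $A\cap\{\varphi^{W,V}(x_\star)\ge -R\}$ we have $h^W(x_\star)\ge c$, so
$P(\exists x\in U\colon h^W(x)\ge c)\ge(1-\tfrac12\texte^{-R^2/(2\sigma^2)})P(A)$,
which rearranges to \eqref{E:1.15}.

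\emph{Proof of \eqref{E:1.16}.} By (ii), $\tilde h:=h^V-\varphi^{W,V}$ has the same law as $h^W$. Since $\max(h^W(x),\tilde h(x))\ge h^V(x)$ pointwise, we have $\{x\in U\colon h^V(x)\ge c\}\subseteq\{x\colon h^W(x)\ge c\}\cup\{x\colon\tilde h(x)\ge c\}$, so a cardinality of at least $2R$ on the left forces cardinality at least~$R$ on one of the two pieces on the right. The union bound produces the factor~$2$.

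\emph{Proof of \eqref{E:2.7}.} Let $B$ denote the event on its left and on $B$ select witnesses $x_\star=x_\star(h^V)$, $y_\star=y_\star(h^V)$. Conditional on $h^V$ (hence on $x_\star,y_\star$), the pair $(\varphi^{W,V}(x_\star),\varphi^{W,V}(y_\star))$ is a centered bivariate Gaussian whose correlation $\rho$ is nonnegative by~(iii). The orthant probability formula $P(X\ge 0,Y\ge 0)=\tfrac14+\tfrac{1}{2\pi}\arcsin\rho\ge\tfrac14$ then yields $P(B\cap\{h^W(x_\star),h^W(y_\star)\ge c\})\ge\tfrac14 P(B)$, which is \eqref{E:2.7}.

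\textbf{Main obstacle.} The easiest of the three arguments is \eqref{E:1.16}; the subtlest ingredient is the sign of the binding-field correlations used in \eqref{E:2.7}, i.e., property~(iii). Once this nonnegativity has been extracted from the Gibbs-Markov identity for the Green functions, the orthant formula is immediate. A minor point of care throughout is to ensure that all witness choices $x_\star,y_\star$ are functions of $h^V$ only, so that conditioning on $h^V$ preserves the Gaussian distribution of $\varphi^{W,V}$ used in the tail and orthant estimates.
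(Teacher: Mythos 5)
Your arguments for \eqref{E:1.15} and \eqref{E:2.7} coincide with the paper's: both start from the Gibbs--Markov decomposition $h^W=h^V+\varphi^{W,V}$ with the two pieces independent, condition on $h^V$, select a witness (or pair of witnesses) measurable in $h^V$ alone, and then invoke the one-sided Gaussian tail bound for \eqref{E:1.15} and the orthant bound $P(X\ge0,\,X'\ge0)\ge\tfrac14$ for nonnegatively correlated centered Gaussians for \eqref{E:2.7}, with the covariance sign extracted exactly as you do from $\text{\rm Cov}(\varphi^{W,V}(x),\varphi^{W,V}(y))=G_W(x,y)-G_V(x,y)\ge0$. Your proof of \eqref{E:1.16} is, however, a genuinely different (and cleaner) route than the paper's. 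The paper again conditions on $h^V$, fixes a complete order on $U$, looks at the first $2R$ vertices with $h^V\ge c$, and uses the sign-symmetry of $\varphi^{W,V}$ to show that at least $R$ of them carry $\varphi^{W,V}(x)>0$ with conditional probability at least one half. You instead introduce $\tilde h:=h^V-\varphi^{W,V}$, which by that same symmetry and independence satisfies $\tilde h\laweq h^W$, observe that $h^W+\tilde h=2h^V$ forces $\{x\in U\colon h^V(x)\ge c\}\subseteq\{x\in U\colon h^W(x)\ge c\}\cup\{x\in U\colon\tilde h(x)\ge c\}$ pointwise, and finish by pigeonhole plus a union bound. This avoids both the conditioning and the ad hoc ordering device entirely, at no loss of generality; the underlying symmetry $\varphi^{W,V}\laweq-\varphi^{W,V}$ is the same in both proofs, merely deployed at a different point. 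All three of your bounds are correct.
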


\begin{proofsect}{Proof}
Recall that we may set $h^W:= h^V+\varphi^{W,V}$ with $h^V$ and $\varphi^{W,V}$ independent. Then
\begin{equation}
\bigl\{\exists x\in U\colon h^V(x)\ge c+R,\,\varphi^{W,V}(x)>-R\bigr\}
\subseteq \bigl\{\exists x\in U\colon h^W(x)\ge c\bigr\}.
\end{equation}
Fix some complete ordering of the vertices in~$U$ and decrease the event on the left by requiring that~$\varphi^{W,V}(x)>-R$ at the ``first''~$x\in U$ where $h^V(x)\ge c$. Since $h^V$ and~$\varphi^{W,V}$ are independent, we get \eqref{E:1.15} by a standard Gaussian estimate.

The same argument (modulo duplication of variables) with $R:=0$ works for \eqref{E:2.7} except that to get the factor~$4$ on the right hand side, we need
\begin{equation}
\label{E:2.5}
P\bigl(\varphi^{W,V}(x)>0,\,\varphi^{W,V}(y)>0\bigr)\ge\frac14.
\end{equation}
This follows from the fact that the covariance of $\varphi^{W,V}(x)$ and $\varphi^{W,V}(y)$ is non-negative  (which enables the FKG inequality; see~\cite[Proposition~5.22]{Biskup-PIMS} for a general statement). 

Also in the proof of \eqref{E:1.16} we proceed similarly as for \eqref{E:1.15}. First we note that
\begin{equation}
\Bigl\{\bigl|\{x\in U\colon h^V(x)\ge c,\,\varphi^{W,V}(x)>0\}\bigr|\ge R\Bigr\}
\subseteq\Bigl\{\bigl|\{x\in U\colon h^W(x)\ge c\}\bigr|\ge R\Bigr\}.
\end{equation}
Now the event on the left contains the event that the number of vertices in~$U$ where $h^V(x)\ge c$ is at least~$2R$ and that at least~$R$ of those vertices have~$\varphi^{W,V}(x)>0$. In light of symmetry and independence of~$h^V$ and~$\varphi^{W,V}$, conditional on the ``first''~$2R$ vertices where~$h^V(x)\ge c$, the latter event has probability at least a half. The inequality \eqref{E:1.16} then follows.
\end{proofsect}

Now we can move on to the proofs of the above propositions. In these proofs we will frequently use the notation $S_K$ for the continuum box~$(0,K)^2$ and, abusing the notation somewhat, also its discrete counterpart $(0,K)^2\cap\Z^2$.

\begin{proofsect}{Proof of Proposition~\ref{lem:4}}
First we address the tightness of $|\Gamma_N^D(t)|$.
Since $D$ is bounded, there exist $a\in\R$ and~$K\in\N$ such that $D\subseteq (a,a+K)^2$. By shift invariance of the DGFF, we may assume that~$a:=0$. Then \eqref{E:1.16} reduces the tightness of $|\Gamma_N^D(t)|$ to the tightness of~$|\Gamma_N^{S_K}(t)|$. The definitions also give
\begin{equation}
\bigl|\Gamma_N^{S_K}(t)\bigr|\laweq \bigl|\Gamma_{NK}^{S_1}(t+m_N-m_{NK})\bigr|.
\end{equation}
Since $t\mapsto\Gamma_N^{S_1}(t)$ is non-increasing and $m_{KN}-m_N=g\log K+o(1)=O(1)$ as $N\to\infty$, the tightness of $|\Gamma_N^{S_K}(t)|$ follows from Theorem~1.2 of Ding and Zeitouni~\cite{DZ}.

Moving on to the proof of \eqref{E:3.4qw}, we use that~$A$ is open to find an open square~$S$ such that $\overline S\subseteq A$. The main result of Bramson and Zeitouni~\cite{BZ} implies $P(\max h_N^S\ge m_N-t)\to1$ as $N\to\infty$ and~$t\to\infty$. (Strictly speaking, the result if formulated for a unit square but slight adjustments to the size of~$S$ and the fact that $m_{NK}-m_K=O(1)$ as~$N\to\infty$ make it hold for all open squares.) The claim will then follow from \eqref{E:1.15} by taking $c:=t$ and letting $R\to\infty$ afterwards, provided we can check that
\begin{equation}
\sup_{N\ge1}\,\max_{x\in S_N}\,\Var\bigl(\varphi^{D_N,A_N}(x)\bigr)<\infty.
\end{equation}
To this end, we first recall the representation using the discrete Green functions
\begin{equation}
\label{E:3.16uai}
\Var\bigl(\varphi^{D_N,A_N}(x)\bigr) = G_{D_N}(x,x)-G_{A_N}(x,x).
\end{equation}
Now let $S'$ and~$S''$ be open squares such that $S''\subseteq S\subseteq D\subseteq S'$ and that~$x$ is in the center of their discrete counterparts~$S_N'$ and~$S_N''$. The natural monotonicity of the Green function in the underlying set bounds \eqref{E:3.16uai} by $G_{S_N'}(x,x)-G_{S_N''}(x,x)$,
In light of the logarithmic asymptotic for the Green function, this is bounded by a quantity that depends only on the ratio of side-lengths of~$S'$ and~$S''$. As~$\overline S\subseteq A$, this ratio is bounded uniformly in~$x\in S_N$.
\end{proofsect}

\begin{proofsect}{Proof of Proposition~\ref{lem:5}}
The argument here is fairly analogous to those used in the previous proof: For a square~$S$ of integer side-length, the claim boils down to Theorem~1.1 of Ding and Zeitouni~\cite{DZ}. Thanks to \eqref{E:2.7}, it then extends to any~$D\subseteq S$ as well.
\end{proofsect}

\begin{proofsect}{Proof of Proposition~\ref{lem:6}}
We first recall that, by Lemma~3.8 of Bramson, Ding and Zeitouni~\cite{BDingZ}, for unit-square domain $S_1$ and any $A\subset(0,N)^2\cap\Z^2$, we have
\begin{equation}
		P \bigl( \exists x \in A \colon  h^{S_1}_N(x) \geq m_N + t - s \bigr)
			\leq c\,\Bigl(\frac{|A|}{N^2}\Bigr)^{1/2}\, t\, \texte^{-\alpha (t-s)},\qquad s\ge 0,\, t\ge1.
\end{equation}
In light of $h_N^{S_K}(x)=h^{S_{KN}}(x)=h_{NK}^{S_1}(x)$ and the fact that $m_{NK}-m_K=O(1)$ as~$N\to\infty$, this yields the same claim for domain $S_K:=(0,K)^2$ and any $A\subseteq NS_K$, provided~$c$ is adjusted by a~$K$-dependent factor. The bound \eqref{E:1.15} with $R:=0$ then extends this to all $D\subseteq S_K$ as well.
\end{proofsect}


\subsection{Continuum limit}
\noindent
Our next task is the extraction of the continuum limit for the ``binding field'' $\varphi^{U,V}$ provided both domains properly scale with~$N$. First off, let us check that the purported limit object, the continuum Gaussian field $\Phi^{D,\wt D}$, is well defined:

\begin{proofsect}{Proof of Lemma~\ref{lemma-2.3a}}
As is readily checked, the continuum Green function, i.e., the integral kernel of the operator $(-\frac14\Delta)^{-1}$ in~$D$, admits the representation
\begin{equation}
\label{E:3.19ww}
G^D(x,y)=-g\log|y-x|+g\int_{\partial D}\Pi^D(x,\textd z)\log|z-y|
\end{equation}
for all~$x,y\in D$ with $x\ne y$. Assuming~$\wt D\subseteq D$, this immediately gives
\begin{equation}
\label{E:1.7}
C^{D,\widetilde D}(x,y)=G^D(x,y)-G^{\widetilde D}(x,y),\qquad x,y\in\wt D,\, x\ne y.
\end{equation}
By the properties of the Poisson kernel, $x\mapsto \int_{\partial D}\Pi^D(x,\textd z)\log|z-y|$ is harmonic on~$D$ for all $y\in D$. Hence, so is $x\mapsto C^{D,\wt D}(x,y)$ on~$\wt D$ for all $y\in\wt D$. The Green function is  symmetric, since the Laplacian is a symmetric operator, and thus~$C^{D,\wt D}$ is symmetric as well.

The difference in \eqref{E:1.7} is defined by \eqref{E:3.19ww} only for~$x\ne y$ but it extends continuously to~$x=y$ by the aforementioned harmonicity in~$x$. In particular, $C^{D,\widetilde D}$ is symmetric and harmonic on~$\widetilde D$ in both variables. To see that $C^{D,\widetilde D}$ is positive semidefinite, we again invoke \eqref{E:1.7} and the fact that the Laplacian on~$D$ dominates the Laplacian on~$\widetilde D$ in the sense of comparison of quadratic forms.

It follows that $C^{D,\widetilde D}$ is a covariance kernel on~$\wt D\times\wt D$ and so there is a mean-zero Gaussian process $\Phi^{ D, \wt{ D}}$ on $\wt{ D}$ with covariance function~$C^{ D, \wt{ D}}$. As $C^{D,\widetilde D}$ is smooth, so are the sample paths of~$ \Phi^{ D, \wt{ D}}$ a.s., by Kolmogorov-\v Censtov criteria (or the Fernique inequality discussed below). To get path-wise harmonicity of~$\Phi^{ D, \wt{ D}}$ we just note that the mean and the variance of~$\Delta \Phi^{ D, \wt{ D}}$ vanish.
\end{proofsect}

Next let us address the passage to continuum limit. Given $\wt{D},D\in\mathfrak D$ with $\wt D\subseteq D$, we will write~$\varphi^{ D, \wt{ D}}_N$ as a shorthand for~$\varphi^{ D_N, \wt{ D}_N}$. Then we have:

\begin{proposition}
\label{lem:2}
Let $\wt{D},D\in\mathfrak D$ obey $\wt D\subseteq D$. Then for all $x,y \in \wt{ D}$
\begin{equation}
\label{E:3.20}
\text{\rm Cov}\bigl(
	\varphi^{ D, \wt{ D}}_{N}(\lf xN \rf) \,,\,
	\varphi^{ D, \wt{ D}}_{N}(\lf yN \rf) \bigr)
\,\underset{N\to\infty}\longrightarrow\, 
C^{D,\wt D}(x,y),
\end{equation}
with the convergence uniform over closed subsets of~$\wt D\times\wt D$. In particular, $\varphi_N^{D,\wt D}(\lfloor  N\cdot\rfloor)$ tends to~$\Phi^{D,\wt D}(\cdot)$ in the sense of finite dimensional distributions.
\end{proposition}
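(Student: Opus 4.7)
The starting point is the identity
\begin{equation*}
\text{\rm Cov}\bigl(\varphi^{D,\wt D}_N(u),\varphi^{D,\wt D}_N(v)\bigr)=G_{D_N}(u,v)-G_{\wt D_N}(u,v),\qquad u,v\in\wt D_N,
\end{equation*}
which follows immediately from the Gibbs-Markov decomposition~\eqref{e:2.3} together with $\text{\rm Cov}(h^V(x),h^V(y))=G_V(x,y)$: the summands $h^{\wt D_N}$ and $\varphi^{D,\wt D}_N$ in $h^{D_N}\laweq h^{\wt D_N}+\varphi^{D,\wt D}_N$ are independent, so their covariances subtract appropriately. The task therefore reduces to showing that the right-hand side, evaluated at $u=\lf xN\rf$ and $v=\lf yN\rf$, converges to $C^{D,\wt D}(x,y)$ uniformly on closed subsets of~$\wt D\times\wt D$. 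The asserted convergence of finite-dimensional distributions then follows at once, since all fields involved are centered Gaussian and weak convergence of Gaussian vectors is equivalent to convergence of their covariance matrices.

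Next, a strong-Markov argument applied at the first exit time $\tau$ of a simple random walk from $\wt D_N$ (killed upon further leaving $D_N$, with the convention $G_{D_N}(z,v)=0$ for $z\notin D_N$) gives
\begin{equation*}
G_{D_N}(u,v)-G_{\wt D_N}(u,v)=E_u\bigl[G_{D_N}(S_\tau,v)\bigr].
\end{equation*}
The plan is to pass to the continuum limit using two ingredients. First, the convergence of discrete harmonic measure established in the Appendix: for $\wt D\in\mathfrak D$ and $x$ in a compact subset of $\wt D$, the law of $S_\tau/N$ under $P_{\lf xN\rf}$ converges weakly to $\Pi^{\wt D}(x,\cdot)$, uniformly in~$x$. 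Second, the asymptotic expansion of the discrete Green function derived from the potential-kernel representation $G_{D_N}(z,v)=E_z[\mathfrak a(S_{\tau_{D_N}}-v)]-\mathfrak a(z-v)$ and $\mathfrak a(\cdot)=g\log|\cdot|+O(1)$: for $y$ in a compact subset of~$\wt D$, so that $v=\lf yN\rf$ is bounded away from $\partial\wt D_N$, one obtains $G_{D_N}(z,v)=G^D(z/N,y)+o(1)$ uniformly in $z\in D_N$, where $G^D$ is given by~\eqref{E:3.19ww}. Combining the two and using that $z\mapsto G^D(z,y)$ extends continuously to $\overline D$, vanishing on $\partial D$, yields
\begin{equation*}
\lim_{N\to\infty}\bigl[G_{D_N}(\lf xN\rf,\lf yN\rf)-G_{\wt D_N}(\lf xN\rf,\lf yN\rf)\bigr]=\int_{\partial\wt D}\Pi^{\wt D}(x,\textd z)\,G^D(z,y).
\end{equation*}

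The final step identifies this limit with $C^{D,\wt D}(x,y)$ by a harmonicity argument. Substituting~\eqref{E:3.19ww} for $G^D(z,y)$ splits the integral into two pieces. The first piece, $g\int_{\partial\wt D}\Pi^{\wt D}(x,\textd z)\int_{\partial D}\Pi^D(z,\textd w)\log|w-y|$, is the harmonic extension from $\partial\wt D$ to $\wt D$ of the boundary trace of $z\mapsto g\int_{\partial D}\Pi^D(z,\textd w)\log|w-y|$; since the latter is already harmonic throughout $\wt D$, the extension coincides with the function itself, so the piece equals $g\int_{\partial D}\Pi^D(x,\textd w)\log|w-y|$ at $x$. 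The second piece, $-g\int_{\partial\wt D}\Pi^{\wt D}(x,\textd z)\log|z-y|$, is already in the form displayed in~\eqref{E:1.6}, and the two together are exactly $C^{D,\wt D}(x,y)$. The main technical obstacle is making the Green-function asymptotic uniform as $z$ ranges over $\partial\wt D_N$, especially where $\partial\wt D$ touches $\partial D$ and $G_{D_N}(z,v)$ degenerates to zero; this is handled by the boundary regularity built into~$\mathfrak D$ together with the uniform-in-starting-point convergence of harmonic measure from the Appendix, which together guarantee continuous matching of the asymptotics all the way up to~$\partial D$.
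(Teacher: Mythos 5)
Your starting point matches the paper: the covariance identity $\text{Cov}(\varphi^{D,\wt D}_N(u),\varphi^{D,\wt D}_N(v))=G_{D_N}(u,v)-G_{\wt D_N}(u,v)$, and the reduction of distributional convergence to covariance convergence. Your final identification step (showing $\int_{\partial\wt D}\Pi^{\wt D}(x,\textd z)G^D(z,y)=C^{D,\wt D}(x,y)$ by inserting \eqref{E:3.19ww} and using that the harmonic extension of the trace of an already-harmonic function recovers the function itself) is also correct. Where you diverge from the paper is the middle: you apply the strong Markov property at $\tau=\tau_{\wt D_N}$ to write the difference as $E_u[G_{D_N}(S_\tau,v)]$ and then try to pass each factor to the limit separately, whereas the paper substitutes the potential-kernel representation into \emph{both} $G_{D_N}$ and $G_{\wt D_N}$ first, notes that the singular term $-a(x-y)$ and the additive constant $c_0$ cancel in the difference, and is then left with $g\sum_z H^D_N(x,z)\log|y-z| - g\sum_z H^{\wt D}_N(x,z)\log|y-z| + o(1)$, where both integrands are bounded continuous functions near $\partial D$ and $\partial\wt D$ and both starting points stay in a compact subset of the interior. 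This is exactly the hypothesis under which Lemma~A.1 applies, and nothing further is needed.

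The gap in your approach is the step asserting $G_{D_N}(z,v)=G^D(z/N,y)+o(1)$ uniformly over $z\in\partial\wt D_N$. Since $\wt D$ can share boundary arcs with $D$ (indeed, the Gibbs--Markov application has $\leb(D\setminus\wt D)=0$), the exit point $S_\tau$ ranges over points arbitrarily close to $\partial D_N$, where the discrete Green function degenerates to zero. Lemma~A.1 gives uniform convergence of harmonic measure only for starting points in \emph{compact subsets of $D$}, so it does not supply the uniform estimate you need for $z$ near $\partial D$. You correctly flag this as ``the main technical obstacle,'' but the tools you cite do not close it: what is actually required is a uniform boundary-vanishing estimate of Beurling type (something like $\sup\{G_{D_N}(z,v)\colon \textd(z/N,\partial D)\le\epsilon\}\to 0$ as $\epsilon\downarrow 0$, uniformly in $N$), and neither this nor the corresponding continuity of $G^D(\cdot,y)$ up to $\partial D$ with uniform modulus is established anywhere in the paper. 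In short, your route can in principle be completed, but at the cost of a nontrivial additional boundary estimate; the paper's order of operations — subtract first, then take the limit — makes the problematic boundary contributions cancel exactly and avoids the issue entirely.
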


For the proof we will need to work with the discrete harmonic measure  $H^D_N(x,y)$ on~$D_N$ which is the probability that the simple symmetric random walk started from~$x\in D_N$ exits~$D_N$ at the point~$y\in\partial D_N$. By Lemma~\ref{lemma-HM}, $\sum_{z\in\partial D}H^D(\lfloor Nx\rfloor,z)\delta_{z/N}$ tends to~$\Pi^D(x,\cdot)$ in the vague (or even weak) topology on the space of probability measures. We also introduce the shorthand
\begin{equation}
\label{E:3.21ww}
D^\delta:=\bigl\{x\in D\colon\textd(x,D^\cc)>\delta\bigr\},
\end{equation}
where~$\delta>0$ is small enough so that~$D^\delta$ is non-empty. Notice that if~$D\in\mathfrak D$, then also $D^\delta\in\mathfrak D$ provided $\delta$ is small enough.

\begin{proofsect}{Proof of Proposition~\ref{lem:2}}
We use the well-known fact that the discrete Green function $G_{D_N}$ also admits a representation of the form \eqref{E:3.19ww}
\begin{equation}
G_{D_N}(x,y)=- \fraka(x-y)+\sum_{z\in\partial D_N}H^D_N(x,z) \fraka(y-z),
\end{equation}
 where~$\fraka$ is the potential kernel on~$\Z^2$. 
Assuming $x,y\in(\wt D^\delta)_N$ for some small~$\delta>0$, we thus get
\begin{multline}
\qquad
\text{\rm Cov}\bigl(\varphi^{D,\wt D}_N(x),\varphi^{D,\wt D}_N(y)\bigr)=
g\sum_{z\in\partial D_N}H^D_N(x,z)\log|y-z|
\\-g\sum_{z\in\partial \wt D_N}H^{\wt D}_N(x,z)\log|y-z|+o(1),
\qquad
\end{multline}
where we used that $H_N^D(x,z)$ is a probability measure to cancel the constant term in \eqref{E:3.23tt} and bound the error uniformly in~$x,y\in\wt D^\delta$.

The functions $z\mapsto\log|y-z|$ are bounded and continuous on a neighborhood of~$\partial D$, resp.,~$\partial\wt D$, and so the convergence in \eqref{E:3.20} follows from \eqref{E:3.21a}. Equicontinuity in~$y\in\wt D^\delta$ then shows that the limit is uniform in~$y\in\wt D^\delta$, for every~$x\in\wt D^\delta$. The uniformity clause in Lemma~\ref{lemma-HM} then applies the same with~$x$ and~$y$ interchanged, from which the claimed joint uniformity follows.
As all processes are centered Gaussian, the convergence of the covariances implies convergence in the sense of finite-dimensional distributions.
\end{proofsect}

\subsection{Some Gaussian inequalities}
Next we will apply some standard Gaussian inequalities to derive much needed tightness of the maximum and Gaussian tails for the fluctuations of both $\varphi_N^{D,\wt D}$ and~$\Phi^{D,\wt D}$. This will be done by invoking the Fernique inequality and the Borell-Tsirelson inequality. We only remind the reader of the statements of these general bounds; detailed proofs can be found in references below and/or the lecture notes by Biskup~\cite{Biskup-PIMS}.

Consider a centered (and separable) Gaussian field $\{X_t\colon t\in\mathfrak X\}$ indexed by points in the totally-bounded (pseudo)metric space $(\mathfrak X,\rho)$, where~$\rho$ is linked to~$X$ via $\rho(t,t'):=[E((X_t-X_{t'})^2)]^{1/2}$. Given any Borel probability measure~$m$ on~$\mathfrak X$ and writing $B_\rho(t,r):=\{t'\in\mathfrak X\colon \rho(t,t')\le r\}$, the Fernique inequality states that
\begin{equation}
\label{E:3.25}
E\bigl(\,\sup_{t\in\mathfrak X}\,X_t\bigr)\le K\sup_{t\in\mathfrak X}\,\int_0^\infty \sqrt{\log\frac1{m(B_\rho(t,r))}}\,\textd r
\end{equation}
for some universal constant~$K<\infty$. See, e.g., Adler~\cite[Theorem~4.1]{Adler}. The Borell-Tsirelson inequality in turn controls the fluctuation of $\sup_{t\in\mathfrak X}X_t$ from its mean by stating that these have tails of a centered Gaussian with variance $\sigma^2:=\sup_{t\in\mathfrak X}E(X_t^2)$. Based on these, we now claim:

\begin{proposition}
\label{lem:3}
Let $D,\wt{D}\in\mathfrak D$ obey $\wt D\subseteq D$. Then for all $\delta>0$ small,
\begin{equation}
\label{E:2.15}
\sup_{N\ge1} \,\,E\Bigl(\,\, \max_{x \in \wt{ D}_{N}^{\delta}} \varphi^{ D, \wt{ D}}_N(x)\Bigr)<\infty.
\end{equation}
Moreover, there are $c,c'\in(0,\infty)$ --- which depend on $D$, $\wt D$ and $\delta$ --- such that for all $N \geq 1$ and all~$t\ge0$,
\begin{equation}
\label{E:2.16}
\BbbP \biggl( \Bigl|\,
	\max_{x \in \wt{D}_N^\delta} \varphi^{ D, \wt{ D}}_N(x) - 
E \max_{x \in \wt{D}_N^\delta} \varphi^{ D, \wt{ D}}_N(x) \Bigr| > t \biggr)
	\leq c \texte^{-c' t^2} \,.
\end{equation}
Completely analogous bounds --- with $x\in\wt D^\delta_N$ replaced by $x\in\wt D^\delta$ --- hold for~$\Phi^{D,\wt D}$.
\end{proposition}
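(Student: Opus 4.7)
I plan to derive \eqref{E:2.15} from the Fernique inequality \eqref{E:3.25} and \eqref{E:2.16} from the Borell--Tsirelson inequality. Both require uniform-in-$N$ control of the intrinsic (pseudo)metric
$$\rho_N(x,y)^2 := E\bigl[(\varphi^{D,\wt D}_N(x) - \varphi^{D,\wt D}_N(y))^2\bigr]$$
on $\wt D_N^\delta$, together with a uniform bound on $\sigma_N^2 := \max_{x \in \wt D_N^\delta} \Var(\varphi^{D,\wt D}_N(x))$.

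\textbf{Uniform Lipschitz estimate for $C_N$.} Write $C_N(x,y) := G_{D_N}(x,y) - G_{\wt D_N}(x,y)$. Using the potential kernel representation exploited in the proof of Proposition~\ref{lem:2}, for $x,y \in \wt D_N^\delta$ one has
$$C_N(x,y) = \sum_{z \in \partial D_N} H^D_N(x,z)\, a(y-z) - \sum_{z \in \partial \wt D_N} H^{\wt D}_N(x,z)\, a(y-z),$$
the singular $a(x-y)$ terms cancelling. For any $x \in \wt D_N^\delta$, every $z \in \partial D_N \cup \partial \wt D_N$ obeys $|y-z| \geq \delta N - O(1)$ for all $y \in \wt D_N^\delta$. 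The asymptotic \eqref{E:3.23tt} then yields $|\nabla a(y-z)| = O(1/N)$ uniformly, so each summand is Lipschitz in $y$ with constant $O(1/N)$. Since $H^D_N(x,\cdot)$ and $H^{\wt D}_N(x,\cdot)$ are probability measures, the triangle inequality gives $|C_N(x,y) - C_N(x,y')| \leq C_\delta\, |y-y'|/N$ uniformly in $x$ and $N$; by symmetry the same holds in the other variable. Expanding
$$\rho_N(x,y)^2 = C_N(x,x) - 2C_N(x,y) + C_N(y,y)$$
as a telescoping sum then gives $\rho_N(x,y)^2 \leq 2 C_\delta\, |x-y|/N$ on $\wt D_N^\delta \times \wt D_N^\delta$. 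Uniform boundedness of $\sigma_N^2$ follows from Proposition~\ref{lem:2} applied on the compact set $\overline{\wt D^\delta}$.

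\textbf{Application of the two inequalities.} Reparametrize by $u = x/N \in \wt D^\delta$; the rescaled intrinsic metric satisfies $\rho_N(\lfloor Nu\rfloor, \lfloor Nv\rfloor) \leq C' \sqrt{|u-v|}$ uniformly in $N$. Taking $m$ to be normalized Lebesgue measure on $\wt D^\delta$, a $\rho_N$-ball of radius $r$ contains a Euclidean ball of radius of order $r^2$, and hence has $m$-mass at least of order $r^4$ for small $r$, while it equals $1$ once $r$ exceeds the (uniformly bounded) $\rho_N$-diameter of $\wt D^\delta$. The Fernique integrand in \eqref{E:3.25} is therefore dominated by a constant multiple of $\sqrt{\log(1/r)}$ for small $r$ and vanishes for large $r$; the resulting integral is finite and independent of $N$, proving \eqref{E:2.15}. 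The Borell--Tsirelson inequality with uniformly bounded $\sigma_N$ then delivers \eqref{E:2.16}. For $\Phi^{D,\wt D}$ the argument is even simpler: by Lemma~\ref{lemma-2.3a}, $C^{D,\wt D}$ is smooth on $\wt D^\delta \times \wt D^\delta$, so its intrinsic metric is Lipschitz in Euclidean distance, and the same two inequalities apply directly.

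\textbf{Main obstacle.} The only genuine work is extracting uniformity in $N$ of the Lipschitz constant for $C_N$. The parameter $\delta>0$ is indispensable here: without restricting to $\wt D^\delta$, boundary points $z \in \partial \wt D_N$ with $|y-z| = O(1)$ would wreck the $|\nabla a| = O(1/N)$ bound. The $O(|x|^{-2})$ error terms in \eqref{E:3.23tt} are of lower order on the scale of $\wt D_N^\delta$, so everything else reduces to the standard Gaussian machinery recalled in the statement.
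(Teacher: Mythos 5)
Your overall strategy coincides with the paper's: both proofs establish the modulus bound $\rho(x,y)\lesssim\sqrt{|x-y|}$ for the intrinsic metric and then invoke Fernique followed by Borell--Tsirelson. Your derivation of the Lipschitz bound for~$C_N$ by differentiating the potential-kernel representation is more explicit than the paper's terse appeal to harmonicity (which would run through a Harnack-type bound), and that part is sound; the $\delta$-separation plays exactly the role you identify.

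There is, however, a gap in the application of Fernique. You take~$m$ to be normalized Lebesgue measure on all of~$\wt D^\delta$ and assert that a $\rho_N$-ball of radius~$r$, which contains a Euclidean disc of radius of order $r^2$, therefore has $m$-mass of order~$r^4$. That last step requires a uniform interior-density estimate for~$\wt D^\delta$, namely $\leb\bigl(B(u,s)\cap\wt D^\delta\bigr)\ge c\,s^2$ for all $u\in\wt D^\delta$ and all small~$s$. This is not automatic: for $\wt D\in\mathfrak D$ the inner parallel set $\wt D^\delta$ can develop cusps (e.g.\ when two boundary arcs of~$\wt D$ approach within distance exactly~$2\delta$), and at such a cusp the mass of a small Euclidean disc intersected with~$\wt D^\delta$ need not be comparable to the disc's area. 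In that case the Fernique integrand $\sqrt{\log(1/m(B_\rho(u,r)))}$ can fail to be uniformly integrable. The paper sidesteps this by first covering~$\wt D^\delta$ by finitely many open squares whose closures lie in~$\wt D$ and running the Fernique argument on each square separately (where convexity makes the $r^4$ ball-mass bound trivial); the finitely many square-wise maxima are then combined. To repair your argument, insert the same reduction: cover $\wt D^\delta$ by such squares $S_1,\dots,S_k$, prove \eqref{E:2.15}--\eqref{E:2.16} on each $S_i$ (your Lipschitz computation is unaffected, as it only uses $\dist(S_i,\wt D^\cc)>0$), and then bound the overall maximum by the maximum of the $k$ square-wise maxima using the Gaussian concentration you already have on each piece.
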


\begin{proof}
As $\wt D^\delta$ can be covered by a finite number of open squares whose closure is still contained in~$\wt D$, it suffices to prove this for one such a square~$S$.
Consider the random fields~$\varphi_N^{D,\wt D}(\lfloor \cdot N\rfloor)$ and~$\Phi^{D,\wt D}(\cdot)$ on~$S$. As observed before, the covariances of these are bounded throughout~$S$ uniformly in~$N$ while the harmonicity ensures that they are Lipschitz functions in both arguments, uniformly in~$N$ over~$S$. Letting~$L$ denote the requisite Lipschitz constant we have $\rho(x,y)\le\sqrt{L|x-y|}$. In particular, the ball $B_\rho(x,r)$ in $\rho$-metric thus contains the part of the Euclidean ball of radius $(r/L)^2$ that is contained in~$S$. 

Using the normalized Lebesgue measure on~$S$ for~$m$ in \eqref{E:3.25}, the regularity of the boundary of~$S$ implies that $m(B_\rho(t,r))\ge c (r/L)^4$ for~$r>0$ small where $c>0$ is a constant. It follows that the integral in \eqref{E:3.25} is finite, uniformly in~$N$, thus proving \eqref{E:2.15}. For~\eqref{E:2.16} we just apply the Borell-Tsirellson inequality with the fact that the variances of the two fields are uniformly bounded on~$\wt D^\delta$.
\end{proof}

Similar arguments also permit us to prove the following claim that will be useful in our approximation arguments in Section~\ref{sec3}.

\begin{proposition}
\label{prop-inner-apprx}
Let $D\in\mathfrak D$ and recall the set $D^\delta$ from \eqref{E:3.21ww}. Then for each $x\in D$,
\begin{equation}
\label{E:3.28rr}
\sup_{\begin{subarray}{c}
D'\colon x\in D'\\D^\delta\subseteq D'\subseteq D
\end{subarray}}
\text{\rm Var}\bigl(\Phi^{D,D'}(x)\bigr)\,\underset{\delta\downarrow0}\longrightarrow\,0
\end{equation}
locally uniformly in~$D$ and, 
for any open set $A$ with $\overline A\subset D$, also
\begin{equation}
\label{E:3.26wq}
\sup_{\begin{subarray}{c}
D'\colon A\subseteq D'\\D^\delta\subseteq D'\subseteq D
\end{subarray}} P\Bigl(\,\,\sup_{x\in A}\,\,\bigl|\Phi^{D,D'}(x)\bigr|>\epsilon\Bigr)\,\underset{\delta\downarrow0}\longrightarrow\,0
\end{equation}
for any $\epsilon>0$.
\end{proposition}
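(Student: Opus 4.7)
The plan is to express the variance and covariance of $\Phi^{D,D'}$ via a single integral over $\partial D'$ of an explicit continuous function that vanishes on $\partial D$, and then exploit compactness of $\partial D'$ near $\partial D$. Given $x\in D^\delta$, set
\begin{equation*}
u_x(y) := g\int_{\partial D}\Pi^D(y,\textd z)\,\log|z-x|,\qquad y\in D,
\end{equation*}
which is harmonic on $D$ and bounded (since $|z-x|\in[\delta,\diam D]$ for $z\in\partial D$). Applied to $D'\subseteq D$ with $x\in D'$, the mean value property for bounded harmonic functions gives $u_x(x)=\int_{\partial D'}\Pi^{D'}(x,\textd z)\,u_x(z)$; subtracting the Poisson representation \eqref{E:1.6} of $C^{D,D'}(x,x)$ yields the clean identity
\begin{equation*}
\Var\bigl(\Phi^{D,D'}(x)\bigr)=\int_{\partial D'}\Pi^{D'}(x,\textd z)\bigl[u_x(z)-g\log|z-x|\bigr],
\end{equation*}
with an analogous formula for $C^{D,D'}(x,y)$ (with $u_y$ in place of $u_x$).

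The integrand is continuous on $\overline D\setminus\{x\}$ and \emph{vanishes on $\partial D$}, the latter by Dirichlet regularity of every boundary point of $D\in\mathfrak D$ (see the fourth paragraph). Since $\partial D'\subseteq\{z\in\overline D\colon\textd(z,\partial D)\le\delta\}$, uniform continuity of the integrand on the compact set $\{z\colon\textd(z,\partial D)\le\delta_0\}$ gives $\sup_{z\in\partial D'}|u_x(z)-g\log|z-x||\to 0$ as $\delta\downarrow 0$, uniformly over admissible $D'$. Because $\Pi^{D'}(x,\cdot)$ is a probability measure, this proves \eqref{E:3.28rr}. For local uniformity in $x\in K\subset D$ compact: once $\delta<\tfrac12\textd(K,\partial D)$, the set $\partial D'$ is uniformly bounded away from $K$, so $(x,z)\mapsto u_x(z)-g\log|z-x|$ is jointly continuous on the resulting compact set and the convergence is uniform in $x\in K$.

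For the sup estimate \eqref{E:3.26wq}, the same representation applied to $C^{D,D'}(x,y)$ gives uniform convergence $C^{D,D'}(x,y)\to 0$ on $\overline A\times\overline A$, hence uniform vanishing of the intrinsic pseudometric $\rho_{D'}(x,y):=\sqrt{\Var(\Phi^{D,D'}(x)-\Phi^{D,D'}(y))}$ and of $\sigma_\delta^2:=\sup_{x\in\overline A}\Var(\Phi^{D,D'}(x))$. Harmonicity of $C^{D,D'}$ in each variable on $D'$ together with the standard gradient bound for bounded harmonic functions (the same mechanism as in the proof of Proposition~\ref{lem:3}) yields $\rho_{D'}(x,y)\le L\sigma_\delta\sqrt{|x-y|}$ on $\overline A$, with $L$ depending only on $\textd(\overline A,\partial D)$. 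Fernique's inequality \eqref{E:3.25}, applied with $m$ the normalized Lebesgue measure on $A$, then gives $E[\sup_{x\in A}|\Phi^{D,D'}(x)|]=O(\sigma_\delta)$, and the Borell--Tsirelson inequality combined with $\sigma_\delta\to 0$ upgrades this to the desired probabilistic sup convergence, uniformly in $D'$.

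The main obstacle is the justification that every point of $\partial D$ is regular for the Dirichlet problem on $D$, which is what promotes the identity $u_x-g\log|\cdot-x|=0$ on $\partial D$ from the quasi-everywhere sense (where it is automatic) to a genuine continuous boundary value. For $D\in\mathfrak D$ this follows from the Wiener criterion once one observes that each connected component of $\partial D$, having positive diameter, is a non-polar subset of $\C$ and therefore contributes a positive Wiener sum in every neighborhood of any of its points; alternatively, it can be deduced from the continuum harmonic-measure convergence results whose discrete counterpart is established in the paper's appendix.
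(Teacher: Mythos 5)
Your proof is correct, but it takes a genuinely different route from the paper's for the variance estimate \eqref{E:3.28rr}. The paper first exploits monotonicity: writing $\Phi^{D,D^\delta}\laweq\Phi^{D,D'}+\Phi^{D',D^\delta}$ (independent summands) reduces the supremum over admissible $D'$ to the single choice $D'=D^\delta$, after which it invokes Lemma~\ref{lemma-cont-h.m.} on convergence of continuum harmonic measures. You instead derive the clean identity
\[
\Var\bigl(\Phi^{D,D'}(x)\bigr)=\int_{\partial D'}\Pi^{D'}(x,\textd z)\bigl[u_x(z)-g\log|z-x|\bigr]
=\int_{\partial D'}\Pi^{D'}(x,\textd z)\,G^D(x,z),
\]
and observe that the integrand extends continuously to $\overline D$ and vanishes on $\partial D$ once one knows every boundary point of $D\in\mathfrak D$ is Dirichlet regular (which indeed follows, as you say, because each boundary component is a non-degenerate continuum). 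The payoff of your approach is an explicit formula giving the bound simultaneously for all admissible $D'$ without the monotonicity step, and it avoids any appeal to the appendix (Lemma~\ref{lemma-cont-h.m.} is itself proved by a Brownian coupling that is essentially another form of regularity). The small cost is that you must explicitly verify Dirichlet regularity and justify the optional-stopping identity for $u_x$ on $\overline{D'}$ (which requires the continuous extension), whereas the paper's monotonicity trick is self-contained modulo the appendix. For \eqref{E:3.26wq} your argument is essentially the same as the paper's (harmonicity gives a uniformly shrinking H\"older pseudometric, then Fernique and Borell--Tsirelson).
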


\begin{proofsect}{Proof}
Let $D'$ be such that~$D^\delta\subseteq D'\subseteq D$. Then
\begin{equation}
\Phi^{D,D^\delta}\laweq \Phi^{D,D'}+\Phi^{D',D^\delta}
\end{equation}
with the two fields on the right independent. This shows that $\text{\rm Var}(\Phi^{D,D'}(x))\le\text{\rm Var}(\Phi^{D,D^\delta}(x))$ for all $x\in D^\delta$. By Lemma~\ref{lemma-cont-h.m.} we have $C^{D,D^\delta}(\cdot,\cdot)\to0$ locally uniformly in~$D$ and, using Cauchy-Schwarz, the same thus applies to the supremum of $|C^{D,D'}(\cdot,\cdot)|$ over all~$D'$ as above.
 
This observation immediately yields \eqref{E:3.28rr}. For \eqref{E:3.26wq} we again notice that, by a covering argument it suffices to prove this for~$A$ equal to a square~$S$ with $\overline S\subset D$. Thanks to harmonicity, also $|C^{D,D'}(x,y)-C^{D,D'}(x,y')|/|y-y'|\to0$ as $\delta\downarrow0$ uniformly in~$x,y,y'\in S$ with $y\ne y'$ and in~$D'$ squeezed between~$D^\delta\cup S$ and~$D$. This implies that the (pseudo)metric $\rho$ induced on~$S$ by the Gaussian field $\Phi^{D,D'}$ obeys $\rho(x,y)\le \sqrt{L_n|x-y|}$, where $L_n\to0$ as $n\to\infty$. Using again the normalized Lebesgue measure on~$S$ for~$m$ in \eqref{E:3.25}, we find out (as in the previous proof) that $m(B_\rho(x,r))\ge c (r/L_n)^4$ for some $c>0$. As the $\rho$-diameter of~$S$ is at most order~$L_n$, we get
\begin{equation}
\sup_{\begin{subarray}{c}
D'\colon A\subseteq D'\\D^\delta\subseteq D'\subseteq D
\end{subarray}} E\Bigl(\,\,\sup_{x\in A}\,\,\bigl|\Phi^{D,D'}(x)\bigr|\Bigr)\,\underset{\delta\downarrow0}\longrightarrow\,0\,.
\end{equation} 
However, by the Borell-Tsirelson inequality, $\sup_{x\in A}\,\,\Phi^{D,D'}(x)$ has uniformly small Gaussian tails whose ``variance'' tends to zero uniformly in~$D'$ as above. This yields \eqref{E:3.26wq} without the absolute value. The symmetry $\Phi^{D,D'}\laweq-\Phi^{D,D'}$ then implies the full statement by a union bound.
\end{proofsect}

To make the list of our preliminaries complete, we also recall the Kahane convexity inequality in the form that will be suitable to our needs:

\begin{proposition}[Kahane's inequality]
\label{lemma-Kahane}
Let $\Phi(x)$ and $\wt\Phi(x)$ be continuous Gaussian fields indexed by points in an open set~$D\subset\R^2$. Assume
\begin{equation}
\text{\rm Cov}\bigl(\Phi(x),\Phi(y)\bigr)\ge\text{\rm Cov}\bigl(\wt\Phi(x),\wt\Phi(y)\bigr),\qquad x,y\in D.
\end{equation}
Then for any finite Borel measure $\sigma$ on~$D$,
\begin{equation}
E\biggl(\exp\Bigl\{-\int\sigma(\textd x)\texte^{\Phi(x)-\frac12\text{\rm Var}(\Phi(x))}\Bigr\}\biggr)
\ge E\biggl(\exp\Bigl\{-\int\sigma(\textd x)\texte^{\wt\Phi(x)-\frac12\text{\rm Var}(\wt\Phi(x))}\Bigr\}\biggr).
\end{equation}
\end{proposition}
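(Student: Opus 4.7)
The plan is to prove this via the classical two-step argument: (i) reduce the inequality to a finite-dimensional statement about Gaussian vectors by a discretization argument, and then (ii) establish the finite-dimensional inequality by a Gaussian interpolation between $\Phi$ and $\widetilde\Phi$ coupled with integration by parts.

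For step (i), I would fix a nested sequence of finite partitions $\{A_i^{(n)}\}_{i=1}^{k_n}$ of $D$ with $\max_i\diam(A_i^{(n)})\to 0$, pick representative points $x_i^{(n)}\in A_i^{(n)}$, and set $p_i^{(n)}:=\sigma(A_i^{(n)})\ge 0$. For the finite-dimensional Riemann sum $S_n^\Phi:=\sum_i p_i^{(n)}\,\texte^{\Phi(x_i^{(n)})-\frac12\text{Var}(\Phi(x_i^{(n)}))}$, continuity of $\Phi$ (which together with Gaussianity forces $x\mapsto \text{Var}\Phi(x)$ to be continuous as well) combined with finiteness of $\sigma$ and a dominated-convergence argument applied to $\exp(-\cdot)$ shows $E\exp(-S_n^\Phi)\to E\exp(-\int \texte^{\Phi-\frac12\text{Var}\Phi}\,\textd\sigma)$; the same applies to $\widetilde\Phi$. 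Thus it suffices to prove, for any $n$ and any vectors of covariances with $\text{Cov}(X_i,X_j)\ge \text{Cov}(\widetilde X_i,\widetilde X_j)$ and any weights $p_i\ge 0$,
\begin{equation}
\label{E:K-fin}
E\exp\Bigl(-\sum_i p_i\,\texte^{X_i-\frac12 E X_i^2}\Bigr)\ge E\exp\Bigl(-\sum_i p_i\,\texte^{\widetilde X_i-\frac12 E\widetilde X_i^2}\Bigr).
\end{equation}

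For step (ii), I would couple $(X_i)$ and $(\widetilde X_i)$ to be independent and interpolate by $Y_i(t):=\sqrt t\,X_i+\sqrt{1-t}\,\widetilde X_i$, which has $\text{Var}(Y_i(t))=t\,E X_i^2+(1-t)E\widetilde X_i^2$ and, for $s\ne t$, the cross-covariance satisfies $\text{Cov}(Y_i(t),Y_j(t))=t\,\text{Cov}(X_i,X_j)+(1-t)\,\text{Cov}(\widetilde X_i,\widetilde X_j)$. Setting $M_i(t):=\texte^{Y_i(t)-\frac12\text{Var}(Y_i(t))}$, $S(t):=\sum_i p_i M_i(t)$ and $F(t):=E\exp(-S(t))$, the endpoints satisfy $F(1)=\text{LHS of \eqref{E:K-fin}}$ and $F(0)=\text{RHS}$, so it suffices to show $F'(t)\ge 0$. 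Differentiating and applying Gaussian integration by parts separately in the $X$ and the $\widetilde X$ coordinates to the terms $E[X_iM_i(t)\texte^{-S(t)}]$ and $E[\widetilde X_iM_i(t)\texte^{-S(t)}]$, the ``diagonal'' contributions $\frac12 E X_i^2$ and $\frac12 E\widetilde X_i^2$ cancel exactly against the $\frac12\tfrac{\dd}{\dd t}\text{Var}(Y_i(t))$ term coming from the Girsanov normalization, leaving
\begin{equation}
F'(t)=\frac12\sum_{i,k}p_ip_k\bigl[\text{Cov}(X_i,X_k)-\text{Cov}(\widetilde X_i,\widetilde X_k)\bigr]\,E\bigl[M_i(t)M_k(t)\,\texte^{-S(t)}\bigr].
\end{equation}
All three factors on the right are nonnegative (the bracket by hypothesis, the $M$'s and $\texte^{-S}$ by construction), giving $F'(t)\ge 0$ and thus \eqref{E:K-fin}.

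The main technical obstacle is the discretization step rather than the interpolation: one must ensure that $S_n^\Phi$ converges in probability (or almost surely along a subsequence) to $\int \texte^{\Phi-\frac12\text{Var}\Phi}\,\textd\sigma$ jointly for $\Phi$ and $\widetilde\Phi$, which requires either a.s.~uniform continuity of $(\Phi,\widetilde\Phi)$ on $\supp\sigma$ or a moment-based approximation of $\sigma$ by atomic measures. Since $\exp(-\cdot)$ is bounded and monotone on $[0,\infty)$, the passage $n\to\infty$ in the expectations only needs convergence in probability, which follows directly from continuity of the fields and of $x\mapsto\text{Var}\Phi(x)$. The integration by parts in step (ii) is routine but one must carefully track the factors of $\sqrt t$ and $\sqrt{1-t}$ produced by the chain rule, and the fact that only the $Y_i$ with matching index $i=j$ depends on $X_i$ (so that $\partial_{X_k}M_i(t)=\sqrt t\,\delta_{ik}M_i(t)$).
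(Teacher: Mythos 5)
Your argument is correct and is exactly the Kahane interpolation the paper invokes: discretize to a finite-dimensional vector, interpolate via $\sqrt t\,X+\sqrt{1-t}\,\widetilde X$, and apply Gaussian integration by parts to show $F'(t)\ge 0$, with the diagonal variance terms cancelling against the $\tfrac12\tfrac{\dd}{\dd t}\Var(Y_i(t))$ normalization just as you describe. The paper's proof is merely a pointer to this classical computation, so your write-out matches it in substance; your acknowledged caveat on the discretization can be handled cleanly by first proving the inequality for $\sigma$ restricted to compacts $K\uparrow D$ (where a.s.\ uniform continuity holds) and then passing to the limit by monotone convergence.
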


\begin{proofsect}{Proof}
This goes back to Kahane~\cite{Kahane}. The proof boils down to a computation of the $t$-derivative of the expectation with~$\Phi$ replaced by $\sqrt t\,\Phi+\sqrt{1-t}\,\wt\Phi$, with $\Phi$ and~$\wt\Phi$ regarded as independent. The same argument applies even when the exponential (of the integral) is replaced by any convex function (of the integral) with at most a polynomial growth at infinity.
\end{proofsect}

\section{Convergence and Gibbs-Markov property}
\label{sec3}\noindent
Here we will prove Theorems~\ref{thm:1} and Theorem~\ref{thm:2}. For Theorem~\ref{thm:1} the key idea is to approximate general domains from within by families of disjoint squares for which the result can be drawn from Biskup and Louidor~\cite{Biskup-Louidor}. Notwithstanding, various limit statements need to be invoked and this is where the propositions from the previous section will come useful. A bonus point is that the  arguments then directly yield Theorem~\ref{thm:2} as well.

\subsection{Reduction to subdomains}
We first address the reduction to a subset of the underlying domain. Pick $D\in\mathfrak D$. We will generally consider approximations~$D^n\in\mathfrak D$ of~$D$ such that
\begin{equation}
\label{E:3.1}
D^n\subseteq D,\,\,\,\forall n\ge1,\quad\text{and}\quad \lim_{n\to\infty}\leb(D\smallsetminus D^n)=0.
\end{equation}
Next we introduce a (continuous) mollifier that will help us deal with various boundary issues.
For~$\epsilon>0$, let $\chi_{n,\epsilon}:D\to[0,1]$ be given by
\begin{equation}
\chi_{n,\epsilon}(x)=1\wedge\frac1{\epsilon}\bigl(\textd(x,(D^n)^\cc)-\epsilon\bigr)_+
\end{equation}
Obviously, $\chi_{n,\epsilon}$ is continuous with $\chi_{n,\epsilon}(x)=0$ when $\textd(x,(D^n)^\cc)\le\epsilon$ and $\chi_{n,\epsilon}(x)=1$ when $\textd(x,(D^n)^\cc)\ge2\epsilon$. In addition, $\epsilon\mapsto\chi_{n,\epsilon}(x)$ is non-increasing for all~$x$. 

The first step of the proof is a passage from the process in~$D$ to that in~$D^n$. This will naturally lead us to consider the continuum Gaussian field $\{\Phi^{D,D^n}(x)\colon x\in D^n\}$ which, we recall, has zero mean and covariance $C^{D,D^n}$ as defined in~\eqref{E:1.7}. Given a realization of~$\Phi^{D,D^n}$ and a continuous, compactly supported function $f\colon\overline D\times\R\to[0,\infty)$, define
\begin{equation}
\label{E:3.3}
f_{\epsilon,\Phi}^{D,D^n}(x,h):=f\bigl(x,\,h+\Phi^{D, D^n}(x)\bigr)\chi_{n,\epsilon}(x),\qquad (x,h)\in D^n\times\R.
\end{equation}
Note that, for each fixed~$n$,~$\epsilon$ and $\Phi^{D,D^n}$, this is still a bounded, continuous, non-negative function with compact support. The main reduction step is now the content of:

\begin{proposition}
\label{prop-3.1}
Suppose that $D$ and~$D^n$ satisfy \eqref{E:3.1} and let~$r_N$ be such that $r_N\to\infty$ with $r_N/N\to0$. Then for any continuous function $f\colon\overline D\times\R\to[0,\infty)$ with compact support,
\begin{equation}
\lim_{n\to\infty}\,\limsup_{\epsilon\downarrow0}\,\limsup_{r\to\infty}\,\limsup_{N\to\infty}\,\Bigl|\,E \texte^{-\langle\eta_{N,r_N}^D \,,\, f \rangle}-E \texte^{-\langle\eta_{N,r}^{D^n} \,,\, f^{D,D^n}_{\epsilon,\Phi}\rangle}\Bigr|=0.
\end{equation}
Here the second expectation is over~$\eta_{N,r}^{D^n}$ and~$\Phi^{D,D^n}$, regarded as independent.
\end{proposition}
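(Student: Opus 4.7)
The plan is to peel off the four limits in the order in which they appear: first $N\to\infty$, then $r\to\infty$, then $\epsilon\downarrow 0$, and finally $n\to\infty$. The central tool is the Gibbs--Markov decomposition~\eqref{e:2.3}, which lets me write
\begin{equation*}
h_N^D\,\laweq\,h_N^{D^n}+\varphi_N^{D,D^n}\quad\text{on }D^n_N,
\end{equation*}
with $h_N^{D^n}$ and $\varphi_N^{D,D^n}$ independent. The argument has three steps: (i) transfer the Laplace functional on the LHS onto the bulk of $D^n_N$; (ii) identify extreme $r_N$-local maxima of $h_N^D$ there with those of $h_N^{D^n}$; and (iii) substitute the continuum field $\Phi^{D,D^n}$ for $\varphi_N^{D,D^n}$.

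\emph{Localization.} Let $K>0$ be such that $\supp f\subseteq\overline D\times[-K,\infty)$ and write $(D^n)^{2\epsilon}$ for the $2\epsilon$-inside of $D^n$. By Proposition~\ref{lem:6}, the expected number of points of $\eta^D_{N,r_N}$ with spatial coordinate in $\overline D\smallsetminus(D^n)^{2\epsilon}$ and height $\ge m_N-K$ is at most $C(K)\bigl(\leb(D\smallsetminus(D^n)^{2\epsilon})\bigr)^{1/2}$, which vanishes as $\epsilon\downarrow 0$ and $n\to\infty$ by~\eqref{E:3.1}. Using $|\texte^{-a}-\texte^{-b}|\le|a-b|$, this contributes a vanishing error to the Laplace functional on the LHS; and since $\chi_{n,\epsilon}$ equals $1$ on $(D^n)^{2\epsilon}$ and is supported in the $\epsilon$-inside of $D^n$, the RHS is likewise determined (up to negligible error) by points with spatial coordinate in $(D^n)^{2\epsilon}_N$.

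\emph{Identification of local maxima.} For $x\in(D^n)^{2\epsilon}_N$ and $N$ large, $\Lambda_{r_N}(x)\subset D^n_N$. Discrete harmonicity of $\varphi_N^{D,D^n}$ together with a standard harmonic gradient estimate and the uniform tightness of its maximum (Proposition~\ref{lem:3}) yields
\begin{equation*}
\max_{x\in(D^n)^{2\epsilon}_N}\,\,\max_{z\in\Lambda_{r_N}(x)}\bigl|\varphi_N^{D,D^n}(z)-\varphi_N^{D,D^n}(x)\bigr|\,=\,O_p\bigl(r_N/(\epsilon N)\bigr)\,\longrightarrow\,0
\end{equation*}
as $N\to\infty$. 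Combined with the separation of near-maximal local maxima (Proposition~\ref{lem:5}) and the tightness bound of Proposition~\ref{lem:4}, this shows that, with probability tending to one, $x\in(D^n)^{2\epsilon}_N$ is an $r_N$-local max of $h_N^D$ iff it is an $r_N$-local max of $h_N^{D^n}$. Swapping $r_N$ for any fixed $r<r_N$ in the process associated to $h_N^{D^n}$ then incurs an error vanishing as $r\to\infty$ after $N\to\infty$, again by Proposition~\ref{lem:5}.

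\emph{Continuum limit and main obstacle.} To complete the reduction I substitute $\Phi^{D,D^n}(x/N)$ for $\varphi_N^{D,D^n}(x)$ at each surviving local maximum. Proposition~\ref{lem:2} supplies finite-dimensional convergence; Proposition~\ref{lem:3} together with discrete harmonicity (and hence control of the modulus of continuity) upgrades this to weak convergence in $C(\overline{(D^n)^\epsilon})$; and Skorokhod embedding supplies a coupling in which the convergence is uniform almost surely. Independence of $\varphi_N^{D,D^n}$ from $h_N^{D^n}$ under Gibbs--Markov then legalizes the replacement, producing the integrand $f^{D,D^n}_{\epsilon,\Phi}$ on the RHS. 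The main obstacle is ensuring coherence of these limits: the local-maxima identification of the previous step and the present uniform $\varphi\to\Phi$ approximation must hold on a common high-probability event, and the resulting error in the Laplace functional must be controlled uniformly in the random (but tight) number of points of $\eta^{D^n}_{N,r}$ that carry nontrivial $f$-mass.
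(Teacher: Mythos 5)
Your overall strategy matches the paper's: mollify away the boundary using Proposition~\ref{lem:6}, apply the Gibbs--Markov decomposition, match local maxima of $h_N^D$ with those of $h_N^{D^n}$ in the bulk, and finally swap $\varphi_N^{D,D^n}$ for $\Phi^{D,D^n}$. Your localization step and the choice of tools (Propositions~\ref{lem:3}, \ref{lem:4}, \ref{lem:5}, \ref{lem:6}, \ref{lem:2}) are all in line with the paper's Lemmas~\ref{lemma-3.2}--\ref{lemma-3.4}.

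However, there is a genuine gap in the identification step. You assert that, with probability tending to one, a bulk point $x$ is an $r_N$-local maximum of $h_N^D$ \emph{if and only if} it is an $r_N$-local maximum of $h_N^{D^n}$. This does not follow from the oscillation bound $\text{\rm osc}_{\Lambda_{r_N}(x)}\varphi_N^{D,D^n}=O_p(r_N/(\epsilon N))$ plus the separation of clusters. The separation guarantees that each cluster of near-maximal points produces exactly one $r_N$-local maximum of each field, and that these two maximizers must lie in the same small ball $\Lambda_{r/2}(\cdot)$; it does \emph{not} give you that they are the same lattice point. Within a cluster, the gap between the top value and the runner-up of $h_N^{D^n}$ has no available lower bound and can well be smaller than $\text{\rm osc}\,\varphi_N^{D,D^n}$, in which case adding $\varphi_N^{D,D^n}$ displaces the argmax. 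The paper resolves exactly this in Lemma~\ref{lemma-3.6}: one does \emph{not} prove coincidence, but constructs an injection $q\colon\Theta_{N,r}^{D^n}\to\Theta_{N,r}^D$ on a good event with $|q(x)-x|\le r$ and $|h_N^D(q(x))-h_N^D(x)|\le N^{-1/3}$, and then controls the resulting discrepancy in the Laplace functionals via a telescoping sum bounded by (a tight number of points) $\times\,\text{osc}_{f\chi_{n,\epsilon}}(r/N+N^{-1/3})\to0$. Your ``main obstacle'' paragraph is pointing at a coherence issue, but the actual missing idea is precisely this bijection-plus-oscillation argument that replaces your (false) claim of coincidence. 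A related, smaller point: the uniform-on-$\overline{(D^n)^\epsilon}$ Skorokhod coupling you invoke in the last step is stronger than what the paper uses; the paper couples $\varphi_N^{D,D^n}$ and $\Phi^{D,D^n}$ only at the finitely many (at most $M$) surviving local maxima after conditioning on their number being $\le M$, which sidesteps the need to upgrade finite-dimensional convergence to weak convergence in $C$.
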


We postpone the proof temporarily as it will require a lot of technical steps, and instead demonstrate how this can be used to establish Theorems~\ref{thm:1} and~\ref{thm:2}. For that we need to prove: 

\begin{proposition}
\label{prop-3.2}
Suppose that $D$ and~$D^n$ satisfy \eqref{E:3.1} and that, for each~$n\ge1$, there is an a.s.\ finite random Borel measure $Z^{D^n}$ on~$D^n$ such that, for any $r'_N\to\infty$ with $r_N'/N\to0$,
\begin{equation}
\label{E:3.5a}
\eta_{N,r_N'}^{D^n}\,\underset{N\to\infty}\Lawlongarrow\,\text{\rm PPP}\bigl(Z^{D^n}(\textd x)\otimes\texte^{-\alpha h}\textd h\bigr),
\end{equation}
where $\alpha:=2/\sqrt g$. Then there is an a.s.\ finite random measure~$Z^D(\textd x)$ on~$\overline D$ such that, for any $r_N\to\infty$ with~$r_N/N\to0$,
\begin{equation}
\eta_{N,r_N}^{D}\,\underset{N\to\infty}\Lawlongarrow\,\text{\rm PPP}\bigl(Z^D(\textd x)\otimes\texte^{-\alpha h}\textd h\bigr).
\end{equation}
Moreover, we have
\begin{equation}
\label{E:3.7b}
Z^{D^n}(\textd x)\,\texte^{\alpha\Phi^{D,D^n}(x)}\,\underset{n\to\infty}\Lawlongarrow\,Z^D(\textd x),
\end{equation}
where on the left $\Phi^{D,D^n}$ is a mean-zero Gaussian field on~$D^n$ with covariance~$C^{D,D^n}$, regarded as  independent of~$Z^{D^n}$.
\end{proposition}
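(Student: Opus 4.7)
The plan is to feed the hypothesis \eqref{E:3.5a} into Proposition~\ref{prop-3.1} and read off the Laplace functional of the purported Cox limit. Fix a continuous compactly supported $f\colon\overline D\times\R\to[0,\infty)$ and parameters $n\ge1$, $\epsilon>0$, and set $\nu_n(\textd x):=Z^{D^n}(\textd x)\,\texte^{\alpha\Phi^{D,D^n}(x)}$, regarded as a random Borel measure on $\overline D$ with $\Phi^{D,D^n}$ independent of $Z^{D^n}$. Since $\Phi^{D,D^n}$ is also independent of $\eta^{D^n}_{N,r}$, conditioning on $\Phi^{D,D^n}$ and applying \eqref{E:3.5a} gives, as $N\to\infty$ (then $r\to\infty$),
\begin{equation*}
E\,\texte^{-\langle\eta^{D^n}_{N,r},\,f^{D,D^n}_{\epsilon,\Phi}\rangle}\longrightarrow E\exp\Bigl\{-\!\int\! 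Z^{D^n}(\textd x)\!\int_\R\!\texte^{-\alpha h}\bigl(1-\texte^{-f(x,h+\Phi^{D,D^n}(x))\chi_{n,\epsilon}(x)}\bigr)\textd h\Bigr\}.
\end{equation*}
The change of variable $h\mapsto h-\Phi^{D,D^n}(x)$ in the inner integral rewrites the right-hand side as $E\exp\{-\int\nu_n(\textd x)\,G_{n,\epsilon}(x)\}$, where $G_{n,\epsilon}(x):=\int_\R\texte^{-\alpha h}(1-\texte^{-f(x,h)\chi_{n,\epsilon}(x)})\textd h$. As $\epsilon\downarrow0$, $\chi_{n,\epsilon}(x)\uparrow\1_{D^n}(x)$ pointwise (since $D^n$ is open) and $G_{n,\epsilon}(x)\uparrow G(x):=\int_\R\texte^{-\alpha h}(1-\texte^{-f(x,h)})\textd h$, which is bounded uniformly in $x$ by the compact support of $f$; monotone convergence inside the $Z^{D^n}$-integral and bounded convergence outside then yield $\lim_{\epsilon\downarrow0}E\exp\{-\int\nu_n\,G_{n,\epsilon}\}=E\exp\{-\int_{D^n}G\,\textd\nu_n\}$. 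Combined with Proposition~\ref{prop-3.1}, the iterated limit in $n,\epsilon^{-1},r,N$ of this quantity coincides with $\lim_{N\to\infty} E\,\texte^{-\langle\eta^D_{N,r_N},f\rangle}$; in particular $\lim_{n\to\infty}E\exp\{-\int G\,\textd\nu_n\}$ exists for every such $G$.

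To upgrade this to the distributional convergence $\nu_n\Lawlongarrow Z^D$ of \eqref{E:3.7b} and identify the limit of $E\,\texte^{-\langle\eta^D_{N,r_N},f\rangle}$ as the Laplace functional of $\text{PPP}(Z^D(\textd x)\otimes\texte^{-\alpha h}\textd h)$, I note that by varying $f$ the functions $G$ sweep out arbitrary positive multiples of any $g\in C_c(D,[0,\infty))$: take $f(x,h)=\lambda g(x)\varphi(h)$ with $\varphi\ge0$ a smooth bump and rescale in $\lambda\downarrow0$. Convergence of $E\,\texte^{-c\int g\,\textd\nu_n}$ for all such $(c,g)$, together with tightness (obtained by letting $g$ approximate the indicator of a compact subset of $D$ and exploiting the non-degeneracy of the limit as $c\downarrow0$), singles out a unique random Borel measure $Z^D$ on $D$ as the distributional limit of $\nu_n$. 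Reading off the Laplace functional finally gives $\lim_{N\to\infty} E\,\texte^{-\langle\eta^D_{N,r_N},f\rangle}=E\exp\{-\int(Z^D\otimes\texte^{-\alpha h}\textd h)(1-\texte^{-f})\}$, which is the Laplace functional of $\text{PPP}(Z^D(\textd x)\otimes\texte^{-\alpha h}\textd h)$ evaluated at $f$; as $f$ ranges over $C_c(\overline D\times\R,[0,\infty))$ this delivers the stated vague convergence.

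The delicate point is the $n\to\infty$ step: $\Var\,\Phi^{D,D^n}(x)$ diverges as $x\to\partial D^n$, so $\texte^{\alpha\Phi^{D,D^n}}$ has potentially very heavy contributions near the boundary, and one must both ensure that $\nu_n$ is a.s.\ a finite Borel measure on $\overline D$ and justify passage of the $\epsilon\downarrow0$ limit through the outer expectation. Proposition~\ref{prop-inner-apprx} provides uniform control of $\Phi^{D,D^n}$ on compact subsets of $D$, while the tightness established above, together with $\leb(D\smallsetminus D^n)\to0$ and the stochastic absolute continuity of $Z^{D^n}$ (available because the hypothesis is to be applied with $D^n$ a finite union of disjoint squares, for which Theorem~\ref{thm:1} is already in hand via~\cite{Biskup-Louidor}), supplies the vanishing of the boundary contribution, so this obstacle can be resolved by essentially routine estimates.
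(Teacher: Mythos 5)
Your proposal follows essentially the same route as the paper: feed the hypothesis \eqref{E:3.5a} into Proposition~\ref{prop-3.1} after conditioning on the binding field, change variables in the $h$-integral, take $\epsilon\downarrow0$ via monotone/bounded convergence, and then use tightness to identify the limit $n\to\infty$. The core reduction and the Laplace-functional identity are exactly as in the paper's proof, so the heart of the argument is sound.

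Two remarks about the ``upgrade'' step, which is where your write-up is least precise. First, the tightness of $\{\nu_n(\overline D)\}$ cannot be obtained from $g\in C_c(D,[0,\infty))$ alone, because $\nu_n$ is concentrated on $D^n$ and the sets $D^n$ approach $\partial D$ as $n\to\infty$; you need $G$ bounded away from zero on \emph{all} of $\overline D$. The Proposition's test functions are $f\in C_c(\overline D\times\R,[0,\infty))$, so $f$ can be supported on all of $\overline D$ in the spatial coordinate; taking $f(x,h)=\delta\varphi(h)$ gives a constant $G$ on $\overline D$ and yields tightness of the total masses directly from the tightness of $\{\eta^D_{N,r_N}\}$ supplied by Proposition~\ref{lem:4} --- this is the paper's route, and the cleaner one. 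Second, the claim that the $G$'s ``sweep out arbitrary positive multiples of any $g$'' by taking $f=\lambda g\varphi$ and sending $\lambda\downarrow0$ is only asymptotic: $G_\lambda=c\lambda g + O(\lambda^2 g^2)$, and controlling the $O(\lambda^2)$ error \emph{uniformly in $n$} inside the exponential requires the tightness you have not yet secured, so the argument as written has a mild circularity. The paper avoids this issue by extracting a weakly convergent subsequence $\nu_{n_k}\Rightarrow Z^D$ (possible once total masses are tight, since $\overline D$ is compact), passing to the limit in the Laplace-functional identity, and observing that the limit does not depend on the subsequence --- no rescaling of $f$ or limit interchange in $\lambda$ is needed. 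Your final paragraph's worry about $\Var\Phi^{D,D^n}$ diverging near $\partial D^n$ is legitimate in spirit, but it is also resolved automatically once tightness of the total masses is in hand; the extra machinery (Proposition~\ref{prop-inner-apprx}, stochastic absolute continuity) you invoke is not actually needed for this proposition.
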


\begin{proofsect}{Proof}
Let~$r_N$ be any sequence with $r_N\to\infty$ with $r_N/N\to0$.
By Proposition~\ref{prop-3.1}, we may find a sequence $r_N'$ satisfying $r_N'\to\infty$ with $r_N'/N\to0$ such that, for any bounded and continuous function $f\colon\overline D\times\R\to[0,\infty)$,
\begin{equation}
\label{E:3.8a}
\lim_{n\to\infty}\,\limsup_{\epsilon\downarrow0}\,\limsup_{N\to\infty}\,\Bigl|\,E \texte^{-\langle\eta_{N,r_N}^D \,,\, f \rangle}-E \texte^{-\langle\eta_{N,r_N'}^{D^n} \,,\, f^{D,D^n}_{\epsilon,\Phi}\rangle}\Bigr|=0.
\end{equation}  
Now $f^{D,D^n}_{\epsilon,\Phi}$ is continuous with compact support for a.e.\ realization of~$\Phi^{D,D^n}$, so conditioning on~$\Phi^{D,D^n}$ and applying \eqref{E:3.5a} along with the Bounded Convergence Theorem yields
\begin{equation}
\label{E:3.9b}
E \texte^{-\langle\eta_{N,r_N'}^{D^n} \,,\, f^{D,D^n}_{\epsilon,\Phi}\rangle}
\,\underset{N\to\infty}\longrightarrow\,E\biggl(\exp\Bigl\{-\int(1-\texte^{-f^{D,D^n}_{\epsilon,\Phi}})\,Z^{D^n}(\textd x)\texte^{-\alpha h}\textd h\Bigr\}\biggr),
\end{equation}
where $Z^{D^n}$ is independent of~$\Phi^{D,D^n}$. Since $\langle\eta_{N,r_N}^D \,,\, f \rangle$ is independent of~$n$ or~$\epsilon$, this and \eqref{E:3.8a} ensure that the limits
\begin{equation}
\lim_{N\to\infty}E \texte^{-\langle\eta_{N,r_N}^D \,,\, f \rangle}\quad\text{and}\quad \lim_{n\to\infty}\,\lim_{\epsilon\downarrow0}\,\lim_{N\to\infty}
E \texte^{-\langle\eta_{N,r_N'}^{D^n} \,,\, f^{D,D^n}_{\epsilon,\Phi}\rangle}
\end{equation}
exist and are equal. It thus suffices to pass to the limit $\epsilon\downarrow0$ followed by~$n\to\infty$ on the right-hand side of \eqref{E:3.9b}.

Plugging in the explicit form of $f^{D,D^n}_{\epsilon,\Phi}$ and substituting~$h$ for~$h+\Phi^{D,D^n}(x)$ yields
\begin{equation}
\text{r.h.s.\ of \eqref{E:3.9b}}=
E\biggl(\exp\Bigl\{-\int(1-\texte^{-f(x,h)\chi_{n,\epsilon}(x)})\,\texte^{\alpha\Phi^{D,D^n}(x)}\,Z^{D^n}(\textd x)\texte^{-\alpha h}\textd h\Bigr\}\biggr).
\end{equation}
Since $\chi_{n,\epsilon}$ increases pointwise to the indicator of~$D^n$ as $\epsilon\downarrow0$, the fact that~$Z^{D^n}$ puts all mass on~$D^n$ allows us to take~$\epsilon\downarrow0$ and get, with the help of the Monotone Convergence Theorem in the exponent and the Bounded Convergence Theorem overall, that
\begin{equation}
\label{E:3.12b}
\lim_{N\to\infty}E \texte^{-\langle\eta_{N,r_N}^D \,,\, f \rangle}
=\lim_{n\to\infty}
E\biggl(\exp\Bigl\{-\int(1-\texte^{-f(x,h)})\,\texte^{\alpha\Phi^{D,D^n}(x)}Z^{D^n}(\textd x)\texte^{-\alpha h}\textd h\Bigr\}\biggr).
\end{equation}
But the processes $\{\eta^D_{N,r_N}\}$ are tight as measures on~$\overline D\times\R$ thanks to Proposition~\ref{lem:4}. So taking appropriate~$f$'s with small supremum norm shows that the random variables
\begin{equation}
\Bigl\{\int Z^{D^n}(\textd x)\texte^{\alpha\Phi^{D,D^n}(x)}\colon n\ge1\Bigr\}
\end{equation}
are tight. We can thus find a sequence $n_k\to\infty$ such that $Z^{D_{n_k}}(\textd x)\texte^{\alpha\Phi^{D,D^{n_k}}(x)}$ converge weakly to a random measure~$Z^D(\textd x)$ concentrated on~$\overline D$. This permits passing the limit~$n\to\infty$ inside in \eqref{E:3.12b} and thus proving the claim.
\end{proofsect}

\subsection{Proofs of Theorems~\ref{thm:1} and~\ref{thm:2}}
We can now prove convergence to the Cox process as stated in Theorem~\ref{thm:1} and the Gibbs-Markov in Theorem~\ref{thm:2}. This is still conditional on the validity of Proposition~\ref{prop-3.1}, whose proof is deferred to the next subsection.

\begin{proofsect}{Proof of Theorem~\ref{thm:1}, convergence to Cox process}
We begin by recalling the conclusion of Theorem~1.1 of~\cite{Biskup-Louidor}. This theorem was stated for~$D$ being a unit square but simple scaling arguments allow us to extend it to squares of rational sizes.  Let~$K\in(0,\infty)\cap\Q$ and define $S_K:=(0,K)^2$. Then for any sequence~$r_N$ as given,
\begin{equation}
\label{e:2.30}
	\eta^{S_K}_{N,r_N}\,\underset{N\to\infty}\Lawlongarrow\, \text{\rm PPP}\bigl(Z^{S_K}(\textd x)\otimes\texte^{-\alpha h}\textd h\bigr),
\end{equation}
where $Z^{S_K}$ is a random measure obtained from that for the unit square by
\begin{equation}
\label{E:4.15t}
Z^{S_K}(\textd x)\laweq K^{4}\,Z^{S_1}(K^{-1}\textd x).
\end{equation}
Here the exponent~$4$ arises from the observation that $m_{KN}-m_N=2\sqrt{g}\log K+o(1)$ as $N\to\infty$ combined with $2\sqrt{g}\,\alpha=4$.

Now consider any $D\in\mathfrak D$. For any integer~$n\ge1$, tile~$\R^2$ by disjoint translates of $(0,2^{-n})^2$ by vectors from~$(2^{-n}\Z)^2$ and let $x_1,\dots,x_{\ell(n)}\in (2^{-n}\Z)^2$ enumerate the lower-left corners of those squares that are entirely contained in~$D$. Define
\begin{equation}
D^n:=\bigcup_{i=1}^{\ell(n)}\bigl(x_i+S^n\bigr),\qquad\text{where}\quad S^n:=(2^{-2n},2^{-n}-2^{-2n})^2.
\end{equation}
Since~$D$ is open and bounded, $D^n\in\mathfrak D$ and the assumptions \eqref{E:3.1} hold.

Since $D^n$ is a union of disjoint squares separated by distance at least~$2^{-2n}$, as soon as~$N>r2^{2n}$ holds, $\eta_{N,r}^{D^n}$ is the sum of $\ell(n)$ independent shifts of the process in $S^n$. By \eqref{e:2.30} we have \eqref{E:3.5a} for any sequence $r_N'$ such that $r_N'\to\infty$ but $r_N'/N\to0$ with $Z^{D^n}$ given by
as the sum of independent copies of~$Z^{S^n}$ translated by $x_1,\dots,x_{\ell(n)}$.
The convergence of $\eta_{N,r_N}^D$ to a Cox process with intensity $Z^D(\textd x)\otimes\texte^{-\alpha h}\textd h$ then follows from Proposition~\ref{prop-3.2}. 
\end{proofsect}

The above argument also shows \eqref{e:1.5}:

\begin{proofsect}{Proof of Theorem~\ref{thm:1}, disjoint partitions}
Suppose that $D$ is the disjoint union $D_1\cup\dots\cup D_m$. The approximation by squares then naturally splits into~$m$ independent processes in $D_1^n,\dots,D_m^n$. This gives $Z^{D^n}=Z^{D_1^n}+\dots+Z^{D_m^n}$
where the measures on the right are regarded as independent. Similarly,~$\Phi^{D,D^n}$ restricted to $D_i^n$ has the law of $\Phi^{D,D_i^n}$, with the fields in the disjoint components independent. Hence we get
\begin{equation}
Z^{D^n}(\textd x)\texte^{\alpha\Phi^{D,D^n}(x)}\laweq Z^{D_1^n}(\textd x)\texte^{\alpha\Phi^{D,D_1^n}(x)}+\dots+Z^{D_m^n}(\textd x)\texte^{\alpha\Phi^{D,D_m^n}(x)},
\end{equation}
with the measures on the right independent and concentrated on disjoint sets. Applying \eqref{E:3.7b} on both sides, the decomposition \eqref{e:1.5} follows.
\end{proofsect}

Next we check that the~$Z^D$ measure is stochastically absolutely continuous with respect to the Lebesgue measure and, in particular, is thus concentrated on~$D$. 

\begin{proofsect}{Proof of Theorem~\ref{thm:1}, stochastic absolute continuity}
Fix a Borel set~$A\subset\overline D$ with $\leb(A)=0$, let $A_n:=\{x\in\overline D\colon\dist(x,A)<2^{-n}\}$ and consider a sequence~$f_n\colon\overline D\to[0,1]$ of continuous functions supported in $A_n\times[1,4]$ such that~$f_n=1$ on~$A\times[2,3]$ and such that $f_n\downarrow0$ on the complement thereof. By the convergence we just proved and the Monotone Convergence Theorem,  $\langle \eta^D_{N,r_N},f_n\rangle$ tends in law to a Poisson random variable with (random) parameter $\alpha^{-1}\texte^{-2\alpha}(1-\texte^{-\alpha})Z^D(A)$ in the limit $N\to\infty$ followed by~$n\to\infty$. But Proposition~\ref{lem:6} tells us
\begin{equation}
\label{E:4.17t}
P\bigl(\langle \eta^D_{N,r_N},f_n\rangle>0\bigr)
\le P\bigl(\exists x\in NA_n\cap\Z^2\colon h^D_N(x)\ge m_N+1\bigr)
\le c\Bigl(\frac{|NA_n\cap\Z^2|}{N^2}\Bigr)^{1/2}\texte^{-\alpha}
\end{equation}
and, since $A_n$ is open, $|NA_n\cap\Z^2|/N^2\to\leb(A_n)$ as~$N\to\infty$. As $\leb(A_n)\downarrow\leb(A)=0$, this vanishes in the limits~$N\to\infty$ and~$n\to\infty$, and so $\langle \eta^D_{N,r_N},f_n\rangle$ in fact tends to zero in probability. It follows that $Z^D(A)=0$ a.s.\ and, in particular, $Z^D(\partial D)=0$ a.s.
\end{proofsect}

Proposition~\ref{prop-3.2} now helps us to get:

\begin{proofsect}{Proof of Corollary~\ref{cor-shift-scale}}
Shifts~$a\in\C$ and dilations~$\lambda>0$ taking integer values can directly be implemented on the lattice and so there \eqref{E:2.6t} follows immediately from the same calculation as that which led to \eqref{E:4.15t}. By the existence of the~$N\to\infty$ limit, this readily extends to all rational-valued shifts and dilations. 

To cover all shifts $a\in\C$ and dilations $\lambda>0$, define $D^n:=\{x\in D\colon\dist(x,D^\cc)>2^{-n}\}$ and note that~$D^n\uparrow D$. Since small shifts and dilations of~$D^n$ still lie in~$D^{n+1}$, one can find rational~$a_n\in\C$ and $\lambda_n>0$ so that $a_n\to a$, $\lambda_n\to\lambda$ and $a_n+\lambda_n D_n\uparrow a+\lambda D$. Fix an open set $A$ with $\overline A\subset D$. Proposition~\ref{prop-inner-apprx} shows that $\max_{x\in A}|\Phi^{D,D^n}(x)|\to0$ in probability and thus
\begin{equation}
\1_A(x) Z^{D^n}(\textd x)\,\underset{n\to\infty}\Lawlongarrow\,\1_A(x)Z^D(\textd x)
\end{equation}
by Proposition~\ref{prop-3.2}. Similarly we get
\begin{equation}
\1_A(a_n+\lambda_nx) Z^{a_n+\lambda_nD^n}(a_n+\lambda_n\textd x)\,\underset{n\to\infty}\Lawlongarrow\,\1_A(a+\lambda x)Z^{a+\lambda D}(a+\lambda\textd x).
\end{equation}
Since the laws of the measures on the left are related by a rational shift and dilation, we already know that they are~$\lambda_n^4$ multiples of each other (in law). From~$\lambda_n\to\lambda$ we then infer
\begin{equation}
\1_A(a+\lambda x)Z^{a+\lambda D}(a+\lambda\textd x)\,\laweq\,\lambda^41_A(x)Z^D(\textd x).
\end{equation}
Taking $A\uparrow D$ and using that $Z^D$ is concentrated on~$D$ a.s. then yields the claim.
\end{proofsect}

Invoking Proposition~\ref{prop-3.2}, we also conclude the Gibbs-Markov property:

\begin{proofsect}{Proof of Theorem~\ref{thm:3}}
Let~$D,\wt D\in\mathfrak D$ with $\leb(D\smallsetminus\wt D)=0$. Set~$D^n:=\wt D$ for all $n\ge1$ and note that \eqref{E:3.1} trivially applies. (Note that we had to first prove concentration of~$Z^{\wt D}$ on~$\wt D$ to make the measure on the left of \eqref{E:3.7b} well defined.) By Theorem~\ref{thm:1}, we know that \eqref{E:3.5a} holds for any sequence~$r_N'$ with the stated properties. Using Proposition~\ref{prop-3.2}, \eqref{E:3.7b} then trivializes as \eqref{e:1.9a}.
\end{proofsect}

Finally, we check all of the remaining properties of the $Z^D$-measures stated in Theorem~\ref{thm:1}:

\begin{proofsect}{Proof of Theorem~\ref{thm:1}, remaining properties of~$Z^D$-measure}
Almost-sure non-atomicity of~$Z^D$ and the fact that, a.s., $Z^D(A)>0$ for any non-empty open $A\subseteq D$ are known for $D:=(0,1)^2$ from Theorem~1.3 of Biskup and Louidor~\cite{Biskup-Louidor}. Invoking the Gibbs-Markov property, this extends to any superset $D\in\mathfrak D$ thereof. The scale and dilation invariance from Corollary~\ref{cor-shift-scale} then extends this to all~$D\in\mathfrak D$.
\end{proofsect}

\subsection{Proof of main reduction step}
For the proofs of Theorems~\ref{thm:1} and \ref{thm:3} to be really complete, we need to provide a proof of Proposition~\ref{prop-3.1}. This will be done via a sequence of lemmas. Throughout, we will think of~$h^D_N$ as defined by $h^D_N:=h^{D^n}_N+\varphi_N^{D,D^n}$, where $h_N^{D^n}$ and $\varphi_N^{D,D^n}$ are independent. Let~$\Theta_{N,r}^D$ denote the $r$-local maxima of $h_N^D$ in~$D_N$ and let $\Theta_{N,r}^{D^n}$ denote the $r$-local maxima of~$h_N^{D^n}$ in~$D^n_N$. Note that
\begin{equation}
\label{E:3.4}
\langle\eta_{N,r}^D,f\rangle=\sum_{x\in\Theta_{N,r}^D}f\bigl(x/N,\,h_N^D(x)-m_N\bigr).
\end{equation}
The argument begins by replacing~$r_N$ by an independent variable~$r$:

\begin{lemma}
\label{lemma-3.2}
Let~$r_N$ be such that $r_N\to\infty$ with $r_N/N\to0$. Then for any continuous function $f\colon\overline D\times\R\to[0,\infty)$ with compact support,
\begin{equation}
\lim_{r\to\infty}\,\,\limsup_{N\to\infty}P\bigl(\langle\eta_{N,r_N}^D,f\rangle\ne\langle\eta_{N,r}^D,f\rangle\bigr)=0.
\end{equation}
\end{lemma}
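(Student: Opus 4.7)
The plan is to reduce the statement to a direct application of Proposition~\ref{lem:5} (the separation estimate for high points), using only elementary properties of local maxima. Write $T>0$ for a radius such that $\supp f \subseteq \overline D\times[-T,T]$.

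The first step is to observe a one-sided inclusion between the local-maxima sets. For fixed $r$, since $r_N\to\infty$, we eventually have $r_N\ge r$, and then $\Lambda_r(x)\subseteq\Lambda_{r_N}(x)$ forces $\Theta_{N,r_N}^D\subseteq\Theta_{N,r}^D$. By \eqref{E:3.4} this gives
\begin{equation}
\langle\eta_{N,r}^D,f\rangle-\langle\eta_{N,r_N}^D,f\rangle
=\sum_{x\in\Theta_{N,r}^D\smallsetminus\Theta_{N,r_N}^D} f\bigl(x/N,\,h_N^D(x)-m_N\bigr),
\end{equation}
which is a sum of non-negative terms. Hence $\langle\eta_{N,r_N}^D,f\rangle\ne\langle\eta_{N,r}^D,f\rangle$ forces the existence of some $x\in\Theta_{N,r}^D\smallsetminus\Theta_{N,r_N}^D$ with $(x/N,\,h_N^D(x)-m_N)\in\supp f$.

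The second step is to convert the existence of such an $x$ into a pair of high points at an intermediate scale. Any $x\in\Theta_{N,r}^D\smallsetminus\Theta_{N,r_N}^D$ admits a $y\in\Lambda_{r_N}(x)$ with $h_N^D(y)\ge h_N^D(x)$, and since $x$ is an $r$-local maximum, necessarily $|x-y|>r$. Moreover, $f(x/N,h_N^D(x)-m_N)\ne 0$ implies $h_N^D(x)\ge m_N-T$, so $h_N^D(y)\ge m_N-T$ as well. Combining, the event of interest is contained in
\begin{equation}
\bigl\{\exists x,y\in D_N\colon r<|x-y|\le r_N,\ h_N^D(x)\wedge h_N^D(y)\ge m_N-T\bigr\}.
\end{equation}

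The third step is to apply the known separation estimate. Since $r_N/N\to 0$, for any fixed $r$ we have $r_N<N/r$ for $N$ large, so the event above is in turn contained in $\{\exists x,y\in\Gamma_N^D(T)\colon r<|x-y|<N/r\}$. Proposition~\ref{lem:5} then yields
\begin{equation}
\limsup_{N\to\infty} P\bigl(\langle\eta_{N,r_N}^D,f\rangle\ne\langle\eta_{N,r}^D,f\rangle\bigr)
\le\limsup_{N\to\infty}P\bigl(\exists x,y\in\Gamma_N^D(T)\colon r<|x-y|<N/r\bigr),
\end{equation}
and the right-hand side vanishes as $r\to\infty$. There is no real obstacle; the only point worth being careful about is the one-sided inclusion $\Theta_{N,r_N}^D\subseteq\Theta_{N,r}^D$, which removes the need to control the symmetric difference and cleanly reduces the problem to Proposition~\ref{lem:5} applied with $t:=T$ determined by $\supp f$.
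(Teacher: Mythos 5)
Your proof is correct and follows essentially the same route as the paper: exploit $\Theta_{N,r_N}^D\subseteq\Theta_{N,r}^D$ for $r_N\ge r$ together with the non-negativity of $f$ to reduce the event to the existence of a point in $\Theta_{N,r}^D\smallsetminus\Theta_{N,r_N}^D$ above height $m_N-T$, produce a nearby competitor $y$ at distance in $(r,r_N]$, and invoke Proposition~\ref{lem:5}. The only difference is that you spell out the one-sided inclusion and the construction of the pair $(x,y)$ slightly more explicitly than the paper, which compresses this into one sentence by citing Lemma~4.4 of~\cite{Biskup-Louidor}.
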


\begin{proofsect}{Proof}
We follow the proof of Lemma~4.4 in Biskup and Louidor~\cite{Biskup-Louidor}: Assume $r_N\ge r$ and note that $\langle\eta_{N,r_N}^D,f\rangle\ne\langle\eta_{N,r}^D,f\rangle$ implies the existence of a point in $\Theta_{N,r}^D\smallsetminus\Theta_{N,r_N}^D$ where $f(x/N,h_N^D-m_N)>0$. If~$f$ is supported on~$\overline D\times[-t,\infty)$, we thus have a point in $\Gamma_N^D(t)\cap(\Theta_{N,r}^D\smallsetminus\Theta_{N,r_N}^D)$. But that implies existence of two points $x,y\in\Gamma_N^D(t)$ such that $r\le|x-y|\le r_N$ which, since $r_N\le N/r$ for~$N$ large, has probability tending to zero in the stated limits by Proposition~\ref{lem:5}.
\end{proofsect}

Next we will insert the mollifier~$\chi_{n,\epsilon}$ next to~$f$ inside expectation:

\begin{lemma}
\label{lemma-3.3}
For any continuous function $f\colon\overline D\times\R\to[0,\infty)$ with compact support,
\begin{equation}
\lim_{n\to\infty}\,\limsup_{\epsilon\downarrow0}\,\limsup_{r\to\infty}\,\limsup_{N\to\infty}\,\Bigl|\,E \texte^{-\langle\eta_{N,r}^D \,,\, f \rangle}-E \texte^{-\langle\eta_{N,r}^D \,,\, f\chi_{n,\epsilon}\rangle}\Bigr|=0.
\end{equation}
\end{lemma}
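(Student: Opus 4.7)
\begin{proofsect}{Proof plan for Lemma~\ref{lemma-3.3}}
The plan is to reduce everything to a single event -- the existence of a high value of~$h_N^D$ in a thin strip around~$\partial D^n$ -- and then exploit that this strip has vanishing Lebesgue measure in the limits taken.

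Since~$f\ge0$ and~$0\le\chi_{n,\epsilon}\le 1$ we have $\langle\eta_{N,r}^D,f\rangle\ge\langle\eta_{N,r}^D,f\chi_{n,\epsilon}\rangle\ge0$, with the two quantities equal unless $\langle\eta_{N,r}^D,f(1-\chi_{n,\epsilon})\rangle>0$. Using $|\texte^{-u}-\texte^{-v}|\le\1_{u\ne v}$ for~$u,v\ge 0$,
\begin{equation}
\Bigl|\,E \texte^{-\langle\eta_{N,r}^D \,,\, f \rangle}-E \texte^{-\langle\eta_{N,r}^D \,,\, f\chi_{n,\epsilon}\rangle}\Bigr|
\le P\bigl(\langle\eta_{N,r}^D,\,f(1-\chi_{n,\epsilon})\rangle>0\bigr).
\end{equation}

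Next I would localize the event on the right. Compact support of~$f$ gives $a\in\R$ with $\supp f\subseteq\overline D\times[a,\infty)$, while $1-\chi_{n,\epsilon}$ is supported in $A_{n,\epsilon}:=\{x\in \overline D\colon \textd(x,(D^n)^\cc)<2\epsilon\}$. Hence, writing $A_{n,\epsilon,N}:=\{x\in D_N\colon x/N\in A_{n,\epsilon}\}$, the event $\langle\eta_{N,r}^D,f(1-\chi_{n,\epsilon})\rangle>0$ implies existence of some (not necessarily local-max) vertex $x\in A_{n,\epsilon,N}$ with $h_N^D(x)\ge m_N+a$. Proposition~\ref{lem:6} then yields
\begin{equation}
P\bigl(\,\exists x\in A_{n,\epsilon,N}\colon h_N^D(x)\ge m_N+a\bigr)\le C(a,D)\,\Bigl(\frac{|A_{n,\epsilon,N}|}{N^2}\Bigr)^{1/2},
\end{equation}
where~$C(a,D)<\infty$. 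Strictly, Proposition~\ref{lem:6} is stated for $t\ge1$; the case $a<1$ is handled by using the sharper Bramson--Ding--Zeitouni bound recalled in its proof (with parameters $t:=1$ and $s:=1-a\ge0$) and the comparison~\eqref{E:1.15} used there to reduce from a square to~$D$. The key feature is that the resulting bound is independent of~$r$, so the $r\to\infty$ limit is trivial.

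It remains to push the remaining three limits. As $N\to\infty$, the natural lattice approximation of~$D_N$ gives $|A_{n,\epsilon,N}|/N^2\to\leb(A_{n,\epsilon}\cap D)$. Since the sets $A_{n,\epsilon}\cap D$ decrease as $\epsilon\downarrow0$ to $\{x\in D\colon\textd(x,(D^n)^\cc)=0\}=D\smallsetminus D^n$, monotone convergence gives $\leb(A_{n,\epsilon}\cap D)\downarrow\leb(D\smallsetminus D^n)$, and then assumption~\eqref{E:3.1} gives $\leb(D\smallsetminus D^n)\to0$ as $n\to\infty$. The main subtlety I foresee is the technical extension of Proposition~\ref{lem:6} to thresholds $m_N+a$ with $a$ possibly very negative; this is essentially free from the existing proof, but worth spelling out carefully to keep the bound uniform in all parameters other than~$a$ and~$D$.
\end{proofsect}
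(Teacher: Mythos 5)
Your proof follows essentially the same route as the paper's: bound the difference of Laplace functionals by the probability that $\eta^D_{N,r}$ charges the support of $f(1-\chi_{n,\epsilon})$, localize this to the thin strip where $\chi_{n,\epsilon}<1$, apply Proposition~\ref{lem:6} (the resulting bound is uniform in~$r$), and then send $N\to\infty$, $\epsilon\downarrow0$, $n\to\infty$ using that the strip has vanishing Lebesgue measure. Your explicit remark about extending Proposition~\ref{lem:6} to thresholds $m_N+a$ with $a<1$ via the Bramson--Ding--Zeitouni estimate and the comparison bound~\eqref{E:1.15} is a detail the paper elides (absorbing the $t$-dependence into the constant); spelling it out is a harmless and correct clarification.
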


\begin{proofsect}{Proof}
Suppose $f$ is supported in $\overline D\times [-t,\infty)$. Then
\begin{equation}
\label{E:3.6}
\texte^{-\langle\eta_{N,r}^D \,,\, f \rangle}-\texte^{-\langle\eta_{N,r}^D \,,\, f\chi_{n,\epsilon}\rangle}
=\texte^{-\langle\eta_{N,r}^D \,,\, f \chi_{n,\epsilon}\rangle}\bigl(1-\texte^{-\langle\eta_{N,r}^D \,,\, f(1-\chi_{n,\epsilon})\rangle}\bigr).
\end{equation}
By the restriction on the support of~$f$, we have $\langle\eta_{N,r}^D \,,\, f(1-\chi_{n,\epsilon})\rangle\ne0$ if and only if there is a point in $\Gamma_N^D(t)\smallsetminus D_{N,2\epsilon}^n$, where (abusing out earlier notations) we write
\begin{equation}
\label{E:3.27c}
D^n_{N,\delta}:=ND_\delta^n\cap\Z^2\quad\text{for}\quad
D^n_{\delta}:=\bigl\{x\in D^n\colon\textd(x,(D^n)^\cc)>\delta\bigr\}.
\end{equation} 
>From Proposition~\ref{lem:6} we thus get
\begin{equation}
\label{E:3.7}
P\bigl(\langle\eta_{N,r}^D \,,\, f(1-\chi_{n,\epsilon})\rangle\ne0\bigr)\le c\,\frac{\,\,|D_N\smallsetminus D_{N,2\epsilon}^n|^{1/2}\!\!}N.
\end{equation}
For large-enough~$N$, for every~$x\in D_N\smallsetminus D_{N,2\epsilon}^n$ the set $N(D\smallsetminus D^n_{3\epsilon})$ will contain the open square of  side-length one centered at~$x$. It follows that $|D_N\smallsetminus D_{N,2\epsilon}^n|/N^2\le \leb(D\smallsetminus D^n_{3\epsilon})$. The assumptions on~$D^n$ ensure that $\leb(D\smallsetminus D^n_{3\epsilon})$, and thus also the right-hand side of \eqref{E:3.7}, tend to zero as~$\epsilon\downarrow0$ followed by~$n\to\infty$. In conjunction with \eqref{E:3.6}, this proves the claim.
\end{proofsect}

Our next step will consist of application of the Gibbs-Markov property. 

\begin{lemma}
\label{lemma-3.4}
For any $n\ge1$, any~$\epsilon>0$ and any continuous function $f\colon\overline D\times\R\to[0,\infty)$ with compact support,
\begin{equation}
\label{E:3.10}
\lim_{r\to\infty}\,\limsup_{N\to\infty}\,\Bigl|\,E \texte^{-\langle\eta_{N,r}^D \,,\, f \chi_{n,\epsilon}\rangle}-E \texte^{-\langle\eta_{N,r}^{D^n} \,,\, f^{D,D^n}_{\epsilon,\Phi}\rangle}\Bigr|=0.
\end{equation}
\end{lemma}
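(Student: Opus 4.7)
The plan is to apply the Gibbs--Markov decomposition $h^D_N \laweq h^{D^n}_N + \varphi^{D,D^n}_N$ with independent summands, which turns the first expectation into
\[
E\exp\Bigl\{-\sum_{x\in\Theta^D_{N,r}} f\bigl(x/N,\ h^{D^n}_N(x)+\varphi^{D,D^n}_N(x)-m_N\bigr)\chi_{n,\epsilon}(x/N)\Bigr\}.
\]
The remaining task reduces to two substitutions, each valid on the support of $\chi_{n,\epsilon}$: first, replace the set $\Theta^D_{N,r}$ in the sum by $\Theta^{D^n}_{N,r}$; second, replace $\varphi^{D,D^n}_N(x)$ by $\Phi^{D,D^n}(x/N)$, with $\Phi^{D,D^n}$ independent of $h^{D^n}_N$ (an independence inherited from the Gibbs--Markov split in the weak limit). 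Together these turn the functional into $\langle\eta^{D^n}_{N,r},\,f^{D,D^n}_{\epsilon,\Phi}\rangle$, and \eqref{E:3.10} follows upon taking expectations.

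For the first substitution I exploit that $\varphi^{D,D^n}_N$ is discrete-harmonic on $D^n_N$. At any $x$ with $\chi_{n,\epsilon}(x/N) > 0$ the lattice distance from $\partial D^n_N$ is at least $\epsilon N$, so a standard gradient estimate for discrete harmonic functions gives
\[
\max_{|y-x|\le r}\bigl|\varphi^{D,D^n}_N(y)-\varphi^{D,D^n}_N(x)\bigr|\ \le\ \frac{C(\epsilon)\,r}{N}\,\max_{z\in D^n_{N,\epsilon/2}}\bigl|\varphi^{D,D^n}_N(z)\bigr|.
\]
By Proposition~\ref{lem:3} the right-hand maximum is tight in $N$, so for fixed $r$ this oscillation is $o_P(1)$. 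Consequently every $x\in\Theta^D_{N,r}$ with $\chi_{n,\epsilon}(x/N)>0$ is a $(r,\delta_N)$-approximate local maximum of $h^{D^n}_N$, and conversely, for some $\delta_N\to 0$ in probability. Propositions~\ref{lem:4} and \ref{lem:5} make the number of these at level $\ge m_N-t$ tight and force any two of them to be either within $r$ or at distance $> N/r$; hence each approximate max sits within an $r$-ball of a unique exact $r$-local max of $h^{D^n}_N$, and exact maxes of the two fields likewise pair up. Uniform continuity of $f$ on its compact support then absorbs the $r/N$-sized position shift and $\delta_N$-sized value shift into an $o_P(1)$ error.

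For the second substitution, Proposition~\ref{lem:2} provides covariance convergence of $\varphi^{D,D^n}_N(\lfloor\cdot N\rfloor)$ to $\Phi^{D,D^n}(\cdot)$ uniformly on compact subsets of $D^n$; combined with the sup-norm tightness from Proposition~\ref{lem:3} and the harmonic equicontinuity of the covariance kernel, this upgrades to distributional convergence in $C(\overline{D^n_{\epsilon}})$. A Skorokhod coupling then produces a joint realization in which $\varphi^{D,D^n}_N(\lfloor\cdot N\rfloor)\to \Phi^{D,D^n}(\cdot)$ uniformly on the support of $\chi_{n,\epsilon}$ in probability, with $\Phi^{D,D^n}$ independent of $h^{D^n}_N$ (the Gibbs--Markov independence being preserved in the weak limit). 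Uniform continuity of $f$ and boundedness of the exponential close the argument.

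The principal difficulty lies in the first step: a bare bound $\delta_N\to 0$ does not \emph{a priori} rule out spurious near-maxes of $h^{D^n}_N$ that have no exact $r$-local-max counterpart, nor does it guarantee a bijective pairing with exact maxes of $h^D_N$. The remedy is the same geometric mechanism used in Lemma~\ref{lemma-3.2} --- $r$-separation of exact local maxes (Proposition~\ref{lem:5}) together with tightness of their total count up to level $m_N-t$ (Proposition~\ref{lem:4}) --- applied simultaneously to $h^D_N$ and $h^{D^n}_N$.
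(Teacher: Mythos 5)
Your proof is correct and follows the same strategy as the paper's: Gibbs--Markov decomposition of $h_N^D$ into $h_N^{D^n}$ plus an independent binding field, a high-probability bijection between the extremal $r$-local maxima of $h_N^D$ and $h_N^{D^n}$ on the support of $\chi_{n,\epsilon}$ (resting, as you note, on Propositions~\ref{lem:4}--\ref{lem:5}), and finally replacing $\varphi_N^{D,D^n}$ by $\Phi^{D,D^n}$ via a coupling supported by Proposition~\ref{lem:2}. The only differences are minor matters of execution --- you control the oscillation of the binding field over $r$-balls deterministically through a discrete-harmonic gradient estimate and the tight sup-norm, whereas the paper bounds the variance of increments and applies Markov's inequality on the event of a tight number of local maxima, and you invoke a $C(\overline{D^n_\epsilon})$-level Skorokhod coupling in place of the paper's pointwise coupling of $\varphi_N^{D,D^n}$ and $\Phi^{D,D^n}$ at the finitely many relevant maxima --- both variants being sound.
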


Before we can start the formal proof, we need to make  some preparatory steps. Consider the finite-$N$ version of the function in \eqref{E:3.3}:
\begin{equation}
\label{E:3.11}
f_{N,\epsilon,\varphi}^{D,D^n}(x,h):=f\bigl(x,\,h+\varphi_N^{D, D^n}(\lfloor Nx\rfloor)\bigr)\chi_{n,\epsilon}(x),\qquad (x,h)\in D^n\times\R.
\end{equation}
Then
\begin{multline}
\label{E:3.12a}
\qquad
\Bigl|\,E \texte^{-\langle\eta_{N,r}^D \,,\, f \chi_{n,\epsilon}\rangle}-E \texte^{-\langle\eta_{N,r}^{D^n} \,,\, f^{D,D^n}_{\epsilon,\Phi}\rangle}\Bigr|
\\
\le
\Bigl|\,E \texte^{-\langle\eta_{N,r}^D \,,\, f \chi_{n,\epsilon}\rangle}-E \texte^{-\langle\eta_{N,r}^{D^n} \,,\, f^{D,D^n}_{N,\epsilon,\varphi}\rangle}\Bigr|+\Bigl|\,E \texte^{-\langle\eta_{N,r}^{D^n} \,,\, f^{D,D^n}_{N,\epsilon,\varphi}\rangle}-E \texte^{-\langle\eta_{N,r}^{D^n} \,,\, f^{D,D^n}_{\epsilon,\Phi}\rangle}\Bigr|.
\end{multline}
Focusing on the first term on the right, by \eqref{E:3.4}, \eqref{E:3.11} and Jensen's inequality,
\begin{equation}
\label{E:3.17}
\begin{aligned}
\Bigl|\,E &\texte^{-\langle\eta_{N,r}^D \,,\, f \chi_{n,\epsilon}\rangle}-E \texte^{-\langle\eta_{N,r}^{D^n} \,,\, f^{D,D^n}_{N,\epsilon,\varphi}\rangle}\Bigr|\\
&\qquad\le E\Bigl|\,\prod_{x\in \Theta_{N,r}^D}\texte^{-f\left(x/N,\,h^D_N(x)-m_N\right)\chi_{n,\epsilon}(x/N)}-\prod_{x\in \Theta_{N,r}^{D^n}}\texte^{-f\left(x/N,\,h^D_N(x)-m_N\right)\chi_{n,\epsilon}(x/N)}\Bigr|.
\end{aligned}
\end{equation}
Our strategy is to identify a set of asymptotically full measure on which we can derive a uniform estimate on the term in absolute value. That will in turn require establishing a one-to-one correspondence between the terms in the two products. This will then yield a good bound on the first term on the right of \eqref{E:3.12a} as well.

We begin by identifying some exceptional events.
Let the function~$f$ be as given and let~$t_0>0$ be such that $f$ is supported on~$\overline D\cap[-t_0,\infty)$. Let~$n\ge1$ and~$\epsilon>0$ be fixed. Recall the set $D^n_{N,\delta}$ from \eqref{E:3.27c}.
For~$t>0$ and $M>0$ let
\begin{equation}
\label{E:3.14b}
A_{N,t}^{(0)}:=\bigl\{\max_{x\in D^n_{N,\epsilon/2}}|\varphi_N^{D,D^n}(x)|\ge t\bigr\}
\quad\text{and}\quad
A_{N,M,t}^{(1)}:=\bigl\{|\Gamma_N^{D^n}(t_0+2t)|>M\bigr\}.
\end{equation}
Define also
\begin{equation}
A_{N,r,t}^{(2)}:=\bigl\{\exists x,y\in\Gamma_N^{D^n}(t_0+2t)\colon r/2<|x-y|<N/r\bigr\}
\end{equation}
and, recalling that $\Lambda_r(x):=\{y\in\Z^2\colon |x-y|\le r\}$, set
\begin{equation}
A_{N,r,t}^{(3)}:=\bigl\{\exists x\in\Gamma_N^{D^n}(t_0+2t)\cap D^n_{N,\epsilon/2},\,\exists y\in \Lambda_{2r}(x)\,\colon|\varphi_N^{D,D^n}(x)-\varphi_N^{D,D^n}(y)|>N^{-1/3}\bigr\}.
\end{equation}
Abbreviate $A_{N,M,r,t}:=A_{N,t}^{(0)}\cup A_{N,M,t}^{(1)}\cup A_{N,r,t}^{(2)}\cup A_{N,r,t}^{(3)}$. 

\begin{lemma}
\label{lemma-3.5}
For~$f$ and~$t_0$ as above,
\begin{equation}
\lim_{t\to\infty}\,\,\limsup_{r\to\infty}\,\,\limsup_{M\to\infty}\,\limsup_{N\to\infty} \,P(A_{N,M,r,t})=0.
\end{equation}
\end{lemma}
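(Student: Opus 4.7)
The plan is to apply the union bound $P(A_{N,M,r,t})\le \sum_{i=0}^{3}P(A^{(i)})$ and to treat the four pieces in turn, each with a single tool from Section~\ref{sec2}, taking the limits in the prescribed order $N\to\infty$, $M\to\infty$, $r\to\infty$, $t\to\infty$. Three of the four terms are essentially immediate. For $A_{N,t}^{(0)}$, Proposition~\ref{lem:3} applied with $\wt D:=D^n$ and $\delta:=\epsilon/2$ gives the uniform mean bound \eqref{E:2.15} and the uniform Gaussian deviation bound \eqref{E:2.16} for $\max_{D^n_{N,\epsilon/2}}\varphi_N^{D,D^n}$; combined with the distributional symmetry $\varphi_N^{D,D^n}\laweq-\varphi_N^{D,D^n}$, this forces $\sup_N P(A_{N,t}^{(0)})\to 0$ as $t\to\infty$. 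For $A_{N,M,t}^{(1)}$, the tightness of $\{|\Gamma_N^{D^n}(t_0+2t)|\}_{N\ge 1}$ supplied by Proposition~\ref{lem:4} yields $\sup_N P(A_{N,M,t}^{(1)})\to 0$ as $M\to\infty$, for each fixed $t$. For $A_{N,r,t}^{(2)}$, Proposition~\ref{lem:5} applied to $D^n$ at level $t_0+2t$ directly gives $\limsup_{N\to\infty}P(A_{N,r,t}^{(2)})\to 0$ as $r\to\infty$.

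The main step is $A_{N,r,t}^{(3)}$, which concerns local oscillations of the binding field. The key observation is that $\varphi_N^{D,D^n}$ is, by definition, the harmonic extension to $D^n_N$ of its boundary values on $D_N\smallsetminus D^n_N$ and is therefore discrete harmonic on $D^n_N$. Standard iterated discrete-gradient estimates for harmonic functions on $\Z^2$ yield
$$
\bigl|\varphi_N^{D,D^n}(x)-\varphi_N^{D,D^n}(y)\bigr|\,\le\,\frac{C(\epsilon)\,|x-y|}{N}\,M_N,\qquad x\in D^n_{N,\epsilon/2},\ y\in\Lambda_{2r}(x),
$$
valid once $N$ is so large that $\Lambda_{2r}(x)$ is contained in a discrete ball of radius $N\epsilon/8$ about~$x$ which itself lies inside $D^n_{N,\epsilon/4}$; here $M_N:=\max_{z\in D^n_{N,\epsilon/4}}|\varphi_N^{D,D^n}(z)|$. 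Proposition~\ref{lem:3}, now with $\delta:=\epsilon/4$, ensures that $\{M_N\}_{N\ge 1}$ is tight, so for any $\eta>0$ one can pick $T<\infty$ with $\sup_N P(M_N>T)<\eta$. For $r$ fixed, the oscillation bound $2C(\epsilon)rT/N$ drops below $N^{-1/3}$ as soon as $N$ is large enough, which empties $A_{N,r,t}^{(3)}$ on the complement of $\{M_N>T\}$. Hence $\limsup_{N\to\infty}P(A_{N,r,t}^{(3)})\le\eta$, and $\eta$ was arbitrary.

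The main obstacle I anticipate is the precise form of the discrete-harmonic gradient estimate used in the last step: it must give genuine $O(|x-y|/N)$ decay (not merely $o(1)$) for fields of sup-norm $O(1)$ on the slightly enlarged interior $D^n_{N,\epsilon/4}$. This is standard for the continuum Laplacian and transfers to the lattice via the potential-kernel representation together with the asymptotics \eqref{E:3.23tt}, but some care is needed since we must stay at lattice distance $\Theta(N)$ from $\partial D^n_N$, which is why the two scales $\epsilon/2$ and $\epsilon/4$ appear. With these four bounds in hand, taking the limits in the stated order completes the proof.
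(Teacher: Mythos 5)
Your proposal is correct, and for the first three events it is identical in structure to the paper's proof: Propositions~\ref{lem:3}, \ref{lem:4}, and \ref{lem:5} are invoked (with the symmetry remark to pass from $\max\varphi$ to $\max|\varphi|$) to kill $A^{(0)}$, $A^{(1)}$, $A^{(2)}$ in the appropriate limits.

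Where you genuinely differ is in the treatment of $A_{N,r,t}^{(3)}$. The paper applies the discrete Harnack inequality to the \emph{covariance kernel} $C_N^{D,D^n}$ (which is discrete harmonic in each argument) to obtain
\[
\max_{x\in D^n_{N,\epsilon/2},\,y\in\Lambda_{2r}(x)}\Var\bigl(\varphi_N^{D,D^n}(x)-\varphi_N^{D,D^n}(y)\bigr)\le c(\epsilon)\,r/N,
\]
then combines Gaussian tail bounds with a union bound over the at-most-$M$ points of $\Gamma_N^{D^n}(t_0+2t)$ guaranteed on $(A^{(1)}_{N,M,t})^{\cc}$, plus $O(r^2)$ choices of $y$. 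You instead apply discrete gradient estimates to the \emph{sample paths} of $\varphi_N^{D,D^n}$ (which are discrete harmonic on $D^n_N$), yielding a deterministic pathwise bound $|\varphi(x)-\varphi(y)|\le C(\epsilon)\,|x-y|\,M_N/N$, and then use tightness of the sup-norm $M_N$ from Proposition~\ref{lem:3}. Your route is arguably slightly cleaner in that it never needs to count the points in $\Gamma_N^{D^n}$ --- the bound $M$ plays no role in your handling of $A^{(3)}$, whereas the paper's union bound does require the conditioning on $(A^{(1)})^{\cc}$. Both methods rest on the same underlying regularity of discrete harmonic functions (Harnack / interior gradient estimates); the paper routes it through the covariance, you route it through the field directly. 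The estimate you rely on, namely $|\nabla u|\le CM/\rho$ for a discrete harmonic $u$ with $\sup$-norm $M$ on a ball of radius $\rho$, is indeed standard on $\Z^2$, and your bookkeeping of scales ($\epsilon/2$ vs.~$\epsilon/4$) is exactly the kind of care needed to keep $\Lambda_{2r}(x)$ inside the domain of applicability.
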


\begin{proofsect}{Proof}
By Propositions~\ref{lem:3},~\ref{lem:4}, and~\ref{lem:5}, we immediately have
\begin{equation}
\label{E:3.16a}
\lim_{t\to\infty}\,\,\limsup_{r\to\infty}\,\,\limsup_{M\to\infty}\,\limsup_{N\to\infty}
\,P\bigl(A_{N,t}^{(0)}\cup A_{N,M,t}^{(1)}\cup A_{N,r,t}^{(2)}\bigr)=0.
\end{equation}
Concerning the remaining event we note that, thanks to discrete harmonicity of $C^{D,D^n}$, the (discrete) Harnack inequality implies for any $r\ge1$ once $N$ is sufficiently large,
\begin{equation}
\max_{\begin{subarray}{c}
x\in D^n_{N,\epsilon/2}\\ y\in\Lambda_{2r}(x)
\end{subarray}}
\text{\rm Var}\bigl(\varphi_N^{D,D^n}(x)-\varphi_N^{D,D^n}(y)\bigr)\le c(\epsilon)r/N
\end{equation}
for some~$c(\epsilon)<\infty$.
By Proposition~\ref{lem:2}, the independence of $\varphi_N^{D,D^n}$ of~$h^B_N$, the fact that $A_{N,M,t}^{(1)}$ depends only on~$h^B_N$, the Markov inequality and a union bound yield
\begin{equation}
\lim_{N\to\infty}P\bigl(A_{N,r,t}^{(3)}\big|(A_{N,M,t}^{(1)})^\cc\bigr)=0,
\end{equation}
for any $M$, $r$ and~$t$. Combining this with \eqref{E:3.16a}, the claim follows.
\end{proofsect}

Next we develop the correspondence between the relevant local maxima of~$\eta_{N,r}^D$ and $\eta_{N,r}^{D^n}$. Thanks to our assumption on the support of~$f$, only points in~$\Gamma_N^D(t_0)\cap D^n_{N,\epsilon/2}$ can possibly contribute to the products in \eqref{E:3.17}. Hence it suffices to show:

\begin{lemma}
\label{lemma-3.6}
Let~$M>0$, $r\ge1$ and $t>1$ be fixed. Then for all~$N$ large, on the event~$(A_{N,M,r,t})^\cc$, there is a map~$q\colon\Theta_{N,r}^{D^n}\cap\Gamma_N^D(t_0+1)\cap D^n_{N,\epsilon/2}\to\Theta_{N,r}^D$ such that
\begin{enumerate}
\item[(1)] $q$ is injective,
\item[(2)] $q(\Theta_{N,r}^{D^n}\cap\Gamma_N^D(t_0+1)\cap D^n_{N,\epsilon/2})\supseteq\Theta_{N,r}^D\cap\Gamma_N^D(t_0)\cap D^n_{N,\epsilon}$,
\end{enumerate}
and, for all $x$ for which~$q(x)$ is defined,
\begin{enumerate}
\item[(3)] $|q(x)-x|\le r$,
\item[(4)] $|h_N^D(q(x))-h_N^D(x)|\le N^{-1/3}$.
\end{enumerate}
\end{lemma}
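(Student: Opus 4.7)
The guiding idea is that on the good event $(A_{N,M,r,t})^\cc$ the binding field $\varphi_N^{D,D^n}$ varies by at most $N^{-1/3}$ on spatial scale $2r$ near every relevant point, so that $r$-local maxima of $h_N^D = h_N^{D^n} + \varphi_N^{D,D^n}$ and of $h_N^{D^n}$ above threshold $m_N-t_0$ are in bijection via the argmax of the other field in a ball of radius~$r$. Accordingly, I would define
\begin{equation*}
q(x)\,:=\,\mathop{\mathrm{argmax}}_{z\in\Lambda_r(x)}h_N^D(z),
\end{equation*}
fixing a deterministic tie-breaking rule. Property~(3), $|q(x)-x|\le r$, is then immediate. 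Property~(4) follows because $x\in\Theta_{N,r}^{D^n}$ and $q(x)\in\Lambda_r(x)$ give $h_N^{D^n}(q(x))\le h_N^{D^n}(x)$, while $h_N^D(q(x))\ge h_N^D(x)$ by definition of $q(x)$; hence $0\le h_N^D(q(x))-h_N^D(x)\le|\varphi_N^{D,D^n}(q(x))-\varphi_N^{D,D^n}(x)|\le N^{-1/3}$, by $(A_{N,r,t}^{(3)})^\cc$. Applying $(A^{(3)})^\cc$ uses $x\in\Gamma_N^{D^n}(t_0+2t)\cap D^n_{N,\epsilon/2}$, which in turn follows from the hypothesis $x\in\Gamma_N^D(t_0+1)\cap D^n_{N,\epsilon/2}$ together with $(A_{N,t}^{(0)})^\cc$ and $t\ge 1$.

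For injectivity~(1), suppose $q(x_1)=q(x_2)=y$ with $x_1\ne x_2$. Then $x_1,x_2\in\Lambda_r(y)$, so $|x_1-x_2|\le 2r$, and both $x_i$ lie in $\Gamma_N^{D^n}(t_0+2t)$ by the above. If $|x_1-x_2|>r/2$, the pair violates $(A_{N,r,t}^{(2)})^\cc$ (since $2r<N/r$ for $N$ large); otherwise $x_1$ and $x_2$ each lie in $\Lambda_r$ of the other, and the $r$-local-maximum property in $h_N^{D^n}$ forces $h_N^{D^n}(x_1)=h_N^{D^n}(x_2)$, which is excluded by the tie-breaking convention used to define $\Theta_{N,r}^{D^n}$.

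The main obstacle is to verify that $q(x)\in\Theta_{N,r}^D$, because the definition of $q(x)$ only orders $h_N^D$ on $\Lambda_r(x)$, whereas $r$-local-maximality of $q(x)$ requires control on the larger ball $\Lambda_r(q(x))\subseteq\Lambda_{2r}(x)$. I would handle this by contradiction: if some $z\in\Lambda_r(q(x))\setminus\Lambda_r(x)$ had $h_N^D(z)\ge h_N^D(q(x))\ge h_N^D(x)\ge m_N-t_0-1$, then using the good-event control of $\varphi$ on a neighborhood slightly larger than $D^n_{N,\epsilon/2}$ (which contains $z$ for $N$ large) one obtains $h_N^{D^n}(z)\ge m_N-t_0-2t$, i.e., $z\in\Gamma_N^{D^n}(t_0+2t)$. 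The pair $(x,z)$ would then satisfy $r<|x-z|\le 2r<N/r$, in direct violation of $(A_{N,r,t}^{(2)})^\cc$, ruling out any such~$z$.

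For surjectivity~(2), given $y\in\Theta_{N,r}^D\cap\Gamma_N^D(t_0)\cap D^n_{N,\epsilon}$, I would set $\widetilde x:=\mathop{\mathrm{argmax}}_{z\in\Lambda_r(y)}h_N^{D^n}(z)$ and repeat the four-step analysis with $h_N^D$ and $h_N^{D^n}$ interchanged to show that $\widetilde x$ lies in the source set for $q$ and satisfies $q(\widetilde x)=y$. The only new verification is that among $z\in\Lambda_r(\widetilde x)$ the maximum of $h_N^D$ is attained at $y$; any competitor $z\in\Lambda_r(\widetilde x)\setminus\Lambda_r(y)$ with $h_N^D(z)\ge h_N^D(y)$ would again produce a pair in $\Gamma_N^{D^n}(t_0+2t)$ at separation in $(r,2r]\subset(r/2,N/r)$, in conflict with $(A_{N,r,t}^{(2)})^\cc$.
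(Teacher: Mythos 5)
Your argument is essentially the paper's: you define $q$ by the nearby argmax of $h_N^D$ and verify its properties via the events comprising $A_{N,M,r,t}$, exactly as the paper does through its auxiliary claim \eqref{E:3.18}. Two of your justifications should be repaired, though. In the injectivity argument you rule out $h_N^{D^n}(x_1)=h_N^{D^n}(x_2)$ with $x_1\ne x_2$ by appealing to a ``tie-breaking convention used to define $\Theta_{N,r}^{D^n}$''; there is no such convention --- $\Theta_{N,r}^{D^n}$ is simply $\{x\colon h_N^{D^n}(x)=\max_{z\in\Lambda_r(x)}h_N^{D^n}(z)\}$ --- and the correct reason is that the DGFF a.s.\ takes distinct values at distinct vertices, so the nearby local maxima are a.s.\ unique (the paper's ``(a.s.)\ unique'' wording). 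In the ``main obstacle'' step, deducing $z\in\Gamma_N^{D^n}(t_0+2t)$ from ``good-event control of $\varphi$ on a neighborhood slightly larger than $D^n_{N,\epsilon/2}$'' is not correct as stated, since $A_{N,t}^{(0)}$ controls $\varphi_N^{D,D^n}$ only on $D^n_{N,\epsilon/2}$ itself; the intended deduction is that $z\in\Lambda_{2r}(x)$ with $x\in\Gamma_N^{D^n}(t_0+2t)\cap D^n_{N,\epsilon/2}$, so $(A_{N,r,t}^{(3)})^\cc$ gives $|\varphi_N^{D,D^n}(z)-\varphi_N^{D,D^n}(x)|\le N^{-1/3}$ and $(A_{N,t}^{(0)})^\cc$ gives $|\varphi_N^{D,D^n}(x)|<t$, whence $h_N^{D^n}(z)\ge m_N-t_0-1-t-N^{-1/3}\ge m_N-(t_0+2t)$ for $N$ large, using $t>1$. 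With these two fixes your proof coincides with the paper's.
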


\begin{proofsect}{Proof}
We will need~$N$ so large that $N/r>2r$, $\epsilon N>r$ and $N^{-1/3}<t$. Let us assume that~$(A_{N,M,r,t})^\cc$ occurs. We claim that then
\begin{equation}
\label{E:3.18}
x\in\Gamma_N^D(t_0+1)\cap D^n_{N,\epsilon/2}
\quad\Rightarrow\quad 
\begin{cases}
\Lambda_{r/2}(x)\cap\Theta_{N,r}^D\ne\emptyset,
\\
\Lambda_{r/2}(x)\cap\Theta_{N,r}^{D^n}\ne\emptyset.
\end{cases}
\end{equation}
Indeed, the containment in~$(A_{N,M,r,t})^\cc$ ensures the following:
\begin{enumerate}
\item[(a)] $h_N^D(x)\ge m_N-t_0-1$ and so $h_N^{D^n}(x)\ge m_N-(t_0+t+1)$,
\item[(b)] $h_N^D(y)<m_N-(t_0+t)$ for $y$ such that $r/2<|x-y|<N/r$,
\item[(c)] $|\varphi_N^{D,D^n}(y)-\varphi_N^{D,D^n}(x)|\le N^{-1/3}$ for $y\in\Lambda_{2r}(x)$.
\end{enumerate}
Then (a) and (b) imply that $h_N^D(y)\le h_N^D(x)-t+1<h_N^D(x)$ for $y\in\Lambda_{2r}(x)\smallsetminus\Lambda_{r/2}(x)$ and so the maximizer of $h_N^D$ on~$\Lambda_{2r}(x)$ lies in~$\Lambda_{r/2}(x)$. This gives the top line in \eqref{E:3.18}; for the bottom line we instead note that (a-c) give
\begin{equation}
h_N^{D^n}(y)\le h_N^{D^n}(x)+1-t+N^{-1/3}<h_N^{D^n}(x),\qquad y\in\Lambda_{2r}(x)\smallsetminus\Lambda_{r/2}(x),
\end{equation}
and the same conclusion holds for~$h_N^{D^n}$ as well. 

By the definition of $r$-local maximum, the sets on the right of \eqref{E:3.18} are singletons almost surely. Moreover, if~$x\in\Gamma_N^D(t_0+1)\cap D^n_{N,\epsilon/2}$ also lies in $\Theta_{N,r}^{D^n}$, then~$x$ is the (a.s.) unique element of $\Lambda_{r/2}(x)\cap\Theta_{N,r}^{D^n}$. We then define~$q(x)$ to be the (a.s.) unique element of $\Lambda_{r/2}(x)\cap\Theta_{N,r}^D$. This implies (3) in the statement and shows~(1) as well. To get~(4), we note that $h_N^D(q(x))-h_N^D(x)$ and $h_N^{D^n}(q(x))-h_N^{D^n}(x)$ differ by at most~$N^{-1/3}$ but, by the definition of~$r$-local maximum and~(3), they have opposite signs. So the bound in~(4) must hold. To get also~(2) we note that \eqref{E:3.18} ensures the existence of a point $x\in\Lambda_{r/2}(y)\cap\Theta_{N,r}^{D^n}$ for every $y\in\Theta_{N,r}^D\cap\Gamma_N^D(t_0)\cap D^n_{N,\epsilon}$. Since~$\epsilon N>r$ and (4) holds, this~$x$ lies in $\Theta_{N,r}^{D^n}\cap\Gamma_N^D(t_0+1)\cap D^n_{N,\epsilon/2}$ and so $q(x)$ is defined and~$y=q(x)$ holds.
\end{proofsect}

We are now finally ready to prove the claim in~\eqref{E:3.10}:
 
\begin{proofsect}{Proof of Lemma~\ref{lemma-3.4}}
Recall the calculation in \twoeqref{E:3.12a}{E:3.17}. We will focus on the expression in \eqref{E:3.17} first. Abbreviate
\begin{equation}
\Sigma_{N,r,\epsilon}^{D^n}:=\Theta_{N,r}^{D^n}\cap\Gamma_N^D(t_0+1)\cap D^n_{N,\epsilon/2}.
\end{equation}
Thanks to the restriction on the support of~$f$ and the presence of~$\chi_{n,\epsilon}$, we can freely restrict the second product to~$x\in\Sigma_{N,r,\epsilon}^{D^n}$. By Lemma~\ref{lemma-3.6}, once~$N$ is large enough and $(A_{N,M,r,t})^\cc$ occurs, all terms that effectively contribute to the first product lie in~$q(\Sigma_{N,r,\epsilon}^{D^n})$. Since~$q$ is also injective, we can write the first product over $q(\Sigma_{N,r,\epsilon}^{D^n})$ and use the telescoping trick to write, on~$(A_{N,M,r,t})^\cc$,
\begin{equation}
\begin{aligned}
\Bigl|\,\prod_{x\in \Theta_{N,r}^D}&\texte^{-f\left(x/N,\,h^D_N(x)-m_N\right)\chi_{n,\epsilon}(x/N)}-\prod_{x\in \Theta_{N,r}^{D^n}}\texte^{-f\left(x/N,\,h^D_N(x)-m_N\right)\chi_{n,\epsilon}(x/N)}\Bigr|
\\
&\qquad\le
\sum_{x\in\Sigma_{N,r,\epsilon}^{D^n}}\Bigl|\texte^{-f\left(q(x)/N,\,h^D_N(q(x))-m_N\right)\chi_{n,\epsilon}(q(x)/N)}-\texte^{-f\left(x/N,\,h^D_N(x)-m_N\right)\chi_{n,\epsilon}(x/N)}\Bigr|.
\end{aligned}
\end{equation}
Let $\text{osc}_{f\chi_{n,\epsilon}}(\delta)$ denote the maximal oscillation of~$f(x,h)\chi_{n,\epsilon}(x)$ as either argument varies over an interval of length~$\delta$. On $(A_{N,M,r,t})^\cc$ (and assuming~$t\ge1$) we have $|\Sigma_{N,r,\epsilon}^{D^n}|\le M$ and, using parts (3-4) of~Lemma~\ref{lemma-3.6}, the sum is at most $M \,\text{osc}_{f\chi_{n,\epsilon}}(r/N+N^{-1/3})$. This tends to zero as~$N\to\infty$; the expectation in \eqref{E:3.17} then tends to zero as~$N\to\infty$ and~$r\to\infty$ by Lemma~\ref{lemma-3.5}.

It remains to address the second term in \eqref{E:3.12a}. Applying the definitions \eqref{E:3.4}, \eqref{E:3.3}, \eqref{E:3.11} we can rewrite this term as
\begin{equation}
\begin{aligned}
\Bigl|\,E &\texte^{-\langle\eta_{N,r}^{D^n} \,,\, f^{D,D^n}_{N,\epsilon,\varphi}\rangle}-E \texte^{-\langle\eta_{N,r}^{D^n} \,,\, f^{D,D^n}_{\epsilon,\Phi}\rangle}\Bigr|
\\
&\qquad\qquad= 
\biggl|\,E\prod_{x\in \Theta_{N,r}^{D^n}}\texte^{-f\left(x/N,\,h^{D^n}_N(x)-m_N+\varphi_N^{D,D^n}(x)\right)\chi_{n,\epsilon}(x/N)}
\\*[-3mm]
&\qquad\qquad\qquad\qquad\qquad\qquad-E\prod_{x\in \Theta_{N,r}^{D^n}}\texte^{-f\left(x/N,\,h^{D^n}_N(x)-m_N+\Phi^{D,D^n}(x/N)\right)\chi_{n,\epsilon}(x/N)}\biggr|
\end{aligned}
\end{equation}
Now recall the event $A_{N,t}^{(0)}$ from \eqref{E:3.14b}. Due to our restriction on the support of~$f$, on $(A_{N,t}^{(0)})^\cc$ we can restrict the first product to $x\in\Theta_{N,r}^{D^n}\cap\Gamma^{D^n}(t_0+t)$. Similarly, on the complement of
\begin{equation}
\label{E:3.26}
\tilde A_{N,t}^{(0)}:=\bigl\{\max_{x\in D^n_{N,\epsilon/2}}\Phi^{D,D^n}(x/N)>t\bigr\}
\end{equation}
the second product can also be restricted to $x\in\Theta_{N,r}^{D^n}\cap\Gamma_N^{D^n}(t_0+t)$ as well. Introducing  an additional restriction on the size of $\Gamma_N^{D^n}(t_0+t)$, we use the telescopic trick again to get
\begin{multline}
\label{E:3.27}
\Bigl|\,E \texte^{-\langle\eta_{N,r}^{D^n} \,,\, f^{D,D^n}_{N,\epsilon,\varphi}\rangle}-E \texte^{-\langle\eta_{N,r}^{D^n} \,,\, f^{D,D^n}_{\epsilon,\Phi}\rangle}\Bigr|\le P\bigl(A_{N,t}^{(0)}\bigr)+P\bigl(\tilde A_{N,t}^{(0)}\bigr)+P\bigl(|\Gamma_N^{D^n}(t)|> M\bigr)
\\
\!\!\!\!+ E\biggl(\,\1_{\{|\Gamma_N^{D^n}(t_0+t)|\le M\}}
\sum_{x\in \Theta_{N,r}^{D^n}\cap\Gamma_N^{D^n}(t_0+t)}
\wt E\Bigl|\,\texte^{-f\left(x/N,\,h^{D^n}_N(x)-m_N+\varphi_N^{D,D^n}(x)\right)\chi_{n,\epsilon}(x/N)}
\\-
\texte^{-f\left(x/N,\,h^{D^n}_N(x)-m_N+\Phi^{D,D^n}(x/N)\right)\chi_{n,\epsilon}(x/N)}\Bigr|\biggr),
\end{multline}
where the inner expectation~$\wt E$ is over any coupling of the random variables
\begin{equation}
\bigl\{(\varphi_N^{D,D^n}(x),\Phi^{D,D^n}(x/N))\colon x\in \Theta_{N,r}^{D^n}\cap\Gamma_N^{D^n}(t_0+t)\bigr\}
\end{equation}
with the correct marginal laws, conditioned on~$h_N^{D^n}$. Since there are at most~$M$ random variables in this collection, for any~$\delta>0$, Proposition~\ref{lem:2} ensures the existence of a coupling such that $|\varphi_N^{D,D^n}(x)-\Phi^{D,D^n}(x/N)|<\delta$ for all~$x\in \Theta_{N,r}^{D^n}\cap\Gamma_N^{D^n}(t_0+t)\cap D_{N,\epsilon}^n$ with probability at least~$1-\delta$ once~$N$ is large. Hence, the $N\to\infty$ limit of the last term on the right of \eqref{E:3.27} is again at most $M(\delta+\text{osc}_{f\chi_{n,\epsilon}}(\delta))$ for any $\delta>0$. As the three probabilities on the right of \eqref{E:3.27} tend to zero as $N\to\infty$, $t\to\infty$ and $M\to\infty$, the claim is proved.
\end{proofsect}

\begin{proofsect}{Proof of Proposition~\ref{prop-3.1}}
The claim follows readily from Lemmas~\ref{lemma-3.2}, \ref{lemma-3.3} and \ref{lemma-3.4}.
\end{proofsect}


\section{Maximizer conditioned on large maximum}
\label{sec4}\noindent
The goal of this section is to establish Theorem~\ref{thm:1.4} dealing with the asymptotic distribution of the maximizer of~$h_N^D$ conditional on the maximum being atypically large. Here and henceforth, the term ``square'' designates a set of the form $(a,a+r)\times(b,b+r)$ with~$a,b\in\R$ and~$r>0$.

\subsection{Reduction to squares}
\label{sec5.1ab}\noindent
Our strategy is to deduce Theorem~\ref{thm:1.4} from the following statement (adapted to our notation) that was proved for square domains in Bramson, Ding and Zeitouni~\cite{BDingZ}:

\begin{proposition}[Proposition~2.2 of~\cite{BDingZ}]
\label{thm-asymp}
Consider the square $S:=(0,1)^2$ and recall that we have $\alpha:=2/\sqrt g=\sqrt{2\pi}$ in our normalization.
There exists a constant $a_\star\in(0,\infty)$ such that
\begin{equation}
\label{E:2.4q}
\lim_{t\to\infty}\,\limsup_{N\to\infty}\,
\biggl|
\frac1t\,\texte^{\alpha t}P\Bigl(\,\max_{x\in S_N}h_N^S(x)> m_N+t\Bigr) -
a_\star \biggr| = 0 \,.
\end{equation}
Moreover, there exists a probability density $\psi$ on $[0,1]^2$ such that for any open set $A\subset[0,1]^2$,
\begin{equation}
\label{E:2.5q}
\lim_{t\to\infty}\,\limsup_{N\to\infty}
\biggl|\, P\Bigl(N^{-1}\operatornamewithlimits{argmax}_{S_N}\,h_N^S\in A\,\Big|\,\max_{x\in S_N}h_N^S(x)> m_N+t\Bigr)
- \int_A\psi(x)\textd x \biggr| = 0 \,.
\end{equation}
\end{proposition}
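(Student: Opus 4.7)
The plan is to establish \eqref{E:2.4q} and \eqref{E:2.5q} in tandem by combining sharp tail estimates for a branching-structured comparison model with a Gibbs-Markov localization at mesoscopic scales.

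For the tail \eqref{E:2.4q}, the tightness bound $P(\max_{S_N} h_N^S > m_N + t) \asymp t\texte^{-\alpha t}$ of Ding-Zeitouni~\cite{DZ} already exhibits the correct shape up to constants. To extract the limit constant $a_\star$, I would compare the DGFF to the modified branching random walk (MBRW) of Bramson-Ding-Zeitouni~\cite{BDingZ} via Kahane-type Gaussian comparison (Proposition~\ref{lemma-Kahane} applied to suitable functionals); for the MBRW, the hierarchical structure makes the tail asymptotic amenable to a McKean-Bramson renewal fixed-point analysis that pins down the prefactor. The DGFF constant is then inherited via the known distributional convergence of $\max - m_N$ from~\cite{BDingZ}: the Laplace transform of the limit has a first-order expansion at $0^+$ whose coefficient forces $a_\star$ to coincide with the MBRW value.

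For the density $\psi$, partition $S = (0,1)^2$ into subsquares $\{Q_i^\delta\}$ of side $\delta$ with centers $x_i$, and apply the Gibbs-Markov decomposition $h_N^S = h_N^{\bigcup_i Q_i^\delta} + \varphi$, with $\varphi$ an independent smooth Gaussian field. Conditional on $\varphi$, the DGFFs on the $Q_i^\delta$ are independent, and the tail \eqref{E:2.4q} applied to each subsquare with threshold $m_N + t - \varphi(x_i) = m_{\delta N} + (t - \varphi(x_i) + 2\sqrt g \log(1/\delta)) + o(1)$ gives
\begin{equation}
P\bigl(\,\max_{Q_i^\delta} h_N^{Q_i^\delta} > m_N + t - \varphi(x_i) \,\big|\, \varphi\bigr) \approx a_\star\, \delta^4\, \bigl[(t - \varphi(x_i)) + 2\sqrt g \log(1/\delta)\bigr]\, \texte^{\alpha\varphi(x_i) - \alpha t}.
\end{equation}
Averaging over $\varphi(x_i)$, with $\text{Var}(\varphi(x_i)) = g\log(1/\delta) + g\log\rad_S(x_i) + O(1)$, and using the algebraic identity $\alpha g = 2\sqrt g$ which exactly cancels the divergent $\log(1/\delta)$ contributions, yields
\begin{equation}
P\bigl(\,\max_{S_N} h_N^S > m_N + t,\, N^{-1}\argmax h_N^S \in Q_i^\delta\bigr) \approx a_\star\, t\, \texte^{-\alpha t}\, \delta^2\, \rad_S(x_i)^2.
\end{equation}
Summing over $i$ and passing to the Riemann-sum limit recovers \eqref{E:2.5q} with $\psi(x) \propto \rad_S(x)^2$.

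The main obstacle is justifying the decoupling steps uniformly in $\delta$: (i) the contribution to $\{\argmax \in Q_i^\delta\}$ from points in the $o(N)$ boundary corridors of the $Q_i^\delta$ must be shown negligible via Proposition~\ref{lem:6} applied to a shrinking-volume region; (ii) the joint event $\{\max_{S_N} h_N^S > m_N + t\}$ must be well-approximated by the disjunction $\bigcup_i \{\max_{Q_i^\delta} > m_N + t - \varphi(x_i)\}$, which requires controlling the product $\prod_{j \neq i}(1 - P(\max_{Q_j^\delta} > m_N + t - \varphi(x_j)))$ uniformly in $\varphi$ via exponential-moment estimates derived from Proposition~\ref{lem:3}; and (iii) the convergence of $\varphi$ to $\Phi^{S,\bigcup_i Q_i^\delta}$ from Proposition~\ref{lem:2} must be promoted to convergence of the relevant exponential moments, which follows from uniform Borell-TIS bounds on the sup-norm. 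The delicate algebraic cancellation $\alpha g = 2\sqrt g$ is the key identity that converts the seemingly divergent shifts from $m_N - m_{\delta N}$ into the finite, $\delta$-independent density $\psi$.
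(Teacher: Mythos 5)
The statement you are trying to prove is not proved in this paper at all: it is quoted verbatim (with an attribution in the very proposition header) as Proposition~2.2 of Bramson, Ding and Zeitouni~\cite{BDingZ}, and the present paper takes it as an external input. Your sketch therefore cannot be compared to a proof in this paper; what can be said is whether it stands as a self-contained argument, and it does not. For \eqref{E:2.4q}, the reduction to the modified branching random walk and the ``renewal fixed-point analysis'' is a one-line gesture at the roughly fifty-page core of~\cite{BDingZ}. In particular, Kahane's inequality (Proposition~\ref{lemma-Kahane}) controls Laplace functionals of exponentials, not tail probabilities of maxima, so it cannot by itself transfer a sharp tail prefactor from the MBRW to the DGFF; what is needed is the full Slepian/Sudakov--Fernique comparison framework of~\cite{BDingZ} together with their delicate two-scale decomposition, and none of that is reproduced here.

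For \eqref{E:2.5q}, your Gibbs--Markov subsquare decomposition is, in outline, the argument the paper gives \emph{later}, in Section~\ref{sec4}, to prove the stronger Theorem~\ref{thm:1.4} (the explicit formula $\psi^D\propto\rad_D^2$). But that proof \emph{uses} Proposition~\ref{thm-asymp} as its starting point (via the rescaled form~\eqref{E:4.3b}), so deriving \eqref{E:2.5q} this way inverts the logical order. There is also a concrete circularity in your step~(ii): to replace $\{\max_{S_N}h_N^S>m_N+t,\ \operatorname{argmax}\in Q_i^\delta\}$ by the disjunction over $i$ of single-subsquare exceedances, you need to rule out two subsquares simultaneously exceeding $m_N+t-O(1)$. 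This is precisely the content of Lemma~\ref{lem:5.1}, and the proof of that lemma invokes the Cox-process convergence of Biskup--Louidor and the Laplace-transform asymptotics (Lemma~3.5 of~\cite{Biskup-Louidor}), both of which are themselves downstream of~\cite{BDingZ}. So the decoupling estimate you defer to ``exponential-moment estimates from Proposition~\ref{lem:3}'' is the hard step, and the tools you cite do not close it; Proposition~\ref{lem:3} bounds the binding field $\varphi$, not the joint tail of two independent subsquare maxima at the same high level. The small-argument Laplace expansion and the $\alpha g = 2\sqrt g$ cancellation in your calculation are correct, but they compute the density once the tail asymptotic and the single-exceedance localization are in hand; they do not establish them.
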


Thanks to the results of Bramson, Ding and Zeitouni~\cite{BDingZ} and Biskup and Louidor~\cite{Biskup-Louidor}, we in fact know that the $N\to\infty$ limits in \twoeqref{E:2.4q}{E:2.5q} in fact exist. Moreover, the conclusions readily generalize to squares of all sizes. Explicitly, for~$S:=a+(0,K)^2$ with $K>0$ rational and~$a\in\Q^2$, reducing to a subsequence of~$N$'s for which $KN\in\N$ and noting that $m_{KN}=m_N+2\sqrt g\log K+o(1)$ as $N\to\infty$ yields (for any $A\subseteq D$ open) that
\begin{equation}
\label{E:4.3b}
\lim_{t\to\infty}\lim_{N\to\infty}\,\frac1t\texte^{\alpha t}\,P\Bigl(N^{-1}\operatornamewithlimits{argmax}_{S_N}\,h_N^S\in A\,,\,\,\max_{x\in S_N}h_N^S(x)> m_N+t\Bigr)
=\int_A\psi^S(x)\textd x,
\end{equation}
where $\psi^S(x):=a_\star K^{2}\psi((x-a)/K)$. Thus, \eqref{E:1.13a} holds for squares of rational sizes. Squares of irrational sizes are handled by straightforward approximations and Proposition~\ref{lem:6}. Note that that~$\psi^S(x)$ is invariant under a simultaneous shift of both~$x$ and~$S$.

Our goal is to extend \eqref{E:4.3b} to all~$D\in\mathfrak D$. More importantly, we also aim to establish the formula \eqref{E:1.14a} for the corresponding density~$\psi^D$. As it turns out, the key is to show that, under the coupling of the DGFF in two nested domains, the maximizer of the field in the larger domain conditioned on the maximum to be very large will (with high probability) coincide with the maximizer of the field in the subdomain. This is the content of the following~claim:

\begin{proposition}[Coincidence of maximizers]
\label{prop-4.2}
Let $D\in\mathfrak D$ and suppose that~$A$ is a non-empty open set and~$S,S'$ open squares such that $\overline A\subset S'$ and $\overline{S'}\subset S\subseteq D$. Regard $h_N^D$ as defined by $h_N^D:=h_N^S+\varphi_N^{D,S}$ with $h_N^S$ and~$\varphi_N^{D,S}$ independent and let $\hat{X}_N^D$, resp., $\hat{X}_N^S$ be the (a.s.-unique) maximizers of~$h_N^D$, resp., $h_N^S$ on~$D_N$, resp.,~$S_N$. Then
\begin{equation}
\label{E:4.4a}
\begin{aligned}
\lim_{M\to\infty}\limsup_{t \to \infty}\,\limsup_{N \to \infty}\,
\biggl|&P\Bigl( \hat{X}_N^ D \in A_N,\, \max_{x\in D_N}h_N^D > m_N + t\Bigr)
\\
&-	 P\Bigl( \hat{X}_N^S \in A_N,\, h_N^D(\hat{X}_N^S) > m_N + t,\,\max_{x\in S'_N}|\varphi_N^{D,S}(x)|\le M\Bigr)
\biggr|\,\frac 1t \texte^{\alpha t}  \ =\  0 \,.
\end{aligned}
\end{equation}
\end{proposition}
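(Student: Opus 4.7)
I would exploit the Gibbs--Markov decomposition $h_N^D = h_N^S + \varphi_N^{D,S}$, where $h_N^S$ and the binding field $\varphi_N^{D,S}$ are independent. The plan is to show that, on an event whose probability is negligible on the scale $t\,\texte^{-\alpha t}$, the two events appearing on the left of \eqref{E:4.4a} coincide. Three ingredients drive the argument: (i) the Borell--Tsirelson tail bound of Proposition~\ref{lem:3}, which gives $P(\max_{S'_N}|\varphi_N^{D,S}| > M) \le C \texte^{-cM^2}$ uniformly in $N$; (ii) the maximum tail estimate of Proposition~\ref{lem:6} applied to $h_N^S$ on the subset $A_N \subseteq S_N$, conditionally on $\varphi_N^{D,S}$; and (iii) the two-point localization of near-maxima from Proposition~\ref{lem:5}, which prevents near-maximal points from being at intermediate separations.

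\textbf{Restriction to $V_M := \{\max_{S'_N}|\varphi_N^{D,S}| \le M\}$.} Abbreviating $E_N(t) := \{\hat X_N^D \in A_N,\,\max h_N^D > m_N + t\}$, I would first control the contribution of $E_N(t) \cap V_M^c$. Conditioning on $\varphi_N^{D,S}$ and using independence, Proposition~\ref{lem:6} applied to $h_N^S$ on $A_N$ (with the threshold shifted by $U := \max_{S'_N}|\varphi_N^{D,S}|$ and by the $O(1)$ difference between $m_N$ and the centering appropriate for $S$) yields the conditional bound $C(|A|/N^2)^{1/2}(t+U)\,\texte^{-\alpha(t-U)}$. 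Integrating against the Gaussian tail $P(U > u) \le C\texte^{-cu^2}$ over $\{U > M\}$ produces $P(E_N(t) \cap V_M^c) \le C(|A|/N^2)^{1/2}\,t\,\texte^{-\alpha t}\,\rho(M)$ with $\rho(M) \to 0$ as $M \to \infty$. After multiplying by $t^{-1}\texte^{\alpha t}$ this vanishes in the iterated limit.

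\textbf{Comparison on $V_M$ and the main obstacle.} On $V_M \cap E_N(t)$, the point $\hat X_N^D \in A_N$ satisfies $h_N^S(\hat X_N^D) > m_N + t - M$ and hence lies in $\Gamma_N^S(M)$. If $\hat X_N^S \ne \hat X_N^D$ then $\hat X_N^S$ also belongs to $\Gamma_N^S(M)$ (since $h_N^S(\hat X_N^S) \ge h_N^S(\hat X_N^D)$), so Proposition~\ref{lem:5} applied to $h_N^S$ forces $|\hat X_N^D - \hat X_N^S| \le r$ with probability $1 - o_r(1)$. Because $\varphi_N^{D,S}$ is discrete-harmonic on $S_N$ and bounded by $M$ on $V_M$, the discrete Harnack inequality delivers $|\varphi_N^{D,S}(\hat X_N^D) - \varphi_N^{D,S}(\hat X_N^S)| = O(Mr/N)$, so the height difference $|h_N^D(\hat X_N^D) - h_N^D(\hat X_N^S)|$ tends to $0$ as $N \to \infty$ for fixed $r, M$; this matches the height condition in $F_N(t,M)$ up to a vanishing slack absorbed in the $\limsup_t$. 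The only remaining discrepancy between $E_N(t)$ and $F_N(t,M)$ is then whether $\hat X_N^S$ itself lies in $A_N$, which, on the good event $|\hat X_N^D - \hat X_N^S| \le r$, can fail only when $\hat X_N^S/N$ sits in the $(r/N)$-neighborhood of $\partial A$. Using Proposition~\ref{thm-asymp} for the square $S$, the conditional law of $\hat X_N^S/N$ given $\{\max h_N^S > m_N + t - M\}$ is asymptotically absolutely continuous with bounded density on $\overline S$, so the neighborhood contribution is $o(t\,\texte^{-\alpha t})$ in the iterated limit $N \to \infty$ followed by $r \to \infty$. A symmetric argument, relying on Proposition~\ref{lem:5} applied to $h_N^D$, handles $F_N(t,M) \setminus E_N(t)$. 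The main obstacle is precisely this boundary localization: one must verify that Proposition~\ref{thm-asymp} delivers the density estimate uniformly in the shifted threshold $t - M$ as $t,M \to \infty$, and dispose of $\hat X_N^S/N$ landing near $\partial A$ without a priori regularity of $\partial A$, presumably through an inner/outer open approximation of $A$ by sets with $\leb$-negligible boundary whose error is controlled by the absolute-continuity of the limit density.
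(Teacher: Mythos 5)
Your overall strategy — Gibbs–Markov decomposition, restriction to $V_M:=\{\max_{S'_N}|\varphi_N^{D,S}|\le M\}$, two‑point localization, and boundary control — is the same framework the paper uses (the paper proceeds through the chain $\hat X_N^D\to\hat X_N^{D,S'}\to\hat X_N^{S,S'}\to\hat X_N^S$, with Lemma~\ref{lemma-4.3} playing the role of your $V_M^c$ estimate). Your treatment of $V_M^c$ by conditioning on $\varphi_N^{D,S}$ and integrating against its Gaussian tail is a legitimate alternative to the paper's dyadic decomposition over $L\in[M,t^{2/3}]$.

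However, there is a genuine quantitative gap in the central step. You apply Proposition~\ref{lem:5} directly to conclude that $|\hat X_N^D-\hat X_N^S|\le r$ with probability $1-o_r(1)$. That is not enough: the probability of \emph{everything} of interest here is already of order $t\,\texte^{-\alpha t}$, and after multiplying by $t^{-1}\texte^{\alpha t}$ you must beat that scale. What you actually need is that the probability of two points in $\Gamma_N^S(-(t-M))$ at separation $>r$ is $o(t\,\texte^{-\alpha t})$ in the iterated limit — a bound \emph{relative to the rare event} $\{\max\ge m_N+t\}$, not a plain $o_r(1)$. Proposition~\ref{lem:5} concerns the threshold $\Gamma_N^D(t)$ for fixed $t$ and gives no control on the $t^{-1}\texte^{\alpha t}$ scale. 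This is precisely what the paper's Lemma~\ref{lem:5.1} supplies, and its proof is nontrivial: after a reduction to square domains via a Gibbs–Markov positivity trick, it rests on the small-$\lambda$ asymptotics of $E(1-\texte^{-\lambda Z})$ and $E(Z\texte^{-\lambda Z})$ from Lemma~3.5 of~\cite{Biskup-Louidor}. Without an analogue of this lemma your comparison on $V_M$ does not close.

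A smaller but related issue is in your boundary localization. You propose to control the event $\hat X_N^S/N\in (\partial A)_{r/N}$ via the bounded density in Proposition~\ref{thm-asymp}, but you correctly flag the uniformity-in-$t-M$ question as unresolved. The paper side-steps this entirely: it splits the discrepancy into a spatial part $\{\exists x\in A_N\cap\Gamma_N^D(-t)\colon\textd(x,A_N^\cc)\le r\}$, dispatched by Proposition~\ref{lem:6} since $\partial A$ is Lebesgue-null, and a height-spacing part $\{\max_{S'_N}h_N^D\in[m_N+t,m_N+t+cMr/N]\}$, dispatched by the Cox-process convergence together with the Laplace transform bound Lemma~\ref{lem:5.2}. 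Your route could plausibly be made to work, but it requires a uniform-in-$t$ version of \eqref{E:2.5q} that you neither state nor prove, whereas the paper's route only uses the already-established upper tail bounds.
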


Postponing the proof of this proposition, we move to:

\begin{proofsect}{Proof of Theorem~\ref{thm:1.4}}
Let~$D\in\mathfrak D$.
We begin by showing the existence of a function~$\psi^D$ such that \eqref{E:1.13a} holds. In light of Proposition~\ref{lem:6} and the additivity in~$A$ of both sides of \eqref{E:1.13a}, it suffices to prove the claim for all open sets~$A\subseteq D$ so small that there are open squares $S,S'$ for which $\overline A\subset S'$ and $\overline{S'}\subset S\subseteq D$ hold. We will assume these sets to be fixed for the duration of the argument.

Consider a tiling of $\R^2$ by open squares of side~$1/K$ and let $S^i\colon i=1,\dots,n_K$ denote those squares entirely contained in~$A$. We assume that~$K$ is so large that $n_K\ge1$. Let $S_N^i$ denote the scaled up version of~$S^i$ by~$N$ and note that $S_N^i\cap S_N^j=\emptyset$ for $i\ne j$. Denote
\begin{equation}
\varphi_{N,i}^{\text{min}}:=\min_{z\in S^i_N}\varphi_N^{D,S}(z),\qquad i=1,\dots,n_K.
\end{equation}
Since the maximizer of~$h_N^S$ is a.s.\ unique, the Gibbs-Markov property shows
\begin{equation}
\begin{aligned}
P\Bigl( \hat{X}_N^S \in A_N,\, &h_N^D(\hat{X}_N^S) > m_N + t,\,\max_{x\in S'_N}|\varphi_N^{D,S}(x)|\le M\Bigr)
\\
&\ge\sum_{i=1}^{n_K}
P\Bigl( \hat{X}_N^S \in A_N\cap S^i_N,\, h_N^D(\hat{X}_N^S) > m_N + t,\,\max_{x\in S'_N}|\varphi_N^{D,S}(x)|\le M\Bigr)
\\
&\ge
\sum_{i=1}^{n_K}
P\Bigl( \hat{X}_N^S \in A_N\cap S^i_N,\, h_N^S(\hat{X}_N^S) > m_N + t-\varphi_{N,i}^{\text{min}},\,\max_{x\in S'_N}|\varphi_N^{D,S}(x)|\le M\Bigr).
\end{aligned}
\end{equation}
As $n_K$ does not depend on~$N$ or $t$, we may assume that the corresponding version of \eqref{E:4.3b} holds for all~$n_K$ squares uniformly. Moreover, the distributional convergence~$\varphi_N^{D,S}\Lawarrow\Phi^{D,S}$ takes place uniformly on~$S'$ and so we can couple $\varphi_N^{D,S}$ and~$\Phi^{D,S}$ on the same probability space such that $\max_{x\in S'_N}|\varphi_N^{D,S}(x)-\Phi^{D,S}(x/N)|=o(1)$ as~$N\to\infty$. Using these facts, we derive
\begin{equation}
\begin{aligned}
\liminf_{t\to\infty}\,\liminf_{N\to\infty}\,\frac1t\texte^{\alpha t}
P\Bigl( \hat{X}_N^S \in A_N,\, &h_N^D(\hat{X}_N^S) > m_N + t,\,\max_{x\in S'_N}|\varphi_N^{D,S}(x)|\le M\Bigr)
\\
&\ge\sum_{i=1}^{n_K}\Bigl(\int_{A\cap S^i}\psi^{S}(x)\textd x\Bigr)E\Bigl(\texte^{\alpha\Phi^{D,S}_i}\1_{\{\sup_{S'}|\Phi^{D,S}|< M\}}\Bigr),
\end{aligned}
\end{equation}
where $\Phi_i^{D,S}:=\inf_{x\in S^i}\Phi^{D,S}(x)$. Writing the right-hand side as one integral and using the Bounded Convergence Theorem and the  a.s.\ continuity of~$\Phi^{D,S}$ on~$S'$ to take~$K\to\infty$, we get  the integral of~$\psi^S(x)$ against the expectation of $\texte^{\alpha\Phi^{D,S}(x)}$ on the event where $\sup_{S'}|\Phi^{D,S}|<M$. In the limit~$M\to\infty$, the Monotone Convergence Theorem yields
\begin{equation}
\begin{aligned}
\liminf_{M\to\infty}\,\liminf_{t\to\infty}\,\liminf_{N\to\infty}\,\frac1t\texte^{\alpha t}\,
P\Bigl( \hat{X}_N^S \in A_N,\, &h_N^D(\hat{X}_N^S) > m_N + t,\,\max_{x\in S'_N}|\varphi_N^{D,S}(x)|\le M\Bigr)
\\
&\ge\int_{A}\psi^{S}(x)\texte^{\frac12\alpha^2\text{Var}(\Phi^{D,S}(x))}\textd x.
\end{aligned}
\end{equation}
A completely analogous argument using squares of side-length~$1/K$ that overlap slightly and cover~$A$ gives us, in conjunction with Proposition~\ref{lem:6} and the fact that~$\partial A$ has zero Lebesgue measure, an identical upper bound with \emph{limes superior} replacing \emph{limes inferior} in all three cases. Based on \eqref{E:4.4a} we then conclude \eqref{E:1.13a} with
\begin{equation}
\psi^D(x):=\psi^S(x)\texte^{\frac12\alpha^2\text{Var}(\Phi^{D,S}(x))},\qquad x\in S.
\end{equation}
It remains to identify~$\psi^D$ explicitly.

Pick~$D\in\mathfrak D$ and recall that \eqref{E:1.6} gives $\text{Var}(\Phi^{D,S}(x))$ as the difference $F^D(x)-F^S(x)$, where we have temporarily denoted
\begin{equation}
\label{E-F^D}
F^D(x):=g\int_{\partial  D}\Pi^{ D}(x,\textd z)\log|z-y|.
\end{equation}
It follows that
\begin{equation}
\label{E:4.11c}
\psi^D(x)\texte^{-\frac12\alpha^2g \,F^D(x)}
=\psi^S(x)\texte^{-\frac12\alpha^2g\, F^S(x)},\qquad x\in S.
\end{equation}
We claim that the quantity on the left is the same for all~$x\in D$. Indeed, if~$x,x'\in D $ are two points, choose~$S$ to be a square centered at~$x$ and~$S'$ a square of the same size centered at~$x'$ such that $S,S'\subseteq D$. The joint shift invariance in~$x$ and~$S$ of both~$\psi^S(x)$ and~$F^S(x)$ implies that the right-hand side of \eqref{E:4.11c} is the same for both~$x$ and~$x'$. Hence, so is the left-hand side. 

Writing~$c_\star$ for the common value of the quantity in \eqref{E:4.11c}, we infer
\begin{equation}
\label{E:4.12qq}
\psi^D(x)=c_\star\,\texte^{\frac12\alpha^2g \,F^D(x)},\qquad x\in D.
\end{equation}
Since $\alpha^2 g=4$ (regardless of the value of~$g$), this gives us \eqref{E:1.14a}. 
\end{proofsect}

We prove also the corresponding conclusion for the $Z^D$-measure:

\begin{proofsect}{Proof of Corollary~\ref{cor-Z-measure}}
Let~$A\subseteq D$ be open and let~$t$ be related to~$\lambda$ via $\lambda:=\frac1\alpha\texte^{-\alpha t}$. Let $\eta^D$ be the limit process in domain~$D$ and let $(x^\star,h^\star)$ be the point with the largest value of the field coordinate. Then, as observed in Biskup and Louidor~\cite{Biskup-Louidor}, 
\begin{equation}
P\bigl(\,x^\star\in A,\,h^\star\le t\bigr)=E\bigl(\,\widehat Z^D(A)(1-\texte^{-\lambda Z^D(D)})\bigr).
\end{equation}
Noting that $\lambda\log(\ffrac1\lambda)=t\texte^{-\alpha t}$, the asymptotic \eqref{E:1.22c} follows from Theorem~\ref{thm:1.4}, the distributional convergence in Theorem~\ref{thm:1} and some simple approximation arguments.

Concerning \eqref{E:1.23c}, we pick $\theta\in(0,1)$ and observe
\begin{equation}
E\bigl(\,\widehat Z^D(A)(\texte^{-\theta\lambda Z^D(D)}-\texte^{-\lambda Z^D(D)})\bigr)=\lambda \int_\theta^1E\bigl(\ Z^D(A)\texte^{-s\lambda Z^D(D)}\bigr)\textd s.
\end{equation}
Straightforward estimates of the exponent on the right hand side then yield
\begin{equation}
\begin{aligned}
\lambda(1-\theta)E\bigl(\,Z^D(A)&\texte^{-\lambda Z^D(D)}\bigr)\\
&\le
E\bigl(\,\widehat Z^D(A)(\texte^{-\theta\lambda Z^D(D)}-\texte^{-\lambda Z^D(D)})\bigr)
\\&\qquad\qquad\qquad\le\lambda(1-\theta)E\bigl(\,Z^D(A)\texte^{-\theta\lambda Z^D(D)}\bigr).
\end{aligned}
\end{equation}
Thanks to \eqref{E:1.22c}, in the limit as~$\lambda\downarrow0$ the middle expression is asymptotic to $(1-\theta)\lambda\log(\ffrac1\lambda)$ times the integral of~$\psi^D$ over~$A$. The claim  follows by taking~$\theta\downarrow0$.
\end{proofsect}

\subsection{Coincidence of maximizers}
We now move to the proof of Proposition~\ref{prop-4.2}.
We begin by noting that inserting the restriction on the size of the field $\varphi_N^{D,S}$ on~$S'_N$ comes at no significant cost even under the small-probability event that the maximum is large:

\begin{lemma}
\label{lemma-4.3}
For sets $S,S',D$ as in Proposition~\ref{prop-4.2} above,
\begin{equation}
\lim_{M\to\infty}\limsup_{t \to \infty}\,\limsup_{N \to \infty}\,\frac1t\texte^{\alpha t}\,
P\Bigl( \,\hat X_N^D\in S'_N\cap\Gamma_N^D(-t),\,\max_{x\in S'_N}|\varphi_N^{D,S}(x)|>M\Bigr)=0.
\end{equation}
\end{lemma}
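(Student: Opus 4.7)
\begin{proofsect}{Proof plan}
The plan is to condition on $\varphi_N^{D,S}$ and exploit its independence from $h_N^S$, combined with the tail bound from Proposition~\ref{lem:6} and the Gaussian concentration of the binding field from Proposition~\ref{lem:3}. First I would observe that the event in question is contained in $E_t\cap\mathcal B$, where $E_t:=\{\exists x\in S'_N\colon h_N^D(x)\ge m_N+t\}$ and $\mathcal B:=\{\max_{x\in S'_N}|\varphi_N^{D,S}(x)|>M\}$. Decomposing $h_N^D=h_N^S+\varphi_N^{D,S}$ and setting $V_\varphi:=\max_{x\in S'_N}\varphi_N^{D,S}(x)$, we have $E_t\subseteq\{\exists x\in S'_N\colon h_N^S(x)\ge m_N+t-V_\varphi\}$. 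Conditioning on $\varphi_N^{D,S}$ and applying Proposition~\ref{lem:6} to $h_N^S$ on $S'_N\subseteq S_N$ (using that $|S'_N|/N^2\le|S'|$ is uniformly bounded), on the event $\{V_\varphi\le t/2\}$ (where $t-V_\varphi\ge 1$ once $t\ge 2$) the conditional probability of $E_t$ is bounded by $c\,t\,\texte^{-\alpha t}\texte^{\alpha V_\varphi}$, while on $\{V_\varphi>t/2\}$ the trivial bound of~$1$ is used. This gives
\begin{equation*}
\frac{1}{t}\texte^{\alpha t}P(E_t\cap\mathcal B)\le c\,E\bigl[\1_{\mathcal B}\,\texte^{\alpha V_\varphi}\bigr]+\frac{\texte^{\alpha t}}{t}P\bigl(V_\varphi>t/2\bigr).
\end{equation*}

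Since $\overline{S'}\subset S$, I would pick $\delta>0$ with $S'\subset S^\delta$ and invoke Proposition~\ref{lem:3} with $\wt D=S$ to conclude that $\sup_N E[V_\varphi]<\infty$ and that $V_\varphi-E[V_\varphi]$ has Gaussian tails uniformly in~$N$. Consequently $P(V_\varphi>t/2)\le c_1\texte^{-c_2 t^2}$ for $t$ large, so the second term vanishes as $t\to\infty$, uniformly in $M$ and $N$. For the first term, the Cauchy-Schwarz inequality gives
\begin{equation*}
E\bigl[\1_{\mathcal B}\,\texte^{\alpha V_\varphi}\bigr]\le\sqrt{P(\mathcal B)}\,\sqrt{E\bigl[\texte^{2\alpha V_\varphi}\bigr]}.
\end{equation*}
The exponential moment is bounded uniformly in $N$ by expanding $V_\varphi$ around $E[V_\varphi]$ and using the sub-Gaussian concentration from Proposition~\ref{lem:3}. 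For the probability, the distributional symmetry $\varphi_N^{D,S}\laweq-\varphi_N^{D,S}$ and a union bound give $P(\mathcal B)\le 2P(V_\varphi>M)\le c_3\texte^{-c_4 M^2}$ for $M$ large. Hence $E[\1_{\mathcal B}\,\texte^{\alpha V_\varphi}]\to 0$ as $M\to\infty$, and the claim follows by taking iterated limits in the stated order.

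I do not anticipate any serious obstacle here; the only point requiring a little care is to ensure that both the Gaussian concentration of $V_\varphi$ and the uniform bound on $E[\texte^{2\alpha V_\varphi}]$ are genuinely $N$-independent. Both are available via Proposition~\ref{lem:3}, which applies precisely because $\overline{S'}\subset S\subseteq D$.
\end{proofsect}
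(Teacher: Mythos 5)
Your proof is essentially correct, but follows a genuinely different route from the paper's. The paper first discards the regime $\max_{S'_N}|\varphi_N^{D,S}|>t^{2/3}$ by the Gaussian tail of Proposition~\ref{lem:3}, then discretizes the remaining range into integer levels $L\in[M,t^{2/3}]$, uses independence of $h^S_N$ and $\varphi^{D,S}_N$ together with the BDZ tail asymptotic~\eqref{E:2.4q} at level $m_N+t-L-1$ and the Gaussian bound~\eqref{E:4.6} at level~$L$, and sums the resulting geometric-Gaussian terms $c''\texte^{\alpha L-c'L^2}$ over $L\ge M$. You instead condition on $\varphi$, invoke the \emph{uniform-in-$N$} tail bound of Proposition~\ref{lem:6} (for the square~$S$), and then close the argument with Cauchy--Schwarz against $P(\mathcal B)$ and the uniformly bounded exponential moment $E[\texte^{2\alpha V_\varphi}]$. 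Your version is arguably cleaner: it avoids the level decomposition entirely, and because Proposition~\ref{lem:6} is uniform in~$N$ you do not need to send $N\to\infty$ before handling the exponential factors, whereas the paper's appeal to~\eqref{E:2.4q} makes the $\limsup_{N\to\infty}$ step essential there.

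One small technical gap: you set $V_\varphi:=\max_{x\in S'_N}\varphi^{D,S}_N(x)$, which may be negative. On $\{V_\varphi<0\}$, Proposition~\ref{lem:6} gives the conditional bound $c\,(t-V_\varphi)\,\texte^{-\alpha(t-V_\varphi)}$, and since $t-V_\varphi>t$ the step $(t-V_\varphi)\le t$ used to pass to $c\,t\,\texte^{-\alpha t}\texte^{\alpha V_\varphi}$ fails (the claimed bound is then smaller than the true one). This is harmless and easily repaired — either replace $V_\varphi$ by $V_\varphi^+:=\max(V_\varphi,0)$ in the exponent (noting that for $t-V_\varphi\ge 1/\alpha$ the map $s\mapsto s\texte^{-\alpha s}$ is decreasing, so on $\{V_\varphi<0\}$ one still has $(t-V_\varphi)\texte^{-\alpha(t-V_\varphi)}\le t\texte^{-\alpha t}$), or simply define $V_\varphi:=\max_{x\in S'_N}|\varphi^{D,S}_N(x)|\ge 0$ from the start, which is also what makes $\mathcal B=\{V_\varphi>M\}$ exactly and what Proposition~\ref{lem:3}/Borell--Tsirelson control directly. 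With either adjustment the argument is sound.
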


\begin{proofsect}{Proof}
Thanks to the fact that~$S'$ is separated from~$S^\cc$ by a positive distance, Proposition~\ref{lem:3} yields
\begin{equation}
\label{E:4.6}
P\Bigl(\,\max_{x\in S'_N}|\varphi_N^{D,S}(x)|>M\Bigr)\le c\texte^{-c' M^2}
\end{equation}
for some~$c,c'\in(0,\infty)$ and all~$N\ge1$. This means we can immediately discard the event that the maximum $|\varphi_N^{D,S}(x)|$ exceeds, say,~$t^{2/3}$. For the complementary event, note that the Gibbs-Markov decomposition $h_N^D=h_N^S+\varphi_N^{D,S}$ on~$S$ implies
\begin{equation}
\begin{aligned}
P\Bigl( \,&\hat X_N^D\in S'_N\cap\Gamma_N^D(-t),\,M<\max_{x\in S'_N}|\varphi_N^{D,S}(x)|<t^{2/3}\Bigr)
\\
&\quad\le\sum_{M\le L\le t^{2/3}}
P\Bigl(\,\hat X_N^D\in S'_N,\, \max_{x\in D_N}h_N^D(x) > m_N + t,\,\,L<\max_{x\in S'_N}|\varphi_N^{D,S}(x)|\le L+1\Bigr)
\\
&\quad\le\sum_{M\le L\le t^{2/3}}
P\Bigl(\,\max_{x\in S'_N}h_N^S(x)> m_N+t-L-1,\,\max_{x\in S'_N}|\varphi_N^{D,S}(x)|> L\Bigr).
\end{aligned}
\end{equation}
Using that the two events in the last line are independent and applying \eqref{E:2.4q} and \eqref{E:4.6}, the \emph{limes superior} as~$N\to\infty$ of the last probability is bounded by $t\texte^{-\alpha t}$ times $c''\texte^{\alpha L-c' L^2}$ for some~$c',c''\in(0,\infty)$. The latter is summable on~$L\ge M$ uniformly in~$t$. The claim follows.
\end{proofsect}

Our proof of Proposition~\ref{prop-4.2} will make a repeated use of the fact that, conditional on an exceptionally high value for the global maximum of~$h_N^D$, all other extreme local maxima of~$h_N^D$ are considerably~smaller.

\begin{lemma}
\label{lem:5.1}
Let $ D \in\mathfrak D$. For any function $s\colon[0,\infty)\to[0,\infty)$ obeying $s(t) \leq t$ and $t - s(t) = o(t)$ as $t \to \infty$,
\begin{equation}
\label{e:3.1}
\lim_{t \to \infty}\,\lim_{r \to \infty}\, \limsup_{N \to \infty} \,\frac1t \texte^{\alpha t}
P\Bigl(\exists x\in\Gamma_N^D(-t),\,\exists y\in\Gamma_N^D(-s(t))\colon|x-y|>r\Bigr)
 = 0 \,.
\end{equation}
\end{lemma}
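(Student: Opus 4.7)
The plan is to pass to the limit Poisson point process (PPP) via Theorem~\ref{thm:1}, and then bound the resulting limit probability using the tail behavior of $Z^D(\overline D)$ obtained from Corollary~\ref{cor-Z-measure}. The first task is a reduction: for $r$ large, two high lattice points at distance $>r$ should give rise to two \emph{distinct} points of the local-maxima process $\eta^D_{N, r_N}$. Specifically, Proposition~\ref{lem:5} applied at level $m_N+s(t)$ (with $t$ fixed) ensures that, with probability tending to $1$, no two points of $\Gamma^D_N(-s(t))$ lie at lattice distance in $(r/2, 2N/r)$. Hence any two high points at distance $>r$ must belong to different ``clusters'' of extreme values; walking uphill within each cluster lands at a cluster top, which is an $r_N$-local maximum for any $r_N$ intermediate between the within-cluster diameter ($<r/2$) and the inter-cluster distance ($>2N/r$) --- in particular, for an appropriate $r_N\to\infty$ with $r_N/N\to 0$. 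Applying the convergence in Theorem~\ref{thm:1} for such $r_N$ then yields
\[
\limsup_{N\to\infty} P(\text{event}) \le P\bigl(N_t\ge 1,\ N_{s(t)}\ge 2\bigr),
\]
where $N_u := |\eta^D\cap(\overline D \times [u,\infty))|$ for the limit PPP $\eta^D$; the lattice-distance condition becomes vacuous in the continuum limit since $r/N\to 0$ and $Z^D$ is non-atomic.

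Next I would bound the PPP probability. Setting $Z:=Z^D(\overline D)$ and $\lambda_u := Z e^{-\alpha u}/\alpha$, conditional on $Z^D$ the counts $N_t$ and $N_{s(t)}-N_t$ are independent Poisson with means $\lambda_t$ and $\lambda_{s(t)}-\lambda_t$. Splitting on whether $N_t=1$ or $N_t\ge 2$ gives
\[
P\bigl(N_t\ge 1,\ N_{s(t)}\ge 2\,\bigm|\,Z^D\bigr) \le \min\{1,\lambda_t\}\,\min\{1,\lambda_{s(t)}-\lambda_t\} + \min\{1,\lambda_t^2/2\}.
\]
Corollary~\ref{cor-Z-measure} with $A=D$ yields $E(Z\,e^{-\lambda Z})\sim C\log(1/\lambda)$ as $\lambda\downarrow 0$, and a Tauberian argument then gives $P(Z>z)\sim C'/z$ as $z\to\infty$. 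Integrating the displayed bound against this tail, splitting by whether $Z$ is small, intermediate, or large relative to the thresholds $\alpha e^{\alpha t}$ and $\alpha e^{\alpha s(t)}/(1-e^{-\alpha(t-s(t))})$, a direct computation produces
\[
P\bigl(N_t\ge 1,\ N_{s(t)}\ge 2\bigr) \le C''\,\bigl(1 + (t-s(t))\bigr)\,e^{-\alpha t}.
\]
Multiplying by $t^{-1} e^{\alpha t}$ yields $C''\bigl(1/t + (t-s(t))/t\bigr)\to 0$ as $t\to\infty$, using the hypothesis $t-s(t)=o(t)$.

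The main obstacle is the reduction step: bridging from the event about arbitrary high lattice points to an event about distinct $r_N$-local maxima where Theorem~\ref{thm:1} applies. This rests on the ``cluster'' picture provided by Proposition~\ref{lem:5} at level $m_N+s(t)$, together with the careful choice of an intermediate scale $r_N$ tending to infinity slowly enough that cluster tops remain $r_N$-local maxima while each uphill walk stays within its cluster. Once the reduction is in place, the remainder is a straightforward Poisson computation exploiting the $1/z$-tail of $Z^D(\overline D)$ afforded by Corollary~\ref{cor-Z-measure}.
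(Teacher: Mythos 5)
Your overall plan — pass to the limit Poisson process and exploit the heavy tail of $Z^D(\overline D)$ — is the right idea, and your Poisson bound conditional on $Z$ is correct as far as it goes. However, there is a genuine circularity that breaks the argument: you invoke Corollary~\ref{cor-Z-measure} to obtain the Laplace transform asymptotics for a \emph{general} domain $D\in\mathfrak D$, but Corollary~\ref{cor-Z-measure} is a consequence of Theorem~\ref{thm:1.4}, whose proof relies on Proposition~\ref{prop-4.2}, which in turn makes repeated use of Lemma~\ref{lem:5.1} itself (see STEPS 1--3 of the proof of Proposition~\ref{prop-4.2}). So at the point in the logical development where Lemma~\ref{lem:5.1} must be established, the needed tail behaviour of $Z^D$ for general $D$ is not yet available.

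The paper avoids this by first \emph{reducing the whole event to a square domain} $S\supset D$: on the event $H^D_{N,r,t}$ one selects the witnessing pair $(x^\star,y^\star)$ via some a priori ordering and notes that, with probability at least $1/4$ (independent of $H^D_{N,r,t}$, as the binding field $\varphi_N^{S,D}$ is independent of $h_N^D$), both $\varphi_N^{S,D}(x^\star)\ge0$ and $\varphi_N^{S,D}(y^\star)\ge0$, so that the pair also witnesses $H^S_{N,r,t}$ for the field $h_N^S=h_N^D+\varphi_N^{S,D}$; hence $P(H^D_{N,r,t})\le 4P(H^S_{N,r,t})$. Once reduced to a square, the needed asymptotics $E(1-\texte^{-\lambda Z})\sim c\lambda\log(1/\lambda)$ and $E(Z\texte^{-\lambda Z})\sim c\log(1/\lambda)$ are already available from Lemma~3.5 of Biskup--Louidor~\cite{Biskup-Louidor}, and no Tauberian step is needed: the paper writes the Poisson event as an exact set difference $\{\eta^D(D\times[t,\infty))\ge1\}\smallsetminus\{\eta^D(D\times[t,\infty))=1,\,\eta^D(D\times[s(t),t))=0\}$ and computes $E(1-\texte^{-\lambda Z}-\lambda Z\texte^{-\lambda'Z})$ directly, which gives $[c+o(1)]\,(t-s(t))\,\texte^{-\alpha t}$ and hence the claim. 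Your Tauberian inference of $P(Z>z)\sim C'/z$ from $E(Z\texte^{-\lambda Z})\sim C\log(1/\lambda)$ is a second, minor, weak point (it requires an extra monotonicity/Karamata argument that you should spell out), but it is the use of Corollary~\ref{cor-Z-measure} that is fatal. If you first reduce to a square $S\supset D$ as above and then run your Poisson computation using the square-domain Laplace asymptotics from the earlier paper, the argument goes through.
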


\begin{proofsect}{Proof}
After some straighforward arguments, the proof reduces to a question about asymptotic for the Laplace transform of the total mass of the $Z^D$-measure. However, since the relevant facts we need from~$Z^D$ have not yet been established for general~$D$, but are known for squares, we first reduce the whole problem to a square domain. 

Let~$S$ be an open square containing~$D$. Let $H_{N,r,t}^D$ denote the event in \eqref{e:3.1} for domain~$D$ and field~$h_N^D$ and let~$H_{N,r,t}^S$ denote the event for domain~$S$ with the field $h_N^S:=h_N^D+\varphi_N^{S,D}$. On~$H_{N,r,t}^D$, let $(x^\star,y^\star)$ be the vertices realizing the event that are chosen according to some \emph{a priori} ordering of all pairs; otherwise, let $(x^\star,y^\star)$ be just the first pair. Then
\begin{equation}
H_{N,r,t}^S\supseteq H_{N,r,t}^D\cap\bigl\{\varphi_N^{S,D}(x^\star)\ge0,\,\varphi_N^{S,D}(y^\star)\ge0\bigr\}.
\end{equation}
Now observe (similarly as in the proof of Lemma~\ref{lemma-2.4}) that the second event is independent of the first and has probability at least~$1/4$. Hence we get $P(H_{N,r,t}^D)\le 4 P(H_{N,r,t}^S)$ and it thus suffices to prove the result for open squares.

So suppose that~$D$ is an open square. For any $r_N\to\infty$ with $r_N/N\to0$, 
\begin{multline}
\qquad
H_{N,r,t}^D
\subseteq \bigl\{\exists x,y\in\Gamma_N(s)\colon r\le |x-y|<r_N\bigr\}
\\
\cup\bigl\{\eta_{N,r_N}^D(D\times[t,\infty))\ge 1,\,\eta_{N,r_N}^D(D\times[s(t),\infty))\ge2\bigr\}.
\qquad
\end{multline}
holds once~$N$ is sufficiently large. In the limit as~$N\to\infty$ and~$r\to\infty$, the probability of the first event on the right tends to zero by Proposition~\ref{lem:5} while the second event has an interpretation in terms of the limit~$\eta^D$ of point processes~$\{\eta_{N,r_N}^D\}$. In particular, it suffices to show that
\begin{equation}
\label{E:4.19a}
\lim_{t\to\infty}\,\frac1t\texte^{\alpha t}\,P\Bigl(\eta^D\bigl(D\times[t,\infty)\bigr)\ge1,\eta^D\bigl(D\times[s(t),\infty)\bigr)\ge2\Bigr)=0
\end{equation}
for any square domain~$D$. 

We will first write the event in \eqref{E:4.19a} as the set difference
\begin{equation}
\label{E:4.20a}
\Bigl\{\eta^D\bigl(D\times[t,\infty)\bigr)\ge1\Bigr\}\smallsetminus\Bigl\{\eta^D\bigl(D\times[t,\infty)\bigr)=1,\,\eta^D\bigl(D\times[s(t),t)\bigr)=0\Bigr\}
\end{equation}
and note that the second event is a subset of the first and so the probability of their set-difference is the difference of their probabilities.
Thanks to Theorem~1.1 of Biskup and Louidor~\cite{Biskup-Louidor}, the process~$\eta^D$ is Poisson with intensity $Z(\textd x)\otimes\texte^{-\alpha h}\textd h$. As the spatial coordinate of the points is not restrained in the above events, only the total mass $Z:=Z^D(D)$ matters. Abbreviating $\lambda:=\frac1\alpha\texte^{-\alpha t}$ and $\lambda':=\frac1\alpha\texte^{-\alpha s(t)}$, \eqref{E:4.20a} and the fact that $\{\eta^D(D\times[t,\infty))=1\}$ and $\{\eta^D(D\times[s(t),t))=0\}$ are independent allows us to write
\begin{equation}
P\Bigl(\eta^D\bigl(D\times[t,\infty)\bigr)\ge1,\eta^D\bigl(D\times[s(t),\infty)\bigr)\ge2\Bigr)
=E\bigl(1-\texte^{-\lambda Z}-\lambda Z\texte^{-\lambda'Z}\bigr).
\end{equation}
The key point is that, for square domains, the asymptotic  for~$\lambda,\lambda'$ small of the expression on the right is known. Indeed, Lemma~3.5 from~\cite{Biskup-Louidor} gives 
\begin{equation}
E\bigl(1-\texte^{-\lambda Z}\bigr)=\bigl[1+o(1)\bigr] c\lambda\log(1/\lambda),\qquad\lambda\downarrow0,
\end{equation}
and
\begin{equation}
E\bigl(Z\texte^{-\lambda Z}\bigr)=\bigl[1+o(1)\bigr] c\log(1/\lambda),\qquad\lambda\downarrow0,
\end{equation}
for some constant~$c\in(0,\infty)$. (In fact, for unit square we get $c:=a_\star$ where $a_\star$ is as in Proposition~\ref{thm-asymp}.) Interpreting this back using~$t$ and~$s(t)$, we get
\begin{multline}
\qquad
P\Bigl(\eta^D\bigl(D\times[t,\infty)\bigr)\ge1,\eta^D\bigl(D\times[s(t),\infty)\bigr)\ge2\Bigr)
\\
=\bigl[c+o(1)\bigr]\, t\texte^{-\alpha t}-\bigl[c+o(1)\bigr]\, s(t)\texte^{-\alpha t},\qquad t,s(t)\to\infty.
\qquad
\end{multline}
Under our assumptions on $s(t)$, this tends to zero as~$t\to\infty$ even after multiplication by~$t^{-1}\texte^{\alpha t}$.
\end{proofsect}

Notwithstanding our previous reduction to square domains in the previous proof, Proposition~\ref{prop-4.2} still requires at least a bound on the Laplace transform of the total mass of~$Z^D$. 

\begin{lemma}
\label{lem:5.2}
For each~$D\in\mathfrak D$, there is~$c>0$ such that for all $\lambda>0$ small enough, 
\begin{equation}
E\bigl(1 - \texte^{-\lambda Z^D(D)}\bigr) \leq c \lambda \log \tfrac{1}{\lambda} \,.
\end{equation}
\end{lemma}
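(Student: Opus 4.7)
The plan is to convert the Laplace transform estimate into a tail bound on $\max_{x\in D_N}h_N^D$ and then read off the conclusion from Proposition~\ref{lem:6}. The key observation is the identity
\begin{equation*}
E\bigl(1-\texte^{-\lambda Z^D(D)}\bigr) = \lim_{N\to\infty} P\Bigl(\max_{x\in D_N}h_N^D > m_N + t\Bigr),\qquad \lambda:=\alpha^{-1}\texte^{-\alpha t}.
\end{equation*}
Indeed, by Theorem~\ref{thm:1} the processes $\eta^D_{N,r_N}$ converge vaguely to a Cox process $\eta^D$ with intensity $Z^D(\textd x)\otimes\texte^{-\alpha h}\textd h$, and since $Z^D(\overline D)<\infty$ a.s.\ the count $\eta^D(\overline D\times(t,\infty))$ is, conditional on $Z^D$, Poisson with finite mean $\alpha^{-1}\texte^{-\alpha t}Z^D(D)$; averaging over $Z^D$ gives exactly $E(1-\texte^{-\lambda Z^D(D)})$ as the probability that the count is at least $1$. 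To upgrade vague convergence on the non-compact strip $\overline D\times\R$ to convergence of this specific tail probability, I would truncate at a large level $M>t$, apply vague convergence on the bounded strip $\overline D\times(t,M]$ (whose boundary carries zero $\eta^D$-intensity a.s., since $Z^D(\partial D)=0$ and the $h$-marginal is absolutely continuous), and then use Proposition~\ref{lem:6} to bound $P(\max h_N^D > m_N+M)$ uniformly in $N$ by $c(D)M\texte^{-\alpha M}$, which vanishes as $M\to\infty$.

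Once the identity is established, Proposition~\ref{lem:6} applied at level~$t$ with $A:=D_N$ --- together with $|D_N|/N^2\le c(D)$, which holds since $D$ is bounded --- gives
\begin{equation*}
\sup_{N\ge 1} P\Bigl(\max_{x\in D_N}h_N^D > m_N+t\Bigr) \le c'(D)\,t\texte^{-\alpha t},\qquad t\ge 1.
\end{equation*}
Substituting $t = \alpha^{-1}\log(1/(\alpha\lambda))$ yields $t\texte^{-\alpha t}\le c''(D)\lambda\log(1/\lambda)$ for all $\lambda>0$ small enough, which is precisely the claimed bound.

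The only mildly subtle step is the truncation needed to pass from vague convergence of $\{\eta^D_{N,r_N}\}$ on the non-compact strip $\overline D\times\R$ to convergence of the tail probability $P(\max h_N^D > m_N+t)$; this is handled, as described above, by the same Proposition~\ref{lem:6} that also powers the final estimate. All other steps are routine applications of results already stated in the paper.
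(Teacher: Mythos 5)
Your proof is correct and follows the paper's argument exactly: translate $E(1-\texte^{-\lambda Z^D(D)})$ into the limiting tail probability $\lim_N P(\max_{x\in D_N}h_N^D>m_N+t)$ with $\lambda=\alpha^{-1}\texte^{-\alpha t}$ via the Cox-process representation from Theorem~\ref{thm:1}, then bound it using Proposition~\ref{lem:6}. The paper compresses the truncation step into ``some simple approximation arguments,'' which you correctly spell out; note that for a one-sided bound one only needs the $\le$ direction of the limit identity (Portmanteau on $\overline D\times(t,M]$ followed by $M\to\infty$), so the second invocation of Proposition~\ref{lem:6} to kill the mass above level~$M$, while fine, is not strictly necessary.
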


\begin{proofsect}{Proof}
By Theorem~\ref{thm:1} and some simple approximation arguments, the expectation on the left is the limit of the probability that $\max_{x\in D_N}h_N^D(x)>m_N+t$, where $\lambda:=\frac1\alpha\texte^{-\alpha t}$. The claim follows from, e.g., Proposition~\ref{lem:6}.
\end{proofsect}

\begin{proofsect}{Proof of Proposition~\ref{prop-4.2}}
For general~$A\subseteq D$ define
\begin{equation}
\hat X_N^{D,A}:=\operatornamewithlimits{argmax}_{A_N}\,h_N^D
\end{equation}
and note that $\hat X_N^{D,D}=\hat X_N^D$. Consider sets $A\subseteq S\subseteq S'\subseteq D$ as in the statement. Writing the event in the first probability on the left-hand side of \eqref{E:4.4a} as $\{\hat X_N^D\in A_N\cap\Gamma_N^D(-t)\}$, our goal is to simply replace $\hat X_N^D$ by $\hat X_N^S$. We will do this by the sequence of replacements
\begin{equation}
\hat X_N^D\longrightarrow \hat X_N^{D,S'}\longrightarrow \hat X_N^{S,S'} \longrightarrow \hat X_N^S.
\end{equation}
Each of the arrows will constitute a STEP as designated below. Note that we can freely intersect the primary event with
\begin{equation}
F_{N,M}:=\bigl\{\,\max_{x\in S_N'}|\varphi_N^{D,S}(x)|\le M\bigr\},
\end{equation}
as the complementary event can be dismissed (in the limit~$M\to\infty$) thanks to Lemma~\ref{lemma-4.3}. To keep our expressions short, we introduce the (sub-probability) measure $P_{N,M}$ by
\begin{equation}
P_{N,M}(E):=P(E\cap F_{N,M}).
\end{equation}
We now proceed with the actual proof.

\smallskip\noindent
\textsl{STEP 1}: Our task to replace $\hat X_N^D$ by $\hat X_N^{D,S'}$ in the above event. Here we note that $\hat X_N^D\in A_N\cap\Gamma_N^D(-t)$ implies $\hat X_N^{D,S'}=X_N^D$ (because $A\subseteq S'$) and thus 
\begin{equation}
\bigl\{\hat X_N^D\in A_N\cap\Gamma_N^D(-t)\bigr\}\subseteq\bigl\{\hat X_N^{D,S'}\in A_N\cap\Gamma_N^D(-t)\bigr\}.
\end{equation}
On the difference of the two events, the maximizer of~$h_N^D$ on~$S_N'$ lies in~$A_N$ but the absolute maximizer lies in $D_N\setminus S_N'$. As $\textd(A,(S')^\cc)>0$, the difference is for large enough~$N$ contained in the event from \eqref{e:3.1} with $s(t):=t$. Hence
\begin{equation}
\label{E:4.31}
\lim_{t\to\infty}\,\limsup_{N\to\infty}\,\frac1t\texte^{\alpha t}\,\Bigl|\,P_{N,M}\bigl(\hat X_N^D\in A_N\cap\Gamma_N^D(-t)\bigr)-P_{N,M}\bigl(\hat X_N^{D,S'}\in A_N\cap\Gamma_N^D(-t)\bigr)\Bigr|=0
\end{equation}
by Lemma~\ref{lem:5.1}.

\smallskip\noindent
\textsl{STEP 2}: Our next goal is to substitute $\hat X_N^{D,S'}$ by~$\hat X_N^{S,S'}$ in $\{\hat X_N^{D,S'}\in A_N\cap\Gamma_N^D(-t)\}\cap F_{N,M}$. First we observe that these two points cannot be too far from each other. Indeed, on $F_{N,M}$
\begin{equation}
h_N^D(\hat X_N^{S,S'})
\ge h_N^S(\hat X_N^{S,S'})-M
\ge h_N^S(\hat X_N^{D,S'})-M\ge h_N^D(\hat X_N^{D,S'})-2M
\end{equation}
and so $\hat X_N^{D,S'}\in\Gamma_N^D(-t)$ and $|\hat X_N^{D,S'}-\hat X_N^{S,S'}|>r$ imply the occurrence of the event in \eqref{e:3.1} with $s(t):=t-2M$. Hence,
\begin{equation}
\label{E:4.34}
\lim_{t\to\infty}\,\lim_{r\to\infty}\,\limsup_{N\to\infty}\,\frac1t\texte^{\alpha t}\,P_{N,M}\Bigl(\hat X_N^{D,S'}\in\Gamma_N^D(-t),\,|\hat X_N^{D,S'}-\hat X_N^{S,S'}|>r\Bigr)=0.
\end{equation}
Moving to the cases when $|\hat X_N^{D,S'}-\hat X_N^{S,S'}|\le r$, the fact that $\varphi_N^{D,S}$ is discrete harmonic on the square~$S_N$ and bounded by~$M$ on~$S_N'$ implies
\begin{equation}
\bigl|\varphi_N^{D,S}(x)-\varphi_N^{D,S}(y)\bigr|\le cM\frac{|x-y|}N,\qquad x,y\in S_N',
\end{equation}
for some constant~$c\in(0,\infty)$ independent of $N$ and~$M$. So once $|\hat X_N^{D,S'}-\hat X_N^{S,S'}|\le r$, 
\begin{equation}
0\le\,h_N^D(\hat X_N^{D,S'})-h_N^D(\hat X_N^{S,S'})\le cM\frac rN.
\end{equation}
It follows that the part of the symmetric difference
\begin{equation}
\bigl\{\hat X_N^{D,S'}\in A_N\cap\Gamma_N^D(-t)\bigr\}\triangle \bigl\{\hat X_N^{S,S'}\in A_N\cap\Gamma_N^D(-t)\bigr\}
\end{equation}
that belongs to $F_{N,M}\cap\{|\hat X_N^{D,S'}-\hat X_N^{S,S'}|\le r\}$ is contained in the union
\begin{equation}
\bigl\{\exists x\in A_N\cap\Gamma_N^D(-t)\colon\textd(x,A_N^\cc)\le r\bigr\}
\cup\bigl\{\,\max_{x\in S_N'}h_N^D(x)\in[m_N+t,m_N+t+cMr/N]\bigr\}.
\end{equation}
Since~$|\partial A|$ has zero Lebesgue measure, Proposition~\ref{lem:6} ensures 
\begin{equation}
\lim_{t\to\infty}\,\lim_{r\to\infty}\,\limsup_{N\to\infty}\,\frac1t\texte^{\alpha t}\,P\bigl(\exists x\in A_N\cap\Gamma_N^D(-t)\colon\textd(x,A_N^\cc)\le r\bigr)=0.
\end{equation}
Similarly, the convergence to the Cox process established in Theorem~\ref{thm:1} along with the fact that $Z^D(\partial S')=0$ a.s.\ implies, for any~$\epsilon>0$,
\begin{equation}
\label{E:4.40a}
\limsup_{N\to\infty}\,P\bigl(\,\max_{x\in S_N'}h_N^D(x)\in[m_N+t,m_N+t+\epsilon]\bigr)
\le E\bigl(\texte^{-\lambda\texte^{-\alpha\epsilon}Z}-\texte^{-\lambda Z}\bigr),
\end{equation}
where we used the shorthands $Z:=Z^D(S')$ and $\lambda:=\frac1\alpha\texte^{-\alpha t}$. The right-hand side is further bounded by $E(1-\texte^{-\lambda(1-\texte^{-\alpha\epsilon})Z})$. Applying Lemma~\ref{lem:5.2} (and $Z^D(S')\le Z^D(D)$), for~$\epsilon>0$ fixed this decays as a constant times $t\texte^{-\alpha t}(1-\texte^{-\alpha\epsilon})$ as $t\to\infty$. Taking~$\epsilon\downarrow0$, the probability in \eqref{E:4.40a} thus tends to zero even after multiplication by $t^{-1}\texte^{\alpha t}$. Combining with \eqref{E:4.34}, we have derived
\begin{equation}
\label{E:4.40}
\lim_{t\to\infty}\,\lim_{r\to\infty}\,\limsup_{N\to\infty}\,\frac1t\texte^{\alpha t}\,\Bigl|\,P_{N,M}\bigl(\hat X_N^{D,S'}\in A_N\cap\Gamma_N^D(-t)\bigr)
-P_{N,M}\bigl(\hat X_N^{S,S'}\in A_N\cap\Gamma_N^D(-t)\bigr)\Bigr|=0
\end{equation}
and thus completed STEP 2.

\smallskip\noindent
\textsl{STEP 3}: Our final task is to replace $\hat X_N^{S,S'}$ by~$\hat X_N^{S}$. Here we note that if $\hat X_N^S\in S_N'$ then $\hat X_N^S=\hat X_N^{S,S'}$ and so the difference of events $\{X_N^S\in A_N\cap\Gamma_N^D(-t)\}$ and $\{X_N^{S,S'}\in A_N\cap\Gamma_N^D(-t)\}$ implies, on~$F_{N,M}$, the event in \eqref{E:3.1} with~$s(t):=t-M$. Lemma~\ref{lem:5.1} yields
\begin{equation}
\label{E:4.41}
\lim_{t\to\infty}\,\lim_{r\to\infty}\,\limsup_{N\to\infty}\,\frac1t\texte^{\alpha t}\,\Bigl|\,P_{N,M}\bigl(\hat X_N^{S,S'}\in A_N\cap\Gamma_N^D(-t)\bigr)
-P_{N,M}\bigl(\hat X_N^{S}\in A_N\cap\Gamma_N^D(-t)\bigr)\Bigr|=0.
\end{equation}
The desired claim now follows by combining \eqref{E:4.31}, \eqref{E:4.40} and \eqref{E:4.41} with Lemma~\ref{lemma-4.3}.
\end{proofsect}

\section{Representation via a derivative martingale}
\label{sec5}\noindent
As noted before, our proofs proceed in a slightly different order from this point on than the statements of the results. Indeed, our next goal is to prove Theorem~\ref{thm:1.6} deferring the proof of conformal-transformation rule from Theorem~\ref{thm:3} to the next section. 

\subsection{Uniqueness characterization}
\label{sec:5.1}\noindent
The proof of Theorem~\ref{thm:1.6} will be based on a representation of measure~$M^D$ in the statement as a weak limit of derivative-martingale like expressions that depend, as far as random objects are concerned, only on a single Gaussian field. A representation of this kind was put forward already in Theorem~1.4 of Biskup and Louidor~\cite{Biskup-Louidor} but the derivations there were based on specific facts about the DGFF on~$\Z^2$ while here we deal with a continuum problem from the outset. It turns out that, instead of approximating our domains by unions of squares, it is more convenient to work with regular triangulations.

Given a domain~$D\in\mathfrak D$ and an integer $K\ge1$, consider a tiling of~$\C$ by equilateral triangles of side-length~$K^{-1}$ with vertices lying on a triangular lattice of mesh~$K^{-1}$. Let $D^1,\dots,D^{m_K}$ denote the (open) triangles that lie entirely in~$D$ and abbreviate
\begin{equation}
\wt D:=\bigcup_{i=1}^{m_K}D^i.
\end{equation}
Note that $\wt D$ depends on~$D$ and~$K$, although we do not make this notationally explicit.
Given a~$\delta\in(0,1)$, let us assume that indices $i=1,\dots,n_K$, for some $n_K\le m_K$, enumerate those triangles that are at least distance~$\delta$ away from~$\C\smallsetminus D$ and let $D^1_\delta,\dots,D^{n_K}_\delta$ denote the equilateral triangles of side length $(1-\delta)K^{-1}$ with the same orientation and centers as $D^1,\dots,D^{n_K}$, respectively. 
Recall the definition
\begin{equation}
\text{osc}_A f:=\sup_{x\in A}f(x)-\inf_{x\in A}f(x)
\end{equation}
of oscillation of a function~$f$ on a set~$A$.
Then we have:

\begin{figure}[t]
\vglue0.2cm
\centerline{\includegraphics[width=3.6truein]{./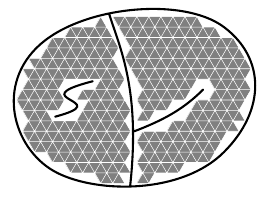}}
\begin{quote}
\small
\vglue0.2cm
{\sc Fig~4.\ }
\label{fig3b}
An illustration of domain $\wt D$ (the shaded triangles) for~$D$ as in Fig.~1. The interior of the union of the closures of these triangles is domain~$D'$ used later in this section.
\normalsize
\end{quote}
\end{figure}

\begin{theorem}
\label{thm:5.1}
Recall that $\alpha:=2/\sqrt g$ and consider the events
\begin{equation}
A^i_{K,R}:=\bigl\{\text{\rm osc}_{D^i_\delta}\Phi^{D,\wt D}\le R\bigr\}\cap\bigl\{\max_{{D^i_\delta}}\Phi^{D,\wt D}\le2\sqrt g\log K-R\bigr\},\qquad i=1,\dots,n_K.
\end{equation}
Then for any family $\{M^D\colon D\in\mathfrak D\}$ of random Borel measures satisfying conditions (0-4) of Theorem~\ref{thm:1.6} with~$c:=1$ in~(4), the random measure
\begin{equation}
\label{E:5.2}
\alpha\psi^D(x)\sum_{i=1}^{n_K}\1_{A_{K,R}^i}\bigl(\alpha\Var(\Phi^{D,\wt D}(x))-\Phi^{D,\wt D}(x)\bigr)\texte^{\alpha \Phi^{D,\wt D}(x)-\frac12\alpha^2\Var(\Phi^{D,\wt D}(x))}\,\1_{D_\delta^i}(x)\,\textd x
\end{equation}
tends in law to $M^D$ in the limit as $K\to\infty$, $R\to\infty$ and~$\delta\downarrow0$ (in this order). This holds irrespective of the orientation of the triangular grid.
\end{theorem}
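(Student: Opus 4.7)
The plan is to compute the Laplace transform $E\exp(-\int f\,\textd M^D)$ for nonnegative continuous compactly supported $f$ and to show it converges to the corresponding Laplace transform of the measure~\eqref{E:5.2}. The first obstacle is that the triangle union $\wt D$ does not differ from $D$ by a null set (thin strips near $\partial D$ remain), so the nested form of the Gibbs-Markov condition~(3) cannot be applied to $(D,\wt D)$ directly. I would circumvent this by introducing the ambient triangulation $\wt\Delta$ of~$\C$ and invoking condition~(3) on the pair $(D,\wt\Delta\cap D)$: only triangle edges inside~$D$ are removed, so $\leb(D\smallsetminus(\wt\Delta\cap D))=0$ and one obtains $M^D(\textd x)\laweq \texte^{\alpha\Phi(x)}M^{\wt\Delta\cap D}(\textd x)$ with $\Phi:=\Phi^{D,\wt\Delta\cap D}$ independent of $M^{\wt\Delta\cap D}$. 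The disjoint part of condition~(3) then splits $M^{\wt\Delta\cap D}$ as an independent sum of $M^{D^i}$ over the fully contained triangles $D^i\subseteq D$ plus $M^{T\cap D}$ over the partial triangles straddling~$\partial D$. Atomlessness of $M^D$ from condition~(1), combined with the fact that $D\smallsetminus\wt D$ shrinks to a null set as $K\to\infty$, then shows the partial-triangle contribution to $\int f\,\textd M^D$ tends to zero in probability.

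Using shift and dilation invariance (condition~(2)), each $M^{D^i}$ with $D^i=c_i+K^{-1}\triangle$ is distributed as $K^{-4}\tilde M_i(K(\cdot-c_i))$ for an i.i.d.\ family of copies $\tilde M_i$ of $M^\triangle$, jointly independent of~$\Phi$. On the intersection of the events $A^i_{K,R}$ the oscillation bound permits replacing $f(x)\texte^{\alpha\Phi(x)}$ by $f(c_i)\texte^{\alpha\Phi(c_i)}$ uniformly on $D^i_\delta$, so conditionally on~$\Phi$,
\begin{equation}
E\bigl(\texte^{-\int f\,\textd M^D}\,\big|\,\Phi\bigr)\,\approx\,\prod_i E\bigl(\texte^{-\lambda_i\tilde M_i(\triangle_\delta)}\bigr),\qquad \lambda_i:=K^{-4}f(c_i)\texte^{\alpha\Phi(c_i)}.
\end{equation}
The small-$\mu$ asymptotic $-\log E(\texte^{-\mu\tilde M(\triangle_\delta)})\sim \mu\log(1/\mu)\int_{\triangle_\delta}\psi^\triangle$, which one derives from condition~(4) by the integration-by-parts argument already used in the proof of Corollary~\ref{cor-Z-measure}, applies uniformly in~$i$ because $\lambda_i\le\|f\|_\infty\texte^{-\alpha R}$ on $A^i_{K,R}$. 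Identifying $\log(1/\lambda_i)=\alpha(2\sqrt g\log K-\Phi(c_i))+O(1)$ converts the product into a sum:
\begin{equation}
-\log E\bigl(\texte^{-\int f\,\textd M^D}\,\big|\,\Phi\bigr)\,\approx\,\alpha \int_{\triangle_\delta}\psi^\triangle\cdot K^{-4}\sum_i \1_{A^i_{K,R}}f(c_i)\bigl(\alpha\Var(\Phi(c_i))-\Phi(c_i)\bigr)\texte^{\alpha\Phi(c_i)},
\end{equation}
where the contribution from triangles outside $A^i_{K,R}$ is controlled separately by Gaussian tail bounds on the global maximum of~$\Phi$ over~$\wt D$, which vanishes in probability as~$R\to\infty$.

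The matching with the Laplace transform of~\eqref{E:5.2} rests on the key identity $\psi^D(x)\,\texte^{-\frac12\alpha^2\Var(\Phi(x))}=\psi^{D^i}(x)$ valid for $x\in D^i$. This is immediate from formula~\eqref{E:1.14a} for $\psi^D$ together with Lemma~\ref{lemma-2.3a}: for $x\in D^i$ the Brownian motion in $\wt\Delta\cap D$ starting at~$x$ stays in~$D^i$, so $\Pi^{\wt\Delta\cap D}(x,\cdot)=\Pi^{D^i}(x,\cdot)$, and the boundary integral against $\Pi^D$ that appears in both $\log\psi^D(x)$ and $\tfrac12\alpha^2\Var(\Phi(x))$ cancels exactly, leaving only the intrinsic-to-$D^i$ quantity $2\int_{\partial D^i}\Pi^{D^i}(x,\textd z)\log|z-x|$. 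Inserting this identity into~\eqref{E:5.2}, together with the scaling $\int_{D^i_\delta}\psi^{D^i}(x)\,\textd x=K^{-4}\int_{\triangle_\delta}\psi^\triangle(y)\,\textd y$, rewrites $\int f\,\textd m_{K,R,\delta}$ as precisely the expression derived above for the log-Laplace exponent of $M^D$. Taking limits in the order $K\to\infty$, $R\to\infty$, $\delta\downarrow 0$ then yields $E\exp(-\int f\,\textd m_{K,R,\delta})\to E\exp(-\int f\,\textd M^D)$, and convergence in law follows by uniqueness of Laplace functionals; orientation-independence is manifest because nothing in this calculation depends on how~$\wt\Delta$ sits in~$\C$.

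The hardest part will be the uniform quantitative control of approximation errors across the $\sim K^2$ triangles. Sharp Gaussian maxima estimates for $\Phi$ on the entire triangulated region are needed to discard the high-$\Phi$ triangles (where $A^i_{K,R}$ fails) in the right quantitative sense; the multiplicative accumulation of Laplace errors across many independent $\tilde M_i$ factors---with parameters $\lambda_i$ that vary wildly through $\Phi(c_i)$---is the most delicate step, effectively requiring one to re-establish, directly from conditions~(0)--(4), a tightness bound on the total mass of $m_{K,R,\delta}$ analogous to Lemma~\ref{lem:5.2}. Ensuring uniform validity of the asymptotic from Corollary~\ref{cor-Z-measure} over the full range of $\lambda_i$ permitted on $A^i_{K,R}$, as well as justifying the Riemann-sum passage from $K^{-4}\sum_i$ with $\sim K^2$ terms to the integral form, is where most of the technical effort must go.
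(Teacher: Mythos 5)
Your high-level plan is the right one and matches the paper's strategy: trim the boundary strip via absolute continuity, decompose via Gibbs--Markov (the detour through $\wt\Delta\cap D$ is indeed what the paper's relation \eqref{E:5.4a} is implicitly doing, since $\Phi^{D,\wt\Delta\cap D}$ restricted to $\wt D$ has the law of $\Phi^{D,\wt D}$), rescale each $M^{D^i}$ to a copy of $M^T$ using condition (2), feed the small-$\lambda$ Laplace asymptotic from condition~(4), and match against \eqref{E:5.2} using the identity $\psi^D(x)\texte^{-\frac12\alpha^2\Var(\Phi^{D,\wt D}(x))}=\psi^{D^i}(x)$. However, there are two concrete gaps.

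\emph{Gap 1 (the freezing step fails as stated).} You assert that ``the oscillation bound permits replacing $f(x)\texte^{\alpha\Phi(x)}$ by $f(c_i)\texte^{\alpha\Phi(c_i)}$ uniformly on $D^i_\delta$'' and you then pass from $K^{-4}\sum_i$ to the integral by a Riemann-sum argument. But on $A^i_{K,R}$ the oscillation of $\Phi^{D,\wt D}$ over $D^i_\delta$ is only bounded by $R$, not by something that decays with $K$. The replacement therefore incurs a multiplicative error as large as $\texte^{\pm\alpha R}$, and since the limit order is $K\to\infty$ first with $R$ fixed, this factor does not disappear. The paper sidesteps this entirely: Proposition~\ref{prop-3} is formulated \emph{uniformly} over the Lipschitz class $\FF_{R,\beta,\delta}$ and is applied to the test function $g(z):=f(x_0^i+K^{-1}z)\,\texte^{\alpha(\Phi^{D,\wt D}(x_0^i+z/K)-\Phi^{D,\wt D}(x_0^i))}$, which lies in $\FF_{R',\beta',\delta}$ with constants depending only on $R$ and $\beta$. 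The field's spatial variation is thus kept inside the integral $\int_{D^i_\delta}f\,\psi^{D^i}\,\texte^{\alpha\Phi^{D,\wt D}}\,\textd x$ exactly, and only the prefactor $\log\lambda_i=\log(K^{-4}\texte^{\alpha\Phi^{D,\wt D}(x_0^i)})$ is evaluated at the center. The resulting $O(R)$ discrepancy between that frozen quantity and $\alpha(\Phi^{D,\wt D}(x)-\alpha\Var\Phi^{D,\wt D}(x))$ is then absorbed because, by Proposition~\ref{prop-0}, $|\log\lambda_i|\gtrsim\log\log K\to\infty$. If you insist on the Riemann-sum route, you would have to show that the $\texte^{\alpha R}$-error triangles contribute negligibly to the measure---which is a strictly stronger statement than Proposition~\ref{prop-1} and is not provided by anything you cite.

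\emph{Gap 2 (discarding bad triangles is not a tail bound on the max).} You propose to handle triangles where $A^i_{K,R}$ fails by ``Gaussian tail bounds on the global maximum of $\Phi$.'' Bounding the maximum (Proposition~\ref{prop-0}) is indeed elementary, but the oscillation event requires something different: one must show that $\sum_i\int_{D^i_\delta}M^{D^i}(\textd x)\,\texte^{\alpha\Phi^{D,\wt D}(x)}\1_{\{\text{osc}_{D^i_\delta}\Phi^{D,\wt D}>R\}}$ is small \emph{as a measure} (Proposition~\ref{prop-1}), not merely that few triangles have large oscillation. Since the $M^{D^i}$ have heavy tails, the latter does not imply the former. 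The paper's proof of Proposition~\ref{prop-1} decomposes $\Phi^{D,\wt D}=\Phi^{D,D'}+\Phi^\triangle+\Phi^\perp$, uses Sheffield's observation that $\Phi^\triangle$ is (up to a constant) the DGFF on the \emph{triangular} lattice to control its variance, and deploys Kahane's convexity inequality to compare Laplace functionals. This is the actual reason the partition is triangular rather than square---a reason your proposal does not exploit, since you introduce triangles only to make the Gibbs--Markov step clean.
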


The representation \eqref{E:5.2} immediately yields:

\begin{proofsect}{Proof of Theorem~\ref{thm:1.6}}
In light of Theorem~\ref{thm:2} and Corollary~\ref{cor-Z-measure}, the family $\{Z^D\colon D\in\mathfrak D\}$ constructed in Theorem~\ref{thm:1} obeys conditions (0-4) of Theorem~\ref{thm:1.6}. By Theorem~\ref{thm:5.1}, these properties determine the law of $\{Z^D\colon D\in\mathfrak D\}$ uniquely.
\end{proofsect}

\begin{remark}
As is easy to check, $\Phi^{D,\wt D}$ is the projection of the CGFF on~$D$ onto the set of functions that are piece-wise harmonic on each of the triangles that falls entirely inside~$D$. Hence, if it were not for the truncations and other restrictions, the measure in \eqref{E:5.2} would already have the desired form of (an approximation to) the critical LQG measure. Unfortunately, we do not know how to (cleanly) remove the truncations to make the connection precise through this route.
\end{remark}

Assume~$\{M^D\colon D\in\mathfrak D\}$ and~$\Phi^{D,\wt D}$ are statistically-independent realizations of the corresponding processes and suppose throughout the rest of this section that the conditions (0-4) in Theorem~\ref{thm:1.6} hold. The proof of Theorem~\ref{thm:5.1} opens up by noting that
\begin{equation}
\label{E:5.4a}
\1_{\wt D}(x)M^D(\textd x)\laweq \sum_{i=1}^{ m_K}\texte^{\alpha\Phi^{D,\wt D}(x)}M^{D^i}(\textd x)
\end{equation}
holds. This is thanks to the Gibbs-Markov property (property (3) in Theorem~\ref{thm:1.6}).
As the next proposition shows, the restriction of the measure on the left to~$\wt D$ is immaterial:

\begin{proposition}
\label{prop-slither}
For each $\epsilon>0$,
\begin{equation}
\lim_{\delta\downarrow0}\,\limsup_{K\to\infty}\,P\biggl(M^D\Bigl(D\smallsetminus\bigcup_{i=1}^{n_K} D_\delta^i\Bigr)>\epsilon\biggr)=0.
\end{equation}
\end{proposition}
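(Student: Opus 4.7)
The strategy is to split the offending set as
\[
D\smallsetminus\bigcup_{i=1}^{n_K}D^i_\delta \;=\; \bigl(D\smallsetminus\wt D\bigr) \;\cup\; \Bigl(\wt D\smallsetminus\bigcup_{i=1}^{n_K}D^i_\delta\Bigr)
\]
and to treat the ``exterior'' piece $D\smallsetminus\wt D$ and the ``interior'' piece $E_{K,\delta}:=\wt D\smallsetminus\bigcup_{i=1}^{n_K}D^i_\delta$ by essentially independent arguments.

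The exterior piece disposes of $\delta$ entirely. For every $x\in D$ the distance $\dist(x,\partial D)$ is positive, so once the triangle side $K^{-1}$ is smaller than $\dist(x,\partial D)/\sqrt3$ the triangle of the tiling containing $x$ sits inside $D$ and hence belongs to $\wt D$. Consequently $\1_{\wt D}(x)\to1$ pointwise on $D$ as $K\to\infty$ (for any fixed orientation of the lattice), and since $M^D$ is a.s.\ a finite measure concentrated on $D$ by condition~(0), the Dominated Convergence Theorem gives $M^D(D\smallsetminus\wt D)\to0$ almost surely as $K\to\infty$, uniformly in~$\delta$.

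For the interior piece, an elementary geometric estimate yields $\leb(E_{K,\delta})\le c(D)\delta$ for all $K$ sufficiently large: the boundary triangles $D^{n_K+1},\dots,D^{m_K}$ all lie in the $(\delta+O(K^{-1}))$-neighbourhood of $\partial D$, whose area is $O(\delta)$, while the $n_K$ inner slivers $D^i\smallsetminus D^i_\delta$ have combined area at most $2\delta\,\leb(D)$. Enclose $E_{K,\delta}$ in an open set $A_{K,\delta}$ with $\leb(A_{K,\delta})\le 2c(D)\delta$. Markov's inequality with a Laplace reweighting gives, for any $M,\lambda>0$,
\[
P\bigl(M^D(A_{K,\delta})>\epsilon,\,M^D(D)\le M\bigr)\;\le\;\frac{\texte^{\lambda M}}{\epsilon}\,E\bigl(M^D(A_{K,\delta})\texte^{-\lambda M^D(D)}\bigr),
\]
and condition~(4) applied to the open set $A_{K,\delta}$ controls the Laplace transform on the right by $c'(D)\delta\log(1/\lambda)$ (using $\|\psi^D\|_\infty<\infty$). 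Taking $\lambda=1/M$ and then coupling $M=1/\delta$ yields
\[
\limsup_{K\to\infty}P\bigl(M^D(E_{K,\delta})>\epsilon\bigr)\;\le\;\frac{c''(D)\delta\log(1/\delta)}{\epsilon}+P\bigl(M^D(D)>1/\delta\bigr),
\]
both of which vanish as $\delta\downarrow0$ by a.s.\ finiteness of $M^D(D)$.

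The main obstacle is that condition~(4) is stated as an asymptotic in $\lambda$ for each \emph{fixed} open set, whereas the displayed bound is needed uniformly over the $K$-dependent family $\{A_{K,\delta}\}$ at $\lambda=\delta$. I would resolve this by replacing $A_{K,\delta}$ with a $K$-independent open cover, handling the two kinds of loss separately: the boundary triangles are captured by the fixed set $D\smallsetminus D^{2\delta}$ (with $D^{2\delta}$ the $2\delta$-interior of $D$), to which condition~(4) applies directly, while the slivers are handled through the Gibbs--Markov decomposition \eqref{E:5.4a}---each contributes $\int_{D^i\smallsetminus D^i_\delta}\texte^{\alpha\Phi^{D,\wt D}}\textd M^{D^i}$, with $M^{D^i}$ independent of $\Phi^{D,\wt D}$ and, by condition~(2), distributed as $K^{-4}$ times a rescaled copy of $M^T$ on a reference triangle $T$---so that tail bounds for each summand derive from condition~(4) applied just to the fixed domain~$T$, and the sum can then be estimated via a truncation and union bound over the $n_K=O(K^2)$ independent triangles.
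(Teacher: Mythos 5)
Your argument for the exterior piece $D\smallsetminus\wt D$ is essentially right, though note that $\1_{\wt D}(x)\to1$ only for $x\in D$ off the Lebesgue-null union of all triangle edges; one must invoke condition~(1) once to pass this to an $M^D$-a.e.\ statement before applying dominated convergence. The genuine gap lies in the interior piece $\wt D\smallsetminus\bigcup_i D^i_\delta$, and you have diagnosed it yourself: condition~(4) is a $\lambda\downarrow0$ asymptotic for each \emph{fixed} open set, so it does not immediately give a bound uniform in $K$. The proposed repair is only a sketch and would require substantial additional work to close. In particular, for the slivers: $M^T$ has infinite mean (condition~(4) forces $E\,M^T(A)=\infty$ for open $A$), and $E\,\texte^{\alpha\Phi^{D,\wt D}(x)}$ is of order $K^{\alpha^2 g/2}=K^2$ on each triangle, so the $n_K=O(K^2)$ summands are not individually small; one would have to restrict to good events for $\Phi^{D,\wt D}$, use Kahane's comparison inequality, and essentially reproduce the machinery of Propositions~\ref{prop-0}--\ref{prop-1}. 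As written, the sliver estimate is a plan, not a proof.

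All of this is far more than the statement requires, and the route through condition~(4) is avoidable. The argument actually used in the text relies only on conditions~(0) and~(1), by contradiction: if the claim failed, one could extract sequences $K_n\to\infty$ and $\delta_n\downarrow0$, with $\delta_n$ exponentially decaying and $K_n$ large enough that $\leb\bigl(D\smallsetminus\bigcup_{i\le n_{K_n}}D^i_{\delta_n}\bigr)=O(\delta_n)$, along which $P\bigl(M^D(D\smallsetminus\bigcup_i D^i_{\delta_n})>\epsilon\bigr)>\epsilon$. Setting $B_m:=\bigcup_{n\ge m}\bigl(D\smallsetminus\bigcup_i D^i_{\delta_n}\bigr)$, one has $\leb(B_m)\to0$ while $P\bigl(M^D(B_m)>\epsilon\bigr)>\epsilon$ for every $m$. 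Since $B_m$ decreases to a Lebesgue-null set and $M^D$ is a.s.\ a finite measure, downward continuity forces $P\bigl(M^D(B_\infty)\ge\epsilon\bigr)\ge\epsilon$ with $\leb(B_\infty)=0$, contradicting condition~(1). The key observation you missed is that the offending set can simply be driven to have small Lebesgue measure along a summable sequence --- no quantitative control on $M^D$ is needed at all.
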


Focussing now on the right-hand side of the expression in \eqref{E:5.4a}, our derivations will require restrictions on irregularity of the field $\Phi^{D,\wt D}$; in particular, we need a bound on the maximal value and oscillation of the field in individual triangles. This is the content of the following propositions (recall again that $\wt D$ depends on~$K$ and~$\delta$):

\begin{proposition}
\label{prop-0}
Let~$x_0^i$ denote the center of the triangle~$D^i$. There is~$\zeta>0$ such that, for each~$\delta>0$
\begin{equation}
\limsup_{K\to\infty}\,P\Bigl(\,\max_{i=1,\dots,n_K}\Phi^{D,\wt D}(x_0^i)>2\sqrt g\log K-\zeta\log\log K\Bigr)=0.
\end{equation}
\end{proposition}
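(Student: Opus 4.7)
The plan is to reduce the statement to the tight upper bound on the maximum of the 2D DGFF. First, using \eqref{E:1.6} together with the disjointness of the triangles composing $\wt D$, one computes
\[
\text{\rm Var}\bigl(\Phi^{D,\wt D}(x_0^i)\bigr)=g\log K+O(1)
\]
uniformly over $i\le n_K$: the variance splits into an integral over $\partial D$ (bounded, since $|z-x_0^i|\ge\delta$ for $z\in\partial D$) and an integral over $\partial D^i$ that evaluates to $-\log K+c_\triangle$ by an affine rescaling onto a reference unit triangle. Similarly, for $i\ne j$, disjointness gives $G^{\wt D}(x_0^i,x_0^j)=0$, so by the Green-function representation~\eqref{E:3.19ww},
\[
\text{\rm Cov}\bigl(\Phi^{D,\wt D}(x_0^i),\Phi^{D,\wt D}(x_0^j)\bigr)=G^D(x_0^i,x_0^j)=-g\log|x_0^i-x_0^j|+O(1).
\]
Thus $\{\Phi^{D,\wt D}(x_0^i)\}_{i\le n_K}$ is a log-correlated Gaussian family whose covariance profile matches, up to additive $O(1)$ corrections uniform in $K$, that of a DGFF on a lattice box $B_K\subset\Z^2$ of side of order $K$ evaluated at a discretization of the rescaled centers $\{Kx_0^i\}$.

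Given this covariance matching, I would invoke a Gaussian comparison argument in the spirit of Slepian's inequality (after a small variance adjustment absorbing the $O(1)$ discrepancies) to transfer the DGFF upper-tail bound to $\Phi^{D,\wt D}$. Combined with the tightness of $\max_{x\in B_K}\wh h^{B_K}(x)-m_K$ due to Bramson--Zeitouni, where $m_K:=2\sqrt g\log K-\tfrac{3\sqrt g}{4}\log\log K$, this yields, for some constant $C_0$ independent of $K$,
\[
P\Bigl(\,\max_{1\le i\le n_K}\Phi^{D,\wt D}(x_0^i)>m_K+t\Bigr)\le P\bigl(\,\max_{B_K}\wh h^{B_K}>m_K+t-C_0\bigr)+o_K(1),
\]
which vanishes as $K\to\infty$ provided $t=t_K\to\infty$. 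Setting $t_K:=(\tfrac{3\sqrt g}{4}-\zeta)\log\log K$ for any fixed $\zeta\in(0,\tfrac{3\sqrt g}{4})$ yields the proposition.

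The main obstacle is executing the Gaussian comparison tightly enough. Standard Slepian or Sudakov--Fernique after iid variance adjustment introduces an $O(\sqrt{\log K})$ error from the added iid noise, which is much too large to permit a $\log\log K$-level conclusion. The remedy is to add a \emph{smooth} correlated perturbation matching the $O(1)$ covariance discrepancies modulo a Gaussian field of bounded overall maximum --- which is available because these discrepancies are continuous functions of position on the restricted domain $\{x_0^i\colon i\le n_K\}\subset D^\delta$ --- or, alternatively, to replay the multi-scale tightness argument of Bramson--Zeitouni directly for $\{\Phi^{D,\wt D}(x_0^i)\}$, whose multi-scale covariance structure mirrors that of the DGFF.
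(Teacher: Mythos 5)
There is a genuine gap in your argument, and the proposed route is also considerably more complicated than what is needed. Your covariance computations (variance $g\log K + O(1)$ at the triangle centers, log-correlated cross-covariances $G^D(x_0^i,x_0^j)$) are correct and match the paper's Lemma~\ref{lemma-var-bd}. But you then build a Gaussian-comparison bridge to the DGFF on a box of side $K$ and invoke Bramson--Zeitouni tightness of its centered maximum. You yourself flag the problem: Slepian/Sudakov--Fernique after iid variance adjustment costs $O(\sqrt{\log K})$, which destroys any conclusion at $\log\log K$ precision. The two remedies you sketch (a correlated $O(1)$-maximum perturbation, or replaying the full multi-scale tightness argument directly for the triangle-center field) are not executed; the first in particular is nontrivial, since the covariance discrepancy that must be absorbed is a difference of two symmetric positive-definite kernels with no obvious sign, so it is not at all clear a bounded-variance coupling field exists. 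As it stands, the proof has a hole exactly where the work would have to be done.

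The deeper issue is that the DGFF comparison machinery is overkill here. The proposition does not assert tightness of the maximum around $m_K$; it only asserts an upper bound of the form $2\sqrt{g}\log K - \zeta\log\log K$ for \emph{some} $\zeta>0$, with no claim that $\zeta$ is near $\tfrac{3\sqrt g}{4}$. The paper's proof is a one-line union bound. From $\Var(\Phi^{D,\wt D}(x_0^i)) = g\log K + O(1)$ and the sharp Gaussian tail estimate $P(X>a)\le \frac{\sigma}{a\sqrt{2\pi}}\texte^{-a^2/2\sigma^2}$, one gets
\begin{equation}
P\bigl(\Phi^{D,\wt D}(x_0^i) > 2\sqrt g\log K - R\bigr)\le \frac{c}{\sqrt{\log K}}\,K^{-2}\,\texte^{\alpha R}
\end{equation}
uniformly in $i$ and in $0\le R\le\sqrt g\log K$. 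Union-bounding over the $n_K = O(K^2)$ centers kills the $K^{-2}$; the residual $1/\sqrt{\log K}$ coming from the prefactor $\sigma/a$ in the tail estimate is exactly what buys the $\log\log K$ margin, since setting $R=\zeta\log\log K$ with $\alpha\zeta<\tfrac12$ makes $(\log K)^{\alpha\zeta}/\sqrt{\log K}\to 0$. No reference to the DGFF or its maximum is required, and the subtle comparison step that stalled your argument simply disappears.
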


\begin{proposition}
\label{prop-1}
For any~$\delta\in(0,1)$ and any $\epsilon>0$,
\begin{equation}
\label{EE:1.2}
\lim_{R\to\infty}\,\limsup_{K\to\infty}\,
P\Bigl(\,\,\sum_{i=1}^{n_K}\int_{D^i_\delta}M^{D^i}(\textd x)\texte^{\alpha\Phi^{D,\wt D}(x)}\1_{\{\text{\rm osc}_{D^i_\delta}\Phi^{D,\wt D}>R\}}>\epsilon\Bigr)=0.
\end{equation}
\end{proposition}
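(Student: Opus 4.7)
The plan is to apply Markov's inequality, reducing the statement to showing that the expectation (or a suitable truncation) of the sum vanishes as $R\to\infty$ uniformly in~$K$. The key structural input is the Gibbs-Markov property (condition~(3) of Theorem~\ref{thm:1.6}): the measures $\{M^{D^i}\}_{i=1}^{n_K}$ are mutually independent and jointly independent of~$\Phi^{D,\wt D}$. By Fubini this gives
\begin{equation*}
E\biggl[\sum_{i=1}^{n_K}\int_{D^i_\delta}M^{D^i}(\textd x)\,\texte^{\alpha\Phi^{D,\wt D}(x)}\1_{\{\text{\rm osc}_{D^i_\delta}\Phi^{D,\wt D}>R\}}\biggr]
=\sum_{i=1}^{n_K}\int_{D^i_\delta}E[M^{D^i}](\textd x)\,f_i(x),
\end{equation*}
where $f_i(x):=E[\texte^{\alpha\Phi^{D,\wt D}(x)}\1_{\{\text{\rm osc}_{D^i_\delta}\Phi^{D,\wt D}>R\}}]$. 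Shift and dilation invariance (condition~(2)) together with (0) give $E[M^{D^i}(D^i_\delta)]\le K^{-4}E[M^T(T)]$, where $T$ is the unit equilateral triangle, and there are $n_K=O(K^2)$ triangles, so the total expected mass is $O(K^{-2})\,E[M^T(T)]$. If $E[M^T(T)]$ turns out to be infinite, one inserts a damping factor $\texte^{-\lambda M^D(D)}$ into the Markov step and invokes the Laplace-transform asymptotic in condition~(4) (together with $M^D(D)<\infty$ a.s.) to truncate the tail; the argument below then proceeds identically.

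To bound $f_i(x)$ I would invoke the Cameron-Martin identity for the centered Gaussian field $\Phi^{D,\wt D}$ with covariance $C^{D,\wt D}$:
\begin{equation*}
f_i(x)=\texte^{\frac12\alpha^2C^{D,\wt D}(x,x)}\,P\Bigl[\text{\rm osc}_{D^i_\delta}\bigl(\Phi^{D,\wt D}+\alpha C^{D,\wt D}(x,\cdot)\bigr)>R\Bigr].
\end{equation*}
Since $\text{\rm osc}(f+g)\le\text{\rm osc} f+\text{\rm osc} g$, the probability on the right is bounded by $P[\text{\rm osc}_{D^i_\delta}\Phi^{D,\wt D}>R-\text{\rm osc}_{D^i_\delta}(\alpha C^{D,\wt D}(x,\cdot))]$. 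The main technical input is then a scaling analysis of $C^{D,\wt D}$ near the diagonal. Using the scaling invariance $G^{\lambda D}(\lambda x,\lambda y)=G^D(x,y)$ of the continuum Green function (immediate from \eqref{E:3.19ww}) and writing $D^i=a^i+K^{-1}T$, one obtains
\begin{equation*}
C^{D,\wt D}(a^i+K^{-1}u,\,a^i+K^{-1}v)=g\log K+H^D(a^i+K^{-1}u,a^i+K^{-1}v)-H^T(u,v),
\end{equation*}
with $H^D,H^T$ the smooth non-logarithmic parts of $G^D,G^T$ respectively.

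This identity yields the three quantitative facts needed. First, $C^{D,\wt D}(x,x)=g\log K+O_\delta(1)$ uniformly in $i$ and in $x\in D^i_\delta$, and since $\alpha^2 g/2=2$ this gives $\texte^{\frac12\alpha^2C^{D,\wt D}(x,x)}=O_\delta(K^2)$. Second, for fixed $u$ the oscillation of $v\mapsto C^{D,\wt D}(a^i+K^{-1}u,a^i+K^{-1}v)$ over $v\in T_\delta$ is bounded by a constant $L_\delta$ independent of $K$, because the $g\log K$ term is constant in $v$, the $H^D$ term varies by $O(K^{-1})$, and $H^T$ is smooth on $T_\delta\times T_\delta$. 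Third, $\Var(\Phi^{D,\wt D}(x)-\Phi^{D,\wt D}(y))=C(x,x)+C(y,y)-2C(x,y)$ is bounded uniformly in $K$ and in $x,y\in D^i_\delta$, since the $g\log K$ contributions cancel. A Fernique/Borell-Tsirelson argument carried out exactly as in Proposition~\ref{lem:3} then converts the third fact into the uniform Gaussian tail bound $P[\text{\rm osc}_{D^i_\delta}\Phi^{D,\wt D}>R']\le c\,\texte^{-c'(R')^2}$ with $c,c'$ depending only on~$\delta$.

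Combining all the bounds,
\begin{equation*}
E\biggl[\sum_{i=1}^{n_K}\int_{D^i_\delta}M^{D^i}(\textd x)\,\texte^{\alpha\Phi^{D,\wt D}(x)}\1_{\{\text{\rm osc}_{D^i_\delta}\Phi^{D,\wt D}>R\}}\biggr]\le O_\delta(K^{-2})\cdot O_\delta(K^2)\cdot c\,\texte^{-c'(R-\alpha L_\delta)^2},
\end{equation*}
which tends to $0$ as $R\to\infty$ uniformly in $K$, yielding the claim via Markov. The main obstacle is the rescaling analysis delivering the three quantitative facts simultaneously; all rest on the cancellation of the logarithmic singularities between $G^D$ and $G^{D^i}$ and the harmonicity of the surviving smooth pieces on the inner triangle $D^i_\delta$.
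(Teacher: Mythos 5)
Your approach via Markov's inequality on the expectation of the sum founders on a fatal obstruction that you acknowledge but do not resolve: the expectation $E[M^T(T)]$ is \emph{necessarily} infinite. This is an immediate consequence of condition~(4), since $E(M^D(A)\texte^{-\lambda M^D(D)})\sim\log(\ffrac1\lambda)\int_A\psi^D\to\infty$ as~$\lambda\downarrow0$ implies, by monotone convergence, $E[M^D(A)]=\infty$ for any open~$A$. Your proposed remedy --- ``insert a damping factor $\texte^{-\lambda M^D(D)}$ and invoke condition~(4)'' --- is precisely where the real work lies, and it does not reduce to the same argument. The quantity $M^D(D)$ is (via the Gibbs-Markov property) a functional of all the $M^{D^i}$'s \emph{and} of the field $\Phi^{D,\wt D}$ jointly, so the Fubini/independence factorization you rely on in the first display collapses once the damping factor is inserted. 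Even if one could decouple, the truncated per-triangle expectation is of order $K^{-4}\log K$ by scaling and condition~(4), and after multiplying by the $O(K^2)$ triangles and the $O(K^2)$ Cameron-Martin factor one is left with an extra $\log K$ that the $K$-uniform Gaussian tail bound alone does not absorb (without further input you do not control).

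The paper circumvents this by working with the Laplace transform $E[\texte^{-\lambda Y}]$ rather than the expectation $E[Y]$, and by exploiting a decomposition you never invoke: $\Phi^{D,\wt D}\laweq\Phi^{D,D'}+\Phi^\triangle+\Phi^\perp$, where $\Phi^\triangle$ is the piecewise-linear interpolation of a DGFF on the triangular lattice and carries the $g\log K$ variance, while $\Phi^{D,D'}$ and $\Phi^\perp$ have $K$-uniformly bounded variance (Lemma~\ref{lemma-2}). One first conditions on $\Phi^\triangle$ and applies Jensen to pull the $\Phi^\perp$-oscillation indicator inside an expectation, producing a small factor $\epsilon_R$; Kahane's convexity inequality (Proposition~\ref{lemma-Kahane}), enabled by a comparison of covariances through an auxiliary Gaussian $Y$, then lower-bounds $E[\texte^{-\lambda Y_R}]$ by $E[\texte^{-\lambda\epsilon_R\texte^{\text{const}}M^D(D)}]$, which tends to~$1$ as $R\to\infty$ because $M^D(D)<\infty$ a.s.~(condition~(0) alone, not~(4)). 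This structure is the ``sole reason'' the paper partitions into triangles, as explicitly noted before Lemma~\ref{lemma-2}; your proposal bypasses this structural input entirely, and no amount of Cameron--Martin bookkeeping with the undecomposed field $\Phi^{D,\wt D}$ makes the expectation finite or supplies the $K$-uniform estimate the statement requires.
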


As a final ingredient, we will need to control the small-argument asymptotic of the Laplace transform of the integral of~$M^D$ against an equicontinuous class functions. This will be based on the tail assumption in part~(4) of Theorem~\ref{thm:1.6}. Fortunately, we will only need this for one domain; namely, the equilateral triangle~$T$ of side-length 1 centered at~$0$ and oriented consistently with the above triangular cover of~$\C$. 

Let~$T_\delta$ be the triangle of side-length~$1-\delta$ centered and oriented same as~$T$. Given~$\beta>0$ and $R>0$ let~$\FF_{R,\beta,\delta}$ denote the class of continuous functions $f\colon\overline T\to\R$ such that,
\begin{equation}
\label{E:5.8c}
f(x)\ge\beta\quad\text{and}\quad |f(x)-f(y)|\le R|x-y|,\qquad x,y\in\overline T_\delta.
\end{equation}
Then we have:

\begin{proposition}
\label{prop-3}
Fix~$\beta>0$ and~$R>0$. For each $\epsilon>0$ there are $\delta_0>0$ and $\lambda_0>0$ such that, for all $\lambda\in(0,\lambda_0)$, all $\delta\in(0,\delta_0)$ and all~$f\in\FF_{R,\beta,\delta}$,
\begin{equation}
(1-\epsilon)\int_{T_\delta}f(x)\psi^T(x)\,\textd x\le
\frac{\log E(\texte^{-\lambda M^T(f\1_{T_\delta})})}{\lambda\log\lambda}\le(1+\epsilon)\int_{T_\delta}f(x)\psi^T(x)\,\textd x,
\end{equation}
where we abbreviated $M^T(f\1_{T_\delta}):=\int_{T_\delta}M^T(\textd x)\,f(x)$.
\end{proposition}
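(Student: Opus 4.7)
The plan is to reduce the claim to condition~(4) of Theorem~\ref{thm:1.6} through three successive approximations: l'H\^opital to pass from the Laplace transform to a first moment, a piecewise-constant approximation of $f$ using the Lipschitz bound, and finally a Gibbs--Markov argument to replace $M^T(T_\delta)$ in the exponential weight by $M^T(T)$, which is the form directly addressed by condition~(4). Starting with the first reduction, set $\varphi(\lambda):=-\log E(\texte^{-\lambda M^T(f\1_{T_\delta})})$. Then
\[
\varphi'(\lambda)=\frac{E\bigl[M^T(f\1_{T_\delta})\,\texte^{-\lambda M^T(f\1_{T_\delta})}\bigr]}{E\bigl[\texte^{-\lambda M^T(f\1_{T_\delta})}\bigr]},
\]
where the denominator tends to $1$ as $\lambda\downarrow 0$ by condition~(0). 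Thus it suffices to show $\varphi'(\lambda)\sim\log(1/\lambda)\int_{T_\delta}f\psi^T$ and integrate, using $\int_0^\lambda\log(1/s)\,\textd s\sim\lambda\log(1/\lambda)$.

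For the second reduction, the Lipschitz bound from the definition of $\FF_{R,\beta,\delta}$ lets us partition $T_\delta$ into small open pieces $B_1,\dots,B_m$ with $\diam(B_i)\le\epsilon/R$ and $M^T(\partial B_i)=0$ a.s.\ (by condition~(1)); set $c_i:=f(x_i)$ so that $|f-\sum_i c_i\1_{B_i}|\le\epsilon$ on $T_\delta$. The error in the first-moment expectation is then controlled by $\epsilon\,E\bigl[M^T(T_\delta)\,\texte^{-\lambda\beta M^T(T_\delta)}\bigr]$, which will turn out to be of order $\epsilon\log(1/\lambda)$. Using $\beta\le c_j\le\|f\|_\infty$ together with monotonicity of the exponential, one sandwiches
\[
E\bigl[M^T(B_i)\texte^{-\lambda\|f\|_\infty M^T(T_\delta)}\bigr]\,\le\, E\bigl[M^T(B_i)\texte^{-\lambda\sum_j c_j M^T(B_j)}\bigr]\,\le\, E\bigl[M^T(B_i)\texte^{-\lambda\beta M^T(T_\delta)}\bigr],
\]
so the claim reduces to showing $E\bigl[M^T(B)\texte^{-c\lambda M^T(T_\delta)}\bigr]\sim\log(1/\lambda)\int_B\psi^T$ for every open $B\subset T_\delta$ and every constant $c>0$, the factor $c$ being absorbed by $\log(1/(c\lambda))/\log(1/\lambda)\to 1$.

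Finally, the third reduction, which is where the main obstacle lies, is the passage from $M^T(T_\delta)$ to $M^T(T)$ in the exponential weight. Condition~(4) directly gives the target asymptotic with $M^T(T)$ in place of $M^T(T_\delta)$; applied to $A=T\smallsetminus\overline T_\delta$ it also yields $E\bigl[M^T(T\smallsetminus T_\delta)\texte^{-\lambda M^T(T)}\bigr]\sim\log(1/\lambda)\int_{T\smallsetminus\overline T_\delta}\psi^T=O(\delta\log(1/\lambda))$, since $\psi^T$ is bounded and $\leb(T\smallsetminus\overline T_\delta)=O(\delta)$. The Gibbs--Markov property (condition~(3), second form) applied with $\wt D:=T_\delta\cup(T\smallsetminus\overline T_\delta)$ decomposes $M^T$ into two $\texte^{\alpha\Phi^{T,\wt D}}$-tilted, conditionally independent pieces supported on $T_\delta$ and $T\smallsetminus\overline T_\delta$, respectively, so that the difference
\[
E\bigl[M^T(B)\texte^{-c\lambda M^T(T_\delta)}\bigr]-E\bigl[M^T(B)\texte^{-c\lambda M^T(T)}\bigr]=E\Bigl[M^T(B)\texte^{-c\lambda M^T(T_\delta)}\bigl(1-\texte^{-c\lambda M^T(T\smallsetminus T_\delta)}\bigr)\Bigr]
\]
factorizes conditionally on $\Phi^{T,\wt D}$. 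The delicate point, which I expect to be the hardest part, is that $E[M^T(B)]=\infty$ unconditionally (as condition~(4) itself forces), so no crude Markov bound on the bad event $\{M^T(T\smallsetminus T_\delta)>\eta\}$ is available; the required control must emerge from the joint exponential tilting together with the Gibbs--Markov factorization, exploiting the scaling of each piece given by condition~(2) to extract the desired $O(\delta\log(1/\lambda))$ remainder.
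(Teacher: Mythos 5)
Your first two reductions are in the right spirit and roughly parallel what the paper does in Lemma~\ref{lemma-5.6} (integral/derivative representation of $\log E(\texte^{-\lambda \cdot})$, then a Lipschitz-based piecewise-constant approximation). But the third reduction is both unnecessary and, as you suspect, genuinely problematic, and it is there that your proof as stated has a gap. The difficulty arises because your setup pins you into proving a \emph{two-sided} asymptotic $E[M^T(B)\texte^{-c\lambda M^T(T_\delta)}] \sim \log(\ffrac1\lambda)\int_B\psi^T$ with the weight $\texte^{-c\lambda M^T(T_\delta)}$ --- but condition~(4) only gives you this with weight $\texte^{-\lambda M^T(T)}$. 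The monotone comparison $M^T(T_\delta)\le M^T(T)$ hands you the \emph{lower} bound for free, but the \emph{upper} bound would require controlling the second moment type quantity $E[M^T(B)\,\texte^{-c\lambda M^T(T_\delta)}(1-\texte^{-c\lambda M^T(T\smallsetminus T_\delta)})]$, and after conditioning on $\Phi^{T,\wt D}$ this factorizes into a product of two conditional expectations that are not jointly controlled by conditions~(0)--(4) (the conditional expectation of $M^T(B)\texte^{-c\lambda M^T(T_\delta)}$ can be large precisely on the rare events where the other factor is not small). No crude Markov/union bound is available for the reason you already identified.

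The paper sidesteps the problem entirely by never letting $M^T(T_\delta)$ appear in the exponential weight. The key is to observe that the two sides of the sandwich require opposite directions of comparison, and in each direction there is a simple replacement that produces $M^T(T)$: for the upper bound on $\log E(\texte^{-\lambda M^T(f\1_{T_\delta})})$ one writes it as $-\lambda\int_0^1 E(M^T(f)\texte^{-s\lambda M^T(f)})/E(\texte^{-s\lambda M^T(f)})\,\textd s$, drops the denominator ($\le 1$) and uses $M^T(f)\le \|f\|_\infty M^T(T)$ to get $\texte^{-s\lambda M^T(f)}\ge \texte^{-\lambda\|f\|_\infty M^T(T)}$; for the lower bound one inserts $\texte^{-\lambda a M^T(T)}$ (legitimate since it only decreases the expectation), applies Jensen's inequality under the tilted measure $\texte^{-\lambda a M^T(T)}\textd P/E(\texte^{-\lambda a M^T(T)})$, and absorbs the extra $\log E(\texte^{-\lambda a M^T(T)})$ term by Lemma~\ref{lem:5.2} and $a\downarrow 0$. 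Both directions then feed directly into a first-moment statement with $\texte^{-\lambda M^T(T)}$ in the weight, which is exactly condition~(4) after the piecewise-constant approximation. So: keep your steps~1--2, drop step~3, and instead organize the bounds so that only $M^T(T)$ ever appears in the exponent --- one-sided comparisons in the right direction are all you need.

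Behind
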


Deferring temporarily the proof of all four propositions, we can now prove the derivative-martingale like representation in \eqref{E:5.2}:

\begin{proofsect}{Proof of Theorem~\ref{thm:5.1}}
In light of \eqref{E:5.4a} and the above propositions, the random measure
\begin{equation}
\label{E:5.10}
M^D_{K,R,\delta}(\textd x):=\sum_{i=1}^{n_K}\1_{A_{K,R}^i}\texte^{\alpha\Phi^{D,\wt D}(x)}\1_{D_\delta^i}(x)\,M^{D^i}(\textd x)
\end{equation}
tends weakly to $M^D$ in the stated limits. Pick some~$f\colon\overline D\to[0,\infty)$ bounded and continuous and assume~$f\ge \beta$ on~$D$ for some~$\beta>0$. Writing $M^D_{K,R,\delta}(f)$ for the integral of~$f$ with respect to $M^D_{K,R,\delta}$, \eqref{E:5.10} reads
\begin{equation}
E\bigl(\texte^{-M^D_{K,R,\delta}(f)}\bigr)=E\biggl(\,\,\prod_{\begin{subarray}{c}
i=1,\dots,n_K\\ A_{K,R}^i\text{ occurs}
\end{subarray}}
\exp\Bigl\{-\texte^{\alpha\Phi^{D,\wt D}(x_0^i)}M^{D^i}(f\1_{D_\delta^i}\texte^{\alpha(\Phi^{D,\wt D}-\Phi^{D,\wt D}(x_0^i))})\Bigr\}\biggr).
\end{equation}
Whenever $A_{K,R}^i$ occurs, we have
\begin{equation}
\label{E:6.13ww}
\texte^{\alpha\Phi^{D,\wt D}(x_0^i)}\le\texte^{\alpha 2\sqrt g\,\log K-\alpha\zeta\log\log K}=K^4(\log K)^{-\alpha \zeta}
\end{equation}
and
\begin{equation}
\label{E:5.13}
\bigl|\Phi^{D,\wt D}(x)-\Phi^{D,\wt D}(x_0^i))\bigr|\le R,\qquad x\in D_\delta^i.
\end{equation}
Since $x\mapsto \Phi^{D,\wt D}(x)$ is harmonic on~$D^i$ and~$f$ is, being a member of~$\FF_{R,\beta,\delta}$, uniformly continuous and positive, for all realizations of~$\Phi^{D,\wt D}$ satisfying \twoeqref{E:6.13ww}{E:5.13} the function
\begin{equation}
g(z):=f\bigl(x_0^i+K^{-1}z\bigr)\texte^{\alpha(\Phi^{D,\wt D}(x_0^i+z/K)-\Phi^{D,\wt D}(x_0^i))}
\end{equation}
lies in~$\FF_{R',\beta',\delta}$ for some suitable~$R'$ and~$\beta'$ that depend only on~$R$ and~$\beta$. The scaling property~(2) tells us that $K^4 M^{D^i}(K^{-1}\textd x)$ has the law of a shifted $M^T$. Proposition~\ref{prop-3} with $\lambda:=K^{-4}\texte^{\alpha\Phi^{D,\wt D}(x_0^i)}$ then implies
\begin{equation}
\label{E:5.15}
\begin{aligned}
\exp\biggl\{&(1+\epsilon)\,K^{-4}\log\bigl(K^{-4}\texte^{\alpha\Phi^{D,\wt D}(x_0^i)}\bigr)\int_{D_\delta^i}f(x)\,K^4\psi^{D^i}(x)\texte^{\alpha\Phi^{D,\wt D}(x)}\,\textd x\biggr\}
\\
&
\le E\biggl(\,\exp\Bigl\{-\texte^{\alpha\Phi^{D,\wt D}(x_0^i)}M^{D^i}(f\1_{D_\delta^i}\texte^{\alpha(\Phi^{D,\wt D}-\Phi^{D,\wt D}(x_0^i))})\Bigr\}\,\bigg|\,\Phi^{D,\wt D}\biggr)
\\
&\qquad\le
\exp\biggl\{(1-\epsilon)\,K^{-4}\log\bigl(K^{-4}\texte^{\alpha\Phi^{D,\wt D}(x_0^i)}\bigr)\int_{D_\delta^i}f(x)\,K^4\psi^{D^i}(x)\texte^{\alpha\Phi^{D,\wt D}(x)}\,\textd x\biggr\}
\end{aligned}
\end{equation}
whenever~$A_{K,R}^i$ holds,~$K$ is sufficiently large and~$\delta$ is sufficiently small, uniformly in~$i=1,\dots,n_K$. Here the factor~$K^4$ inside the integral arises from scaling (and shifting) the measure~$\psi^T(x)\textd x$ on the unit triangle~$T$ to the triangle~$D^i$ of side-length~$K^{-1}$. 

In order to convert the expression in the exponentials on both sides of \eqref{E:5.15} into the desirable form, we will absorb the term $\log(K^{-4}\texte^{\alpha\Phi^{D,\wt D}(x_0^i)})$ into the integral and turn it into the expression $\alpha(\Phi^{D,\wt D}(x)-\alpha\Var(\Phi^{D,\wt D}(x)))$. For this we first note (cf Lemma~\ref{lemma-var-bd}) that
\begin{equation}
\label{E:5.16}
\Var\bigl(\Phi^{D,\wt D}(x)\bigr)=g\log K+O(1),\qquad x\in D_\delta^i.
\end{equation}
As~$\alpha^2g=4$, assuming that $A_{K,R}^i$ occurs, this and \eqref{E:5.13} permit us to write,
\begin{equation}
\log(K^{-4}\texte^{\alpha\Phi^{D,\wt D}(x_0^i)})
=\alpha\bigl(\Phi^{D,\wt D}(x)-\alpha\Var(\Phi^{D,\wt D}(x))\bigr)+O(1),
\end{equation}
where the~$O(1)$ term is bounded uniformly in~$x\in D_\delta^i$. But $\log(K^{-4}\texte^{\alpha\Phi^{D,\wt D}(x_0^i)})$ is order $\log\log K$ on~$A_{K,R}^i$ and so the $O(1)$ term can be absorbed at the cost of changing $(1\pm\epsilon)$ into~$(1\pm2\epsilon)$. Denoting the measure in \eqref{E:5.2} by $\wt Z^D_{K,R,\delta}$ and recalling that
\begin{equation}
\psi^{D^i}(x)=\psi^D(x)\texte^{-\frac12\alpha^2\Var(\Phi^{D,\wt D}(x))},\qquad x\in D^i,
\end{equation}
(see, e.g., \eqref{E:4.12qq}) we thus get
\begin{equation}
E\bigl(\texte^{-(1+\epsilon)\wt Z^D_{K,R,\delta}(f)}\bigr)\le
E\bigl(\texte^{-M^D_{K,R,\delta}(f)}\bigr)\le E\bigl(\texte^{-(1-\epsilon)\wt Z^D_{K,R,\delta}(f)}\bigr).
\end{equation}
In particular, since~$\epsilon$ was arbitrary, $\wt Z^D_{K,R,\delta}(f)\Lawarrow M^D(f)$ in the stated limits for any bounded and continuous function $f\colon\overline D\to\R$ with~$f\ge\beta$. But~$\beta$ was arbitrary and such functions are dense in the space of all bounded and continuous functions and so the claim follows.
\end{proofsect}

\subsection{Proofs of key propositions}
We are left with the proof of the above four propositions. We begin with those that are easier:

\begin{proofsect}{Proof of Proposition~\ref{prop-slither}}
Let us temporarily write~$\wt D_{K,\delta}$ instead of just~$\wt D$.
Suppose the claim fails. Then there are sequences $K_n\to\infty$ and~$\delta_n\downarrow0$, the latter exponentially decaying, such that $\liminf_{n\to\infty}P(M^D(D\smallsetminus\wt D_{K_n,\delta_n})>\epsilon)>\epsilon$ for some~$\epsilon>0$. Now $\leb(D\smallsetminus\wt D_{K_n,\delta_n})=O(\delta_n)$ and so $B_m:=\bigcup_{n\ge m}(D\smallsetminus D_{K_n,\delta_n})$ obeys $\leb(B_m)\to0$ as~$m\to\infty$. By $D\smallsetminus\wt D_{K_n,\delta_n}\subseteq B_n$ and~$B_n$ is decreasing and so $P(M^D(B_n)>\epsilon)>\epsilon$ for all~$n\ge1$. But this contradicts the stochastic absolute continuity assumption because $B_\infty:=\bigcap_{n\ge1}B_n$ has zero Lebesgue measure and yet non-zero $M^D$ measure with positive probability. The claim follows.
\end{proofsect}

For the second proposition, and also later reference, we need to control the variance of~$\Phi^{D,\wt D}$ and some linear combinations thereof:

\begin{lemma}
\label{lemma-var-bd}
For each $\delta\in(0,1)$ there is $c(\delta)<\infty$ such that
\begin{equation}
\label{E:5.20cc}
\sup_{K\ge1}\,\,\max_{i=1,\dots,n_K}\,\,\sup_{x\in D_\delta^i}\,
\Bigl|\Var\bigl(\Phi^{D,\wt D}(x)\bigr)-g\log K\Bigr|\le c(\delta)
\end{equation}
and
\begin{equation}
\label{EE:1.3}
\sup_{K\ge1}\,\,\max_{i,j=1,\dots,n_K}\,\sup_{x,y\in D_\delta^i}\,\sup_{z\in D_\delta^j}\,\,\text{\rm Cov}\bigl(\Phi^{D,\wt D}(z),\Phi^{D,\wt D}(y)-\Phi^{D,\wt D}(x))\bigr)\le c(\delta).
\end{equation}
In fact, we even have
\begin{equation}
\label{E:6.22r}
\sup_{K\ge1}\,\max_{i=1,\dots,n_K}\,\sup_{\begin{subarray}{c}
x,y\in D_\delta^i\\x\ne y
\end{subarray}}
\frac{E\bigl((\Phi^{D,\wt D}(x)-\Phi^{D,\wt D}(y))^2\bigr)}{ K |x-y|}\le c(\delta).
\end{equation}
\end{lemma}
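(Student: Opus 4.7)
All three bounds will follow from the explicit representation
\begin{equation*}
C^{D,\wt D}(x,y)=F^D(x,y)-F^{\wt D}(x,y),\qquad F^U(x,y):=g\int_{\partial U}\Pi^U(x,\textd w)\log|w-y|,
\end{equation*}
combined with two basic reductions. First, since $\wt D$ is the disjoint union of the triangles $D^i$ and Brownian motion started in $D^i$ exits $\wt D$ only through $\partial D^i$, one has $F^{\wt D}(x,y)=F^{D^i}(x,y)$ whenever $x\in D^i$. Second, scale invariance of the Poisson kernel gives
\begin{equation*}
F^{D^i}(x,y)=F^T(\tilde x,\tilde y)-g\log K,\qquad \tilde x:=K(x-x_0^i),\ \tilde y:=K(y-x_0^i),
\end{equation*}
where $T$ is the unit equilateral triangle centered at $0$ with the orientation of the grid. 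All uniform bounds will then reduce to properties of the single reference function $F^T$, which is smooth on $T\times T$ and has all derivatives bounded on $T_\delta\times T_\delta$ by constants depending only on~$\delta$.

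\textbf{The variance bound \eqref{E:5.20cc}} is then immediate: the representations above yield
\begin{equation*}
\Var\bigl(\Phi^{D,\wt D}(x)\bigr)=F^D(x,x)-F^T(\tilde x,\tilde x)+g\log K,
\end{equation*}
where $F^D(x,x)$ is bounded by $c(\delta,D)$ (since $x$ is at distance at least $\delta$ from $\partial D$) and $F^T(\tilde x,\tilde x)=g\log\rad_T(\tilde x)$ is bounded on $T_\delta$. For the cross-covariance \eqref{EE:1.3}, I would split
\begin{equation*}
\text{Cov}\bigl(\Phi^{D,\wt D}(z),\Phi^{D,\wt D}(y)-\Phi^{D,\wt D}(x)\bigr)=\bigl[F^D(z,y)-F^D(z,x)\bigr]-\bigl[F^{D^j}(z,y)-F^{D^j}(z,x)\bigr],
\end{equation*}
where $D^j$ is the triangle containing $z$. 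The first bracket is bounded by $c(\delta,D)|x-y|\le c(\delta,D)$ by smoothness of $F^D$ on the $\delta$-interior of $D$. The second equals $g\int_{\partial D^j}\Pi^{D^j}(z,\textd w)\log(|w-y|/|w-x|)$; using the geometric lower bound $|w-x|,|w-y|\ge c\delta K^{-1}$ for $w\in\partial D^j$ and $x,y\in D^i_\delta$ (valid whether $i=j$ or not) together with $\bigl||w-y|-|w-x|\bigr|\le|x-y|\le cK^{-1}$, the integrand is bounded uniformly by $c/\delta$.

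\textbf{The Lipschitz-type bound \eqref{E:6.22r}} is the most delicate since a direct estimate of $|F^{D^i}(x,y)-F^{D^i}(x,x)|$ yields only $O(K|x-y|)$, reflecting that the gradient of $F^{D^i}$ scales as $K$. The plan is to extract the required cancellation via a second-order Taylor expansion of $H:=C^{D,\wt D}$ at the diagonal. The symmetry $H(x,y)=H(y,x)$ forces $\partial_1 H(x,x)=\partial_2 H(x,x)$ and $\partial_1^2 H(x,x)=\partial_2^2 H(x,x)$, so both the first-order and the ``pure'' second-order contributions cancel in the symmetric combination $H(x,x)+H(y,y)-2H(x,y)$, leaving
\begin{equation*}
H(x,x)+H(y,y)-2H(x,y)=(y-x)^{T}\,\partial_1\partial_2 H(x,x)\,(y-x)+O\bigl(|y-x|^3 K^3\bigr).
\end{equation*}
By the scaling relation $\|\partial_1\partial_2 F^{D^i}(x,x)\|\le c(\delta)K^2$, whereas $\|\partial_1\partial_2 F^D(x,x)\|\le c(\delta,D)$, so the leading quadratic term is bounded by $c(\delta)K^2|y-x|^2$. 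Since $x,y\in D^i_\delta$ forces $|y-x|\le cK^{-1}$, the cubic remainder is of the same order, and the resulting estimate takes the Lipschitz-type scaling form displayed in \eqref{E:6.22r}.

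\textbf{The main obstacle} is the need for this symmetric cancellation in \eqref{E:6.22r}: both $F^{D^i}$ itself (of logarithmic size) and its first derivatives (of order $K$) grow as the mesh shrinks, so only the specific combination $H(x,x)+H(y,y)-2H(x,y)$, coupled with the diagonal identities $\partial_1 H=\partial_2 H$ and $\partial_1^2 H=\partial_2^2 H$, produces the quadratic reduction to $|\partial_1\partial_2 H|\cdot|y-x|^2$. Everything else is routine bookkeeping using smoothness of $F^D$ and of the rescaled reference function $F^T$ on~$T_\delta$.
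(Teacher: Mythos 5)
Your reduction to the reference triangle $T$ via $F^{D^i}(x,y)=F^T(\tilde x,\tilde y)-g\log K$ is sound, and for \eqref{E:5.20cc} and \eqref{EE:1.3} it reproduces, after rescaling, essentially the same elementary log-ratio estimates that the paper carries out directly on the integral representation of $C^{D,\wt D}$. For \eqref{E:6.22r} you take a genuinely different route: the paper bounds the first-order covariance difference $\bigl|C^{D,\wt D}(z,x)-C^{D,\wt D}(z,y)\bigr|$ by $c(\delta)K|x-y|$ using the inequality $\bigl|\log(|x-u|/|y-u|)\bigr|\le c\,|x-y|/\min(|x-u|,|y-u|)$ and then specializes to $z\in\{x,y\}$, whereas your second-order Taylor expansion at the diagonal, using the symmetry of $H:=C^{D,\wt D}$ to cancel the linear and pure second-order terms, gives the sharper estimate $c(\delta)K^2|x-y|^2$. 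Both are valid; yours needs two extra derivatives of $F^T$ (available) and is strictly better when $|x-y|\ll K^{-1}$, but the paper's one-derivative estimate is already enough for what is used downstream.

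That said, your concluding claim that $c(\delta)K^2|x-y|^2$ ``takes the Lipschitz-type scaling form displayed in \eqref{E:6.22r}'' does not hold as printed, and in fact \eqref{E:6.22r} itself appears to contain a typo. Dividing your bound by the displayed denominator $K^{-1}|x-y|$ gives $c(\delta)K^3|x-y|\le c(\delta)K^2$, which is not bounded uniformly in $K$; the paper's own bound $c(\delta)K|x-y|$ fails against $K^{-1}|x-y|$ for the same reason. Indeed \eqref{E:6.22r} as printed cannot be correct: for $\tilde x,\tilde y$ near opposite corners of $T_\delta$ the second difference $F^T(\tilde x,\tilde x)-2F^T(\tilde x,\tilde y)+F^T(\tilde y,\tilde y)$ is a fixed nonzero quantity, so $E\bigl((\Phi^{D,\wt D}(x)-\Phi^{D,\wt D}(y))^2\bigr)$ is of order one while $K^{-1}|x-y|$ is of order $K^{-2}$. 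The denominator should read $K|x-y|$; that is what both proofs actually establish and what the subsequent Fernique argument requires, namely a pseudo-metric bound $\rho(x,y)\le L\sqrt{K|x-y|}$. You should have flagged the mismatch between your estimate and the printed formula rather than asserting agreement.
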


\begin{proofsect}{Proof}
Recall that~$C^{D,\wt D}(x,y)$ denotes the covariance of the Gaussian field $\Phi^{D,\wt D}$. Using that the harmonic measure is normalized to one, we rewrite \eqref{E:1.6} as
\begin{equation}
C^{D,\wt D}(x,y)=\int_{\partial D\times\partial\wt D}\Pi^D(x,\textd z)\otimes\Pi^{\wt D}(x,\textd \tilde z)\,\log\frac{|y-z|}{|y-\tilde z|}.
\end{equation}
To get \eqref{E:5.20cc} we now evaluate this for~$y=x\in D_\delta^i$ and note that, given any $z\in\partial D$ and $\tilde z\in\partial D_\delta^i$, the ratio $\frac{|x-\tilde z|}{|x-z|}$ is bounded between a constant times $\delta/K$ and a constant times $1/(\delta K)$. (The constant depends on the diameter of~$D$ but not on~$K$.) 

For \eqref{EE:1.3} and \eqref{E:6.22r} we pick $x,y\in D^i_\delta$, $z\in D^j_\delta$ and rewrite \eqref{E:1.6}  as
\begin{multline}
\label{EE:1.6}
\qquad
\text{\rm Cov}\bigl(\Phi^{D,\wt D}(z),\Phi^{D,\wt D}(x)-\Phi^{D,\wt D}(y))\bigr)
\\
=\int_{\partial D}\Pi^D(z,\textd u)\log\frac{|x-u|}{|y-u|}-\int_{\partial\wt D}\Pi^{\wt D}(z,\textd u)\log\frac{|x-u|}{|y-u|}.
\qquad
\end{multline}
Given any~$u\in\C\smallsetminus\{x\}$, the triangle inequality shows
\begin{equation}
\biggl|\frac{|y-u|}{|x-u|}-1\biggr|\le\frac{|x-y|}{|x-u|}
\end{equation}
 which by the concavity of $x\mapsto\log(1+x)$ for~$x>-1$ gives
\begin{equation}
-\frac{|x-y|}{|y-u|}\le\log\frac{|x-u|}{|y-u|}\le \frac{|x-y|}{|x-u|},
\end{equation}
as soon as the ratios on the left and right are less than one. This happens for all~$u\in\partial D\cup\partial\wt D$ when~$|x-y|<\delta/K$ and, under this condition, the first integral in \eqref{EE:1.6} is at most a constant times~$\delta^{-1}|x-y|$ in absolute value while the second is a most a constant times $\delta^{-1}K|x-y|$ in absolute value. Writing the difference $\Phi^{D,\wt D}(x)-\Phi^{D,\wt D}(y)$ as the sum of differences for pairs of points at distance less than $\delta/K$ and using the convexity of~$\wt D_\delta^i$, this generalizes to all 
$x,y\in D^i_\delta$ and all~$z\in D^j_\delta$. Hereby \eqref{EE:1.3} and \eqref{E:6.22r} follows.
\end{proofsect}

\begin{proofsect}{Proof of Proposition~\ref{prop-0}}
Thanks to \eqref{E:5.20cc}, the standard Gaussian estimate and some simple algebra show
\begin{equation}
P\bigl(\,\Phi^{D,\wt D}(x_0^i)>2\sqrt g\log K-R\bigr)
\le \frac c{\sqrt{\log K}}\,K^{-2}\,\texte^{\alpha R}
\end{equation}
for some constant~$c>0$, uniformly in~$i=1,\dots,n_K$ and~$R$ such that $0\le R\le \sqrt g\,\log K$. As there are only order~$K^2$ points to control, the claim follows by a union bound.
\end{proofsect}

The proof of Proposition~\ref{prop-3}, dealing with the measure $M^T$ on the triangle~$T$ of unit length, is somewhat more involved. The argument naturally splits into two parts the first of which is the content of the following claim:

\begin{lemma}
\label{lemma-5.6}
Fix~$\beta>0$ and~$R>0$. For each $\epsilon>0$ there are $\delta_0>0$ and $\lambda_1>0$ such that, for all $\lambda\in(0,\lambda_1)$, all $\delta\in(0,\delta_0)$ and all~$f\in\FF_{R,\beta,\delta}$,
\begin{equation}
(1-\epsilon)\int_{T_\delta}f(x)\psi^T(x)\,\textd x\le
\frac{E(M^T(f\1_{T_\delta})\texte^{-\lambda M^T(T)})}{\log(\ffrac1\lambda)}\le(1+\epsilon)\int_{T_\delta}f(x)\psi^T(x)\,\textd x
\end{equation}
\end{lemma}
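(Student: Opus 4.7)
The plan is to approximate $f$ by step functions on a fine partition of $T$, apply condition~(4) of Theorem~\ref{thm:1.6} cell-by-cell, and sum up. The point to leverage is that every $f\in\FF_{R,\beta,\delta}$ is pinched between $\beta$ and $C:=\beta+R\diam(T)$ on $T_\delta$, which yields both a uniform lower bound $\int_{T_\delta}f\psi^T\ge\beta\int_{T_{\delta_0}}\psi^T=:c_0>0$ on the target (provided $\delta_0$ is fixed small enough so that this is positive) and a uniform upper bound $C$ on the coefficients that will show up in the approximation.

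First I would tile $\overline T$ by finitely many open cells $B_1,\dots,B_N\subset T$ (for instance, a triangular subdivision) of diameter at most $\eta$, with pairwise disjoint interiors and complement of zero Lebesgue measure. Set $\alpha_j:=\sup_{B_j\cap T_\delta}f$ and $\underline\alpha_j:=\inf_{B_j\cap T_\delta}f$, both defined as $0$ if $B_j\cap T_\delta=\emptyset$. The Lipschitz assumption in \eqref{E:5.8c} gives $\alpha_j-\underline\alpha_j\le R\eta$ whenever $B_j\subseteq T_\delta$, so pointwise
\begin{equation}
\sum_j \underline\alpha_j \1_{B_j}(x) \,\le\, f(x)\1_{T_\delta}(x) \,\le\, \sum_j \alpha_j \1_{B_j}(x),
\end{equation}
and integrating against $\psi^T$ produces the two-sided Lebesgue estimate
\begin{equation}
\Bigl|\,\sum_j \alpha_j \!\int_{B_j}\!\psi^T - \int_{T_\delta}\!f\psi^T\Bigr| \,\le\, R\eta\,\|\psi^T\|_{L^1(T)} + C\,\|\psi^T\|_\infty\cdot o_{\eta,\delta}(1),
\end{equation}
where the second error absorbs cells touching $\partial T_\delta$, whose total Lebesgue area is $O(\eta+\delta)$.

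Next I would multiply by $\texte^{-\lambda M^T(T)}$, take expectation, and divide by $\log(1/\lambda)$. Condition~(4) applied to each open cell $B_j\subset T$ gives
\begin{equation}
\frac{E\bigl(M^T(B_j)\,\texte^{-\lambda M^T(T)}\bigr)}{\log(1/\lambda)}\,\underset{\lambda\downarrow0}\longrightarrow\, \int_{B_j}\psi^T(x)\,\textd x,
\end{equation}
and since there are only $N=N(\eta)$ terms in the weighted sum and $0\le\alpha_j,\underline\alpha_j\le C$ uniformly in $f$, for any $\varepsilon'>0$ one finds $\lambda_1=\lambda_1(\eta,\varepsilon')$ such that the difference between the Laplace-transform ratio and its Lebesgue limit is at most $C\varepsilon'$ for all $\lambda<\lambda_1$, uniformly over $f\in\FF_{R,\beta,\delta}$ and $\delta<\delta_0$. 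Combining: given $\epsilon>0$, fix $\delta_0$ so $c_0>0$, then $\eta$ so the Riemann error is at most $\tfrac12\epsilon c_0$, then $\varepsilon'$ so $C\varepsilon'\le\tfrac12\epsilon c_0$, which furnishes $\lambda_1$ and delivers both stated inequalities.

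The main obstacle is precisely achieving uniformity in $f$ and $\delta$ simultaneously; this is handled entirely by the pincer bound $\beta\le f\le C$ on $T_\delta$ combined with the fact that the partition (and hence the number of invocations of condition~(4)) is finite and independent of $f$. A subtle point to keep in mind is that condition~(4) is stated for open subsets of $D=T$, so the cells must be chosen open and genuinely contained in $T$ (a triangular subdivision of $T$ respects this); boundary contributions are harmless since $\psi^T$ is bounded and the boundaries have zero Lebesgue measure.
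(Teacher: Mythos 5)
Your approach --- approximate $f$ from above and below by step functions constant on a fine partition of $T$, apply condition~(4) cell by cell, and use the uniform pincer $\beta\le f\le\beta+R\,\diam(T)$ on $T_\delta$ together with the finiteness of the partition to get the required uniformity in $f$ and $\delta$ --- is essentially the paper's argument. There is, however, a slip in your lower bound: the pointwise inequality $\sum_j\underline\alpha_j\1_{B_j}(x)\le f(x)\1_{T_\delta}(x)$ is false on cells that straddle $\partial T_\delta$, i.e.\ those with $B_j\cap T_\delta\ne\emptyset$ but $B_j\not\subseteq T_\delta$. For such a cell your definition gives $\underline\alpha_j\ge\beta>0$, yet for $x\in B_j\smallsetminus T_\delta$ the right-hand side vanishes. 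Consequently the step $E\bigl(M^T(f\1_{T_\delta})\texte^{-\lambda M^T(T)}\bigr)\ge\sum_j\underline\alpha_j\,E\bigl(M^T(B_j)\texte^{-\lambda M^T(T)}\bigr)$ is not justified as written. The remedy is to set $\underline\alpha_j:=0$ for every cell that is not wholly contained in $T_\delta$; the dropped cells have total Lebesgue area $O(\eta)$ and are absorbed into your existing error term. This is precisely what the paper does by taking the lower-bound integral over $T_\delta'$, the union of cells lying entirely inside $T_\delta$. With that correction your proof matches the paper's.
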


\begin{proofsect}{Proof}
Fix~$\beta>0$ and~$R>0$ and, given an~$\epsilon>0$, partition~$T$ into $K^2$ triangles $\{T_i\colon i=1,\dots,K^2\}$ of side~$K^{-1}$, where~$R/K\le\epsilon$. Thanks to our assumption that condition (2) in Theorem~\ref{thm:1.6} holds, we may find~$\lambda_1>0$ such that
\begin{equation}
\label{E:5.21a}
(1-\epsilon)\int_{T_i}\psi^T(x)\textd x\le
\frac{E\bigl(M^T(T_i)\texte^{-\lambda M^T(T)}\bigr)}{\log(\ffrac1\lambda)}
\le 
(1+\epsilon)\int_{T_i}\psi^T(x)\textd x
\end{equation}
is true for all~$\lambda\in(0,\lambda_1)$ and all~$i=1,\dots,K^2$.

Now let~$f$ be a function such that \eqref{E:5.8c} holds and observe that, since~$f\ge0$ we necessarily have~$f\le R$.
Thanks to the Lipschitz estimate in \eqref{E:5.8c}, we may then approximate~$f$ from above and below by a function $f_K$ that is constant on each~$T_i$ and $|f_K-f|\le\epsilon$. Then \eqref{E:5.21a} gives
\begin{equation}
\begin{aligned}
E\bigl(M^T(f\1_{T_\delta})\texte^{-\lambda M^T(T)}\bigr)
&\le E\bigl(M^T(f_K+\epsilon)\texte^{-\lambda M^T(T)}\bigr)\\
&\le\log(\ffrac1\lambda)\sum_{i=1}^{K^2}(1+\epsilon)
\int_{T_i}\psi^T(x)\bigl(f_K(x)+\epsilon\bigr)\textd x
\\
&\le \log(\ffrac1\lambda)(1+\epsilon)\int_T\psi^T(x)\bigl(f(x)+2\epsilon\bigr)\textd x
\end{aligned}
\end{equation}
as soon as~$\lambda\in(0,\lambda_1)$. For the corresponding lower bound we assume that~$2\epsilon\in(0,\beta)$; a similar derivation then yields
\begin{equation}
\frac{E\bigl(M^T(f\1_{T_\delta})\texte^{-\lambda M^T(T)}\bigr)}{\log(\ffrac1\lambda)}
\ge
(1-\epsilon)\int_{T_\delta'}\psi^T(x)\bigl(f(x)-2\epsilon\bigr)\textd x,
\end{equation}
where~$T_\delta'$ is the union of all~$T_i$'s that lie entirely in~$T_\delta$. Since the integral of~$\psi^T$ is finite and positive and~$f$ is bounded below and above by constants that depend only on~$\beta$ and~$R$, respectively, the claim follows by choosing~$\delta$ sufficiently small and~$K$ large enough.
\end{proofsect}

\begin{remark}
\label{remark-for-Stefan}
The bound \eqref{E:5.21a} is the only place where the Laplace transform asymptotics \eqref{E:1.25} of the $Z^D$-measure enters the proof of Theorem~\ref{thm:5.1}. Here we state this bound only for unit triangles because, in \eqref{E:5.15}, we use invariance under dilations from \eqref{E:2.17ie} to bring triangles of side-length~$1/K$ to unit scale. Should the dilation property in \eqref{E:2.17ie} not be \emph{a priori} available, the same can be achieved by enhancing \eqref{E:1.25} to:
\settowidth{\leftmargini}{(111)}
\begin{enumerate}
\item[(4')] (uniform Laplace transform tail) There is~$c\in(0,\infty)$ such that for any open set~$A$ with $\overline A\subset D$, and with
\begin{equation}
D_K:=\bigl\{K^{-1}x\colon x\in D\bigr\}\quad\text{and}\quad A_K:=\bigl\{K^{-1}x\colon x\in A\bigr\}
\end{equation}
we have
\begin{equation}
\label{E:1.25a}
\lim_{\lambda\downarrow0}\,\,\sup_{K\ge1}\,\,\biggl|\,\frac{K^4\,E(M^{D_K}(A_K)\texte^{-\lambda K^4 M^{D_K}(D_K)})}{\log(\ffrac1\lambda)}- c\int_A  r^{\,D}(x)^2\,\textd x\biggr|\,=\,0
\end{equation}
\end{enumerate}
As the proof shows, we in fact just need to prove this for~$D$ and~$A$ being the triangles~$T$ and~$T_\delta$.
\end{remark}

\begin{proofsect}{Proof of Proposition~\ref{prop-3}}
We will relate the quantity in the statement to that in Lemma~\ref{lemma-5.6}. Pick~$f\colon T\to[0,\infty)$ with~$f(x)\le R$. For the upper bound we note
\begin{equation}
\begin{aligned}
\log E\bigl(\texte^{-\lambda M^T(f)}\bigr)
&=-\lambda\int_0^1\frac{E(M^T(f)\texte^{-s\lambda M^T(f)})}{E(\texte^{-s\lambda M^T(f)})}\,\textd s
\\
&\le-\lambda E\bigl(M^T(f)\texte^{-\lambda R M^T(T)}\bigr),
\end{aligned}
\end{equation}
where we first bounded the denominator in the integrand by one and then estimated the exponent in the numerator from above by $\lambda M^T(T)$ times the absolute maximum of~$f$. Invoking the upper bound in Lemma~\ref{lemma-5.6} for~$f$ replaced by $f\1_{T_\delta}$, we get the upper bound in the claim as soon as $\lambda_0\le\lambda_1/R$.

For the corresponding lower bound, pick~$a>0$ and write
\begin{equation}
\begin{aligned}
E\bigl(\texte^{-\lambda M^T(f)}\bigr)
&\ge E\bigl(\texte^{-\lambda M^T(f)-\lambda a M^T(T)}\bigr)
\\
&\ge E\bigl(\texte^{-\lambda a M^T(T)}\bigr)\exp\biggl\{-\lambda \frac{E(M^T(f)\texte^{-\lambda a M^T(T)})}{E(\texte^{-\lambda a M^T(T)})}
\biggr\}
\end{aligned}
\end{equation}
where the second bound follows by Jensen's inequality for exponential function. Since the logarithm of the first term on the right is order $a\lambda\log(\ffrac1\lambda)$, the claim again follows from Lemma~\ref{lemma-5.6} with $\epsilon$ replaced by, say, $\epsilon/2$ provided~$a$ is chosen sufficiently small and $\lambda_0\le\lambda_1 a$.
\end{proofsect}

Proposition~\ref{prop-1} is hardest to prove and is the sole reason why we work with triangular partitions. In order to explain this, let $D'$ denote the interior of $\bigcup_{i=1}^{m_K}\overline{D^i}$. Then
\begin{equation}
\Phi^{D,\wt D}\laweq\Phi^{D,D'}+\Phi^{D',\wt D}
\end{equation}
with the last two fields independent. Next  we recall that the field~$\Phi^{D',\wt D}$ can be thought of as a projection of the CGFF on~$D'$ onto the space of (piece-wise) harmonic functions on~$\wt D$. This space of harmonic functions contains the linear subspace $\HH^\triangle$ of all continuous piece-wise linear functions with zero boundary conditions on~$\partial D'$. Thus, $\Phi^{D,D'}$ further splits into a sum of independent Gaussian fields, $\Phi^\triangle+\Phi^{\perp}$, where $\Phi^\triangle$ is the projection of~$\Phi^{D',\wt D}$ on~$\HH^\triangle$ while~$\Phi^{\perp}$ is the corresponding projection onto the orthogonal complement thereof. Combining with the above formula, we thus have
\begin{equation}
\label{E:6.36ww}
\Phi^{D,\wt D} \laweq \Phi^{D,D'}+ \Phi^\triangle+\Phi^{\perp},
\end{equation}
with all three fields on the right independent. An important technical point for us is that, thanks to an observation that goes back to Sheffield~\cite{Sheffield-review}, $\Phi^\triangle$ can be represented as a linear extension of the DGFF on the triangular grid.

\smallskip
We start with some estimates on the variances of the various involved fields. The first one of these, $\Phi^{D,D'}$, can be dealt with using Proposition~\ref{prop-inner-apprx} so let us move directly to the ``rough'' field~$\Phi^\perp$. Here we get:

\begin{lemma}
\label{lemma-2}
For each~$\delta>0$ there is $c(\delta)<\infty$ such that
\begin{equation}
\sup_{K\ge1}\,\max_{i=1,\dots,n_K}\,\sup_{x\in D_\delta^i}\,\text{\rm Var}\bigl(\Phi^{\perp}(x)\bigr)\le c(\delta)
\end{equation}
\end{lemma}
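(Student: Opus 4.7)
\begin{proofsect}{Proof plan}
The plan is to exploit independence in the decomposition $\Phi^{D,\wt D}\laweq\Phi^{D,D'}+\Phi^\triangle+\Phi^\perp$ from \eqref{E:6.36ww}, which yields the variance identity
$$\Var\bigl(\Phi^\perp(x)\bigr)=\Var\bigl(\Phi^{D,\wt D}(x)\bigr)-\Var\bigl(\Phi^{D,D'}(x)\bigr)-\Var\bigl(\Phi^\triangle(x)\bigr),$$
and then to show that the divergent $g\log K$ contribution to $\Var(\Phi^{D,\wt D})$ (known from Lemma~\ref{lemma-var-bd}) is almost fully cancelled by $\Var(\Phi^\triangle)$, while $\Var(\Phi^{D,D'})$ stays bounded as $K\to\infty$.

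For the second term I would invoke Proposition~\ref{prop-inner-apprx}: the equilateral-triangle geometry forces $D'\supseteq D^{c/K}$ for a geometric constant~$c$, so applying \eqref{E:3.28rr} with $\delta':=c/K\to0$ gives $\Var(\Phi^{D,D'}(x))\to0$ locally uniformly in~$x\in D$, in particular $\Var(\Phi^{D,D'}(x))=O(1)$ uniformly for $K\ge1$ and $x\in D_\delta^i$.

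The crux will be the lower bound $\Var(\Phi^\triangle(x))\ge g\log K-c(\delta)$. Because the Dirichlet-orthogonal projection onto the space $\HH^\triangle$ of piecewise-linear functions with zero boundary on $\partial D'$ coincides with the nodal interpolation operator (a standard property of linear finite elements), the field $\Phi^\triangle$ is affine on each triangle~$D^i$, and hence $\Phi^\triangle(x)=\sum_{j=1}^3\lambda_j(x)\Phi^\triangle(v_j^i)$, where $v_1^i,v_2^i,v_3^i$ are the vertices of $D^i$ and $\lambda_j(x)\ge0$ are the barycentric coordinates of $x$ (with $\sum_j\lambda_j(x)=1$). By the identification alluded to in \cite{Sheffield-review} and already hinted at in the paragraph preceding~\eqref{E:6.36ww}, the Gaussian vector $\{\Phi^\triangle(v)\colon v\text{ vertex}\}$ is (up to a deterministic multiplicative constant) the DGFF on the triangular lattice of mesh $1/K$ with zero boundary conditions on $\partial D'$. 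Standard Green-function asymptotics on the triangular lattice will then give $\Var(\Phi^\triangle(v))=g\log K+O(c(\delta))$ uniformly over vertices at Euclidean distance at least $\delta/2$ from~$\partial D$, together with the increment estimate $\Var(\Phi^\triangle(v)-\Phi^\triangle(v'))=O(1)$ for nearest neighbours. Expanding the quadratic form $\Var(\sum_j\lambda_j\Phi^\triangle(v_j^i))$ and using $\sum_j\lambda_j=1$ together with these two bounds yields $\Var(\Phi^\triangle(x))=g\log K+O(c(\delta))$ uniformly for $x\in D_\delta^i$, which combined with the previous two steps completes the proof.

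The hardest part will be the uniform-in-$K$ control of the triangular-lattice Green function near the boundary of~$D'$: a vertex $v$ of a triangle $D^i$ with $i\le n_K$ can still lie within Euclidean distance $O(1/K)$ of triangles not in $\wt D$, so $\partial D'$ is rather rough on the relevant scale. I expect to circumvent this through a Gaussian comparison argument --- either via the Kahane convexity inequality (Proposition~\ref{lemma-Kahane}) applied to a smoothed inflation of $D'$, or more directly through monotonicity of the Green function in the underlying domain --- combined with a careful estimate on the Poisson-kernel terms entering the covariance representation \eqref{E:1.6}, analogous to the bounds already worked out in the proof of Lemma~\ref{lemma-var-bd}.
\end{proofsect}
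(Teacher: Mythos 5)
Your plan follows the paper's proof in all essentials: the same variance identity from the independent decomposition $\Phi^{D,\wt D}\laweq\Phi^{D,D'}+\Phi^\triangle+\Phi^\perp$, the same appeal to Proposition~\ref{prop-inner-apprx} for $\Var(\Phi^{D,D'})$, and the same reduction of $\Var(\Phi^\triangle)$ to the DGFF on the triangular lattice via Sheffield together with piecewise-linear interpolation over barycentric coordinates. Two points deserve flagging, though neither is a fatal gap.

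First, the claim that ``the Dirichlet-orthogonal projection onto $\HH^\triangle$ coincides with the nodal interpolation operator'' is not correct as stated: for $f\in\HH^\triangle$, $\nabla f$ is constant on each triangle, so $(\Phi,f)_\nabla$ depends on $\Phi$ through integrals over triangle \emph{edges}, not merely on vertex values; the Dirichlet projection of a piecewise-harmonic field is therefore not its nodal interpolant in general. Fortunately you don't need this premise --- $\Phi^\triangle$ is affine on each triangle simply because, by definition, it takes values in $\HH^\triangle$. Drop the finite-element claim and the argument is cleaner and correct.

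Second, the anticipated ``hardest part'' --- uniform control despite the lattice-scale roughness of $\partial D'$ --- is not actually a difficulty, and the Kahane/monotonicity detours you propose are unnecessary. The paper uses the exact potential-kernel identity $G_K(x,y)=\sum_{z\in\partial^\star\T_K}a(K(z-y))H^{\T_K}(x,z)-a(K(x-y))$. Because $H^{\T_K}(x,\cdot)$ is a probability measure and every $y\in\overline{D^i}$ with $i\le n_K$ is at Euclidean distance $\ge\delta$ from $\partial D'$, the Kozma--Schreiber asymptotic $a(w)\sim\tau\log|w|$ with $\tau=\sqrt3/\pi$ applies uniformly to the boundary sum, giving $\tau\log K+O(c(\delta))$; meanwhile $a(K(x-y))=O(1)$ since $x,y$ lie in a common mesh-$K^{-1}$ triangle. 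The detailed microscopic geometry of $\partial^\star\T_K$ never enters, so no comparison argument is needed. Otherwise the proof plan is sound.
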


The proof of this claim requires somewhat unpleasant computations and so we postpone it temporarily in order to keep attention focussed on the main part of the argument. We will nonetheless record an immediate corollary:

\begin{corollary}
Denote
\begin{equation}
\label{E:6.41ww}
\epsilon_R:=\sup_{K\ge1}\,\,\max_{i=1,\dots,n_K}\,\,\sup_{x\in D_\delta^i}\,
E\bigl(\texte^{\alpha\Phi^\perp(x)}\1_{\{\text{\rm osc}_{D^i_\delta}\Phi^{\perp}>R\}}\bigr).
\end{equation}
Then $\lim_{R\to\infty}\epsilon_R=0$.
\end{corollary}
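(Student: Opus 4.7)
The plan is to apply Cauchy-Schwarz to split
\[
E\bigl(\texte^{\alpha\Phi^{\perp}(x)}\1_{\{\text{\rm osc}_{D^i_\delta}\Phi^\perp>R\}}\bigr)
\le \sqrt{E(\texte^{2\alpha\Phi^\perp(x)})}\,\sqrt{P\bigl(\text{\rm osc}_{D^i_\delta}\Phi^\perp>R\bigr)}
\]
and bound each factor separately. The first factor equals $\texte^{2\alpha^2\Var(\Phi^\perp(x))}$, which is at most $\texte^{2\alpha^2 c(\delta)}$ uniformly in $K$, $i$ and $x\in D_\delta^i$ by Lemma~\ref{lemma-2}. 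It thus suffices to show that $P(\text{\rm osc}_{D_\delta^i}\Phi^\perp>R)\to 0$ as $R\to\infty$, uniformly in $K\ge 1$ and $i=1,\dots,n_K$.

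For that, I would appeal to the Fernique and Borell-Tsirelson inequalities as in Proposition~\ref{lem:3}. By the independence in the decomposition \eqref{E:6.36ww},
\[
\Var\bigl(\Phi^\perp(x)-\Phi^\perp(y)\bigr)\le \Var\bigl(\Phi^{D,\wt D}(x)-\Phi^{D,\wt D}(y)\bigr)\le c(\delta)K^{-1}|x-y|
\]
for $x,y\in D_\delta^i$, where the second inequality is the H\"older-type bound \eqref{E:6.22r}. Rescaling $D_\delta^i$ to a triangle of side $1-\delta$ via $y\mapsto K(y-x_0^i)$, the intrinsic (pseudo)metric $\tilde\rho$ of the rescaled field satisfies $\tilde\rho(\tilde x,\tilde y)\le K^{-1}\sqrt{c(\delta)|\tilde x-\tilde y|}$, while its pointwise variance stays bounded by $c(\delta)$. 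Using normalized Lebesgue measure on the rescaled triangle in \eqref{E:3.25}, a change of variables $u=rK$ makes the resulting Fernique integral independent of $K$, so $E(\sup_{D_\delta^i}\Phi^\perp)$ is bounded by some $c'(\delta)<\infty$, uniformly in $K$ and $i$.

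Finally, the Borell-Tsirelson inequality combined with Lemma~\ref{lemma-2} yields
\[
P\Bigl(\,\sup_{D_\delta^i}\Phi^\perp>c'(\delta)+t\Bigr)\le \texte^{-t^2/(2c(\delta))},\qquad t>0,
\]
uniformly in $K$ and $i$. By Gaussian symmetry, the same bound applies to $-\Phi^\perp$, and a union bound then produces a uniform Gaussian tail for $\text{\rm osc}_{D_\delta^i}\Phi^\perp$. In particular, $P(\text{\rm osc}_{D_\delta^i}\Phi^\perp>R)\to 0$ as $R\to\infty$ uniformly in $K$ and $i$, which together with the Cauchy-Schwarz estimate above gives $\epsilon_R\to 0$.

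The main obstacle is precisely the uniformity in $K$: the triangles $D^i_\delta$ shrink at rate $K^{-1}$, so every estimate on $\Phi^\perp$ must interact correctly with this scaling. The crucial input that makes things work is the bound \eqref{E:6.22r} from Lemma~\ref{lemma-var-bd}, which provides exactly the right power of~$K$ in the intrinsic metric so that, after rescaling, the Fernique integral is bounded uniformly in~$K$.
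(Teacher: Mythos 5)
Your proposal is correct, and it uses the same underlying technical machinery as the paper: the Hölder-type bound \eqref{E:6.22r}, extended to $\Phi^\perp$ by the independence in \eqref{E:6.36ww}, to activate the Fernique inequality; the Borell--Tsirelson inequality, enabled by Lemma~\ref{lemma-2}; and the observation that the triangle side-length $K^{-1}$ cancels the $K^{-1}$ in \eqref{E:6.22r}, making the bounds uniform in $K$. The only real deviation is in how the expectation is closed out: the paper bounds both $\Phi^\perp(x)$ and $\operatorname{osc}_{D^i_\delta}\Phi^\perp$ in terms of $M:=\sup_{D^i_\delta}|\Phi^\perp|$ and applies the exponential Chebyshev inequality to $M$, which is known to have a uniform Gaussian tail; you instead split off the event via Cauchy--Schwarz and control the first factor by the Gaussian moment-generating function of $\Phi^\perp(x)$ directly. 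Both routes invoke exactly the same inputs and give the same conclusion; the Cauchy--Schwarz version is perhaps slightly more mechanical to write down, while the paper's version avoids squaring the exponent. Either is fine, and your identification of the essential uniformity-in-$K$ issue and of \eqref{E:6.22r} as the key estimate matches the intent of the original argument.
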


\begin{proofsect}{Proof}
Thanks to the independence of $\Phi^{D,D'}+\Phi^\triangle$ and $\Phi^\perp$, the bound \eqref{E:6.22r} applies equally well to $\Phi^\perp$ instead of~$\Phi^{D,\wt D}$. This makes the Fernique criterion available which, in conjunction with the Borell-Tsirelson inequality (enabled by Lemma~\ref{lemma-2}), then shows that $\sup_{x\in D_\delta^i}\,|\Phi^\perp(x)|$ has Gaussian tails uniformly in~$K\ge1$ and $i=1,\dots,n_K$. Bounding both $\Phi^\perp(x)$ and $\text{\rm osc}_{D^i_\delta}\Phi^{\perp}$ in terms of $\sup_{x\in D_\delta^i}\,|\Phi^\perp(x)|$ and applying the exponential Chebyshev inequality, we get the claim.
\end{proofsect}

The above estimates will be particularly useful thanks to the following observation:

\begin{lemma}
\label{cor-1}
Let $Y\laweq\NN(0,100c(\delta))$ be independent of $\Phi^\triangle$. Then for any $x,y\in\bigcup_{i=1}^n D_\delta^i$,
\begin{equation}
\text{\rm Cov}\bigl(\Phi^\triangle(x)+Y,\Phi^\triangle(y)+Y\bigr)
\ge \text{\rm Cov}\bigl(\Phi^{D,\wt D}(x),\Phi^{D,\wt D}(y)\bigr).
\end{equation}
Similarly, setting $\Psi^\triangle(x):=\Phi^\triangle(x)+\Phi^\triangle(x_0^i+a)-\Phi^\triangle(x_0^i+b)$ whenever~$x\in D_\delta^i$, once~$\delta$ is small enough, we have
\begin{equation}
\text{\rm Cov}\bigl(\Psi^\triangle(x)+Y,\Psi^\triangle(y)+Y\bigr)
\ge \text{\rm Cov}\bigl(\Phi^{D,\wt D}(x),\Phi^{D,\wt D}(y)\bigr),
\end{equation}
for any~$x,y\in\bigcup_{i=1}^n D_\delta^i$ and any~$a,b$ with~$|a|,|b|\le \frac14K^{-1}$.
\end{lemma}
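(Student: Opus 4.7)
The starting point is the orthogonal decomposition \eqref{E:6.36ww}, namely $\Phi^{D,\wt D}\laweq\Phi^{D,D'}+\Phi^\triangle+\Phi^\perp$ with the three summands independent. Taking covariances we get
\begin{equation*}
C^{D,\wt D}(x,y)=\text{\rm Cov}\bigl(\Phi^{D,D'}(x),\Phi^{D,D'}(y)\bigr)+\text{\rm Cov}\bigl(\Phi^\triangle(x),\Phi^\triangle(y)\bigr)+\text{\rm Cov}\bigl(\Phi^\perp(x),\Phi^\perp(y)\bigr),
\end{equation*}
so since $\text{\rm Cov}(\Phi^\triangle(x)+Y,\Phi^\triangle(y)+Y)=\text{\rm Cov}(\Phi^\triangle(x),\Phi^\triangle(y))+100c(\delta)$, the first claim reduces to dominating $\text{\rm Cov}(\Phi^{D,D'}(x),\Phi^{D,D'}(y))+\text{\rm Cov}(\Phi^\perp(x),\Phi^\perp(y))$ by $100c(\delta)$. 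Cauchy--Schwarz bounds each covariance by the corresponding pointwise variance; Lemma~\ref{lemma-2} gives $\Var(\Phi^\perp(x))\le c(\delta)$, and for $\Phi^{D,D'}$ I would use Proposition~\ref{prop-inner-apprx} (since the ``gap'' $D\smallsetminus D'$ sits in an $O(K^{-1})$-neighbourhood of $\partial D$, so is contained in any $D\smallsetminus D^{2\delta}$ for $K$ large) together with a direct check for the (finitely many) small~$K$ to obtain a uniform bound $\Var(\Phi^{D,D'}(x))\le c(\delta)$ on $\bigcup_{i\le n_K}D_\delta^i$. Absorbing everything into a single $c(\delta)$, the first inequality holds with ample room.

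For the second inequality, let $i(x)$ be the index with $x\in D_\delta^{i(x)}$ and write $W_x:=\Phi^\triangle(x_0^{i(x)}+a)-\Phi^\triangle(x_0^{i(x)}+b)$, so $\Psi^\triangle(x)=\Phi^\triangle(x)+W_x$. For $\delta$ small enough the inradius of $D_\delta^i$ exceeds $K^{-1}/4$, so the points $x_0^i+a$ and $x_0^i+b$ lie inside $D_\delta^i$ and all our uniform estimates apply there. Expanding,
\begin{equation*}
\text{\rm Cov}\bigl(\Psi^\triangle(x)+Y,\Psi^\triangle(y)+Y\bigr)=\text{\rm Cov}\bigl(\Phi^\triangle(x),\Phi^\triangle(y)\bigr)+\Delta_{x,y}+100c(\delta),
\end{equation*}
where $\Delta_{x,y}:=\text{\rm Cov}(\Phi^\triangle(x),W_y)+\text{\rm Cov}(W_x,\Phi^\triangle(y))+\text{\rm Cov}(W_x,W_y)$. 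It remains to check $|\Delta_{x,y}|+2c(\delta)\le100c(\delta)$.

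The crux is a uniform estimate for increments of $\Phi^\triangle$ tested against values (or other increments) of $\Phi^\triangle$. Since $\Phi^{D,\wt D}=\Phi^{D,D'}+\Phi^\triangle+\Phi^\perp$ with independent summands, solving for $\Phi^\triangle$ and expanding yields
\begin{equation*}
\text{\rm Cov}\bigl(\Phi^\triangle(z),\Phi^\triangle(p)-\Phi^\triangle(q)\bigr)=\text{\rm Cov}\bigl(\Phi^{D,\wt D}(z),\Phi^{D,\wt D}(p)-\Phi^{D,\wt D}(q)\bigr)-\sum_{\Psi\in\{\Phi^{D,D'},\Phi^\perp\}}\text{\rm Cov}\bigl(\Psi(z),\Psi(p)-\Psi(q)\bigr).
\end{equation*}
The first term on the right is bounded by $c(\delta)$ by \eqref{EE:1.3}, and each correction term is bounded by $2c(\delta)$ via Cauchy--Schwarz and the uniform variance bounds on $\Phi^{D,D'}$ and $\Phi^\perp$ established above. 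Thus $|\text{\rm Cov}(\Phi^\triangle(z),\Phi^\triangle(p)-\Phi^\triangle(q))|\le5c(\delta)$ for any $z,p,q$ in triangles~$D_\delta^j$. Applying this to the first two terms of $\Delta_{x,y}$ directly, and to $\text{\rm Cov}(W_x,W_y)$ after splitting it as $\text{\rm Cov}(W_x,\Phi^\triangle(x_0^{i(y)}+a))-\text{\rm Cov}(W_x,\Phi^\triangle(x_0^{i(y)}+b))$, gives $|\Delta_{x,y}|\le 20c(\delta)$; combined with the $2c(\delta)$ from the $\Phi^{D,D'}$ and $\Phi^\perp$ covariances we obtain a total of at most $22c(\delta)<100c(\delta)$, which closes the argument. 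I expect the only real obstacle to be the bookkeeping needed to synchronize the various constants from Lemma~\ref{lemma-var-bd}, Lemma~\ref{lemma-2} and Proposition~\ref{prop-inner-apprx} into a single $c(\delta)$; the generous factor $100$ in the statement is precisely what buys this flexibility.
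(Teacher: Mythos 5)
Your argument is correct and follows the paper's own route almost exactly: both start from the orthogonal decomposition $\Phi^{D,\wt D}\laweq\Phi^{D,D'}+\Phi^\triangle+\Phi^\perp$, use Proposition~\ref{prop-inner-apprx} and Lemma~\ref{lemma-2} to bound $\Var(\Phi^{D,D'})$ and $\Var(\Phi^\perp)$ uniformly by $c(\delta)$, and then observe that subtracting the corresponding covariances transfers the increment bound \eqref{EE:1.3} from $\Phi^{D,\wt D}$ to $\Phi^\triangle$ at the cost of a numerical multiple of $c(\delta)$, which the factor $100$ in the definition of $Y$ easily absorbs. The only difference is cosmetic: you spell out the constant-chasing (arriving at $22c(\delta)$) that the paper leaves implicit.
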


\begin{proofsect}{Proof}
first note that by the first part of Proposition~\ref{prop-inner-apprx}, $\text{Var}(\Phi^{D,D'}(x))\le c(\delta)$ uniformly in $x\in\bigcup_{i=1}^n D_\delta^i$ and $K\ge1$. Using also Lemma~\ref{lemma-2}, the bounds \twoeqref{EE:1.3}{E:6.22r} in Lemma~\ref{lemma-var-bd} apply with~$\Phi^{D,\wt D}$ replaced by~$\Phi^\triangle$ (and $c(\delta)$ multiplied by a numerical constant). 

Now assume, without loss of generality that~$Y$, is independent of $\Phi^\triangle$, $\Phi^\perp$ and $\Phi^{D,D'}$. Then \eqref{E:6.36ww} and Lemma~\ref{lemma-2} imply
\begin{equation}
\text{\rm Cov}\bigl(\Phi^\triangle(x)+Y,\Phi^\triangle(y)+Y\bigr)
\ge \text{\rm Cov}\bigl(\Phi^{D,\wt D}(x),\Phi^{D,\wt D}(y)\bigr)+2c(\delta).
\end{equation}
This immediately gives the first part of the claim. For the second part, we write the covariance using covariances of~$\Phi^\triangle$ and then apply \twoeqref{EE:1.3}{E:6.22r}.
\end{proofsect}

\begin{proofsect}{Proof of Proposition~\ref{prop-1}}
In light of the second part of Proposition~\ref{prop-inner-apprx}, for any~$\delta>0$,
\begin{equation}
\text{\rm osc}_{\bigcup_{i=1}^{n_K}\,D_\delta^i}\Phi^{D,D'}\,\underset{K\to\infty}\longrightarrow\,0
\end{equation}
in probability. This means that (at the cost of modifying~$R$ slightly) we can replace the event $\{\text{\rm osc}_{D^i_\delta}\Phi^{D,\wt D}>R\}$ by the union of the corresponding events (with~$R$ replaced by~$R/2$) for the fields $\Phi^\triangle$ and~$\Phi^\perp$. Invoking a union bound, we may then study the contributions of triangles where these events occur separately.

Let us first address the contribution coming from high oscillations of the field $\Phi^\perp$. Define
\begin{equation}
M_{K,R}^{\perp}:=\sum_{i=1}^{n}\int_{D^i_\delta}M^{D^i}(\textd x)\texte^{\alpha\Phi^{D,\wt D}(x)}\1_{\{\text{\rm osc}_{D^i_\delta}\Phi^{\perp}>R\}}.
\end{equation}
Then, by the fact that $\Phi^{D,D'}+\Phi^\perp$ and~$\Phi^\triangle$ are independent and using Jensen's inequality,
\begin{equation}
E\texte^{-\lambda M_{K,R}^{\perp}}
\ge E\biggl(\exp\Bigl\{-\lambda\sum_{i=1}^{n}\int_{D^i_\delta}M^{D^i}(\textd x)\texte^{\alpha\Phi^\triangle(x)}E\bigl(\texte^{\alpha\Phi^\perp(x)}\1_{\{\text{\rm osc}_{D^i_\delta}\Phi^{\perp}>R\}}\bigr)\Bigr\}\biggr).
\end{equation}
Recall that the inner expectation is bounded by the quantity~$\epsilon_R$ in \eqref{E:6.41ww}.

Let now~$Y\laweq\NN(0,100c(\delta))$ be independent of $\Phi^\perp$, $\Phi^\triangle$ and the $M^{D^i}$'s. The first claim in Lemma~\ref{cor-1} permits us to use Kahane's convexity inequality (see Proposition~\ref{lemma-Kahane}) to conclude
\begin{equation}
\begin{aligned}
E\texte^{-\lambda \texte^{\alpha Y}M_{K,R}^{\perp}}
&\ge E\biggl(\exp\Bigl\{-\lambda\epsilon_R\sum_{i=1}^{n}\int_{D^i_\delta}M^{D^i}(\textd x)\texte^{\alpha\Phi^\triangle(x)+\alpha Y}\Bigr\}\biggr).
\\
&\ge
E\biggl(\exp\Bigl\{-\lambda\,\epsilon_R\,\texte^{\frac32\alpha^2 c(\delta)}\sum_{i=1}^{n}\int_{D^i_\delta}M^{D^i}(\textd x)\texte^{\alpha\Phi^{D,\wt D}(x)}\Bigr\}\biggr)
\\
&\ge E\biggl(\exp\Bigl\{-\lambda\,\epsilon_R\,\texte^{\frac32\alpha^2 c(\delta)} M^D(D)\Bigr\}\biggr),
\end{aligned}
\end{equation}
where we also used that $\text{Var}(\Phi^\triangle(x))\le\text{Var}(\Phi^{D,\wt D}(x))$. The right-hand side is independent of~$K$ and it tends to zero as~$R\to\infty$ since~$M^D(D)<\infty$ a.s.\ and~$\epsilon_R\to0$. As~$Y$ is a fixed random variable independent of~$M_{K,R}^\perp$, it follows that $M_{K,R}^\perp\to0$ in probability as~$K\to\infty$ and~$R\to\infty$.

Next let us move to contributions coming from high oscillations of~$\Phi^\triangle$.
Since~$\Phi^\triangle$ is linear on every triangle, to control its oscillation it suffices to control its value at three distinct points. For~$\delta$ small enough, the distance of~$x_0^i$ to the boundary of~$D_\delta^i$ is at least~$\frac14 K^{-1}$. For each~$a,b$ with~$|a|,|b|\le \frac14 K^{-1}$ we have $x_0^i+a,x_0^i+b\in D_\delta^i$ and so it makes sense to define
\begin{equation}
M_{K,R}^\triangle(a,b):=\sum_{i=1}^{n}\int_{D^i_\delta}M^{D^i}(\textd x)\texte^{\alpha\Phi^{D,\wt D}(x)}\1_{\{\Phi^\triangle(x_0^i+a)-\Phi^\perp(x_0^i+b)>R\}}
\end{equation}
Using Jensen's inequality to eliminate the field $\Phi^{D,D'}+\Phi^\perp$ as before, we obtain
\begin{multline}
\quad
E\texte^{-\lambda M_{K,R}^\triangle(a,b)}
\\
\ge E\biggl(\exp\Bigl\{-\lambda\texte^{\frac12\alpha^2 c(\delta)}\sum_{i=1}^{n}\int_{D^i_\delta}M^{D^i}(\textd x)\texte^{\alpha\Phi^\triangle(x)}\1_{\{\Phi^\triangle(x_0^i+a)-\Phi^\perp(x_0^i+b)>R\}}\Bigr\}\biggr).
\end{multline}
The Markov inequality then shows
\begin{equation}
\quad
E\texte^{-\lambda M_{K,R}^\triangle(a,b)}
\ge E\biggl(\exp\Bigl\{-\lambda\texte^{\frac14c(\delta)-\alpha R}\sum_{i=1}^{n}\int_{D^i_\delta}M^{D^i}(\textd x)\texte^{\alpha\Psi(x)}\Bigr\}\biggr).
\end{equation}
where we recalled $\Psi^\triangle(x):=\Phi^\triangle(x)+\Phi^\triangle(x_0^i+a)-\Phi^\triangle(x_0^i+b)$ once $x\in D^i_\delta$.
Applying again Kahane's estimate from Proposition~\ref{lemma-Kahane} with the help of the second claim in Lemma~\ref{cor-1}, we thus get
\begin{equation}
\label{EE:1.19}
E\texte^{-\lambda \texte^{\alpha Y}M_{K,R}^\triangle(a,b)}
\ge E\biggl(\exp\Bigl\{-\lambda c'(\delta)\texte^{-\alpha R} M^D(D)\Bigr\}\biggr)
\end{equation}
for some~$c'(\delta)<\infty$. Notice that this gives us the convergence $M_{K,R}^\triangle(a,b)\to0$ in probability as~$K\to\infty$ and~$R\to\infty$ uniformly in the allowed~$a,b$.

Now let~$a_1,a_2,a_3$ be the points marking the third-roots of unity. The intersection of the event in \eqref{EE:1.2} with the event $\{M_{K,R/2}^\perp\le\ffrac\epsilon2\}$ is then contained in
\begin{equation}
\bigcup_{1\le i<j\le3}\bigl\{M_{K,R/20}^\triangle(a_i2^{-K-2},a_j2^{-K-2})>\ffrac\epsilon6\bigr\}.
\end{equation}
By \eqref{EE:1.19}, each of these three events has probability tending to zero as $K\to\infty$ and $R\to\infty$. Since also $M_{K,R}^\perp\to0$ in probability in this limit, the claim follows.
\end{proofsect}

\subsection{Variance of~$\Phi^\perp$}
For the proof of Proposition~\ref{prop-3} to be complete, it remains to check the uniform boundedness of the variance of the field~$\Phi^\perp$.

\begin{proofsect}{Proof of Lemma~\ref{lemma-2}}
The independence of $\Phi^\triangle$ and~$\Phi^\perp$ implies
\begin{equation}
\label{e:1.8}
\Var \bigl( \Phi^\perp(x) \bigr) = \Var \bigl( \Phi^{D, \wt{D}}(x) \bigr) - \Var \bigl( \Phi^\triangle(x) \bigr)-\Var \bigl( \Phi^{D,D'}(x) \bigr) 
\end{equation}
and so, in light of the first part of Proposition~\ref{prop-inner-apprx}, it suffices to show that the first two variances on the right differ only by a universal constant. Lemma~\ref{lemma-var-bd} gives
\begin{equation}
\Var \bigl( \Phi^{D, \wt{D}}(x) \bigr)\le g\log K+c_1
\end{equation}
for all $x\in\bigcup_{i=1}^{n_K}D_\delta^i$. It thus suffices to show that
\begin{equation}
\label{E:5.48}
\Var \bigl( \Phi^\triangle(x) \bigr)\ge g\log K-c_2
\end{equation}
for some~$c_2<\infty$ and all $x\in\bigcup_{i=1}^{n_K}D_\delta^i$.

Let $\T_K$ denote the set of the vertices in the triangles constituting~$\wt D$; we think of~$\T_K$ as a subgraph of the triangular lattice of mesh size~$K^{-1}$. Let $\partial^\star\T_K$ denote \emph{inner} boundary of~$\T_K$; these are the vertices that lie in~$\partial D'$. Abusing our notations slightly, let~$\Phi^K$ denote the restriction of~$\Phi^\triangle$ to the points in~$\T_K$. As observed in Sheffield~\cite[Section~4.2]{Sheffield-review}, $\Phi^K$ is distributed as~$3^{1/4}$ times the DGFF on~$\T_K$ with $0$ boundary values on~$\partial^\star\T_K$. More precisely, in our normalization (which differs from Sheffield by a factor $1/4$) the density of~$\Phi^K$ with respect to the Lebesgue measure is proportional to
\begin{equation}
	\exp \Bigl\{ - \tfrac18 \tfrac{1}{\sqrt{3}} \|\nabla\varphi\|_2^2 \Bigr\} =
	\exp \Bigl\{ -\tfrac18  \tfrac{6}{\sqrt{3}} \big<\varphi, -\tfrac16\Delta\varphi \big> \Bigr\} =
	\exp \Bigl\{ -\tfrac12  \tfrac{\sqrt{3}}{2} \big<\varphi, G_K^{-1} \varphi \big> \Bigr\}\,.
\end{equation}
Here $\langle f,g\rangle$, resp., $\Vert f\Vert_2$ denotes the inner product, resp., the norm in $\ell^2(\T_K)$ with respect to the counting measure, $\Delta$ is the discrete Laplace operator on~$\T_K$, and $\frac16\Delta$ is thus the generator of the simple random walk on $\T_K$. Finally, $G_K:=(-\frac16\Delta)^{-1}$ is the Green operator with $0$ boundary conditions on $\partial^\star\T_K$.

Based on the above reasoning, it follows that
\begin{equation}
\label{E:5.50}
\text{Cov} \big( \Phi^K(x), \Phi^K(y) \big) = (2/\sqrt{3}) G_K(x,y),\qquad x,y\in\T_K.
\end{equation}
Letting $\fraka(x)$ denote the potential kernel associated with the simple random walk on the triangular lattice~$\T$ of unit mesh size and writing $H^{\T_K}(x,\cdot)$ to denote the harmonic measure on~$\partial^\star\T_K$ for the random walk on~$\T_K$ started from~$x$, standard discrete potential theory shows
\begin{equation}
\label{e:1.11}
G_K(x,y) = - \fraka\bigl(K(x-y) \bigr)+
\sum_{z\in\partial^\star\T_K} H^{\T_K} (x,z)\fraka\bigl(K(z-y) \bigr)  \,.
\end{equation}
For the harmonic potential on the triangular lattice (or other lattice as well), Theorem~1 from Kozma and Schreiber~\cite{KozmaSchreiber04} gives the asymptotic
\begin{equation}
\label{e:1.12}
\fraka(w) \underset{|w|\to\infty}\sim \tau \log |w| \ , \quad  \text{where}\quad
\tau := \frac{\text{vol}(\T)}{\pi |\det \,\Sigma|^{1/2}} \,,
\end{equation}
for $|w|$ denoting the Euclidean norm of~$w$,
\begin{equation}
\text{vol}(\T) := \lim_{r \to \infty} \frac{\text{Leb}( B(0,r) )}{| B(0,r) \cap \T|}
\end{equation}
expressing the comparison of volumes of the Euclidean ball~$B(0,r)$ of radius~$r$ centered at~$0$ with the number of vertices of~$\T$ contained therein
and $\Sigma$ denoting the covariance matrix of the single step distribution of the random walk (regarded as a random variable in~$\R^2$).
A straightforward calculation now shows
\begin{equation}
\label{e:1.13}
	\text{vol}(\T) = \frac{\sqrt3}2
	\quad\text{and}\quad
	\det \,\Sigma = \frac14 
\end{equation}
and so $\tau = \sqrt{3}/\pi \,.$

Now let $u,v \in \T_K\cap \overline{D^i}$ for some $i\in \{1, \dots, n_K\}$. Then the lattice distance of these points from~$\partial^\star\T_K$ is at least order~$\delta K$ and so, as~$K\to\infty$, the asymptotic \eqref{e:1.12} can be used. Combining \eqref{E:5.50} with \eqref{e:1.12} and \eqref{e:1.13}, we thus get
\begin{equation}
\label{e:1.14}
\text{Cov}\bigl( \Phi^K(u), \Phi^K(v) \bigr) \geq \frac{\sqrt{3}}{\pi} \frac{2}{\sqrt{3}} \log K-c
\end{equation}
for some~$c=c(\delta)<\infty$.
If $\{u,v,w\}$ are the vertices in $\T_K\cap \overline{D^i}$ for some $i\in \{1, \dots, n_K\}$, then every $x \in D^i$ can be written as $x=\alpha_1 u+\alpha_2v+\alpha_3w$ for some nonnegative numbers $\alpha_1, \alpha_2, \alpha_3$ that add up to~$1$. The piece-wise linearity of~$\Phi^\triangle$ then implies that
$\Phi^\triangle(x)$ can be written as 
\begin{equation}
	\Phi^\triangle(x) = \Phi^\triangle \big(\alpha_1 u + \alpha_2 v + \alpha_3 w\big)
	= \alpha_1 \Phi^K(u) + \alpha_2 \Phi^K(u) + \alpha_3 \Phi^K(u).
\end{equation}
Invoking~\eqref{e:1.14}, we thus get
\begin{equation}
\label{e:1.16}
	\Var \big(\Phi^\triangle(x) \big) \geq 
		\big(g\log K - c\big) ( \alpha_1 + \alpha_2 + \alpha_3 )^2
		 = g \log K-c.
\end{equation}
This is exactly the desired bound \eqref{E:5.48}.
\end{proofsect}

\section{Conformal invariance}
\label{sec6}\noindent
The next item to prove is the conformal-transformation rule from Theorem~\ref{thm:3}. We begin by a simple consequence of Theorem~\ref{thm:5.1}. Recall that a rotation is a map $f\colon\C\to\C$ of the form $f(z):=\lambda z$ for~$\lambda\in\C$ with~$|\lambda|=1$.

\begin{lemma}
\label{lemma-6.1}
The family of laws of $\{Z^D\colon D\in\mathfrak D\}$ is invariant under rotations. More precisely, for all~$\lambda\in\C$ with~$|\lambda|=1$ and all $D\in\mathfrak D$, and denoting $f(z):=\lambda z$,
\begin{equation}
(Z^{f(D)}\circ f)\,(\textd x)\laweq Z^D(\textd x).
\end{equation}

\end{lemma}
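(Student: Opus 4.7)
The plan is to deduce rotation invariance from the derivative-martingale representation of Theorem~\ref{thm:5.1}, exploiting the remark there that the representation is insensitive to the orientation of the triangular grid.

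First I would establish the rotation covariance of all analytic ingredients appearing in \eqref{E:5.2}. Let $f(z)=\lambda z$ with $|\lambda|=1$. Using that $f$ is a Euclidean isometry (so $|f(w)-f(y)|=|w-y|$) and that harmonic measure pushes forward under $f$ by $\Pi^{f(D)}(f(x),f(\cdot))=\Pi^D(x,\cdot)$ (a direct consequence of the rotation invariance of Brownian motion, compatible with the discrete-to-continuum statements of the Appendix), the change of variables $z=f(w)$ in \eqref{E:1.6} and \eqref{E:1.14a} yields
\begin{equation*}
C^{f(D),f(\wt D)}(f(x),f(y))=C^{D,\wt D}(x,y)\qquad\text{and}\qquad \psi^{f(D)}(f(x))=\psi^D(x)
\end{equation*}
for every $\wt D\subseteq D$ in $\mathfrak D$ and $x,y\in\wt D$. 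As an immediate consequence of the first identity, the Gaussian field $\{\Phi^{f(D),f(\wt D)}(f(x))\colon x\in\wt D\}$ has the same law as $\{\Phi^{D,\wt D}(x)\colon x\in\wt D\}$; in particular the variances coincide pointwise under $x\mapsto f(x)$.

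Second I would fix $D\in\mathfrak D$ and apply Theorem~\ref{thm:5.1} to both $D$ and $D':=f(D)$, but using compatible tilings: for $D$ pick any triangular tiling, producing triangles $D^1,\dots,D^{n_K}$, and for $D'$ use the rotated tiling whose triangles are $f(D^1),\dots,f(D^{n_K})$. Since $f$ is an isometry, these are equilateral triangles of side $K^{-1}$ on a triangular lattice of mesh $K^{-1}$ and are precisely those entirely contained in $D'$ and at distance $\ge\delta$ from $(D')^{\text c}$; the orientation clause in Theorem~\ref{thm:5.1} permits exactly this choice. Writing $\mu^D_{K,R,\delta}$ and $\mu^{D'}_{K,R,\delta}$ for the random measures in \eqref{E:5.2} built from these two tilings, Theorem~\ref{thm:5.1} gives $\mu^D_{K,R,\delta}\Lawarrow Z^D$ and $\mu^{D'}_{K,R,\delta}\Lawarrow Z^{D'}$ as $K\to\infty$, $R\to\infty$, $\delta\downarrow0$.

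Third I would push $\mu^{D'}_{K,R,\delta}$ back by $f$ and check, via the identities of Step~1, that it coincides in law with $\mu^D_{K,R,\delta}$. Indeed, under the joint identification $\Phi^{D',f(\wt D)}\circ f \laweq \Phi^{D,\wt D}$, the truncation event $A^i_{K,R}$ defined for the rotated tiling corresponds to $A^i_{K,R}$ for the original tiling, while $f(D^i_\delta)=(D')^i_\delta$, $\psi^{D'}(f(\,\cdot\,))=\psi^D(\,\cdot\,)$, the variances match, and the Jacobian of $f$ is $1$; hence $(\mu^{D'}_{K,R,\delta}\circ f)(\textd x)\laweq \mu^D_{K,R,\delta}(\textd x)$. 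Passing to the limit (and invoking continuity of the pushforward by a continuous bijection for the vague topology) yields $(Z^{f(D)}\circ f)(\textd x)\laweq Z^D(\textd x)$, which is the claim.

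There is no serious obstacle here beyond the observation that harmonic measure transforms correctly under isometries; all other steps are essentially a bookkeeping change of variables in the explicit formula \eqref{E:5.2}. The key structural input is the orientation-independence in Theorem~\ref{thm:5.1}, which is what allows us to align the approximating tilings for $D$ and $f(D)$ exactly.
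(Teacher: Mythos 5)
Your proof is correct and follows the same route as the paper: both rely on the derivative-martingale representation of Theorem~\ref{thm:5.1}, the orientation-independence of the triangular grid, and the rotation covariance of $\psi^D$ and $C^{D,\wt D}$ deduced from the explicit Poisson-kernel formulas. You have simply spelled out the bookkeeping (the change of variables, the matching of truncation events, the Jacobian being~$1$) in more detail than the paper's terse argument.
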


\begin{proofsect}{Proof}
As observed already in the proof of Theorem~\ref{thm:1.6}, the family $\{Z^D\colon D\in\mathfrak D\}$ obeys conditions (0-4) thereof and so, by Theorem~\ref{thm:5.1}, $Z^D$ is a weak limit of the measures in \eqref{E:5.2}. Thanks to the representation based on Poisson kernel and Euclidean distance, the function $\psi^D$ is clearly invariant under rotations of~$D$ and so is the covariance~$C^{D,\wt D}$ of the field~$\Phi^{D,\wt D}$, provided we rotate the underlying triangular grid --- which determines~$\wt D$ from~$D$ --- along with~$D$. It follows that also the law of the measure in \eqref{E:5.2} is rotation invariant and thus so must be~$Z^D$.
\end{proofsect}

We will now state general conditions on a family of random measures that ensures validity of the conformal-transformation rule \eqref{E:1.10a}. 

\begin{theorem}
\label{thm:6.1}
Suppose~$\{M^D\colon D\in\mathfrak D\}$ is a family of random Borel measures that obey:
\settowidth{\leftmargini}{(11)}
\begin{enumerate}
\item[(0)] (support and total mass restriction) $M^D$ is concentrated on~$D$ and $M^D(D)<\infty$ a.s.
\item[(1)] (stochastic absolute continuity) $P(M^D(A)>0)=0$ for any Borel $A\subseteq D$ with $\leb(A)=0$,
\item[(2)] (shift, dilation and rotation invariance) for $a,\lambda\in\C$,
\begin{equation}
M^{a+\lambda D}(a+\lambda\textd x)\laweq |\lambda|^4 M^D(\textd x),
\end{equation}
\item[(3)] (Gibbs-Markov property) if $D,\widetilde D\in\mathfrak D$ are disjoint then, for $M^D$ and $M^{\wt D}$ on the right are regarded as independent,
\begin{equation}
M^{D\cup\wt D}(\textd x)\laweq M^D(\textd x)+M^{\wt D}(\textd x),
\end{equation}
while if $D,\widetilde D\in\mathfrak D$ obey $\widetilde D\subseteq D$ and $\leb(D\smallsetminus\widetilde D)=0$, then
\begin{equation}
M^D(\textd x) \laweq M^{\wt{ D}}(\textd x)\, \texte^{\alpha  \Phi^{ D, \wt{ D}}(x)} \,,
\end{equation}
where $\Phi^{ D, \wt{ D}}$ and $M^{\wt{ D}}$ on the right-hand side are independent.
\end{enumerate}
Then for any conformal bijection $f\colon D\to D'$ with $D,D'\in\mathfrak D$
\begin{equation}
\label{E:6.2}
(M^{D'}\circ f)(\textd x)\laweq |f'(x)|^4 M^D(\textd x).
\end{equation}
\end{theorem}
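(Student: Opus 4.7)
The strategy is to combine the Gibbs-Markov property with the invariance under affine similarities (condition (2)) to reduce conformal invariance to a local, affine computation. The key ingredients are (a) conformal invariance of the continuum Green function, which renders the laws of the binding fields on $D$ and $f(D)$ identical once coupled through $f$, and (b) the fact that any conformal map is locally an affine similarity $z\mapsto f(x_0)+f'(x_0)(z-x_0)$ to first order.

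Fix a small scale $\epsilon>0$ and partition $D$ as $\wt D:=\bigsqcup_i D_i$ with $\leb(D\smallsetminus\wt D)=0$, $\diam(D_i)<\epsilon$, and every $D_i\in\mathfrak D$ (e.g., intersect $D$ with a fine dyadic grid and discard boundary dust). Applying condition (3) (disjoint-union followed by full-measure subset) yields
\begin{equation*}
M^D(\textd x)\laweq\texte^{\alpha\Phi^{D,\wt D}(x)}\sum_i M^{D_i}(\textd x),
\end{equation*}
with the $M^{D_i}$ and the binding field jointly independent. The same partitioning of $f(D)$ via the pieces $f(D_i)$ gives the analogous decomposition in terms of $M^{f(D_i)}$ and $\Phi^{f(D),f(\wt D)}$. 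Since the continuum Green function obeys $G^{f(U)}(f(x),f(y))=G^U(x,y)$ for any planar conformal bijection, the representation~\eqref{E:1.7} gives $C^{f(D),f(\wt D)}(f(x),f(y))=C^{D,\wt D}(x,y)$, hence $\Phi^{f(D),f(\wt D)}\circ f\laweq \Phi^{D,\wt D}$. Coupling these fields to agree identically through~$f$ reduces~\eqref{E:6.2} to the piecewise claim that $(M^{f(D_i)}\circ f)(\textd x)\laweq |f'(x)|^4 M^{D_i}(\textd x)$ asymptotically as $\epsilon\downarrow 0$, with all summands jointly independent of the coupled binding field.

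For each piece~$D_i$ pick a reference point $x_i$ and define the affine tangent $g_i(z):=f(x_i)+f'(x_i)(z-x_i)$. Condition~(2) gives $(M^{g_i(D_i)}\circ g_i)(\textd x)\laweq |f'(x_i)|^4 M^{D_i}(\textd x)$ \emph{exactly}. It then remains to compare $M^{f(D_i)}\circ f$ with $M^{g_i(D_i)}\circ g_i$: on $D_i$ the two maps differ in sup norm by $O(\epsilon^2)$, so the image domains $f(D_i)$ and $g_i(D_i)$ agree up to a boundary sliver of Lebesgue measure $O(\epsilon^3)$ inside domains of area $O(\epsilon^2)$. I would pass through the common core $U_i:=f(D_i)\cap g_i(D_i)$, applying the Gibbs-Markov property on both $f(D_i)$ and $g_i(D_i)$ to strip the sliver, invoking stochastic absolute continuity (condition~(1)) to kill the trim, and using Proposition~\ref{prop-inner-apprx} to show that the binding fields $\Phi^{f(D_i),U_i}$ and $\Phi^{g_i(D_i),U_i}$ have variances vanishing uniformly in~$i$ as $\epsilon\downarrow 0$.

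Summing over~$i$ and passing to $\epsilon\downarrow 0$, continuity of~$f'$ converts the local factors $|f'(x_i)|^4$ into the global weight $|f'(x)|^4$, and the two independent decompositions converge identically on the two sides of~\eqref{E:6.2}. The main obstacle will be the local replacement of $g_i$ by $f$: it requires the Gibbs-Markov trimming and the ensuing binding-field variance estimates to be controlled simultaneously and uniformly on order $\epsilon^{-2}$ pieces, and the natural bounds have constants that depend on $\|f''\|_\infty$ restricted to compact subsets of~$D$. Once this uniform local comparison is in hand, convergence is extracted by a Laplace-transform argument on integrals against continuous test functions, along the lines of the proof of Proposition~\ref{prop-3.2}, yielding~\eqref{E:6.2}.
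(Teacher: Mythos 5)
Your high-level strategy matches the paper's: localize via a Gibbs-Markov decomposition into small pieces, use conformal invariance of the Green function to couple the binding fields across $f$ (this is Lemma~6.3 in the paper), approximate $f$ piecewise by an affine similarity to invoke condition~(2), and control the error. But the paper's execution differs in a way that avoids the two obstacles you face, and one of your steps as stated contains a genuine gap.

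The gap: you invoke condition~(1) (stochastic absolute continuity) ``to kill the trim'' $f(D_i)\smallsetminus U_i$. Condition~(1) only concerns Lebesgue-null sets, and your slivers have positive Lebesgue measure $O(\epsilon^3)$ each. To kill their contribution you would need a compactness argument across scales as in Proposition~\ref{prop-slither} (nest the slivers over a sequence $\epsilon_n\downarrow0$, observe that the decreasing intersections are null, then apply (1)); this is nontrivial and must also control the binding-field weights multiplying the sliver masses. Moreover, to strip $f(D_i)$ down to $U_i$ you cannot apply the second clause of condition~(3) directly, because $\leb\bigl(f(D_i)\smallsetminus U_i\bigr)>0$; you must first carve off the sliver using the disjoint-union clause, which introduces an additional independent $M$-measure on the sliver that then has to be discarded.

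The paper sidesteps both issues by choosing the affine approximant $f_{i,K}(z):=f(x_0^i)+(1+K^{-1/2})f'(x_0^i)(z-x_0^i)$, whose slight $(1+K^{-1/2})$ inflation guarantees $f_{i,K}(D^i)\supseteq f(D^i)$ once $K$ is large; then only a one-sided trim (from $f_{i,K}(D^i)$ to $f(D^i)$) is required, Jensen's inequality turns the random binding field $\Phi^{f_{i,K}(D^i),f(D^i)}$ into a deterministic variance factor controlled by Proposition~\ref{prop-inner-apprx}, and the Lipschitz control afforded by the truncation events $A^i_{K,R}$ in the representation~\eqref{E:5.10} keeps the per-piece error uniform. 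This yields the one-way Laplace-transform inequality (Proposition~\ref{prop-6.2}), and the theorem follows by applying it twice and symmetrizing---a structural trick your proposal does not anticipate but which is what makes the ``asymptotic equality'' rigorous without requiring uniform two-sided control over $O(\epsilon^{-2})$ pieces. Your proposal correctly identifies the need for uniform control as ``the main obstacle'' but leaves it open; the paper's one-sided-plus-symmetrize route is precisely what removes it.
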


We can apply this result immediately to the family of measures constructed in Theorem~\ref{thm:1}:

\begin{proofsect}{Proof of Theorem~\ref{thm:3}}
By Theorem~\ref{thm:1} and Lemma~\ref{lemma-6.1}, the family of measures $\{Z^D\colon D\in\mathfrak D\}$ obeys the conditions of Theorem~\ref{thm:6.1} and so we get \eqref{E:1.10a} from \eqref{E:6.2}. The relation \eqref{E:1.17c} is then a consequence of the fact that, for any conformal bijection $f\colon D\to\wt D$ between simply connected domains~$D$ and~$D'$, we have $|f'(x)|=\text{rad}_{\wt D}(f(x))/\text{rad}_D(x)$.
\end{proofsect}

The upshot of Theorem~\ref{thm:6.1} is that the Gibbs-Markov property is sufficient to turn invariance under ``rigid'' conformal maps --- shifts, rotations and dilations --- into invariance under all conformal maps. This is no big surprise as general conformal maps can be thought of as shifts, rotations and dilations locally. However,  full details still require some work. Fortunately, a good part of this work has already been done in Propositions~\ref{prop-slither}--\ref{prop-1} in Section~\ref{sec5}.

\smallskip
The crux of the proof will be to show the following one-way bound:

\begin{proposition}
\label{prop-6.2}
Let~$\{M^D\colon D\in\mathfrak D\}$ be as in Theorem~\ref{thm:6.1}. Pick $D,D'\in\mathfrak D$ and let~$f\colon D\to D'$ be a conformal bijection. Then for any bounded and continuous function $u\colon D'\to[0,\infty)$,
\begin{equation}
\label{E:6.4}
E\bigl(\texte^{-M^D(u\circ f)}\bigr)
\ge E\bigl(\texte^{-M^{D'}(u|f'\circ f^{-1}|^{-4})}\bigr).
\end{equation}
\end{proposition}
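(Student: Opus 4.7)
The plan is to reduce \eqref{E:6.4} to an inequality on small triangulated pieces where the rigid-invariance of condition~(2) applies, using the Gibbs-Markov property (condition~(3)) and the conformal invariance of the continuum Green function to glue the pieces together.

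A change of variables $y=f(x)$ on the right-hand side of \eqref{E:6.4} gives, with $v:=u\circ f$,
\begin{equation}
E\exp\Bigl(-\int_D v(x)\,M^D(\textd x)\Bigr)\ge E\exp\Bigl(-\int_D v(x)\,|f'(x)|^{-4}\,(M^{D'}\circ f)(\textd x)\Bigr),
\end{equation}
so it suffices to compare the Laplace transforms of $M^D$ and $|f'|^{-4}(M^{D'}\circ f)$ as random measures on~$D$. Fix $K\ge1$ and $\delta>0$ and let $\{D^i\}_{i=1}^{n_K}$ be the equilateral triangles of side $K^{-1}$ in a triangular grid at distance $>\delta$ from $\partial D$, and set $\wt D:=\bigcup_i D^i$. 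As in the derivation of \eqref{E:5.4a}, condition~(3) yields (up to slither terms controlled via Proposition~\ref{prop-slither} together with condition~(1))
\begin{equation}
M^D\big|_{\wt D}\laweq\sum_i\texte^{\alpha\Phi^{D,\wt D}(x)}M^{D^i}(\textd x),\quad M^{D'}\big|_{f(\wt D)}\laweq\sum_i\texte^{\alpha\Phi^{D',f(\wt D)}(y)}M^{f(D^i)}(\textd y),
\end{equation}
with all summands and Gaussian fields mutually independent.

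The crucial structural input is the conformal invariance of the continuum Green function, $G^{D'}(f(x),f(y))=G^D(x,y)$, which via~\eqref{E:1.7} yields $C^{D',f(\wt D)}(f(x),f(y))=C^{D,\wt D}(x,y)$ and hence $\Phi^{D',f(\wt D)}(f(\cdot))\laweq\Phi^{D,\wt D}(\cdot)$ on~$\wt D$. Coupling both to a common field $\Phi:=\Phi^{D,\wt D}$, conditioning on $\Phi$ and using independence of the $M^{D^i}$ (resp.~$M^{f(D^i)}$), the full inequality reduces to a per-triangle comparison. On each $D^i$, restriction to the event $A^i_{K,R}$ of Theorem~\ref{thm:5.1} together with continuity of $v$ let us replace $v(x)\texte^{\alpha\Phi(x)}$ by its center value $c_i:=v(x_0^i)\texte^{\alpha\Phi(x_0^i)}$, up to a multiplicative error vanishing as $K\to\infty$; what remains is to show
\begin{equation}
E\bigl[\texte^{-c_i M^{D^i}(D^i)}\bigr]\ge E\bigl[\texte^{-c_i|f'(x_0^i)|^{-4}M^{f(D^i)}(f(D^i))}\bigr]+o(1).
\end{equation}
Setting $A_i(z):=f(x_0^i)+f'(x_0^i)(z-x_0^i)$, condition~(2) identifies $M^{A_i(D^i)}(A_i(D^i))\laweq|f'(x_0^i)|^4M^{D^i}(D^i)$. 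Because $f$ is smooth on a neighborhood of $\overline{D^i}$, $|f(z)-A_i(z)|=O(K^{-2})$ uniformly on $D^i$, so one may select inscribed and circumscribed triangles $T^i_{\rm in}\subset f(D^i)\cap A_i(D^i)\subset f(D^i)\cup A_i(D^i)\subset T^i_{\rm out}$, both similar to $A_i(D^i)$ with side-ratio $1+O(K^{-1})$. Iterated Gibbs-Markov then ties $M^{f(D^i)}$ and $M^{A_i(D^i)}$ to the common reference $M^{T^i_{\rm in}}$ via auxiliary binding fields of variance $O(K^{-1})$ (by the Green-function bound underlying Lemma~\ref{lemma-var-bd}), and Kahane's convexity inequality (Proposition~\ref{lemma-Kahane}), used with an auxiliary single Gaussian to dominate the covariance kernels of these small-variance fields as in Lemma~\ref{cor-1}, produces the required Laplace comparison with multiplicative error tending to one as $K\to\infty$.

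The main obstacle is precisely this per-triangle estimate: since $f(D^i)$ and $A_i(D^i)$ do not nest, the comparison has to thread through the sandwich of three triangles and track two small binding-field corrections in opposite directions simultaneously, which is the one point where the proof goes beyond the arguments of Section~\ref{sec5}. Once the local estimate is in hand, the iterated limit $K\to\infty$, $R\to\infty$, $\delta\downarrow0$ proceeds exactly as in the proof of Theorem~\ref{thm:5.1}: slither mass vanishes by Proposition~\ref{prop-slither}, exceptional-oscillation triangles are discarded via Propositions~\ref{prop-0}--\ref{prop-1}, and continuity of $u$, $f$ and $|f'|$ on compact subsets of $D$ controls the passage from $v(x)\texte^{\alpha\Phi(x)}$ to its center value $c_i$.
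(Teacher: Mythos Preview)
Your overall strategy matches the paper's: triangulate~$D$, apply Gibbs-Markov on both sides, use the conformal invariance of the Green function to identify $\Phi^{D',f(\wt D)}\circ f\laweq\Phi^{D,\wt D}$, and reduce to a per-triangle comparison where condition~(2) acts on the linearized map. The difference lies entirely in how the per-triangle step is handled, and here the paper's device is simpler than your sandwich.

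You linearize exactly, $A_i(z)=f(x_0^i)+f'(x_0^i)(z-x_0^i)$, so $A_i(D^i)$ and $f(D^i)$ do not nest and you are forced to thread through auxiliary triangles $T^i_{\rm in}\subset T^i_{\rm out}$. The paper instead \emph{dilates} the linearization,
\[
f_{i,K}(z):=f(x_0^i)+(1+K^{-1/2})f'(x_0^i)(z-x_0^i),
\]
so that $|f-f_{i,K}|=O(K^{-3/2})$ on~$D^i$ forces the one-sided containment $f_{i,K}(D^i)\supseteq f(D^i)$. Rigid invariance then turns $M^{D^i}\circ f_{i,K}^{-1}$ into (a constant times) $M^{f_{i,K}(D^i)}$, and a \emph{single} Gibbs-Markov step gives $M^{f_{i,K}(D^i)}|_{f(D^i)}\laweq M^{f(D^i)}\texte^{\alpha\Phi}$ with $\Phi:=\Phi^{f_{i,K}(D^i),f(D^i)}$ of variance $o(1)$ on~$\wt D^i_\delta$ (Proposition~\ref{prop-inner-apprx}). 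Jensen's inequality --- not Kahane --- then replaces $\texte^{\alpha\Phi}$ by $\texte^{\frac12\alpha^2\Var\Phi}\le1+\epsilon$, and the product over~$i$ reassembles via Gibbs-Markov into~$M^{D'}$. Since only the inequality \eqref{E:6.4} is claimed, a one-sided containment is exactly what is needed, and the dilation buys it for free.

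Your sandwich route is not wrong in principle, but as sketched the inequality directions are not tracked. Both Jensen and Kahane yield one-sided bounds; tying $M^{A_i(D^i)}$ and $M^{f(D^i)}$ to a common $M^{T^i_{\rm in}}$ produces two binding fields whose contributions push in opposite directions, and the auxiliary Gaussian~$Y$ you introduce (to dominate covariances) leaves a random multiplicative factor $\texte^{\alpha Y}$ that must itself be removed --- this step and the comparison of the integration sets $A_i(D^i_\delta)$ versus $f(D^i_\delta)$ (whose symmetric difference carries nonzero $M$-mass) are where your sketch is silent. These can likely be pushed through, but the scaled-up linearization removes the difficulty entirely and should be preferred.
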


Indeed, this is more than enough to give us the desired result:

\begin{proofsect}{Proof of Theorem~\ref{thm:6.1}}
Just iterate \eqref{E:6.4} twice to see that equality must hold for all bounded and continuous~$u$.
\end{proofsect}

To get Proposition~\ref{prop-6.2}, we begin with the observation that the law of the ``binding'' field~$\Phi^{D,\wt D}$ is invariant under conformal maps. More precisely, we have:

\begin{lemma}
Let $D,\wt D\in\mathfrak D$ obey $\wt D\subseteq D$. Let~$f$ be a conformal bijection of~$D$ onto~$f(D)\in\mathfrak D$. Then we have
\begin{equation}
C^{\,D,\wt D}(x,y)=C^{\,f(D),f(\wt D)}\bigl(\,f(x),f(y)\bigr),\qquad x,y\in\wt D.
\end{equation}
In particular, $\Phi^{f(D),f(\wt D)}\circ f\laweq\Phi^{D,\wt D}$.
\end{lemma}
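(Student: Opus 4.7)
The plan is to reduce the claimed covariance identity to the classical conformal invariance of the continuum Green function. By \eqref{E:1.7},
$$C^{D,\wt D}(x,y) = G^D(x,y) - G^{\wt D}(x,y), \qquad x,y\in\wt D,\,x\ne y,$$
so it suffices to show $G^{f(D)}(f(x),f(y)) = G^D(x,y)$ for any conformal bijection $f$ and any distinct $x,y\in D$. Applying this to both $(D,\wt D)$ and to $(f(D),f(\wt D))$ and subtracting then yields the covariance identity on $\{x\ne y\}$, which extends to the diagonal by the continuity already established in Lemma~\ref{lemma-2.3a}.

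To prove the invariance of $G$, I would start from the representation \eqref{E:3.19ww} applied in $f(D)$ and change variables using conformal invariance of harmonic measure, namely $\Pi^{f(D)}(f(x),f(\cdot)) = \Pi^D(x,\cdot)$ (a standard consequence of the fact that planar Brownian motion is conformally invariant up to a time change). This gives
$$G^{f(D)}(f(x),f(y)) - G^D(x,y) = g\int_{\partial D}\Pi^D(x,\textd z)\log\frac{|f(z)-f(y)|}{|z-y|} - g\log\frac{|f(x)-f(y)|}{|x-y|}.$$
The integrand $h(z):=\log\frac{|f(z)-f(y)|}{|z-y|}$ extends continuously to $z=y$ with value $\log|f'(y)|$, and the logarithmic singularities of numerator and denominator cancel so that $h$ is harmonic on all of $D$ and bounded up to $\partial D$ (since $f$ is a homeomorphism of $D$ onto $f(D)\in\mathfrak D$). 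Therefore $\int_{\partial D}\Pi^D(x,\textd z)\,h(z) = h(x) = \log\frac{|f(x)-f(y)|}{|x-y|}$, which exactly cancels the remaining term on the right, yielding $G^{f(D)}(f(x),f(y)) = G^D(x,y)$.

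For the final distributional statement, observe that $\Phi^{f(D),f(\wt D)}\circ f$ is a centered Gaussian field on $\wt D$, and its covariance at $(x,y)$ equals $C^{f(D),f(\wt D)}(f(x),f(y))$, which by the identity just established coincides with $C^{D,\wt D}(x,y)$. Since a centered Gaussian field is determined in law by its covariance function, this gives $\Phi^{f(D),f(\wt D)}\circ f \laweq \Phi^{D,\wt D}$.

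The only mildly technical point is the behavior of $f$ near $\partial D$, where domains in $\mathfrak D$ may be rough: the change of variables in the integrand above requires that $f$ induce the correct push-forward on harmonic measures. This follows from the regularity built into the class $\mathfrak D$ (each boundary component is connected and of positive diameter) together with standard Carath\'eodory-type extension results, which are the same ingredients underlying the Poisson-kernel convergence statements of the Appendix.
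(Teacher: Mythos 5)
Your argument is correct and, at the structural level, is the same as the paper's: both reduce the covariance identity via \eqref{E:1.7} to the classical fact $G^{f(D)}(f(x),f(y))=G^D(x,y)$, and then read off the Gaussian identity in law from equality of covariances. The only divergence is in how this classical fact is justified. The paper gives a terse PDE argument (the Green function is the fundamental solution of the Poisson equation with zero boundary data, and this characterization is carried over by $f$ since the Laplacian is conformally covariant). You instead work directly with the representation \eqref{E:3.19ww}, use conformal invariance of harmonic measure, and then solve a Dirichlet problem for the explicit harmonic function $h(z)=\log\frac{|f(z)-f(y)|}{|z-y|}$. Your route is slightly more computational and requires a little more care with boundary behavior: the identity $\int_{\partial D}\Pi^D(x,\textd z)\,h(z)=h(x)$ for a bounded harmonic $h$ needs a boundary-value interpretation (a.e.\ nontangential limits with respect to harmonic measure, and boundedness to apply optional stopping), and the change of variables $w=f(z)$ on $\partial f(D)$ implicitly uses a boundary extension of $f$ that, strictly speaking, only exists a.e.\ harmonic measure for general $D\in\mathfrak D$. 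You flag these points, and they are no more delicate than the boundary-condition issue implicitly present in the paper's PDE parenthetical, so the argument is sound; it just proves the same key lemma by a harmonic-measure computation rather than by the Laplacian's transformation law.
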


\begin{proofsect}{Proof}
By \eqref{E:1.7}, $C^{D,\wt D}$ is the difference of the Green functions in~$D$ and~$\wt D$. The claim then follows from the fact that $G^{\,f(D)}(f(x),f(y))=G^D(x,y)$ for any~$x,y\in D$. (This is itself proved, e.g., by noting that $y\mapsto G^D(x,y)$ is a fundamental solution of the Poisson equation in~$D$, i.e., it solves the equation $\Delta\varphi(y)=\delta_x(y)$ in~$D$ with zero boundary conditions on~$\partial D$. Under~$f$, this equation transforms to the corresponding equation in~$f(D)$.)
\end{proofsect}

\begin{proofsect}{Proof of Proposition~\ref{prop-6.2}}
Let~$D\in\mathfrak D$ and let~$\wt D$ be the union of triangles of side~$K^{-1}$ as detailed in Section~\ref{sec:5.1}. Then, as observed before, the measure $M^D_{K,R,\delta}$ defined in \eqref{E:5.10} tends, in the limit $K\to\infty$, $R\to\infty$ and~$\delta\downarrow0$, to~$M^D$. Suppose now that $f\colon D\to D'$ is a conformal bijection of~$D$ onto a domain~$D'\in\mathfrak D$ and pick a test function $u\colon D'\to[0,\infty)$ which we assume to be bounded and continuous. The composition $u\circ f$ induces a corresponding test function on~$D$. Then
\begin{equation}
M^D_{K,R,\delta}(u\circ f)=\sum_{i=1}^{n_K}\1_{A_{K,R}^i}\,M^{D^i}\bigl((u\circ f)\,\1_{D^i_\delta}\,\texte^{\alpha\Phi^{D,\wt D}}\bigr).
\end{equation}
Define
\begin{equation}
f_{i,K}(z):=f(x_0^i)+(1+K^{-1/2})f'(x_0^i)(z-x_0^i)
\end{equation}
and note that this is a slightly scaled-up linearization of~$f$ on~$D^i$.
Using the inverse of this function to change variables from~$z\in D$ to~$z\in D'$ we get
\begin{equation}
\label{E:6.7}
M^D_{K,R,\delta}(u\circ f)=\sum_{i=1}^{n_K}\1_{A_{K,R}^i}\,M^{D^i}\circ f_{i,K}^{-1}\Bigl(\,(u\circ f\circ f_{i,K}^{-1})\,(\1_{D^i_\delta}\circ f_{i,K}^{-1})\,\texte^{\alpha\Phi^{D,\wt D}\circ f_{i,K}^{-1}}\Bigr).
\end{equation}
But the fact that~$D^i$ are at least~$\delta$ away from the boundary of~$D$ implies a uniform bound on~$f''$ there and, as a consequence, the estimate
\begin{equation}
\label{E:7.11cc}
f-f_{i,K}=O(K^{-3/2}),\qquad\text{on }D^i,
\end{equation}
with the implicit constant uniform in~$i=1,\dots,n_K$. For any fixed~$\epsilon>0$, the following is true as soon as~$K$ is sufficiently large and~$c\in(0,\infty)$ is a constant independent of~$K$:
\begin{enumerate}
\item[(a)] $f_{i,K}(D^i)\supseteq f(D^i)$,
\item[(b)] for $\wt D^i_\delta:=\{z\in f(D^i)\colon\dist(z,f(D^i)^\cc)>(\ffrac\delta2)|f'(x_0^i)|/K\}$,
\begin{equation}
\1_{D^i_\delta}\circ f_{i,K}^{-1}\le 1_{\wt D^i_\delta},
\end{equation}
\item[(c)] whenever $A_{K,R}^i$ occurs,
\begin{equation}
\texte^{\alpha\Phi^{D,\wt D}\circ f_{i,K}^{-1}}\le
\texte^{\alpha\Phi^{D,\wt D}\circ f^{-1}+ cK^{-3/2}},\qquad\text{on }\wt D^i_\delta,
\end{equation}
\item[(d)] on~$f(D^i)$,
\begin{equation}
u\circ f\circ f_{i,K}^{-1}\le u+\epsilon.
\end{equation}
\item[(e)] for each $\epsilon>0$ and~$K$ sufficiently large, 
\begin{equation}
\label{E:6.15}
\texte^{\frac12\alpha^2\Var(\Phi^{f_{i,K}(D^i),f(D^i)}(x))}\le 1+\epsilon
\end{equation}
holds for all $x\in D_\delta^i$.
\end{enumerate}
Here (a-b) are consequences of the definition of $f_{i,K}$, (c) uses \eqref{E:7.11cc} the restriction on Lipschitz property of~$\Phi^{D,\wt D}$ imposed by event $A_{K,R}^i$ while (d) is in turn based on the uniform continuity of the test function~$u$. To get (e) we again invoke the ``closeness'' of domains $f_{i,K}(D^i)$ and $f(D^i)$ along with the invariance under scaling of both domains and Proposition~\ref{prop-inner-apprx}.

Invoking (b-d) in \eqref{E:6.7} gives
\begin{equation}
\label{E:6.12}
M^D_{K,R,\delta}(u\circ f)\le\texte^{c K^{-1/2}}\,\sum_{i=1}^{n_K}\1_{A_{K,R}^i}\,(M^{D^i}\circ f_{i,K}^{-1})\Bigl(\,(u+\epsilon)\,\1_{\wt D^i_\delta}\,\texte^{\alpha\Phi^{D,\wt D}\circ f^{-1}}\Bigr)
\end{equation}
while assumptions (2-3) of Theorem~\ref{thm:6.1} imply
\begin{equation}
(M^{D^i}\circ f_{i,K}^{-1})(w)\laweq |f'(x_0^i)|^{-4}\,M^{f(D^i)}\bigl(w\,\texte^{\alpha\Phi^{f_{i,K}(D^i),f(D^i)}}\bigr)
\end{equation}
for any measurable function~$w\colon f_{i,K}(D^i)\to[0,\infty)$ with $\supp(w)\subseteq f(D^i)$. (Here $\Phi^{f_{i,K}(D^i),f(D^i)}$ is independent of~$\Phi^{D,\wt D}$.) 
Using this inside the Laplace transform and applying Jensen's inequality, we get the estimate
\begin{equation}
\label{E:6.14}
E\bigl(\texte^{-(M^{D^i}\circ f_{i,K}^{-1})(w)}\bigr)
\ge E\biggl(\exp\Bigl\{-|f'(x_0^i)|^{-4}\,M^{f(D^i)}\bigl(w\,\texte^{\frac12\alpha^2\Var(\Phi^{f_{i,K}(D^i),f(D^i)})}\bigr)\Bigr\}\biggr).
\end{equation}

Now we are ready to put bits and pieces together.  First, since $\{M^{D^i}\colon i=1,\dots,n_K\}$ are independent, and all of them are independent of~$\Phi^{D,\wt D}$, \eqref{E:6.12}, \eqref{E:6.14} and \eqref{E:6.15} yield
\begin{multline}
E\bigl(\texte^{-M^D_{K,R,\delta}(u\circ f)}\bigr)
\\
\ge E\biggl(\,\exp\Bigl\{-\texte^{c K^{-1/2}}(1+\epsilon)\sum_{i=1}^{n_K}M^{f(D^i)}\bigl((u+\epsilon)(|f'\circ f^{-1})|^{-4}+\epsilon)\texte^{\alpha\Phi^{D,\wt D}\circ f^{-1}}\bigl)\Bigr\}\biggr),
\end{multline}
where we regard $\{M^{f(D^i)}\colon i=1,\dots,n_K\}$ as independent and independent of~$\Phi^{D,\wt D}$.
Notice that we also bounded $|f'(x_0^i)|^{-4}\le |(f'\circ f^{-1})(x)|^{-4}+\epsilon$ for all~$x\in f(D^i)$ and dropped the indicators of events~$A_{K,R}^i$ and sets~$\wt D_\delta^i$. But Lemma~\ref{lemma-6.1} implies that
\begin{equation}
\Phi^{D,\wt D}\circ f^{-1}\laweq \Phi^{D',\wt D'}
\end{equation}
and so the Gibbs-Markov property yields
\begin{equation}
E\bigl(\texte^{-M^D_{K,R,\delta}(u\circ f)}\bigr)
\ge E\bigl(\texte^{-\texte^{c K^{-1/2}}(1+\epsilon)\,M^{D'}((u+\epsilon)(|f'\circ f^{-1}|^{-4}+\epsilon))}\bigr).
\end{equation}
Taking $K\to\infty$, $\epsilon\downarrow0$, $R\to\infty$ and~$\delta\downarrow0$ (in this order) then gives the claim.
\end{proofsect}

\section{Connection to Liouville Quantum Gravity}
\label{sec8}\noindent
The goal of this section is to identify our $Z^D$-measures with the critical Liouville Quantum Gravity defined, somewhat implicitly, by the limit \eqref{E:2.25ua}.  We will do this by checking that the version of critical LQG defined by \twoeqref{E:2.22}{E:2.25ua} obeys the conditions of Theorem~\ref{thm:1.6}. The key tool, and the reason for working with Seneta-Heyde norming, is Kahane's inequality (cf Proposition~\ref{lemma-Kahane}). 

\subsection{Proof modulo a key proposition}
Let $M^D_\infty$ be the measure from \eqref{E:2.25ua}. 
It is straightforward to check that $M^D_\infty$ obeys conditions (0,1,2) of Theorem~\ref{thm:1.6}, as well as the independence part of the Gibbs-Markov property~\eqref{E:2.18ua}, so the key is to verify the part of the Gibbs-Markov property \eqref{E:2.19ua} dealing with restrictions to a subdomain and the Laplace-transform tail in \eqref{E:1.25}. The former is the content of:

\begin{lemma}[Gibbs-Markov for LQG measure]
\label{lemma-GM-M}
Suppose $\wt D,D\in\mathfrak D$ are such that~$\wt D\subseteq D$ and $\leb(D\smallsetminus\wt D)=0$. Then for each~$f\in C_\cc(\wt D)$,
\begin{equation}
\label{E:8.1}
E\bigl(\texte^{-\langle M^D_\infty,\,f\rangle}\bigr)
=E\bigl(\texte^{-\langle M^{\wt D}_\infty,\,f\texte^{\alpha\Phi^{D,\wt D}}\rangle}\bigr)
\end{equation}
where $M^{\wt D}_\infty$ and~$\Phi^{D,\wt D}$ on the right are regarded as independent.
\end{lemma}

\begin{proofsect}{Proof}
The Seneta-Heyde norming allows us to use the Kahane convexity inequality from Proposition~\ref{lemma-Kahane}. Fix $f\in C_\cc(\wt D)$ and denote
\begin{equation}
\label{E:8.2}
G^D_t(x,y):=\Cov\bigl(\varphi_t(x),\varphi_t(y)\bigr)=\int_{\texte^{-2t}}^\infty p^D_s(x,y)\textd s\,.
\end{equation}
A routine coupling argument for Brownian motion shows that $p_s^D(x,y)-p_s^{\wt D}(x,y)$ is integrable for small~$s$ uniformly in~$x,y\in\supp(f)$. This shows
\begin{equation}
\epsilon(t):=\sup_{x,y\in\supp(f)}\bigl|G^D_t(x,y)-G^{\wt D}_t(x,y)-C^{D,\wt D}(x,y)\bigr|\,\underset{t\to\infty}\longrightarrow\,0\,.
\end{equation}
Kahane's inequality along with  $\rad_D(x)^2=\rad_{\wt D}(x)^2\texte^{\frac12 \alpha^2 C^{D,\wt D}(x,x)}$  then implies, for~$\epsilon:=\epsilon(t)$,
\begin{equation}
E\bigl(\texte^{-\texte^{Y_\epsilon-\epsilon/2}\langle M^D_\infty,\,f\rangle}\bigr)
\ge E\bigl(\texte^{-\langle M^{\wt D}_\infty,\,f\texte^{\alpha\Phi^{D,\wt D}}\rangle}\bigr)\,,
\end{equation}
where~$Y_\epsilon\laweq\NN(0,\epsilon)$ is independent of~$M^D_\infty$ on the left-hand side. As~$\epsilon(t)\to0$ as~$t\to\infty$, we get ``$\ge$'' in \eqref{E:8.1}. The other inequality is completely analogous so we omit it.
\end{proofsect}

The main task is thus the proof of \eqref{E:1.25}. 
Since $u\mapsto u\texte^{-\lambda u}$ is bounded on~$(0,\infty)$ for any~$\lambda>0$, we can work with $M_t^D$ instead of~$M^D$ and, by \eqref{E:2.25ua}, even restrict~$t$ to integers (which we will denote as~$n$) provided we take these to infinity before taking~$\lambda$ to zero. The definition \eqref{E:2.22} along with Tonelli's Theorem yield,  for any measurable $A\subseteq D$,
\begin{equation}
\label{8E:8.5}
E\bigl[M_t^D(A)\texte^{-\lambda M_t^D(D)}\bigr]=\int_A\sqrt{t}\,E\Bigl[\texte^{\alpha\varphi_t(x)-\frac12\alpha^2\Var(\varphi_t(x))}\texte^{-\lambda M_t^D(D)}\Bigr]\,\rad_D(x)^2\,\textd x\,.
\end{equation}
This leads to the next important point, which also explains our reliance on the Seneta-Heyde normalization:

\begin{lemma}[Girsanov argument]
\label{lemma-Girsanov}
For each~$t>0$ and any~$x\in D$,
\begin{equation}
\label{8E:8.6}
E\Bigl[\texte^{\alpha\varphi_t(x)-\frac12\alpha^2\Var(\varphi_t(x))}\texte^{-\lambda M_t^D(D)}\Bigr]
=E\biggl[\exp\Bigl\{-\lambda\int_D\texte^{\alpha^2G^D_t(x,y)}M_t^D(\textd y)\Bigr\}\biggr].
\end{equation}
\end{lemma}

\begin{proofsect}{Proof}
The proof is based on a Girsanov-type argument. Fix~$x\in D$, recall \eqref{E:8.2} and set
\begin{equation}
\chi_t(y):=\varphi_t(y)-\varphi_t(x)\frac{G_t(x,y)}{G_t(x,x)}\,.
\end{equation}
Then~$\Cov(\chi_t(y),\varphi_t(x))=0$ for all~$y$ and so~$\chi_t$ is (being multivariate Gaussian) independent of~$\varphi_t(x)$. Writing $\wt M_t^D$ for the object in \eqref{E:2.22} with~$\varphi_t$ replaced by~$\chi_t$, we have
\begin{equation}
\label{8E:8.8}
M_t^D(\textd y)=\exp\biggl\{\alpha\varphi_t(x)\frac{G_t(x,y)}{G_t(x,x)}-\frac{\alpha^2}2\frac{G_t(x,y)^2}{G_t(x,x)}\biggr\}\wt M_t^D(\textd y).
\end{equation}
The left hand side of \eqref{8E:8.6} is the expectation of $\texte^{-\lambda M_t^D(D)}$ under the measure
\begin{equation}
P_{t,x}(A):=E\bigl(1_A\texte^{\alpha\varphi_t(x)-\frac12\alpha^2\Var(\varphi_t(x))}\bigr).
\end{equation}
As is now readily verified
\begin{equation}
(\varphi_t(x),\chi_t\bigr) \text{ under }P_{t,x}\,\,\laweq\,\,\bigl(\varphi_t(x)+\alpha G_t(x,x),\chi_t\bigr)\text{ under }P.
\end{equation}
Using this in \eqref{8E:8.8}, we get
\begin{equation}
M_t^D(\textd y)\text{ under }P_{t,x}\,\,\laweq\,\,\texte^{\alpha^2 G_t^D(x,y)}M_t^D(\textd y)\text{ under }P.
\end{equation}
This now readily implies the claim.
\end{proofsect}

The benefit of the rewrite \eqref{8E:8.6} is that the resulting expression is amenable to applications of Kahane's convexity inequality. This underlies the proof of:

\begin{proposition}
\label{prop-8.3}
For any~$D\in\mathfrak D$ and any open~$A$ with~$\overline A\subset D$,
\begin{equation}
\label{8E:1.7}
\lim_{\lambda\downarrow0}\,\limsup_{n\to\infty}\,
\sup_{x\in A}\,\Biggl|\,\frac{\sqrt n}{\log(1/\lambda)}\,E\biggl[\exp\Bigl\{-\lambda\int_D\texte^{\alpha^2G^D_n(x,y)}M_n^D(\textd y)\Bigr\}\biggr] -\frac1{\sqrt{2\pi}}\Biggr|\,=\,0.
\end{equation}
\end{proposition}

Deferring the proof temporarily, we use this to give:

\begin{proofsect}{Proof of Theorem~\ref{thm:LQG} from Proposition~\ref{prop-8.3}}
We have already verified properties (0-3) of Theorem~\ref{thm:1.6}. Property~(4),  with $c=\frac1{\sqrt{2\pi}}$,  then follows by combining \eqref{8E:8.5} with Lemma~\ref{lemma-Girsanov} and invoking Proposition~\ref{prop-8.3} along with the Bounded Convergence Theorem. 
\end{proofsect}

\subsection{Proof of Proposition~\ref{prop-8.3}, upper bound}
We are left to prove the asymptotic formula \eqref{8E:1.7} which we will do by proving separately upper and lower bounds on the expectation. Thanks to the scaling property in Theorem~\ref{thm:1.6}(3) and the assumption that~$A$ does not reach the boundary of~$D$ we may assume that~$D$ is so large that it compactly embeds the open unit (Euclidean) disc centered at~$x$. In the upper bound we simply restrict the integral to that disc. Noting that, on this disc,  $\rad_D$  is larger than some~$\delta>0$, by absorbing $\delta$ into~$\lambda$ it then suffices to prove the upper bound with $M_t^D$ replaced by the same measure without the  conformal radius  term; namely
\begin{equation}
\label{E:2.22-new}
\wh M_t^D(\textd x):= 1_D(x)\,\sqrt{t}\,\texte^{\alpha\varphi_t(x)-\frac12\alpha^2\Var(\varphi_t(x))}\,\textd x\,.
\end{equation}

Consider the collection of annuli 
\begin{equation}
A_k:=\{x\in\C\colon \texte^{-k}\le |x|<\texte^{-k+1}\},\quad k=1,\dots, n\,,
\end{equation}
where~$n$ will play the role of variable~$t$ above.
For~$\delta>0$ small we also set
\begin{equation}
A_k^\delta:=\bigl\{x\in A_k\colon\dist(x,A_k^\cc)>\delta\texte^{-k}\},
\quad k=1,\dots,n.
\end{equation}
In order to use Kahane's inequality, we observe:

\begin{lemma}
\label{lemma-8.4}
For each~$D$ as above and each~$\delta>0$ small there is~$\hat c=\hat c(\delta,D)\in(0,\infty)$ such that for all~$n\in\N$, all $k,\ell\in\{1,\dots,n\}$, all~$x\in A_k^\delta$ and all~$y\in A_\ell^\delta$, we have
\begin{equation}
\label{8E:1.10}
G^D_n(x,y)\le\sum_{m=1}^n G_n^{A_m}(x,y)+g(k\wedge\ell)+\hat c.
\end{equation}
\end{lemma}

\begin{proofsect}{Proof}
When $\ell=k$ we use $G^D_n(x,y)=-g\log(|x-y|\vee\texte^{-n})+O(1)$ and
\begin{equation}
G_n^{A_k}(x,y)=-g\log(|x-y|\vee\texte^{-n})- gk+O(1)\,,
\end{equation}
where the factor~$-gk$ marks the~$k$-dependence to the ``harmonic correction'' in the formula for the Green function. As $G_n^{A_m}(x,y)=0$ for~$m\ne k$, we get the desired inequality.

When $\ell<k$, we have $G_n^{A_m}(x,y)=0$ for all~$m=0,\dots,n$ while
\begin{equation}
G_n^D(x,y)=g\ell+O(1)
\end{equation}
because $|x-y|$ is of order~$\texte^{-\ell}$. The claim follows in this case as well.
\end{proofsect}

Consider a random walk $\{S_k\colon k\ge0\}$ with $S_k-S_{k-1}$ i.i.d.~$\NN(0,g)$ for~$k\ge1$ and $S_0=\NN(0,\hat c)$ for~$\hat c$ the constant in \eqref{8E:1.10} (and independent of the increments of~$S$). Note that then
\begin{equation}
\Cov(S_k,S_\ell)=g(k\wedge\ell)+\hat c\,.
\end{equation}
In light of \eqref{8E:1.10}, Kahane's inequality then bounds the expectation in \eqref{8E:1.7} by
\begin{equation}
\label{8E:1.14}
\begin{aligned}
E\biggl[\exp\Bigl\{-\lambda\int_D\texte^{\alpha^2G^D_n(0,x)}&\wh M_n^D(\textd x)\Bigr\}\biggr]
\le E\biggl[\exp\Bigl\{-\lambda\sum_{k=1}^{n/2}\int_{A_k^\delta}
\texte^{\alpha^2G^D_n(0,x)}\wh M_n^D(\textd x)\Bigr\}\biggr]
\\
&\le
E\biggl[\exp\Bigl\{-\lambda\sum_{k=1}^{n/2}\int_{A_k^\delta}
\texte^{\alpha^2G^D_n(0,x)}\texte^{\alpha S_k-\frac12\alpha^2 (g k+\hat c)}
\wh M_n^{A_k}(\textd x)\Bigr\}\biggr]
\\
&\le
E\biggl[\exp\Bigl\{-c\lambda\sum_{k=1}^{n/2}\texte^{\alpha S_k} \texte^{2k}
\wh M_n^{A_k}(A_k^\delta)\Bigr\}\biggr],
\end{aligned}
\end{equation}
where $c$ is a positive constant and the measures $\{\wh M_n^{A_k}\colon k=1,\dots,n\}$ are independent of each other and of the random walk~$S$. The constant~$c$ arises from the observation that, since $G^D_n(0,x)=g\log(|x|\vee\texte^{-n})+O(1)$ and $\alpha^2g=4$,
\begin{equation}
\texte^{\alpha^2G^D_n(0,x)-\frac12\alpha^2(gk+\hat c)}\ge c\texte^{2k},\quad x\in A_k^\delta,
\end{equation}
holds for all $n\in\N$ and all~$k=1,\dots,n$.

Abbreviating, for the objects as above, 
\begin{equation}
\label{8E:1.16}
X_{n,k}:=\texte^{2k}
\wh M_n^{A_k}(A_k^\delta),\quad 1\le k\le n,
\end{equation}
a key point to observe in the last line of \eqref{8E:1.14} is that
\begin{equation}
\label{8E:8.18}
X_{n,k}\,\,\overset{\text{law}}=\,\sqrt{\frac{n}{n-k+1}}\,\,X_{n-k+1,1}, \quad 1\le k< n.
\end{equation}
We will need the following bound from \cite[Corollary~6]{DRSV2} (see also Remark~29 there) that
\begin{equation}
\label{8E:8.19}
\sup_{n\ge1}\,\,E\bigl(1/X_{n,1}\bigr)<\infty\,.
\end{equation}
As most of~$X_{n,k}$ will thus be bounded away from zero, to prevent the sum $\sum_{k=1}^{n/2} \texte^{\alpha S_k} X_{n,k}$ in \eqref{8E:1.14} from exploding as~$n\to\infty$, the random walk~$S_k$ has to stay below $\alpha^{-1}\log(1/\lambda)$, and in fact diverge to~$-\infty$, as~$k$ increases. This is articulated precisely in: 

\begin{lemma}
\label{8lemma-1.4}
Recall the notation in \eqref{8E:1.16} and abbreviate $S^\star_n:=\max_{k\le n}S_k$.
For any~$\epsilon>0$,
\begin{equation}
\label{8E:8.21}
\limsup_{\lambda\downarrow0}\,
\limsup_{n\to\infty}\,\,\frac{\sqrt n}{\log(1/\lambda)}\,E\biggl[1_{\{S^\star_n>(1+\epsilon)\alpha^{-1}\log(1/\lambda)\}}\exp\Bigl\{-c\lambda\sum_{k=1}^{n}\texte^{\alpha S_k} X_{n,k}\Bigr\}\biggr]=0.
\end{equation}
\end{lemma}

\begin{proofsect}{Proof}
Abbreviate $q_\lambda:=(1+\epsilon)\alpha^{-1}\log(1/\lambda)$ and let~$K_n\in\{0,\dots,n\}$ be the smallest index where~$S$ equals~$S^\star_n$. Neglecting, in the sum in \eqref{8E:8.21}, all terms but that of~$k=K_n$, we get
\begin{equation}
E\biggl[1_{\{S^\star_n>q_\lambda\}}\exp\Bigl\{-c\lambda\sum_{k=1}^n\texte^{\alpha S_k} X_{n,k}\Bigr\}\biggr]
\le E\Bigl(\texte^{-c\lambda \texte^{\alpha S^\star_n}X_{n,K_n}}1_{\{S^\star_n>q_\lambda\}}\Bigr).
\end{equation}
Next we decompose the expectation according to which of the integer intervals~$S^\star_n/q_\lambda$ belongs to and then use that for $S^\star_n\ge\ell q_\lambda$ with~$\ell\ge1$ we have $\lambda\texte^{\alpha S^\star_n}\ge\lambda^{1-(1+\epsilon)\ell}\ge\lambda^{-\epsilon\ell}$ to get
\begin{equation}
\label{8E:8.27}
\begin{aligned}
E\Bigl(\texte^{-c\lambda \texte^{\alpha S^\star_n}X_{n,K_n}}1_{\{S^\star_n>q_\lambda\}}\Bigr)
&=\sum_{\ell\ge1}E\Bigl(\texte^{-c\lambda \texte^{\alpha S^\star_n}X_{n,K_n}}1_{\{S^\star_n/q_\lambda\in(\ell,\ell+1]\}}\Bigr)
\\
&\le\sum_{\ell\ge1}E\Bigl(\texte^{-c\lambda^{-\ell\epsilon}X_{n,K_n}}1_{\{S^\star_n\le(\ell+1)q_\lambda\}}\Bigr)\,.
\end{aligned}
\end{equation}
We split the last expectation according to whether~$X_{n,K_n}\le\lambda^{-\ell\epsilon}$ or not. Since the~$X_{n,k}$'s are independent of~$S$, we can do this under conditioning on~$S$; \twoeqref{8E:8.18}{8E:8.19} along with the Markov inequality then yield
\begin{equation}
\label{8E:8.28}
E\Bigl(\texte^{-c\lambda^{-\ell\epsilon}X_{n,K_n}}1_{\{S^\star_n\le(\ell+1)q_\lambda\}}\Bigr)
\le\Bigl(\texte^{-c\lambda^{-\ell\epsilon/2}}+C\lambda^{-\ell\epsilon/2}\Bigr)P\bigl(S^\star_n\le(\ell+1)q_\lambda\bigr),
\end{equation}
where~$C$ denotes the supremum in \eqref{8E:8.19}. Standard ballot-problem estimates show
\begin{equation}
\label{8E:8.32}
P\bigl(S^\star_n\le(\ell+1)q_\lambda\bigr)\le \tilde c(\ell+1)\frac{q_\lambda}{\sqrt n}.
\end{equation}
The prefactors on the right-hand side of \eqref{8E:8.28} provide sufficient decay to control the term~$(\ell+1)$ on the right; plugging the resulting bound in \eqref{8E:8.27} then readily yields the claim.
\end{proofsect}

\begin{proofsect}{Proof of upper bound in \eqref{8E:1.7}}
Re-using our notation $S^\star_n:=\max_{k\le n}S_k$, the proof is based on the sharp asymptotic version of \eqref{8E:8.32}:
\begin{equation}
P(S_n^\star\le t)\le\frac t{\sqrt n}\bigl(1+o(1)\bigr),
\end{equation}
where we used that the steps of~$S$ have variance~$g=2/\pi$ and where~$o(1)$ is a term that vanishes in the limits $n\to\infty$ followed by~$t\to\infty$. This follows by a comparison of our random walk (that has Gaussian, albeit not time-homogene\-ous, steps) with standard Brownian motion; see, e.g., \cite[Section~4.3]{BL-new}.

Denote $q_\lambda:=(1+\epsilon)\alpha^{-1}\log(1/\lambda)$. From \eqref{8E:1.14} and Lemma~\ref{8lemma-1.4} we then get
\begin{multline}
\quad
\limsup_{\lambda\downarrow0}\,
\limsup_{n\to\infty}
\frac{\sqrt n}{\log(1/\lambda)}\,E\biggl[\exp\Bigl\{-\lambda\int_D\texte^{\alpha^2G^D_n(0,x)}\wh M_n^D(\textd x)\Bigr\}\biggr]
\\
=\lim_{\epsilon\downarrow0}\,\limsup_{\lambda\downarrow0}\,
\limsup_{n\to\infty}
\frac{\sqrt n}{\log(1/\lambda)}\,E\biggl[1_{\{S^\star\le q_\lambda\}}\exp\Bigl\{-\lambda\int_D\texte^{\alpha^2G^D_n(0,x)}\wh M_n^D(\textd x)\Bigr\}\biggr]
\\
\le \lim_{\epsilon\downarrow0}\limsup_{\lambda\downarrow0}\,
\limsup_{n\to\infty}\frac{\sqrt n}{\log(1/\lambda)} P\bigl(S^\star_n\le q_\lambda\bigr)\le
\lim_{\epsilon\downarrow0}\,(1+\epsilon)\,\alpha^{-1}=\frac1{\sqrt{2\pi}},
\quad
\end{multline}
where we used that $\alpha=2/\sqrt g=\sqrt{2\pi}$ in the last step.
\end{proofsect}

\subsection{Proof of Proposition~\ref{prop-8.3}, lower bound}
For the lower bound we will actually proceed along a very much the same argument. Keeping our notation for the annuli~$A_k$, we first note:
 
\begin{lemma}
There is $k_0\ge1$ such that for all $n\ge k_0$, all $k,\ell\in\{0,\dots,n\}$, all $x\in A_{k}$ and all~$y\in A_{\ell}$, we have
\begin{equation}
\label{8E:8.34}
G^D_n(x,y)\ge\sum_{m=1}^n G_n^{A_m}(x,y)+g(k\wedge\ell - k_0)_+
\end{equation}
\end{lemma}

\begin{proofsect}{Proof}
Fix~$x,y\in D$ and let~$k,\ell\ge0$ be such that~$x\in A_k$ and~$y\in A_\ell$. If~$k=\ell > 0$ the inequality holds without taking the positive part on the right hand side, since $x$ and $y$ are deep inside $D$. The monotonicity estimate $G^D_n(x,y)\ge G_n^{A_k}(x,y)$ implied by the corresponding comparison of the (substochastic) transition kernels, then gives~\eqref{8E:8.34} and also handles the case $k=\ell=0$.

For~$k<\ell$ the point~$y$ is ``deep'' inside~$D$ and so, by the fact that~$G_t^{2D}(x,y)-G_t^{D}(x,y)$ is uniformly bounded from above in~$t$ and~$y$, we still have that $G^D_n(x,y)=-g\log(|x-y|\vee\texte^{-n})+O(1)$. Since~$|x-y|\le 2\texte^{-k+1}$, there is~$\hat c>0$ such that
\begin{equation}
G^D_n(x,y)\ge g(k\wedge\ell)-\hat c
\end{equation}
regardless of the choice of~$x$ and~$y$ (with~$k\ne \ell$). Since $G_n^{A_m}(x,y)=0$ for all~$m=1,\dots,n$, taking~$k_0$ be the minimal integer with~$gk_0\ge\hat c$, we get \eqref{8E:8.34} for all~$k,\ell\ge k_0$. The right-hand side of \eqref{8E:8.34} is zero in  the remaining cases (with~$k\ne\ell$) while~$G^D_n(x,y)\ge0$.
\end{proofsect}

We are now ready to give:

\begin{proofsect}{Proof of lower bound in \eqref{8E:1.7}}
As in the upper bound, since $\rad_D$ is bounded from above on~$D$, we can work with $\wh{M}_t^D$ in place of~$M_t^D$. Abusing our earlier notation, let~$S$ now denote the random walk with~$S_0=\dots=S_{k_0}=0$ and~$\{S_{k+1}-S_k\colon k\ge k_0\}$ i.i.d.~$\NN(0,g)$. From Kahane's convexity inequality we then get
\begin{equation}
\begin{aligned}
E\biggl[\exp\Bigl\{-\lambda\int_D\texte^{\alpha^2G^D_n(0,x)}&\wh M_n^D(\textd x)\Bigr\}\biggr]
= E\biggl[\exp\Bigl\{-\lambda\sum_{k=1}^n\int_{A_k}
\texte^{\alpha^2G^D_n(0,x)}\wh M_n^D(\textd x)\Bigr\}\biggr]
\\
&\ge
E\biggl[\exp\Bigl\{-\lambda\sum_{k=1}^n\int_{A_k}
\texte^{\alpha^2G^D_n(0,x)}\texte^{\alpha S_k-\frac12\alpha^2 g k}
\wh M_n^{A_k}(\textd x)\Bigr\}\biggr]
\\
&\ge
E\biggl[\exp\Bigl\{-c\lambda\sum_{k=1}^n\texte^{\alpha S_k} \texte^{2k}
\wh M_n^{A_k}(A_k)\Bigr\}\biggr].
\end{aligned}
\end{equation}
Abusing our notation further, we now define
\begin{equation}
X_{n,k}:=\texte^{2k}\wh M_n^{A_k}(A_k), \quad 1\le k\le n,
\end{equation}
and note that
\begin{equation}
\{X_{n,1},\dots,X_{n,n}\}\text{ are independent with } X_{n,k}\,\,\overset{\text{law}}=\,\,\sqrt{\frac n{n-k+1}}\,X_{n-k+1,1}.
\end{equation}
Although $X_{n,1},\dots,X_{n,n}$ are  not exactly equidistributed; what matters is that we have
\begin{equation}
\forall p<1\colon \quad\sup_{n\ge1}E\bigl(X_{n,1}^p\bigr)<\infty.
\end{equation}
In particular, given~$\epsilon\in(0,1)$ and denoting
\begin{equation}
E_{n,\lambda}:=\bigcap_{k=1}^n\biggl\{X_{n,k}\le \lambda^{-\epsilon/2}k^2\sqrt{\frac n{n-k+1}}\biggr\}
\end{equation}
a standard tail calculation shows 
\begin{equation}
\label{8E:8.41}
\lim_{\lambda\downarrow0}\inf_{n\ge1}\,P(E_{n,\lambda})=1.
\end{equation}
Let now~$q_\lambda:=(1-\epsilon)\alpha^{-1}\log(1/\lambda)$ and let
\begin{equation}
F_{n,\lambda}:=\bigcap_{k=1}^n\bigl\{S_k\le q_\lambda-(\log k)^2\bigr\}.
\end{equation}
We now estimate
\begin{multline}
\qquad
E\biggl[\exp\Bigl\{-c\lambda\sum_{k=1}^n\texte^{\alpha S_k} \texte^{2k}
\wh M_n^{A_k}(A_k)\Bigr\}\biggr]
\\
\ge
E\biggl[1_{E_{n,\lambda}\cap F_{n,\lambda}}\,\exp\Bigl\{-c\lambda\sum_{k=1}^n\texte^{\alpha S_k} \texte^{2k}
\wh M_n^{A_k}(A_k)\Bigr\}\biggr]
\\
\ge
\exp\biggl\{-c\lambda^{\epsilon/2}\sum_{k=1}^n \texte^{-\alpha(\log k)^2}k^2\sqrt{\frac{n}{n-k+1}}\,\biggr\}P\bigl(E_{n,\lambda}\cap F_{n,\lambda}\bigr).
\qquad
\end{multline}
The exponential term tends to unity in the limit~$n\to\infty$ followed by~$\lambda\downarrow0$. In light of the independence of~$E_{n,\lambda}$ and~$F_{n,\lambda}$ and \eqref{8E:8.41}, to get the claim it suffices to show that
\begin{equation}
\lim_{\epsilon\downarrow0}\,\liminf_{\lambda\downarrow0}\,\liminf_{n\to\infty}\,\frac{\sqrt n}{\log(1/\lambda)}
P\bigl(F_{n,\lambda})=\alpha^{-1}=\frac1{\sqrt{2\pi}}.
\end{equation}
This is proved by first extending event (as a lower bound) to full path of Brownian motion of time-length~$n$ and then applying \cite[Proposition~4.7]{BL-new}.
\end{proofsect}

\section*{Appendix: Convergence of harmonic measures}
\setcounter{theorem}{0}
\setcounter{equation}{0}
\renewcommand{\thesection}{A}
\renewcommand{\thetheorem}{A.\arabic{theorem}}
\renewcommand{\theequation}{A.\arabic{equation}}
\addcontentsline{toc}{section}
{{\tocsection {}{\thesection}{\!\!\!\!Appendix: Convergence of harmonic measures\dotfill}}{}}
\noindent
The purpose of this section is to prove that the harmonic measure converges as the underlying domain (discrete or continuous) approaches a limit set $D\in\mathfrak D$. We begin with the harder of the two statements; namely, the one dealing with convergence of the discrete harmonic measure $H^D_N$ to its continuous counterpart~$\Pi^D$:

\begin{lemma}
\label{lemma-HM}
For any~$D\in\mathfrak D$ and any bounded continuous~$f\colon\C\to\R$,
\begin{equation}
\label{E:3.21a}
\sum_{z\in\partial D_N}H^D_N\bigl(\lfloor xN\rfloor,z\bigr)\,f(z/N)\,\underset{N\to\infty}\longrightarrow\,\int_{\partial D}\Pi^D(x,\textd z)f(z)
\end{equation}
uniformly as~$x$ varies over closed subsets of~$D$. 
\end{lemma}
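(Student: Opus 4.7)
The plan is to interpret both sides probabilistically and exploit Donsker's invariance principle. Writing $X^N$ for the simple random walk on $\Z^2$ started at $\lfloor xN\rfloor$ with exit time $\tau_N$ from $D_N$, and $B$ for a standard planar Brownian motion started at $x$ with exit time $\tau$ from $D$, the left-hand side of \eqref{E:3.21a} equals $E[f(X^N_{\tau_N}/N)]$ while the right-hand side equals $E[f(B_\tau)]$. I would first fix $x \in D$ and prove that $X^N_{\tau_N}/N \Rightarrow B_\tau$ in law. By Donsker (or by a strong coupling such as KMT or the dyadic Skorokhod embedding), on a suitable probability space $\sup_{t\le T}|X^N_{\lfloor N^2 t\rfloor}/N - B_{t/2}|\to 0$ almost surely for every $T<\infty$, where the $1/2$ accounts for the per-coordinate variance of the SRW step. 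The claim then reduces to showing that the exit time and position from $D$ are $P$-almost surely continuous functionals of the Brownian path.

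The continuity of these functionals at $P$-a.e.\ path follows once every point of $\partial D$ is regular for BM killed outside $D$. In $\R^2$, regularity is equivalent to the complement of $D$ being non-thin at $z$, and the hypothesis that each boundary component has positive Euclidean diameter makes each component a non-polar continuum. Wiener's criterion (or equivalently Kakutani's criterion via hitting of non-polar sets) then yields regularity at every $z\in\partial D$. A small additional issue is that the discrete approximation $D_N$ is only constrained to lie between two sets in \eqref{E:1.1}--\eqref{E:1.1a}: I would handle this by noting that once the coupled SRW reaches the $\epsilon N$-neighborhood of $\partial D$, it must exit~$D_N$ within $O(\epsilon N^2)$ steps regardless of the choice, and the coupled Brownian motion travels $O(\epsilon)$ during the matching time, so both walks exit at essentially the same continuum point.

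To upgrade pointwise convergence to uniform convergence on closed subsets $K\subset D$, I would establish equicontinuity of the family $x\mapsto E[f(X^N_{\tau_N}/N)]$ in $x$, uniformly in~$N$. On the continuum side this is immediate since $x\mapsto \int\Pi^D(x,\textd z)f(z)$ is harmonic in~$D$ and bounded by $\|f\|_\infty$. On the discrete side, I would couple two SRWs started at $\lfloor xN\rfloor$ and $\lfloor yN\rfloor$ by running them independently until they collide and then identifying them: standard hitting-time estimates for the difference walk (which is itself an SRW on $\Z^2$) show that, for $x,y\in K$ with $|x-y|$ small and $K$ away from $\partial D$, the two walks coalesce well before either exits $D_N$, with probability tending to one. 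This gives the required uniform equicontinuity, and a standard $\epsilon/3$ argument combining it with pointwise convergence yields uniformity on~$K$.

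The main obstacle will be the low regularity allowed for $\partial D$: the boundary may be quite irregular (e.g.\ have cusps pointing inward, or consist of many tiny components, though the latter is excluded here). The positive-diameter assumption on each component is precisely what is needed to rule out thin pieces where the discrete exit distribution might concentrate while the continuum one would not; it is also what lets us appeal to standard ``off-diagonal'' hitting estimates near $\partial D$. Apart from this, the argument is a fairly standard application of the invariance principle together with potential-theoretic regularity.
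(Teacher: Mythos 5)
Your high-level plan coincides with the paper's: couple the simple random walk to a Brownian motion via Donsker, and argue that once the pair gets close to $\partial D$ both processes must exit nearby. The observation that the positive-diameter assumption on boundary components makes every $z\in\partial D$ a regular boundary point (each component being a non-polar continuum) is correct and is the right structural reason the lemma holds. However, the crucial step is not carried out, and two of your auxiliary claims are wrong.

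The gap: regularity of $\partial D$ is a \emph{pointwise, non-quantitative} statement at each $z\in\partial D$; what the argument actually needs is a \emph{uniform quantitative} estimate of the form: for all small $r$ and every $y$ within distance $r$ of $\partial D$, both the Brownian motion and the coupled walk, started near $y$, exit $D$ (resp.~$D_N$) inside a ball $B(y,C(r))$ with probability at least $1-\epsilon$, where $C(r)\to 0$ and the estimate is uniform in~$y$. Appealing to ``a.s.\ continuity of the exit functional'' does not by itself supply this: a.s.\ continuity is a statement about perturbations of a fixed Brownian path, whereas here we must compare a Brownian path with a \emph{random-walk} path, the two processes exit \emph{different} domains ($D$ vs.\ $D_N$), and the discrete exit time can differ substantially from the continuum one. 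The paper's figure-eight construction (the event $G_r(x)$ and its iterates at scales $4^m r$) is precisely this missing quantitative input: it produces a closed loop at a controlled scale that is shared by the Brownian path and the coupled walk and that must cross the nearby boundary component (using the diameter lower bound). Without something of this kind, the claim that ``both walks exit at essentially the same continuum point'' remains a hand-wave.

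Two subsidiary claims are incorrect. First, it is not true that once the walk is in the $\epsilon N$-neighborhood of $\partial D$ it exits $D_N$ within $O(\epsilon N^2)$ steps: the walk may drift back into the bulk of $D_N$, and the exit time from a point at distance $\epsilon N$ from the boundary is of order $\mathrm{diam}(D_N)^2$, not $\epsilon^2 N^2$. Second, the proposed coalescence coupling for equicontinuity fails in two dimensions. Two independent SRWs on $\Z^2$ started at distance $\Theta(N)$ apart do hit each other eventually by recurrence, but the probability that they meet before either exits a region of diameter $\Theta(N)$ is of order $\log(1/\delta)/\log N\to 0$, not $\to 1$. If one wants an equicontinuity step (the paper's proof does not need one, since its estimates are already uniform in $x$ over compact subsets of $D$), the correct tool is a discrete gradient estimate for bounded discrete harmonic functions, applied to $y\mapsto\sum_z H^D_N(y,z)f(z/N)$.
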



Before we give a proof, let us note that results of this kind (of course) exist in the literature.
For instance, Lawler and Limi\'c~\cite[Proposition~7.3.3]{Lawler-Limic} discuss this for a specific class of simply connected domains (in general dimension). Two dimensional cases have also been treated, e.g., in the recent work of Chelkak and Smirnov~\cite{Chelkak-Smirnov} but these apply only to monotone sequences of domains with the boundary of the limit domain composed of a finite number of Jordan arcs (Proposition~3.3 \cite{Chelkak-Smirnov}) or arbitrary sequences of simply connected domains converging in Carath\'eodory sense (Theorem~3.12 of \cite{Chelkak-Smirnov}). However, none of these settings include our class of domains and so we provide a probabilistic proof here. Our argument is akin to that in \cite{Lawler-Limic} albeit quite special to two dimensions. It is here where the assumption on the number and the diameter of the connected components of~$\partial D$ plays an important role.

\begin{proofsect}{Proof of Lemma~\ref{lemma-HM}}
It is obvious that one can take subsequential limits and that they would all be concentrated on~$\partial D$, so the key issue is to identify the limit with the harmonic measure in the continuum domain. We will do this by a coupling argument. 

Pick~$x\in D$ and let~$B=\{B_t\colon t\ge0\}$ be a standard Brownian motion started from~$x$. Consider the simple random walk $X=\{X_n\colon n\ge0\}$ on $\Z^2$ started from $\lfloor xN\rfloor$. Let $T_R:=\inf\{t\ge0\colon |B_t|\ge R\}$. By Donsker's Theorem, for each~$N\ge1$ there exists a coupling $P_{N,x}$ of the two processes on the same probability space such that, for each~$r>0$ and each $R>0$, uniformly in~$x\in D$,
\begin{equation}
\label{E:A.2}
\lim_{N\to\infty} P_{N,x}\bigl(\,\,\sup_{0\le t\le T_R}|B_t-N^{-1}X_{\lfloor tN^2\rfloor}|>r\bigr)=0.
\end{equation}
The key is to show that the exit point of~$B$ from~$D$ is with high probability close to the exit point (scaled by~$N^{-1}$) of~$X$ from~$D_N$. Specifically, setting
\begin{equation}
\tau:=\inf\{t\ge0\colon B_t\not\in D\}
\end{equation}
and
\begin{equation}
\hat\tau:=\inf\{n\ge1\colon X_n\not\in D_N\},
\end{equation}
it suffices to show that $B_\tau-N^{-1}X_{\hat\tau}$ is small with high probability.

We begin by some general considerations about behavior of Brownian paths. Pick~$r>0$ and consider the figure-eight domain
\begin{equation}
A_r(x):=\bigl\{z\in\C\colon 3r<|x-z-2r|<4r\bigr\}\cup\bigl\{z\in\C\colon 3r<|x-z+2r|<4r\bigr\}.
\end{equation}
Let $\gamma:=\{z\in\C\colon |x-z\pm 2r|=\frac72r\}$ be the center curve in~$A_r(x)$ and let $G_r(x)$ be the event that the Brownian motion hits~$\gamma$ in $A_r(x)\cap\{z\in\C\colon r<|z-x|<2r\}$ and, between this time and the first exit time from~$A_r(x)$ does the following: it runs a complete ``circle'' around one of the annuli until it hits its path again, and then enters the other of the two annuli and runs a complete ``circle'' around it until it hits itself again. 

It is a fact that the probability of~$G_r(x)$ is positive and, by recurrence and scale invariance of~$B$, that so uniformly in~$r>0$ and $x$ with $|x|<r$. It follows from the Strong Markov Property that, for each~$\epsilon>0$ there is~$K\ge1$ such that $\bigcup_{m=1}^K G_{4^m r}(x)$ occurs with probability at least~$1-\epsilon$ for all~$r>0$ and all~$x$ with $|x|<r$. A key observation is that, due to the construction, on~$G_r(x)$ intersected with the event in \eqref{E:A.2} with $R>4r$, both the Brownian motion $B$ and the scaled random walk~$\wt X_t:=N^{-1}X_{\lfloor tN^2\rfloor}$ complete a loop around the disc $\{z\in\C\colon |z-x|<r\}$. 

Let~$\delta>0$ be smaller than the diameters of all connected components of~$\partial D$ and, given~$\epsilon>0$ and~$K\ge1$ as above, pick~$r>0$ so that $4^Kr+r<\delta$. Let $D^r$ be as in \eqref{E:3.21ww} with~$\delta$ replaced by~$r$ and set
\begin{equation}
\tau_r:=\inf\{t\ge0\colon B_t\not\in D^r\}.
\end{equation}
By the Strong Markov property for the Brownian motion we know that
\begin{equation}
\label{E:A.5}
P_{N,x}\Bigl(\bigcup_{m=1}^K G_{4^m r}(B_{\tau_r})\text{ occurs for }B_{\tau_r+\cdot}\Bigr)\ge1-\epsilon
\end{equation}
uniformly in~$N\ge1$ and~$x$ compact subsets of~$D^r$. Since our constructions guarantee that some connected component of~$\partial D$ ``crosses'' $\{z\in\C\colon r<|z-B_{\tau_r}|<4^Kr\}$, on the event in \eqref{E:A.5} we have $|B_{\tau}-B_{\tau_r}|<\delta$. Assuming that the event in \eqref{E:A.2} occurs for some~$R>\delta$, in light of our assumptions \twoeqref{E:1.1}{E:1.1a} on~$D_N$ also $X$ must hit $\partial D_N$ within $N\delta$ of~$NB_\tau$. It follows that
\begin{equation}
\liminf_{N\to\infty}\,\,P_{N,x}\bigl(\,|B_{\tau}-N^{-1}X_{\hat\tau_0}|<2\delta\bigr)>1-\epsilon,
\end{equation}
uniformly in~$x$ over compact subsets of~$D^r$. As~$\epsilon$ and~$\delta$ were arbitrary (just small enough), this yields the claim.
\end{proofsect}

The second approximation statement concerns continuum domains. Since we are dealing with monotone sequences, we could perhaps appeal to the Perron construction of solutions to the Dirichlet problem. (We thank John Garnett for making this point to us.) However, a direct probabilistic argument works just as well: 

\begin{lemma}
\label{lemma-cont-h.m.}
Given $D\in\mathfrak D$, let $D^\delta$ be as in \eqref{E:3.21ww}. Then for any bounded continuous~$f\colon\C\to\R$,
\begin{equation}
\int_{\partial D^\delta}\Pi^{D^\delta}(x,\textd z)f(z)
\,\underset{\delta\downarrow0}\longrightarrow\,
\int_{\partial D}\Pi^D(x,\textd z)f(z)
\end{equation}
uniformly as~$x$ varies over closed subsets of~$D$.
\end{lemma}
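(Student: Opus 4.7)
The strategy is to couple the two harmonic measures through a single Brownian motion, thereby reducing the claim to a quantitative boundary-regularity estimate that can be obtained by essentially the same figure-eight argument used in the proof of Lemma~\ref{lemma-HM}.

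Fix a closed set $K\subset D$ and let $c_0:=\dist(K,D^\cc)>0$, so that $K\subset D^\delta$ whenever $\delta\in(0,c_0)$. For $x\in K$, let $B=(B_t)_{t\ge0}$ be a standard planar Brownian motion started at~$x$ and set $\tau:=\inf\{t\ge0\colon B_t\notin D\}$ and $\tau_\delta:=\inf\{t\ge0\colon B_t\notin D^\delta\}$. Then $\tau_\delta\le\tau$, and the laws of $B_\tau$ and $B_{\tau_\delta}$ are $\Pi^D(x,\cdot)$ and $\Pi^{D^\delta}(x,\cdot)$, respectively. Since $f$ is bounded and continuous, the lemma reduces to showing that $B_{\tau_\delta}\to B_\tau$ in probability as $\delta\downarrow0$, uniformly in $x\in K$. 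Applying the strong Markov property at $\tau_\delta$, the post-$\tau_\delta$ process is a fresh Brownian motion started at $y:=B_{\tau_\delta}$, which satisfies $\dist(y,D^\cc)\le\delta$. Hence it suffices to establish the following deterministic estimate: for every $\epsilon>0$ there exists $\delta_0>0$ such that, whenever $y\in D$ obeys $\dist(y,D^\cc)\le\delta_0$, a Brownian motion started at $y$ exits $D$ within distance $\epsilon$ of $y$ with probability at least $1-\epsilon$.

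Let $d_0>0$ denote the infimum of the Euclidean diameters of the (finitely many) connected components of $\partial D$; this is strictly positive by the definition of $\mathfrak D$. Given such $y$, pick $y^\star\in\partial D$ with $|y-y^\star|\le\delta_0$ and let $\Gamma$ be the connected component of $\partial D$ through $y^\star$; then $\diam(\Gamma)\ge d_0$. For each scale $r$ with $2\delta_0\le r\le d_0/15$, the set $\Gamma$ contains a point inside the inner disc and a point outside the outer disc of at least one of the two annuli comprising the figure-eight region $A_r(y)$ introduced in the proof of Lemma~\ref{lemma-HM}. Planar topology then forces any loop inside $A_r(y)$ enclosing the corresponding inner disc to intersect $\Gamma\subset\partial D$; hence on the figure-eight event $G_r(y)$, which has probability $p>0$ independent of $r$ and of $y$, the Brownian motion from $y$ must exit $D$ within distance $O(r)$ of $y$. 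Using the strong Markov property at successive first exit times from concentric discs to arrange essentially independent trials at scales $r_j:=4^j(2\delta_0)$ for $j=0,1,\ldots,K$, with $K$ chosen so that $r_K\le\epsilon/c$ for an appropriate constant $c$, the Brownian motion then exits $D$ within distance $\epsilon$ of $y$ with probability at least $1-(1-p)^{K+1}$. Taking $\delta_0\downarrow0$ forces $K\to\infty$, completing the estimate.

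The main technical obstacle is the combinatorial geometry underlying the figure-eight step: verifying that a connected set of diameter at least $d_0$ with a point close to $y$ indeed crosses each relevant annulus, and arranging the figure-eight events at distinct scales to be independent via the strong Markov property at exits from nested discs. Both points mirror the reasoning in the proof of Lemma~\ref{lemma-HM} and rely crucially on the assumption, built into the definition of $\mathfrak D$, that $\partial D$ has only finitely many connected components, each of positive Euclidean diameter.
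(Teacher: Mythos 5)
Your proposal is correct and follows essentially the same route as the paper: a trivial coupling by running a single Brownian motion, reducing to the statement that a Brownian motion started within $\delta$ of $\partial D$ exits $D$ nearby with high probability, and then invoking the figure-eight construction from the proof of Lemma~\ref{lemma-HM} (whose validity rests on each component of $\partial D$ having positive diameter). The paper's own proof is only a two-sentence sketch referring back to Lemma~\ref{lemma-HM} with the coupling declared trivial; your write-up fills in the reduction and the scale bookkeeping (choosing $K$ to beat $(1-p)^{K+1}\le\epsilon$ and then $\delta_0$ small enough that $4^K\delta_0\lesssim\epsilon$) in more detail, but the underlying mechanism is identical.
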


\begin{proofsect}{Proof (sketch)}
The proof uses the same argument as that of Lemma~\ref{lemma-HM} except that the coupling is now trivial (both processes are standard Brownian motions). The construction in the above proof guarantees that, after exiting~$D^r$, with probability at least~$1-\epsilon$, the Brownian motion will hit a component of~$\partial D$ within distance~$4^Kr+r$. As $K$ is fixed and~$r\downarrow0$, the claim follows.
\end{proofsect}

\section*{Acknowledgments}
\noindent
This research has been partially supported by European Union's Seventh Framework Programme (FP7/2007-2013]) under the grant agreement number 276923-MC--MOTIPROX, and also ISF grant 1382/17, NSF grant DMS-1407558,  the United States-Israel Binational Science Foundation grant~2018330, the German-Israeli Foundation for Scientific Research and Development grant I-2494-304.6/2017 and GA\v CR project P201/11/1558. Part of this work was carried out when the authors visited the Research Institute for Mathematical Sciences in Kyoto University.  We wish to thank Mo Wong for pointing out that the Laplace transform tail only implies \eqref{E:2.27ua} and not \eqref{E:2.28ua} and to an anonymous referee for many constructive suggestions on the presentation.

\end{document}